\documentclass[12pt]{amsbook}
\setlength{\textwidth}{6in}
\setlength{\oddsidemargin}{0pt}
\setlength{\evensidemargin}{0pt}
\setlength{\hoffset}{4pt}
\setlength{\parskip}{5pt}
\usepackage{amsmath}
\usepackage{amssymb}
\usepackage{amsthm}

\input xy
\xyoption{matrix}
\xyoption{arrow}
\xyoption{curve}

\theoremstyle{plain}
\newtheorem{theorem}{Theorem}[section]
\newtheorem{corollary}[theorem]{Corollary}
\newtheorem{proposition}[theorem]{Proposition}
\newtheorem{lemma}[theorem]{Lemma}

\theoremstyle{definition}
\newtheorem{definition}[theorem]{Definition}

\newcommand{\br}[1]{\langle#1\rangle}
\newcommand{\C}{{\mathcal{C}}}

\newcommand{\Cat}{{\mathbf{Cat}}}

\newcommand{\DD}{{\mathcal{D}}}
\newcommand{\D}{\mathbb{D}}

\newcommand{\e}{\varepsilon}

\newcommand{\etahat}{\hat{\eta}}

\newcommand{\id}{{\mathop{\textnormal{id}}\nolimits}}
\long\def\ignore#1\endignore{}

\newcommand{\JJ}{{\mathcal{J}}}
\newcommand{\J}{{\mathbb{J}}}
\newcommand{\Lhat}{{\hat{L}}}
\newcommand{\M}{{\mathbf{M}}}

\newcommand{\op}{{\textnormal{op}}}

\def\S{{\mathcal{S}}}
\newcommand{\Set}{{\mathbf{Set}}}
\newcommand{\Shat}{\hat{S}}

\newcommand{\That}{\hat{T}}

\begin{document}

\title{Left Adjoints for Generalized Multicategories}

\author{A.\ D.\ Elmendorf}

\address{Department of Mathematics\\
Purdue University Calumet\\
 Hammond, IN 46323}
 \thanks{The author was partially supported by a grant from the Simons Foundation
 (Collaboration Grant for Mathematicians number 209092 to Anthony Elmendorf)}

\email{aelmendo@purduecal.edu}

\date{February 14, 2015}

\begin{abstract}
We construct generalized 
multicategories associated to an arbitrary operad in 
$\Cat$ that is $\Sigma$-free.  The construction generalizes the passage
to symmetric multicategories from permutative categories, 
which is the case when the operad is the categorical version
of the Barratt-Eccles operad.  The main
theorem is that there is an adjoint pair relating algebras over the
operad to this sort of generalized multicategory.  The construction 
is flexible enough to allow for equivariant generalizations.
\end{abstract}

\maketitle

\tableofcontents

\part{Introduction and constructions}
\section{Introduction}
This paper describes a new construction for multicategories that generalizes
the notions of both symmetric and non-symmetric multicategory in the literature.
While not as general as Cruttwell and Shulman's construction in \cite{CS}, we
are able to give a left adjoint construction to algebras over the operad
that controls the construction.  This generalizes the free strict monoidal category 
construction for non-symmetric multicategories and the free permutative
category construction for symmetric multicategories.  The only hypothesis 
necessary for the new construction 
and the left adjoint
is the $\Sigma$-free property on the controlling
categorical operad; that is, the $n$th category $\DD_n$ of the operad must have a $\Sigma_n$-action
that is free.  The special case in which $\DD_n$ is the discrete category $\Sigma_n$
gives the usual construction of non-symmetric multicategories, and the 
special case of the categorical version of the Barratt-Eccles operad gives
the usual construction of symmetric multicategories.  

Our construction is motivated by Leinster's work in \cite{L}, where he
showed that monads on certain categories such as
sets or categories, subject to a few hypotheses, could give rise to generalized
multicategories; however, he did not fit symmetric multicategories into his
framework.  It is also motivated by the work of Guillou and May in \cite{GM}, 
in which they described how algebras
over categorical operads in an equivariant setting could give rise to genuine equivariant
spectra.  However, they did not consider any associated theory of 
multicategories.  Such a theory would be a generalization of the theory
given in \cite{EM2} by Mandell and the author in the non-equivariant setting,
in which we showed that the spectrum determined by a permutative category
depends only on its underlying (symmetric) multicategory.  The present construction
does generalize Leinster's to the symmetric setting, as well as accounting for 
the context considered by Guillou and May; it also allows for many other examples.
Applications to stable homotopy are a subject for future work.

The crucial feature that we exploit in all our constructions is
is the presence of an operad of categories that is $\Sigma$-free, meaning that the groups
$\Sigma_n$ not only act, but act \emph{freely} on the associated components of the operad.  This
freeness is critical: it explains the failure of Leinster's construction when
considering strictly commutative monoidal categories, and makes clear that
the most natural generalization of his construction to the symmetric case
is to permutative categories, since they  are precisely the algebras over the $\Sigma$-free
categorical Barratt-Eccles operad.
We make heavy use of the $\Sigma$-free hypothesis, but need to 
assume nothing more about our categorical operad.  In particular, we make no use of other
properties enjoyed by the Barratt-Eccles operad, such as its $E_\infty$
structure.  

Our main results are as follows.  Given an operad $\DD$ of 
categories that is $\Sigma$-free, we construct a category of generalized
multicategories associated to the monad $\D$ given by the operad $\DD$.
We call these $\D$-multicategories.
We show that every $\D$-algebra has an underlying $\D$-multicategory,
and that there is a left adjoint to this forgetful functor back to $\D$-algebras.
In the case in which the operad $\DD$ is
the categorical Barratt-Eccles operad, the algebras are permutative 
categories, which in turn all have underlying (symmetric) multicategories;
our construction recovers these multicategories and the left adjoint
back to permutative categories.  

The most novel feature of the paper is a construction that comes within
a whisker of being the left adjoint, but fails on exactly one count: the unit
map fails to preserve the presheaf structure we impose on the morphisms of our
sort of multicategory.  We call this the ``provisional'' left adjoint, since it
does most of the heavy lifting in the construction of the actual left adjoint, which
is a coequalizer (really a coend) constructed from the provisional version.  

The paper is organized as follows.  Since the proofs consist almost entirely
of lengthy diagram chases, these are deferred to the second part of the paper,
which can be ignored by the casual reader.  The definition and main constructions, 
as well as the statements of the main results, are in the first part, consisting
of the first six sections.  However, these sections, though rather short, contain
no proofs at all.  The diligent reader will find these by perusing the corresponding
sections in the second part.

The results of this paper should be considered as proof of concept theorems, 
since there are many other questions about the construction
that this paper does not address.  Perhaps chief among these is the question
of producing spectra, especially equivariant spectra, from particular instances of the construction.  However,
this will definitely require more hypotheses than those we use in this paper, 
which is long enough as is without investigation of further questions.

It is a pleasure to acknowledge conversations
with Mike Shulman on these topics.  Conversations with Anna Marie 
Bohmann and Bj\o rn Dundas have also been stimulating and useful.

\section{Preliminary Constructions}

We begin by describing 
some preliminary
constructions which are necessary for our definition of $\D$-multicategories,
deferring all proofs to later sections.  We first need some
results about categorical operads, and the consequences we will need
that follow from our $\Sigma$-free hypothesis.  

Given an operad $\DD$ in $\Cat$, we can form the associated 
monad $\D$ in $\Cat$ by means of the standard construction
\[
\D\C:=\coprod_{n\ge0}\DD_n\times_{\Sigma_n}\C^n.
\]
It will be most useful, however, to break this monad up into pieces, using
the nerve construction $N:\Cat\to\mathord{\textbf{Ssets}}$, where the target
is the category of simplicial sets.  Since $N$ is a right adjoint, it preserves products
and therefore operad structure, so $N\DD$ is an operad of simplicial sets, and since
evaluation at any simplicial degree is also a right adjoint, we get the sequence
of (set) operads $N\DD_0,N\DD_1,N\DD_2,\dots$.  Associated to each we have 
a monad $\D_0,\D_1,\D_2,\dots$ in $\Set$.  Now given a (small) category $\C$, we also denote
by $\C_0,\C_1,\C_2,\dots$ the simplices of the nerve of $\C$, so in particular $\C_0$
is the set of objects of $\C$, and $\C_1$ its set of morphisms.   It is now straightforward
to see that the objects of $\D\C$ are $\D_0\C_0$ and the morphisms are $\D_1\C_1$; 
higher nerve degrees are also given by $\D_n\C_n$.  It will be useful to consider mixed
uses of these monads, however; for example, we will need to consider both $\D_1\C_0$
and $\D_0\C_1$.  We will make much use of the following as a calculational tool; its
proof makes crucial use of the $\Sigma$-free hypothesis.

\begin{lemma}\label{category object}
If $\DD$ is $\Sigma$-free, the monads $\{\D_0,\D_1\}$ give a category object in
monads on $\Set$, which in turn determines the remaining monads $\D_2,\D_3,\dots$.
\end{lemma}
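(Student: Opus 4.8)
The plan is to read off the category-object structure from the low-dimensional part of the simplicial operad $N\DD$, and then to show that the single pullback of monads required for such a structure exists precisely because of the $\Sigma$-free hypothesis. First I would record the structure maps. Since $N\DD$ is a simplicial operad, for each arity $n$ the face and degeneracy maps of the nerve $N\DD_n$ are operad maps in each simplicial degree, and hence induce monad morphisms. In the bottom two degrees these are the two face maps, giving source and target $s,t\colon\D_1\to\D_0$, and the degeneracy $s_0$, giving the identity-assigning map $e\colon\D_0\to\D_1$.

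The essential point---and the one place the $\Sigma$-free hypothesis is indispensable---is to produce the composition, which amounts to showing that the pullback $\D_1\times_{\D_0}\D_1$ exists in monads on $\Set$, is carried by the pointwise fibre product of the underlying functors, and that this functor is exactly $\D_2$. Concretely I would compute $\D_1 S\times_{\D_0 S}\D_1 S$ for a set $S$: an element is a pair of classes $[f;\vec s]$, $[g;\vec t]$ (with $f,g$ in the relevant $(N\DD_n)_1$) whose target and source agree in $\D_0 S$. Equality in the quotient $\coprod_n (N\DD_n)_0\times_{\Sigma_n}S^n$ forces $f,g$ to have a common arity $n$ and produces a permutation $\sigma\in\Sigma_n$ with $d_0 f=\sigma\cdot d_1 g$ and $\vec s=\sigma\cdot\vec t$. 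Freeness of the $\Sigma_n$-action on $(N\DD_n)_0$ makes this $\sigma$ unique, so one may replace $g$ by $\sigma g$ and $\vec t$ by $\vec s$ without ambiguity; the resulting composable pair $(f,\sigma g)$ represents a well-defined element of $\D_2 S=\coprod_n(N\DD_n)_2\times_{\Sigma_n}S^n$, using the Segal identification $(N\DD_n)_2\cong(N\DD_n)_1\times_{(N\DD_n)_0}(N\DD_n)_1$. I would check that this is inverse to the evident comparison in the other direction, obtaining a natural isomorphism $\D_1\times_{\D_0}\D_1\cong\D_2$. Since $\D_2$ is a genuine monad and its two outer face maps $\D_2\to\D_1$ are monad morphisms realizing the two projections, this exhibits $\D_2$ as the pullback in the category of monads. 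The composition $m\colon\D_1\times_{\D_0}\D_1\cong\D_2\to\D_1$ is then the middle face map $d_1$.

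With the data $(\D_0,\D_1,s,t,e,m)$ in hand, I would verify the category-object axioms. These are all instances of the simplicial identities in $N\DD$---together with the corresponding Segal identification $\D_3\cong\D_1\times_{\D_0}\D_1\times_{\D_0}\D_1$ needed to phrase associativity---and they hold because each $N\DD_n$ is the nerve of an honest category $\DD_n$, so no extra input is required beyond naturality of the isomorphisms just constructed. Finally, the claim that $\{\D_0,\D_1\}$ determines the higher monads is exactly the statement that $\D_\bullet$ is the nerve of this category object: iterating the Segal isomorphism gives $\D_k\cong\D_1\times_{\D_0}\cdots\times_{\D_0}\D_1$ ($k$ factors) for every $k$, an expression built solely from $\D_0$, $\D_1$ and the structure maps.

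I expect the fibre-product computation of the second paragraph to be the main obstacle; everything else is bookkeeping with simplicial identities, whereas that step is where the quotient by $\Sigma_n$ must be shown to commute with the relevant pullback. The untwisting permutation $\sigma$ is exactly what would be ambiguous, and the category-object structure would fail to close up, were the $\Sigma_n$-action not free.
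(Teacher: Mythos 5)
Your proposal is correct, and its mathematical crux is the same as the paper's: $\Sigma$-freeness enters exactly once, to show that passage to $\Sigma_n$-orbits commutes with the pullback defining composables, via the existence and uniqueness of your untwisting permutation $\sigma$. The routes differ in packaging, though. The paper never touches elements of $\D_1S\times_{\D_0S}\D_1S$ here: it isolates your computation as the general Proposition \ref{orbitpullbacks} (passage to orbits sends pullbacks of free $G$-sets to pullbacks of sets, proved by precisely the existence-and-uniqueness chase you describe), observes that $\DD$ is a category object in operads because the operad and category structure maps commute, and then concludes formally: a category object is specified by diagrams plus one pullback condition, the operad-to-monad passage is assembled from products, orbits of free actions, and coproducts, all of which preserve pullbacks, so the category object is carried to one in monads on $\Set$, with the higher $\D_k$ appearing as nerve components just as in your iterated Segal identification. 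The functorial packaging buys brevity and reuse --- the same Proposition \ref{orbitpullbacks} also drives Theorem \ref{cartesianmonad} and Theorem \ref{preservescovers} --- and it inherits the category axioms rather than checking them against simplicial identities; your inline version buys self-containedness and makes visible exactly where freeness is needed. One step you should make explicit: concluding that the pointwise isomorphism $\D_1\times_{\D_0}\D_1\cong\D_2$ exhibits $\D_2$ as a pullback \emph{in the category of monads} uses that the forgetful functor from monads on $\Set$ to endofunctors creates limits (monads being monoids in endofunctors); this standard fact is the hinge on which your remark about the two outer face maps turns.
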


We also need the fact that all these monads are what Leinster calls \emph{Cartesian}.
We recall the definition.

\begin{definition} A monad $J$ is \emph{Cartesian} if it satisfies the following properties:
\begin{enumerate}
\item
$J$ preserves pullbacks.
\item
The naturality diagrams for the product $\mu:J^2\to J$ and the unit $\eta:\id\to J$
are all pullbacks.
\end{enumerate}
\end{definition}

Our first basic result is the following theorem; note that since the categorical operad
$\DD$ is assumed to be $\Sigma$-free, so are all the set operads $N\DD_n$.

\begin{theorem}\label{cartesianmonad}
The monad associated to a $\Sigma$-free operad in $\Set$ is Cartesian.
Consequently, all the monads $\D_n$ are Cartesian.
\end{theorem}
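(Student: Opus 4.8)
The plan is to prove the property for a general $\Sigma$-free operad $P$ in $\Set$ with associated monad $JX=\coprod_{n\ge0}P_n\times_{\Sigma_n}X^n$, and then read off the statement about the $\D_n$ for free: as noted just before the theorem, the $\Sigma$-freeness of $\DD$ passes to each set operad $N\DD_n$, and $\D_n$ is by definition the monad of $N\DD_n$, so the general result applies verbatim. It therefore remains to verify the two clauses in the definition of Cartesian for $J$. The single workhorse observation is that $\Sigma$-freeness lets us split each component: since $\Sigma_n$ acts \emph{freely}, $P_n$ is a free $\Sigma_n$-set, so a choice of orbit representatives gives a $\Sigma_n$-equivariant isomorphism $P_n\cong(P_n/\Sigma_n)\times\Sigma_n$ and hence a natural isomorphism
\[
P_n\times_{\Sigma_n}X^n\;\cong\;(P_n/\Sigma_n)\times X^n
\]
of functors in $X$, the naturality being automatic because the splitting does not involve $X$.

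First I would dispatch clause (1). Each functor $X\mapsto(P_n/\Sigma_n)\times X^n$ preserves pullbacks, since $(-)^n$ does (pullbacks commute with finite products) and multiplication by a fixed set does as well. Because every map in the image of $J$ respects the arity grading, a pullback $JA\times_{JC}JB$ is computed one arity at a time, so $J\cong\coprod_n(P_n/\Sigma_n)\times(-)^n$ preserves pullbacks summand by summand, using that coproducts in $\Set$ are disjoint and universal. Next I would handle the unit half of clause (2) directly on elements: the image of $\eta_X$ sits entirely in the arity-one summand $P_1\times X$, sending $x$ to $(1,x)$ for the operad unit $1\in P_1$, with no quotient to contend with since $\Sigma_1$ is trivial. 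Chasing an element of the pullback $Y\times_{JY}JX$ of $\eta_Y$ against $Jf$, the condition $\eta_Y(y)=Jf(\xi)$ forces $\xi$ into arity one with operad part $1$ and $f$ of its entry equal to $y$; this identifies the pullback with $X$ via $\eta_X$. Note that freeness is not needed here, only that $J$ comes from an operad with a unit.

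The substantive case, and the one where I expect the real difficulty, is the multiplication half of clause (2): that for every $f\colon X\to Y$ the naturality square of $\mu$ is a pullback, i.e.\ that $(\mu_X,J^2f)\colon J^2X\to JX\times_{JY}J^2Y$ is a bijection. An element of the target pairs a one-level $X$-datum $\xi=[p;x_1,\dots,x_n]\in JX$ with a two-level $Y$-datum $\Xi\in J^2Y$ whose operadic contraction $\mu_Y(\Xi)$ agrees with $Jf(\xi)$ in $JY$. The plan is to transport the branching shape recorded by $\Xi$ onto the entries of $\xi$: the equality $Jf(\xi)=\mu_Y(\Xi)$ supplies a permutation matching $p$ with the operadic composite $\gamma$ coming from $\Xi$ and matching the $f(x_i)$ with the leaves of $\Xi$, and here is exactly where $\Sigma$-freeness is indispensable, for it makes that matching permutation \emph{unique}, so the induced grouping of the $x_i$ into the blocks prescribed by $\Xi$ is well defined and produces a unique preimage in $J^2X$. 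Surjectivity is then the construction of this preimage and injectivity is the uniqueness of the permutation.

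The hard part will be the bookkeeping of the operad composition $\gamma$ interacting with the free $\Sigma_n$-actions across the two levels, which is precisely the sort of lengthy diagram chase the introduction defers to the second part. I would expect this step to genuinely require freeness, matching the well-known failure of Cartesianness for the free commutative monoid monad, where each $P_n$ is a point carrying the maximally non-free trivial action; the reduction $P_n\cong(P_n/\Sigma_n)\times\Sigma_n$ that drives the entire argument is available exactly when the action is free.
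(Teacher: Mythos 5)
Your proposal follows the paper's proof in all essentials: both verify the two Cartesian clauses directly, both observe that the unit square needs only the operad unit and not freeness, and both reduce the multiplication square to an element chase in which $\Sigma$-freeness supplies a unique matching permutation. The one real difference is in clause (1): where the paper proves a standalone proposition that passage to orbits of \emph{free} $G$-sets preserves pullbacks (Proposition \ref{orbitpullbacks}) and applies it to $\JJ_n\times X^n$, you trivialize the quotient outright via a chosen splitting $P_n\cong(P_n/\Sigma_n)\times\Sigma_n$, giving $P_n\times_{\Sigma_n}X^n\cong(P_n/\Sigma_n)\times X^n$ naturally in $X$. This is an equivalent use of freeness and arguably cleaner for this one theorem, though the paper's orbit proposition earns its keep by being reused elsewhere (e.g.\ in Lemma \ref{category object} and in the proof that $\D$ preserves covers).

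The place where your plan is thinner than the paper is the injectivity half of the $\mu$-square, which is in fact the bulk of the paper's proof. ``Injectivity is the uniqueness of the permutation'' undersells what is required: given two elements $[a;b_1,x_1,\dots,b_k,x_k]$ and $[a';b'_1,x'_1,\dots,b'_k,x'_k]$ of $J^2X$ with the same image, freeness must be invoked three separate times --- once at the outer level (a unique $\sigma\in\Sigma_k$ with $a'=a\cdot\sigma$), once at each inner level (unique $\tau_i$ relating the $b'$'s to the $b$'s), and once at the bottom (to extract $x_i=\tau_i\cdot x'_{\sigma^{-1}(i)}$ from the agreement of the $\mu_X$-images) --- and these must be spliced together using the operad equivariance identity
\[
\gamma(a\cdot\sigma;b_{\sigma(1)}\cdot\tau_{\sigma(1)},\dots,b_{\sigma(k)}\cdot\tau_{\sigma(k)})
=\gamma(a;b_1,\dots,b_k)\cdot(\tau_1\oplus\cdots\oplus\tau_k)\cdot\sigma\br{j_1,\dots,j_k},
\]
i.e.\ May's block permutations. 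Your closing remarks show you anticipate exactly this two-level bookkeeping, so nothing in your outline would fail; it is simply that this verification, rather than the existence construction, is where the real content of the theorem sits.
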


Our construction of generalized multicategories associated to the monad $\D$
involves the use of presheaves.  
We therefore need to introduce our conventions and notation for presheaves, and 
give the fundamental lemmas about them that will be used in the rest of the paper.

When considering a fiber product $X\times_ZY$, we will consider the 
structure map from $X$ to $Z$ to be of ``source'' type, and the structure map from $Y$
to $Z$ to be of ``target'' type, where the use of ``source'' and ``target'' should be clear
from context.  
The motto is ``source to the right, target to the left.''
Our motivating example is to be able to consider the composition map in a small category
$\C$ to be given by a map
\[
\gamma:\C_1\times_{\C_0}\C_1\to\C_1;
\]
the structure map for the first $\C_1$ is the source map, and for the second one the
target map.  With this in mind, we have the following characterization of presheaves.

\begin{definition}
Let $X$ be a set, and $\C$ a small category.  A \emph{presheaf structure} on $X$ consists
of a map $\e:X\to\C_0$, together with an action map
\[
\xi: X\times_{\C_0}\C_1\to X.
\]
\end{definition}

Of course, by an action map we mean the usual coherence conditions of associativity
and unit must be satisfied.  
Further, we consider $X\times_{\C_0}\C_1$ to have a structure map given
by the source on $\C_1$, and we require the action to preserve structure maps in the sense
that the diagram
\[
\xymatrix{
X\times_{\C_0}\C_1\ar[rr]^-{\xi}\ar[dr]_-{S}&&X\ar[dl]^-{\e}
\\&\C_0}
\]
commutes; this is necessary in order to make sense of associativity.
The fibers in $X$ over the elements of $\C_0$ then form the target objects
of a functor from $\C^\op$ to $\Set$, and conversely, given such a functor, the disjoint
union of the target objects has a presheaf structure as specified above.  

The category theorists refer to the following concept as a ``discrete fibration.''  Since we will
also refer to its dual concept, we will use alternate terminology.

\begin{definition}
A functor $F:\C\to\C'$ is a \emph{target cover} if, given $f':a'\to b'$ in $\C'_1$ and 
$b\in\C_0$ such that
$F(b)=b'$,
there is a unique $f\in\C_1$ such that $F(f)=f'$ and $T(f)=b$, where $T$ refers to the
target function.
\end{definition}

An alternate way of describing this is to say that the following square is a pullback:
\[
\xymatrix{
C_1\ar[r]^-{T}\ar[d]_-{F_1}&\C_0\ar[d]^-{F_0}
\\\C'_1\ar[r]_-{T}&\C'_0.}
\]
Replacing $T$ with $S$ (the source map), we get the dual notion of a \emph{source cover}.
If a functor is both a source and target cover, we will say it is simply a \emph{cover}.

The following theorem is our primary structural tool.

\begin{theorem}\label{preservescovers}
Let $\D$ be the monad associated to a $\Sigma$-free $\Cat$-operad.  Then
$\D$ preserves both target covers and source covers (and consequently covers.)
\end{theorem}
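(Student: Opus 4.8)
The plan is to reduce the statement to the Cartesian property of the monads $\D_0,\D_1$ (Theorem~\ref{cartesianmonad}) together with one extra input drawn from the $\Sigma$-free hypothesis: that the source and target transformations of the category object of Lemma~\ref{category object} have pullback naturality squares. Throughout, write $s,t:\D_1\to\D_0$ for the source and target natural transformations of that category object, and recall that for a small category $\C$ the category $\D\C$ has object set $\D_0\C_0$ and morphism set $\D_1\C_1$. A direct inspection of the defining formula $\D\C=\coprod_{n\ge 0}\DD_n\times_{\Sigma_n}\C^n$ shows that the target function of $\D\C$ is the composite
\[
T_{\D\C}=t_{\C_0}\circ\D_1T:\D_1\C_1\to\D_1\C_0\to\D_0\C_0,
\]
where $T:\C_1\to\C_0$ is the target in $\C$; the analogous formula holds for the source with $s$ and $S$.

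With this in hand, the square asserting that $\D F$ is a target cover factors horizontally as
\[
\xymatrix{
\D_1\C_1\ar[r]^-{\D_1T}\ar[d]_-{\D_1F_1}&\D_1\C_0\ar[r]^-{t_{\C_0}}\ar[d]^-{\D_1F_0}&\D_0\C_0\ar[d]^-{\D_0F_0}
\\\D_1\C'_1\ar[r]_-{\D_1T}&\D_1\C'_0\ar[r]_-{t_{\C'_0}}&\D_0\C'_0.}
\]
The left-hand square is $\D_1$ applied to the defining pullback square of the target cover $F$; since $\D_1$ is Cartesian by Theorem~\ref{cartesianmonad} it preserves pullbacks, so this square is a pullback. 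The right-hand square is the naturality square of $t$ at $F_0:\C_0\to\C'_0$. Granting that this too is a pullback, the pasting law for pullbacks shows the outer rectangle is a pullback, which is exactly the assertion that $\D F$ is a target cover. The argument for source covers is identical with $s$ in place of $t$ and $S$ in place of $T$, and a functor that is both is then carried to a cover.

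The one remaining point, which is the main obstacle, is to prove that $s$ and $t$ are \emph{Cartesian} natural transformations. I would argue directly from $\Sigma$-freeness, exactly in the spirit of the proof of Theorem~\ref{cartesianmonad}. The transformation $t$ is induced by the operad map $N\DD_1\to N\DD_0$ given by the face map of the simplicial operad $N\DD$ that selects targets, so on the arity-$n$ summand its naturality square at $g:X\to Y$ is
\[
\xymatrix{
(N\DD_n)_1\times_{\Sigma_n}X^n\ar[r]\ar[d]&(N\DD_n)_0\times_{\Sigma_n}X^n\ar[d]
\\(N\DD_n)_1\times_{\Sigma_n}Y^n\ar[r]&(N\DD_n)_0\times_{\Sigma_n}Y^n.}
\]
Because the $\Sigma_n$-action on $(N\DD_n)_0$ is free and the face map is $\Sigma_n$-equivariant, every point of the putative pullback has a unique preimage: freeness pins down the unique permutation relating two chosen representatives, and equivariance of the face map then both produces the required lift and forces its uniqueness.

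This orbit-bookkeeping is precisely the step in which $\Sigma$-freeness is indispensable; without a free action the permutation relating representatives would not be determined, and the lift along the face map would fail to be unique. It is the same mechanism that makes the unit and multiplication squares pullbacks in Theorem~\ref{cartesianmonad}. Assembling the summands over $n$ yields the desired pullback square for $t$, and symmetrically for $s$, completing the reduction and hence the proof.
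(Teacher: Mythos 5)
Your proposal is correct, but it takes a genuinely different route from the paper's proof. The paper never decomposes the target map of $\D\C$: it factors the construction $\D$ itself into products, passage to orbits by the free $\Sigma_n$-actions, and coproducts, and checks that each stage preserves the pullback condition defining covers --- the orbit stage being Proposition \ref{orbitspreservecovers}, which rests on Proposition \ref{orbitpullbacks}; Cartesianness of the monads is never invoked there. You instead factor the cover \emph{condition} for $\D F$: writing $T_{\D\C}=T_\D\circ\D_1T$ (your $t$ is the paper's $T_\D$), you paste $\D_1$ applied to the cover square of $F$, a pullback since $\D_1$ is Cartesian by Theorem \ref{cartesianmonad}, against the naturality square of $T_\D:\D_1\to\D_0$, a pullback by your orbit-representative chase. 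That chase is sound: freeness of $\Sigma_n$ on $(\DD_n)_0$ alone pins down the unique permutation and yields both existence and uniqueness of the lift (freeness on morphisms is automatic from equivariance of source and target), and the arity summands assemble because the horizontal maps preserve arity. What your route buys is economy and a sharper conceptual statement: your key lemma is exactly Theorem \ref{preservescovers} specialized to functors between discrete categories, so your argument is the reduction ``discrete case plus Cartesian monads implies the general case.'' Indeed, the paper later obtains your lemma as a \emph{consequence} of the theorem (for instance the pullback $\D_1M_1\cong\D_1M_0\times_{\D_0M_0}\D_0M_1$, justified by applying the theorem to the cover $T:M_1^\delta\to M_0^\delta$), so the two statements imply each other and you have simply inverted the logical order. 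What the paper's route buys is uniformity: a single factorization handles source and target covers at once, with no pasting lemma and no appeal to Theorem \ref{cartesianmonad}, reusing only the orbit machinery already established for free actions.
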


It will be surprisingly useful to apply this theorem to functors between discrete
categories, which are all covers.  However, our most fundamental use is
to prove the following theorem, whose following corollary is basic to  our
definition of generalized multicategory.  Again, $\D$ is the monad associated
to a $\Sigma$-free $\Cat$-operad.

\begin{theorem}\label{Dpresheaf}
Let $X$ have the structure of presheaf over a category $\C$ with structure map
$\e:X\to\C_0$.  
Then $\D_0X$ 
naturally supports the structure of presheaf over $\D\C$
with structure map $\D_0\e:\D_0X\to\D_0\C_0$.
\end{theorem}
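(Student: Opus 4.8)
The plan is to avoid constructing the action map on $\D_0 X$ by hand and instead exploit the equivalence between presheaf structures and target covers together with Theorem~\ref{preservescovers}. Recall that a presheaf structure on $X$ over $\C$ is exactly the data of a target cover (discrete fibration) $\pi\colon\mathcal{E}\to\C$ whose object set $\mathcal{E}_0$ is $X$ and whose object map $\pi_0$ is $\e$. Concretely, the \emph{category of elements} $\mathcal{E}$ has $\mathcal{E}_0=X$ and $\mathcal{E}_1=X\times_{\C_0}\C_1$; its target map is the projection onto $X$ and its source map is the action $\xi$, and the projection $\pi$ to $\C$ is a target cover precisely because the square exhibiting $\mathcal{E}_1\cong X\times_{\C_0}\C_1$ is the pullback of $T\colon\C_1\to\C_0$ along $\e$, which is the alternate pullback description of a target cover given above. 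Conversely, every target cover over $\C$ arises this way, its source map furnishing the action. So the first step is simply to record this correspondence, which amounts to unwinding the two definitions already in place.

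First I would apply the endofunctor $\D$ to the projection $\pi\colon\mathcal{E}\to\C$. Since $\D$ is a monad on $\Cat$, this yields a functor $\D\pi\colon\D\mathcal{E}\to\D\C$, and by the discussion of nerve degrees its object map is $\D_0(\pi_0)=\D_0\e\colon\D_0 X\to\D_0\C_0$ while its morphism map is $\D_1(\pi_1)$. The key step is then Theorem~\ref{preservescovers}: because $\pi$ is a target cover and $\D$ preserves target covers, $\D\pi$ is again a target cover.

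Finally I would run the correspondence backwards. The functor $\D\pi\colon\D\mathcal{E}\to\D\C$ is a target cover with object set $(\D\mathcal{E})_0=\D_0 X$ and object map $\D_0\e$, so it equips $\D_0 X$ with a presheaf structure over $\D\C$ whose structure map is exactly $\D_0\e\colon\D_0 X\to\D_0\C_0=(\D\C)_0$, as claimed; the action is the source map of the category $\D\mathcal{E}$, and the presheaf axioms hold automatically because they are encoded in the single statement that $\D\pi$ is a target cover. Naturality in $X$ follows at once, since both the category-of-elements construction and $\D$ are functorial.

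I expect the only real obstacle to be bookkeeping rather than mathematics. One must verify that the two passages (presheaf $\leftrightarrow$ target cover) are mutually inverse and that they correctly match the pullback of $T\colon\C_1\to\C_0$ along $\e$ with the fiber product $X\times_{\C_0}\C_1$ appearing in the presheaf definition, and likewise identify the target map $T\colon\D_1\C_1\to\D_0\C_0$ of $\D\C$ with the one against which the new action fibers. Once these identifications are in place the theorem is immediate from Theorem~\ref{preservescovers}; the genuine content of the argument has already been absorbed into that preservation result.
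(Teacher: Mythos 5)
Your proposal is correct and is essentially the paper's own proof: the paper likewise encodes the presheaf structure as the target cover $\C\int X\to\C$ (the Grothendieck construction, with $\e$ as the object map), applies Theorem \ref{preservescovers} to obtain the target cover $\D(\C\int X)\to\D\C$, and then reads off the presheaf structure on the object set $\D_0X$ with structure map $\D_0\e$. The bookkeeping you flag (that presheaf structures and target covers with prescribed object data determine one another) is exactly what the paper establishes beforehand via Theorem \ref{equivalentpresheaves} and the surrounding proposition and corollary, so nothing further is needed.
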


\begin{corollary}\label{D_0presheaf}
If $X$ supports the structure of presheaf over $\D^k(*)$,
then $\D_0X$ supports the structure of presheaf over $\D^{k+1}(*)$.
\end{corollary}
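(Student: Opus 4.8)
The plan is to recognize this corollary as nothing more than a direct specialization of Theorem~\ref{Dpresheaf}, with the base category taken to be $\C=\D^k(*)$. Since $\D$ is the monad on $\Cat$ associated to our $\Sigma$-free operad, $\D^k(*)$ is itself a small category, and so it is a legitimate target for the presheaf structure discussed above. Thus the only real work is to check that the hypotheses of Theorem~\ref{Dpresheaf} are met and that its conclusion, when written out, is exactly the statement we want.

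Concretely, I would argue as follows. By hypothesis $X$ carries a presheaf structure over $\D^k(*)$, with some structure map $\e\colon X\to\bigl(\D^k(*)\bigr)_0$ and an action map of the form prescribed by the definition of presheaf structure. Applying Theorem~\ref{Dpresheaf} with $\C=\D^k(*)$ produces, naturally in the data, a presheaf structure on $\D_0X$ over the category $\D\bigl(\D^k(*)\bigr)$, with structure map $\D_0\e\colon\D_0X\to\D_0\bigl(\D^k(*)\bigr)_0$. Since $\D\bigl(\D^k(*)\bigr)=\D^{k+1}(*)$ as categories, this is precisely a presheaf structure on $\D_0X$ over $\D^{k+1}(*)$, which is the desired conclusion.

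The one identification that must be made explicit — and which is the only point where I would slow down — is that the structure map $\D_0\e$ does land in the object set of $\D^{k+1}(*)$. This follows from the fact, recorded in the preliminary discussion of the monad $\D$, that the objects of $\D\C$ are given by $\D_0\C_0$; applied to $\C=\D^k(*)$ this reads $\bigl(\D^{k+1}(*)\bigr)_0=\D_0\bigl(\D^k(*)\bigr)_0$, so that the codomain of $\D_0\e$ is exactly the object set of $\D^{k+1}(*)$, as required. There is no substantive obstacle here: all of the genuine content, including the verification of the action, associativity, unit, and structure-map compatibility axioms, is already packaged inside Theorem~\ref{Dpresheaf}, so the corollary is obtained purely by setting $\C=\D^k(*)$ and reading off the result.
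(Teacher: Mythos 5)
Your proposal is correct and is exactly the paper's own argument: the paper proves Theorem~\ref{Dpresheaf} and then states that Corollary~\ref{D_0presheaf} ``now follows immediately,'' i.e., by the specialization $\C=\D^k(*)$ together with the identification $\D\bigl(\D^k(*)\bigr)=\D^{k+1}(*)$ that you spell out. Your extra care about the codomain of $\D_0\e$ being the object set $\bigl(\D^{k+1}(*)\bigr)_0=\D_0\bigl(\D^k(*)\bigr)_0$ is a harmless (and correct) elaboration of the same point.
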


\section{The Definition of $\D$-multicategories}

We are now in a position to define generalized multicategories associated 
to a $\Sigma$-free operad $\DD$, which we call $\D$-multicategories after
the associated monad $\D$.  
To define a $\D$-multicategory $M$, 
we require the following data:
\begin{enumerate}
\item
A set $M_0$ of objects.
\item
A set $M_1$ of morphisms, which must come equipped with a specified presheaf
structure over $\D(*)$.
\item
A target map $T:M_1\to M_0$, which must be a presheaf map over the 
terminal functor $\e:\D(*)\to*$.
\item
A source map $S:M_1\to\D_0M_0$, which must be a map of presheaves over $\D(*)$.  
The presheaf structure on $\D_0M_0$ is given by Corollary \ref{D_0presheaf}.
\item
A unit map $I:M_0\to M_1$ which must be a presheaf map over the monadic unit map
$*\to\D(*)$.
\item
A composition map, to be described in detail below.
\end{enumerate}

It may be helpful at this point to explain how this works in the case of the categorical
Barratt-Eccles operad, whose algebras are the permutative categories.  Each permutative
category has an underlying symmetric multicategory, which this formalism is designed
to capture.  The component categories of the operad are the categories $E\Sigma_n$, whose
objects are the elements of $\Sigma_n$ and with exactly one morphism for each choice
of source and target.  The category $\D(*)$ then has objects the natural numbers (including 0),
and the only morphisms are elements of automorphism groups, which are 
isomorphic to $\Sigma_n$ for each $n$.  
The presheaf structure on the morphisms of a symmetric multicategory then consists of actions
of the $\Sigma_n$ on the morphisms with sources lists of length $n$, with the action
permuting the order in which the source objects are listed.  Composition involves
concatenation of lists.  The monad $\D_0$ associates to a set the set of lists of elements
of the set; that is, it is the free monoid monad.  It is now clear how $\D_0X$ is a presheaf
over $\D(*)$ in this instance.

The only one of our general data still to be described explicitly is the composition map.  We define
the \emph{composables} $M_2$ as the pullback in the following diagram:
\[
\xymatrix{
M_2\ar[r]^-{T}\ar[d]_-{S}&M_1\ar[d]^-{S}
\\\D_0M_1\ar[r]_-{\D_0T}&\D_0M_0,}
\]
so we can also write $M_2\cong M_1\times_{\D_0M_0}\D_0M_1$, with the maps
$T$ and $S$ in the diagram given by projection to the first and second factors, respectively.
Since Corollary \ref{D_0presheaf} tells us that $\D_0M_1$ is a presheaf over $\D^2(*)$, 
$M_2$ inherits that structure, and $T:M_2\to M_1$,
like $\D_0T:\D_0M_1\to\D_0M_0$,
is a map of presheaves over 
$\D\e:\D^2(*)\to\D(*)$.  The composition map we require as part of the data of the
multicategory $M$ is a map
\[
\gamma:M_2\to M_1
\]
that is a map of presheaves over the monad multiplication $\mu:\D^2(*)\to\D(*)$.  This 
completes the specification of the data for a $\D$-multicategory $M$.

Of course, we also require properties for these data.  For the data other than the 
composition map, these amount to the commutativity of the diagram
\[
\xymatrix{
&M_0\ar[dl]_-{\eta}\ar[d]_-{I}\ar[dr]^-{=}
\\\D_0M_0&M_1\ar[l]^-{S}\ar[r]_-{T}&M_0,}
\]
where $\eta$ is the unit for the monad $\D_0$.

The remaining properties involve the composition map $\gamma$.  First, we require
that it preserve sources and targets, in the sense that the two diagrams
\[
\xymatrix{
M_2\ar[r]^-{T}\ar[d]_-{\gamma}&M_1\ar[d]^-{T}
\\M_1\ar[r]_-{T}&M_0}
\quad\text{and}\quad
\xymatrix{ 
M_2\ar[r]^-{S}\ar[d]_-{\gamma}&\D_0M_1\ar[r]^-{\D_0S}&\D_0^2M_0\ar[d]^-{\mu}
\\M_1\ar[rr]_-{S}&&\D_0M_0}
\]
must commute.  Note that the second diagram captures the idea that
sources of composites in a multicategory are given by concatenation of sources:
this is the role of the monad multiplication map $\mu$ in the diagram.

Next, we require that $\gamma$ be unital, and we 
can write this most easily by using the expression $M_1\times_{\D_0M_0}\D_0M_1$
for $M_2$.  The unit conditions now can be expressed by requiring that
the diagram 
\[
\xymatrix{
M_1
\ar[r]^-{\cong}
\ar[ddrr]_-{=}
&M_0\times_{M_0}M_1
\ar[r]^-{I\times_\eta\eta}
&M_1\times_{\D_0M_0}\D_0M_1
\ar[dd]^-{\gamma}
&M_1\times_{\D_0M_0}\D_0M_0
\ar[l]_-{1\times\D_0I}
&M_1
\ar[l]_-{\cong}
\ar[ddll]^-{=}
\\\relax
\\&&M_1}
\]
commute.

It remains to specify the meaning of associativity for $\gamma$.  To do so, we first introduce
the pullback
\[
\xymatrix{
M_3\ar[r]^-{T}\ar[d]_-{S}&M_2\ar[d]^-{S}
\\\D_0M_2\ar[r]_-{\D_0T}&\D_0M_1,}
\]
where we think of $M_3$ as giving the associativity data for $M$.  It is
a presheaf over $\D^3(*)$ by construction, $S:M_3\to\D_0M_2$ is a map
of presheaves over $\D^3(*)$, and $T:M_3\to M_2$ is a map of
presheaves over the functor $\D^2\e:\D^3(*)\to\D^2(*)$.  We have two
induced composition maps $M_3\to M_2$ which we think of as composing
in either the first two (target) slots, or the last two (source) slots.  For composition
in the target slots, the commutative diagram
\[
\xymatrix{
M_3\ar[r]^-{T}\ar[d]_-{S}&M_2\ar[r]^-{\gamma}\ar[d]^-{S}&M_1\ar[ddd]^-{S}
\\\D_0M_2\ar[r]^-{\D_0T}\ar[d]_-{\D_0S}&\D_0M_1\ar[d]^-{\D_0S}
\\\D_0^2M_1\ar[r]_-{\D_0^2T}\ar[d]_-{\mu}&\D_0^2M_0\ar[dr]^-{\mu}
\\\D_0M_1\ar[rr]_-{\D_0T}&&\D_0M_0}
\]
gives us the induced map $\gamma_T:M_3\to M_2$, which the left vertical maps
in the diagram tell us is a map of presheaves over the functor $\mu:\D^3(*)\to\D^2(*)$. 
Next, the commutative diagram 
\[
\xymatrix{
M_3\ar[r]^-{T}\ar[d]_-{S}&M_2\ar[r]^-{T}\ar[d]^-{S}&M_1\ar[dd]^-{S}
\\\D_0M_2\ar[r]_-{\D_0T}\ar[d]_-{\D_0\gamma}&\D_0M_1\ar[dr]^-{\D_0T}
\\\D_0M_1\ar[rr]_-{\D_0T}&&\D_0M_0}
\]
gives us the induced map $\gamma_S:M_3\to M_2$, in which we compose
in the last two slots;  the left vertical maps tell us that it is a map of presheaves
over the functor $\D\mu:\D^3(*)\to\D^2(*)$.
Our associativity condition is now the requirement that
the diagram
\[
\xymatrix{
M_3\ar[r]^-{\gamma_T}\ar[d]_-{\gamma_S}&M_2\ar[d]^-{\gamma}
\\M_2\ar[r]_-{\gamma}&M_1}
\]
must commute.  This completes the definition of a $\D$-multicategory.

\section{The Underlying $\D$-multicategory of a $\D$-algebra}\label{underlying}

In this section we give the construction of the underlying $\D$-multicategory
of a $\D$-algebra.  We defer the proofs that it actually satisfies the necessary
properties.  

Suppose given a $\D$-algebra $\C$, so $\C$ is a category together with
an action map $\xi:\D\C\to\C$.  Since $\D$ determines (and is determined by)
the category object $\{\D_0,\D_1\}$ in monads on $\Set$, this is equivalent to 
having a $\D_0$-algebra structure $\xi_0:\D_0\C_0\to\C_0$ and a
$\D_1$-algebra structure $\xi_1:\D_1\C_1\to\C_1$ which determine 
a functor.  We use this structure to define an underlying $\D$-multicategory $U\C$.

For the objects $(U\C)_0$, we just use the objects $\C_0$ of $\C$.  For the
morphisms, we exploit the fact that we have the action map $\xi_0:\D_0\C_0\to\C_0$
to define $(U\C)_1$ by means of the following pullback square,
which also defines a comparison map $\kappa_1:(U\C)_1\to\C_1$:
\[
\xymatrix{
(U\C)_1\ar[r]^-{\kappa_1}\ar[d]_-{S}&\C_1\ar[d]^-{S}
\\\D_0\C_0\ar[r]_-{\xi_0}&\C_0.}
\]
For the target map $T:(U\C)_1\to(U\C)_0=\C_0$, we compose $\kappa_1$
with the target map $T:\C_1\to\C_0$ in $\C$.  In the case of permutative categories,
this definition simply says that a morphism in the underlying multicategory 
consists of a morphism in the category together with a specified decomposition
of the source as a direct sum.

We must provide a presheaf structure on $(U\C)_1$ over $\D(*)$,
and it is not merely the pullback of the presheaf structure on $\D_0M_0$,
since that would not preserve the comparison map $\kappa_1$.
We exploit
the following basic connection between presheaves and target covers,
whose proof appears in Section \ref{presheafproofs}.

\begin{theorem}\label{equivalentpresheaves}
Let $F:\C\to\C'$ be a target cover.  Then a presheaf structure on $X$ over $\C$
with structure map $\e:X\to\C_0$ is equivalent to a presheaf structure on $X$
over $\C'$ with structure map $\e':X\to\C'_0$ together with an explicit factorization
$\e'=F_0\circ\e$.
\end{theorem}

The presheaf structure on $U\C_1$ can now be described using the characterization
of presheaves from Theorem \ref{equivalentpresheaves}.  
We start with the discrete category $\C_0^\delta$ generated by the set $\C_0$,
and observe that the terminal map $\e:\C_0^\delta\to*$ is a cover.  Consequently,
by Theorem \ref{preservescovers}, so is the functor $\D\e:\D(\C_0^\delta)\to\D(*)$.
In particular, a presheaf
structure on $U\C_1$ over $\D(*)$ is equivalent to a presheaf structure over $\D(\C_0^\delta)$,
which we describe as follows.  
Note that the objects of $\D(\C_0^\delta)$ are $\D_0\C_0$ and the
morphisms are $\D_1\C_0$.
Since $U\C_1:=\C_1\times_{\C_0}\D_0\C_0$, we
have
\[
U\C_1\times_{\D_0\C_0}\D_1\C_0\cong\C_1\times_{\C_0}\D_1\C_0.
\]
In the following composite defining the presheaf action map, we exploit the
fact that the monad action is a functor; in particular, the diagram
\[
\xymatrix{
\D_1\C_1\ar[r]^-{S_\D S}\ar[d]_-{\xi_1}
&\D_0\C_0\ar[d]^-{\xi_0}
\\\C_1\ar[r]_-{S}&\C_0}
\]
induces  a map
\[
\xymatrix@C+10pt{
\D_1\C_1\ar[r]^-{(\xi_1,S_D S)}&\C_1\times_{\C_0}\D_0\C_0}
\]
that preserves the structure map of target type to $\C_0$.  We now express
the presheaf action map on $U\C_1$ as the composite
\begin{gather*}
\xymatrix{
U\C_1\times_{\D_0\C_0}\D_1\C_0\cong\C_1\times_{C_0}\D_1\C_0
\ar[r]^-{1\times\D_1I}
&\C_1\times_{\C_0}\D_1\C_1}
\\\xymatrix{
\ar[rr]^-{1\times(\xi_1,S_\D S)}
&&\C_1\times_{\C_0}\C_1\times_{\C_0}\D_0\C_0
\ar[r]^-{\gamma_\C\times1}
&\C_1\times_{\C_0}\D_0\C_0\cong U\C_1.}
\end{gather*}

For the identity map $(U\C)_0=\C_0\to(U\C)_1$, we note that the diagram
\[
\xymatrix{
\C_0\ar[r]^-{I_\C}\ar[d]_-{\eta}&\C_1\ar[d]^-{S}
\\\D_0\C_0\ar[r]_-{\xi_0}&\C_0}
\]
commutes, since both composites coincide with the identity on $\C_0$.  We get
an induced map $I_{U\C}:\C_0\to\C_1\times_{\C_0}\D_0\C_0\cong(U\C)_1$.

Before constructing the composition map $\gamma_{U\C}$, we pause to introduce
the comparison maps $\kappa_n:(U\C)_n\to\C_n$; we will need 
the case $n=2$ for the definition of $\gamma_{U\C}$ and the case $n=3$
for the verification of associativity.  Note first that since
$\{\D_0,\D_1\}$ is a category object in monads on $\Set$, we have a composite identity
natural transformation $I^n_\D:\D_0\to\D_n$ for each $n$.  
We assume for induction that the diagrams
\[
\xymatrix{
U\C_n\ar[r]^-{T}\ar[d]_-{\kappa_n}
&U\C_{n-1}\ar[d]^-{\kappa_{n-1}}
\\\C_n\ar[r]_-{T}&\C_{n-1}}
\quad\text{and}\quad
\xymatrix{
U\C_n\ar[rrr]^-{\kappa_n}\ar[d]_-{S}
&&&\C_n\ar[d]^-{S}
\\\D_0U\C_{n-1}\ar[r]_-{\D_0\kappa_{n-1}}
&\D_0\C_{n-1}\ar[r]_-{I_\D^{n-1}}
&\D_{n-1}\C_{n-1}\ar[r]_-{\xi_{n-1}}&\C_{n-1}}
\]
both commute; this is true for $n=1$ with the convention that
$\kappa_0=\id_{\C_0}$.
Now assuming the
previous $\kappa_j$ have been defined for $j\le n$, and supposing $n\ge1$, we
define $\kappa_{n+1}$ by means of the following commutative diagram, which
induces a map to the pullback $\C_{n+1}=\C_{n}\times_{\C_{n-1}}\C_{n}$:
\[
\xymatrix{
U\C_{n+1}\ar[r]^-{T}\ar[d]_-{S}
&U\C_{n}\ar[r]^-{\kappa_{n}}\ar[d]^-{S}
&\C_{n}\ar[dddd]^-{S}
\\\D_0U\C_{n}\ar[r]^-{\D_0T}\ar[d]_-{\D_0\kappa_{n}}
&\D_0U\C_{n-1}\ar[d]^-{\D_0\kappa_{n-1}}
\\\D_0\C_{n}\ar[r]^-{\D_0T}\ar[d]_-{I_\D^{n}}
&\D_0\C_{n-1}\ar[d]^-{I_\D^{n-1}}
\\\D_{n}\C_{n}\ar[r]_-{T^2}\ar[d]_-{\xi_{n}}
&\D_{n-1}\C_{n-1}\ar[dr]^-{\xi_{n-1}}
\\\C_{n}\ar[rr]_-{T}&&\C_{n-1}.}
\]
The right part of the diagram commutes by induction, and the left stack from top to bottom
as follows: the top square defines $U\C_{n+1}$, the next one
combines an inductive hypothesis
with the fact that $\D_0$ is Cartesian, the one below
it follows from targets of identities being the object, and the bottom because 
$\xi:\D\C\to\C$ is a functor.  It is straightforward to see that the inductive hypotheses are
preserved.

Now to define the composition map $\gamma_{U\C}:(U\C)_2\to(U\C)_1$, 
we use the following commutative diagram, which defines a map from $(U\C)_2$
to $\C_1\times_{\C_0}\D_0\C_0=(U\C)_1$: 
\[
\xymatrix{
(U\C)_2\ar[rrr]^-{\kappa_2}\ar[d]_-{S}&&&\C_2\ar[r]^-{\gamma_\C}\ar[d]^-{S}&\C_1\ar[ddd]^-{S}
\\\D_0(U\C)_1\ar[r]^-{\D_0\kappa_1}\ar[d]_-{\D_0S}&\D_0\C_1\ar[r]^-{I_\D}\ar[d]^-{\D_0S}
&\D_1\C_1\ar[r]^-{\xi_1}\ar[d]^-{S^2}&\C_1\ar[ddr]^-{S}
\\\D_0^2\C_0\ar[r]_-{\D_0\xi_0}\ar[d]_-{\mu}&\D_0\C_0\ar[r]_-{=}&\D_0\C_0\ar[drr]^-{\xi_0}
\\\D_0\C_0\ar[rrrr]_-{\xi_0}&&&&\C_0.}
\]
This completes the specification of the data for the underlying $\D$-multicategory $U\C$.

\section{The provisional left adjoint construction}

In this section we introduce the construction of a $\D$-algebra $\Lhat M$ associated to 
a given $\D$-multicategory $M$ that is almost, but not quite, a left adjoint to the
forgetful functor.  The construction is functorial, and there are unit and counit
maps satisfying the triangle identities.  The only flaw is that the unit map fails to 
preserve the presheaf structure.  Our actual left adjoint, $LM$, will be constructed
as a quotient of $\Lhat M$, and will inherit all the left adjoint properties as well 
as satisfying the requirement that the unit actually be a map of $\D$-multicategories.  

The idea of $\Lhat M$ in the case of ordinary (symmetric) multicategories and
their associated free permutative categories is that the objects of $LM$ (and $\Lhat M$)
in this case are given by the free monoid on the objects of $M$:
\[
\Lhat M_0=LM_0:=\coprod_{n\ge0}M_0^n=\D_0M_0.
\]
We adopt the principle that for any $\D$ that we are considering, we also want 
$LM_0:=\D_0M_0$.  The problem is specifying the morphisms.  In the case of 
classical symmetric multicategories, a morphism from one list $\br{a_1,\dots,a_m}$ to
another $\br{b_1,\dots,b_n}$ is specified by a function 
$f:\{1,\dots,m\}\to\{1,\dots,n\}$ together with an $n$-tuple of morphisms $\br{\phi_i}_{i=1}^n$
of $M$, where 
\[
\phi_i:\br{a_j}_{f(j)=i}\to b_i.
\]
The problem is that the entries in the list $\br{a_j}_{f(j)=i}$ are given in 
natural  number order, and there is no way to capture this in our formalism.  
Another way to look at the problem is that we can certainly capture the idea of
a list of morphisms as an element of $\D_0M_1$, but we also need to permute
the elements of the source in a compatible way; what the function $f$ does is permute in such a way
that the inputs to the individual morphisms in the list have their order preserved.
Instead, we will allow the elements of the source to be permuted arbitrarily
as part of the data for a morphism, which means that each morphism in the actual left adjoint will
be represented in many different ways.  We will then identify the ones that do represent
the same morphism by means of a coequalizer construction.  The construction of $\Lhat M$,
however, simply allows the entries to be permuted arbitrarily without identifying morphisms, and this is the essential
difference between $\Lhat M$ and $LM$.  We turn now to the actual construction of
$\Lhat M$.

We have already specified the objects $\Lhat M_0$ as $\D_0M_0$.  The morphisms
are given as the pullback in the following diagram:
\[
\xymatrix{
\Lhat M_1\ar[r]^-{\That}\ar[dd]_-{\Shat}&\D_0M_1\ar[d]^-{S}
\\&\D_0^2M_0\ar[d]^-{\mu}
\\\D_1M_0\ar[r]_-{T_\D}&\D_0M_0,}
\]
so we can write
\[
\Lhat M_1\cong\D_0M_1\times_{\D_0M_0}\D_1M_0.
\]
The intuition here is that the elements of $\D_0M_1$ correspond to lists of morphisms,
and the elements of $\D_1M_0$ attach permutations to them in a compatible way.  This
may be clearer if we realize that we have a pullback diagram
\[
\xymatrix{\D_1M_0\ar[r]^-{T_\D}\ar[d]_-{\D_1\e}&\D_0M_0\ar[d]^-{\D_0\e}
\\\D_1(*)\ar[r]_-{T_\D}&\D_0(*)}
\]
that can be pasted onto the bottom of the given diagram, giving a pure 
permutation in the lower left corner. (The diagram
is a pullback because we can think of $\e$ as giving a cover functor from
$M_0^\delta$ to $*$, and then using the fact that $\D$ preserves covers.)  
However, the actual diagram given is
much more useful for specifying the source of a morphism.  In particular,
we may extend the diagram as follows,
\[
\xymatrix{
\Lhat M_1\ar[r]^-{\That}\ar[dd]_-{\Shat}&\D_0M_1\ar[d]^-{S}\ar[r]^-{\D_0T}&\D_0M_0
\\&\D_0^2M_0\ar[d]^-{\mu}
\\\D_1M_0\ar[r]_-{T_\D}\ar[d]_-{S_\D}&\D_0M_0
\\\D_0M_0,}
\]
in which the top horizontal composite defines the target map for $\Lhat M$
and the left vertical composite defines the source map.

We specify an identity map $I:\D_0M_0\to\Lhat M_1$ by means of the pullback
construction of $\Lhat M_1$: examining the diagram
\[
\xymatrix{\D_0M_0\ar[r]^-{\D_0I}\ar[dd]_-{I_\D}\ar[dr]_-{\D_0\eta}
&\D_0M_1\ar[d]^-{\D_0S}
\\&\D_0^2M_0\ar[d]^-{\mu}
\\\D_1M_0\ar[r]_-{T_\D}&\D_0M_0,}
\]
we see that both composites coincide with the identity on $\D_0M_0$, 
so the diagram commutes and we get a map to $\Lhat M_1$.  

To construct the composition, we start again with
\[
\Lhat M_1\cong\D_0M_1\times_{\D_0M_0}\D_1M_0,
\]
from the pullback diagram defining $\Lhat M_1$.  We then have 
\[
\Lhat M_2:=\Lhat M_1\times_{\D_0M_0}\Lhat M_1
\cong\D_0M_1\times_{\D_0M_0}\D_1M_0\times_{\D_0M_0}\D_0M_1\times_{\D_0M_0}\D_1M_0.
\]
The first step in the composition is the identification of the two terms
in the middle as $\D_1M_1$; this is a consequence of the pullback diagram
\[
\xymatrix{
\D_1M_1\ar[r]^-{\D_1T}\ar[d]_-{S_\D}&\D_1M_0\ar[d]^-{S_\D}
\\\D_0M_1\ar[r]_-{\D_0T}&\D_0M_0.}
\]
This diagram is a pullback because we can think of $T:M_1\to M_0$
as a functor $\M_1^\delta\to M_0^\delta$, which is a cover since the 
categories are discrete, and then use the fact that $\D$ preserves covers. 
We get an isomorphism 
\[
\xymatrix@C+10pt{
\D_1M_0\times_{\D_0M_0}\D_0M_1
&\D_1M_1\ar[l]_-{(\D_1T,S_\D)}^-{\cong}}
\]
that preserves the structure maps to $\D_0M_0$ of both source and target type.
Next, we exploit the commutative diagram
\[
\xymatrix{
\D_1M_1\ar[r]^-{T_\D}\ar[d]_-{\D_1S}&\D_0M_1\ar[dd]^-{\D_0S}
\\\D_1\D_0M_0\ar[d]_-{\D_1I_\D}\ar[dr]^-{T_\D}
\\\D_1^2M_0\ar[r]_-{T_\D^2}\ar[d]_-{\mu}&\D_0^2M_0\ar[d]^-{\mu}
\\\D_1M_0\ar[r]_-{T_\D}&\D_0M_0.}
\]
We abbreviate the left vertical composite as $\theta$, and we obtain
a map $(T_\D,\theta):\D_1M_1\to\D_0M_1\times_{\D_0M_0}\D_1M_0$.
This map also preserves the structure maps of both source and target type; 
the preservation of target type structure maps is a consequence of 
the commutative diagram
\[
\xymatrix{
\D_1M_1\ar[r]^-{T_\D}\ar[d]_-{\D_1T}&\D_0M_1\ar[d]^-{\D_0T}
\\\D_1M_0\ar[r]_-{T_\D}&\D_0M_0,}
\]
and preservation of source type structure maps follows from the 
commutative diagram
\[
\xymatrix{
\D_1M_1\ar[r]^-{S_\D}\ar[d]_-{\D_1S}&\D_0M_1\ar[dd]^-{\D_0S}
\\\D_1\D_0M_0\ar[d]_-{\D_1I_\D}\ar[dr]^-{S_\D}
\\\D_1^2M_0\ar[r]_-{S_\D^2}\ar[d]_-{\mu}&\D_0^2M_0\ar[d]^-{\mu}
\\\D_1M_0\ar[r]_-{S_\D}&\D_0M_0.}
\]
THe net result of these two steps is to produce what we think of as
an interchange map $\chi:\D_1M_0\times_{\D_0M_0}\D_0M_1
\to\D_0M_1\times_{\D_0M_0}\D_1M_0$ defined by the zig-zag
involving an isomorphism as follows:
\[
\xymatrix@C+10pt{
\D_1M_0\times_{\D_0M_0}\D_0M_1
&\D_1M_1\ar[l]_-{(\D_1T,S_\D)}^-{\cong}\ar[r]^-{(T_\D,\theta)}
&\D_0M_1\times_{\D_0M_0}\D_1M_0.}
\]

We will also need the fact that both squares in the diagram
\[
\xymatrix{
\D_0M_2\ar[r]^-{\D_0T}\ar[d]_-{\D_0S}&\D_0M_1\ar[d]^-{\D_0S}
\\\D_0^2M_1\ar[r]^-{\D_0^2T}\ar[d]_-{\mu}&\D_0^2M_0\ar[d]^-{\mu}
\\\D_0M_1\ar[r]_-{\D_0T}&\D_0M_0}
\]
are pullbacks, since $\D_0$ is Cartesian, so 
\[
D_0M_1\times_{\D_0M_0}\D_0M_1\cong\D_0M_2.
\]
We also need the fact that the diagram
\[
\xymatrix{
\D_2M_0\ar[r]^-{T_\D}\ar[d]_-{S_\D}&\D_1M_0\ar[d]^-{S_\D}
\\\D_1M_0\ar[r]_-{T_\D}&\D_0M_0}
\]
is a pullback, since it just expresses the composables in $\D(M_0^\delta)$,
so we have
\[
\D_1M_0\times_{\D_0M_0}\D_1M_0\cong\D_2M_0.
\]
We can now express the composition in $\Lhat M$ as the composite map
\begin{gather*}
\Lhat M_2
\cong\D_0M_1\times_{\D_0M_0}\D_1M_0\times_{\D_0M_0}\D_0M_1\times_{\D_0M_0}\D_1M_0
\\
\xymatrix{\relax
&&\D_0M_1\times_{\D_0M_0}\D_1M_1\times_{\D_0M_0}\D_1M_0\ar[ll]_-{1\times(\D_1T,S_\D)\times1}^-{\cong}}
\\
\xymatrix{
\ar[rr]^-{1\times(T_\D,\theta)\times1}&&\D_0M_1\times_{\D_0M_0}\D_0M_1\times_{\D_0M_0}\D_1M_0\times_{\D_0M_0}\D_1M_0}
\\
\xymatrix{
\cong\D_0M_2\times_{\D_0M_0}\D_2M_0
\ar[rr]^-{\D_0\gamma_M\times\gamma_\D}&&\D_0M_1\times_{\D_0M_0}\D_1M_0\cong\Lhat M_1.}
\end{gather*}
Note that the middle two arrows can be also written as $1\times\chi\times1$.
It is not supposed to be obvious that this captures the idea of composition
in the intuitive description of $\Lhat M$ given above; however, it is the most
convenient description for verification of its formal properties.

We now claim the following as the major result about $\Lhat M$:

\begin{theorem}
The category $\Lhat M$ supports the structure of a $\D$-algebra.  There is a natural
map of $\D$-algebras $\e:\Lhat U\C\to\C$ for any $\D$-algebra $\C$, and there is
a map $\eta: M\to U\Lhat M$ for any $\D$-multicategory $M$ that satisfies 
all the properties of a map of $\D$-multicategories except preservation of 
the $\D(*)$-presheaf structure on morphisms.  Both adjunction triangles commute.
\end{theorem}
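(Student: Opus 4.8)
The plan is to break the $\D$-algebra structure on $\Lhat M$ into its two constituent pieces using the category object $\{\D_0,\D_1\}$ of Lemma \ref{category object}: first verify that the source, target, identity, and composition maps already specified make $\Lhat M$ a category, and then produce an action functor $\xi:\D\Lhat M\to\Lhat M$ and check the monad algebra axioms. For the category axioms, the unit laws follow from the two triangle-shaped diagrams defining $I:\D_0M_0\to\Lhat M_1$, while associativity of $\gamma_{\Lhat M}$ requires expanding the triple-composable object as an iterated fiber product and pushing the interchange map $\chi$ past the two composition maps $\gamma_M$ and $\gamma_\D$. Throughout I would use that each $\D_n$ is Cartesian (Theorem \ref{cartesianmonad}) and that $\D$ preserves covers (Theorem \ref{preservescovers}) to commute the monads past the various pullbacks, in particular to keep rewriting the middle factors as $\D_1M_1$, $\D_0M_2$, or $\D_2M_0$ as needed. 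The action $\xi$ is the evident ``flattening'': on objects it is the monad multiplication $\mu:\D_0^2M_0\to\D_0M_0$, exhibiting $\Lhat M_0$ as a free $\D_0$-algebra, and on morphisms it is assembled from the multiplications of the free factors $\D_0M_1$ and $\D_1M_0$, reconciled by $\chi$; functoriality and the algebra axioms then reduce to the monad unit and associativity laws together with the same Cartesian and cover bookkeeping.

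Next I would construct the counit and provisional unit explicitly, degree by degree. For $\e:\Lhat U\C\to\C$ the object map is $\xi_0:\D_0\C_0\to\C_0$, and the morphism map is obtained from the comparison map $\kappa_1:(U\C)_1\to\C_1$ together with $\xi_1$, using the pullback descriptions of $(U\C)_1$ and $(\Lhat U\C)_1$. That $\e$ is a map of $\D$-algebras, i.e.\ commutes with the actions, then follows directly from the $\D$-algebra axioms for $\C$ (for instance $\xi_0\circ\mu=\xi_0\circ\D_0\xi_0$ and the corresponding degree-one identity). For $\eta:M\to U\Lhat M$ the object map is the monad unit $\eta:M_0\to\D_0M_0$, and the morphism map sends $M_1$ into $(U\Lhat M)_1\cong\Lhat M_1\times_{\D_0M_0}\D_0^2M_0$ via the units $\eta$ and $I_\D$ picking out the length-one list and the identity permutation. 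I would then check one by one that $\eta$ preserves objects, target, source, identity, and composition; each is a short chase against the defining diagrams, the source condition being where the multiplication $\mu$ does its bookkeeping. The one condition I would \emph{not} be able to verify is preservation of the $\D(*)$-presheaf action: the unit selects the unpermuted representative, which is not stable under the nontrivial permutations encoded by the morphisms of $\D(*)$, so $\eta$ and the presheaf action maps fail to commute. This is exactly the single defect advertised in the introduction, and I would exhibit it rather than attempt to prove it.

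Finally, the two triangle identities are verified by tracing objects and morphisms separately. Both collapse to the monad unit laws $\mu\circ\D_0\eta=\id$ and $\mu\circ\eta=\id$ at the object level, and to the analogous identities for the degree-one data at the morphism level, so that $\e_{\Lhat M}\circ\Lhat\eta=\id_{\Lhat M}$ and $U\e_\C\circ\eta_{U\C}=\id_{U\C}$; naturality of $\e$ and $\eta$ is similarly formal. I expect the main obstacle to be the associativity of $\gamma_{\Lhat M}$ in the first step: because $\chi$ is defined only as a zig-zag through the isomorphism $(\D_1T,S_\D)$, every manipulation must be shown to respect \emph{both} the source-type and target-type structure maps to $\D_0M_0$, and it is precisely this double compatibility, established in the setup via cover-preservation, that must be threaded through the large associativity diagram without ever breaking the fiber products. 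Isolating and confirming the unique failure of $\eta$ is conceptually delicate but, once the presheaf action on $(U\Lhat M)_1$ is written out, computationally short by comparison.
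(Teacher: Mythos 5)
Your overall architecture matches the paper's: first the category structure on $\Lhat M$, then the $\D$-algebra structure via the category object $\{\D_0,\D_1\}$, a unit built from $\eta$ and $I_\D$ (singleton list plus identity permutation), the presheaf failure correctly isolated as the unique defect, and the triangles checked degreewise. However, there is a genuine gap in your counit. You propose to build $\e_1:(\Lhat U\C)_1\to\C_1$ ``from $\kappa_1$ together with $\xi_1$,'' but those two ingredients alone cannot produce a well-defined functor. A morphism of $\Lhat U\C$ has two components: a list in $\D_0U\C_1$ and a permutation-type element of $\D_1\C_0$. Collapsing the list (via $\D_0\kappa_1$, then $I_\D$, then $\xi_1$) yields a morphism of $\C$ whose source is $\xi_0$ applied to the \emph{target} $T_\D$ of the $\D_1\C_0$-component; but the source of $\e_1$ of that morphism must be $\xi_0$ applied to its \emph{source} $S_\D$. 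These differ exactly by the permutation datum. The paper's counit therefore also realizes the $\D_1\C_0$-component as an actual morphism of $\C$ --- apply $\D_1I_\C$ and then $\xi_1$ --- and composes the two resulting morphisms with $\gamma_\C$. Omitting $\D_1I_\C$ and the final $\gamma_\C$, your map does not even have the correct source, so the construction fails before any axioms can be checked. This conversion of formal permutation data into honest morphisms of $\C$ (possible precisely because $\C$ is a $\D$-algebra) is the conceptual heart of the counit, and without it neither the algebra-map property nor the triangle identities can be addressed.

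Two smaller inaccuracies: the $\D_1$-action on $\Lhat M_1=\D_0M_1\times_{\D_0M_0}\D_1M_0$ is not ``reconciled by $\chi$'' --- the interchange $\chi$ has the wrong type for this --- but by the simplicial structure map $T_\D:\D_1\D_0\to\D_0^2$ together with $\mu$ on each factor (in the paper, $p_1\circ\xi_1=\mu\circ T_\D\circ\D_1p_1$ and $p_2\circ\xi_1=\mu\circ\D_1p_2$); $\chi$ enters only in the composition $\gamma_{\Lhat M}$. And while your claim that the algebra-map property of $\e$ ``follows directly from the $\D$-algebra axioms'' and that the triangles ``collapse to the monad unit laws'' cites the right ingredients, in the paper these are among the longest verifications (the composition-preservation diagram for $\e$ occupies three pages of pasted diagrams, and the second triangle requires a careful analysis of which factors carry identity elements); this is an underestimate of effort rather than a wrong idea, but it conceals where the real work lies.
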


We defer the proof, as with those of all previous claims. 

\section{The actual left adjoint}
The actual left adjoint, which we denote by $LM$, is a quotient of $\Lhat M$.
In particular, we define $LM_0:=\Lhat M_0=\D_0M_0$, and we define
$LM_1$ by means of a coequalizer diagram with target $\Lhat M_1$, displayed
schematically as follows:
\[
\D_0M_1\times_{\D_0^2M_0}\D_0\D_1M_0\times_{\D_0M_0}\D_1M_0
\rightrightarrows\D_0M_1\times_{\D_0M_0}\D_1M_0,
\]
where the target expresses $\Lhat M_1$.  We then define $LM_1$ as the
coequalizer of the diagram.  The intuition 
in the case of classical symmetric multicategories
is that the middle term 
in the source of the two arrows
displays tuples
of permutations that can be attached either to the tuples of morphisms in 
the front, or the total permutation in the back, and the result should be the
same either way they are attached.  The two arrows record these two ways
of attaching the lists of permutations.

To describe the first arrow, in which we attach the list of permutations to the
list of morphisms, we 
give a
presheaf structure on $M_1$ over $\D(M_0^\delta)$ equivalent
to the one assumed over $\D(*)$ by
exploiting Theorem \ref{equivalentpresheaves} and the fact
that $S:M_1\to\D_0M_0$ is a map of presheaves over $\D(*)$.  Since $S$ is
a map of presheaves, we have a commutative triangle
\[
\xymatrix{
M_1\ar[rr]^-{S}\ar[dr]_-{\e}&&\D_0M_0\ar[dl]^-{\D_0\e}
\\&\D_0(*).}
\]
But $\D_0\e:\D_0M_0\to\D_0(*)$ is the map on objects of the target cover
$\D\e:\D(M_0^\delta)\to\D(*)$, so Theorem \ref{equivalentpresheaves} tells us
that the presheaf structure on $M_1$ over $\D(*)$ is equivalent to one over
$\D(M_0^\delta)$ with structure map $S$.  In particular, we get a presheaf
action map
\[
\xymatrix{
M_1\times_{\D_0M_0}\D_1M_0\ar[r]^-{\psi}&M_1.}
\]
This gives us a composite
\[
\xymatrix{
\D_0M_1\times_{\D_0^2M_0}\D_0\D_1M_0
\cong\D_0(M_1\times_{\D_0M_0}\D_1M_0)
\ar[r]^-{\D_0\psi}&\D_0M_1}
\]
that induces the first map to be coequalized.  

The second arrow is easier to describe: it is 
induced by
the composite
\begin{gather*}
\xymatrix{
\D_0\D_1M_0\times_{\D_0M_0}\D_1M_0\ar[r]^-{I_\D\times1}
&\D_1^2M_0\times_{\D_0M_0}\D_1M_0}
\\\xymatrix{
\relax\ar[r]^-{\mu\times1}
&\D_1M_0\times_{\D_0M_0}\D_1M_0\ar[r]^-{\gamma_\D}
&\D_1M_0.}
\end{gather*}

We now define $LM_1$ to be the coequalizer of the two arrows
described, and our main theorem is as follows.

\begin{theorem}\label{maintheorem}
The coequalizer $LM_1$ gives the morphisms for a $\D$-multicategory
$LM$, and the construction $L$ is left adjoint to the underlying
$\D$-algebra construction $U$.
\end{theorem}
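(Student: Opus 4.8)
The plan is to obtain every piece of the structure on $LM$ by pushing the corresponding structure on $\Lhat M$ through the quotient $q:\Lhat M_1\to LM_1$, and then to read off the adjunction from the provisional adjunction of the previous section together with the universal property of the coequalizer. Since $\Lhat M$ is already a $\D$-algebra (so $U\Lhat M$ is already a $\D$-multicategory with all axioms verified), the entire task is to check that the defining relation of $LM_1$ is respected by each structure map, so that the map descends, and that the axioms survive the descent. The structural fact that makes this uniform is that the coequalizing pair is \emph{reflexive}: inserting the tuple of identity permutations into the middle $\D_0\D_1 M_0$ factor gives a common section, because acting by identities via $\psi$ is the identity, and multiplying an identity into the total permutation via $\gamma_\D$ is also the identity. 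As each $\D_n$ is a coproduct of $\Sigma_n$-coinvariants of finite powers, it preserves reflexive coequalizers, so $\D_0 q$ and $\D_1 q$ are again coequalizers and the pullbacks forming $\D_0 LM_1$, $LM_2$, and $\D_1 LM_1$ are the expected quotients.

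First I would descend source, target, and unit. The two parallel maps into $\Lhat M_1\cong\D_0 M_1\times_{\D_0 M_0}\D_1 M_0$ are arranged so that their composites with $\Shat$ and with $\That$ agree: reattaching a tuple of permutations changes neither the underlying list of objects recorded by $\D_0 T$ nor the total source permutation computed after $\mu$ and $\gamma_\D$. This yields induced maps $S,T:LM_1\to\D_0 M_0$, and since the unit $I$ lands in the identity-permutation locus fixed by the relation, an induced unit $I:\D_0 M_0\to LM_1$.

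Next I would descend the presheaf structure, the $\D$-action, and the composition. For the presheaf structure I would use Theorem \ref{equivalentpresheaves} to work over $\D(M_0^\delta)$, where the action on $\Lhat M_1$ is simply $\gamma_\D$ on the permutation factor; the defining relation is exactly the assertion that this action agrees with first acting on the morphism factor via $\psi$, so the two induce a single well-defined action on $LM_1$. For the $\D$-action $\xi_1$ and for composition, reflexivity reduces matters to checking, at the level of $\Lhat M_1$ and $\Lhat M_2$, that each map carries the defining relation into the defining relation. The composition check is where I expect the main obstacle to lie: one must verify that permuting inside a slot before composing equals permuting the composite afterward, in both the target slot and the permuted source slots, and this is precisely where the interchange map $\chi$ and the operad multiplication $\gamma_\D$ interact. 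This is the most intricate diagram chase, and it cannot be quoted from abstract functoriality alone; it must be established from the explicit formulas, using $\Sigma$-freeness through Theorem \ref{preservescovers} to control the relevant pullbacks. Once all structure descends, the unit, source/target, and associativity axioms for $LM$ follow by applying $q$ to the already-verified axioms for $U\Lhat M$, and the same descent equips $LM$ with the $\D$-algebra structure inherited from $\Lhat M$.

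Finally I would establish $L\dashv U$. Composing the provisional unit with $Uq$ gives $\eta_L:M\to ULM$; it satisfies every axiom of a map of $\D$-multicategories by the provisional theorem, and the one axiom that failed for the provisional unit—preservation of the $\D(*)$-presheaf structure—now holds, because that failure is exactly the difference between the two maps that $q$ coequalizes. For each $\D$-algebra $\C$ I would show that restriction along $\eta_L$ is a bijection $\D\text{-alg}(LM,\C)\cong\D\text{-multicat}(M,U\C)$. An algebra map $LM\to\C$ is the same as an algebra map $\Lhat M\to\C$ that coequalizes the two parallel maps; under the provisional adjunction this corresponds to a provisional multicategory map $M\to U\C$ that coequalizes them; and such a map coequalizes the two maps if and only if it preserves the presheaf structure, i.e. if and only if it is an honest map of $\D$-multicategories. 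Thus the provisional bijection restricts to the desired one, naturally in $\C$ and with unit $\eta_L$. The triangle identities descend from the provisional ones via $q$ together with the counit $\e:\Lhat U\C\to\C$, which already respects the relation since $\C$ carries a genuine presheaf structure; here the substance has all been absorbed into the identification of ``coequalizes'' with ``presheaf-preserving,'' leaving only bookkeeping.
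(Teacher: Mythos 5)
Your proposal matches the paper's proof in all essentials: the structure of $LM$ is obtained by descent along the reflexive coequalizer (using that operadic monads preserve reflexive coequalizers), the composition descent is exactly the intricate compatibility of the defining relation with the interchange $\chi$ and $\gamma_\D$ (the paper's liftings $\Gamma_L$, $\Gamma_R$), the counit coequalizes the defining pair because $\C$ carries a genuine presheaf action, and the unit becomes equivariant because its failure of equivariance is precisely the coequalized difference (the paper's map $\etahat$). Your repackaging of the final step as a natural bijection $\D\text{-alg}(LM,\C)\cong\D\text{-multicat}(M,U\C)$ rests on exactly those same two facts, so it is the paper's argument in adjoint-functor clothing rather than a different route.
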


\part{Proofs}
\section{$\Sigma$-free Categorical Operads and Their Associated Monads}
The proofs of our claims start in this section.
We begin by explaining the consequences of the $\Sigma$-free 
assumption on a categorical operad that we will use.  The $\Sigma$-free
hypothesis is always used in the context of the following fundamental proposition,
which is no doubt well-known to the experts.

\begin{proposition}\label{orbitpullbacks}
Let $G$ be a discrete group.  Then passage to orbits from the category of $G$-sets
to the category of sets sends pullbacks of free $G$-sets to pullbacks of sets.
\end{proposition}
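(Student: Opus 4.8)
The plan is to exhibit the canonical comparison map and then show it is a bijection, isolating the single point where the freeness hypothesis is genuinely needed. Fix $G$-sets $X,Y,Z$ with equivariant maps $f\colon X\to Z$ and $g\colon Y\to Z$, and recall that their pullback in the category of $G$-sets is the $G$-invariant subset
\[
P:=X\times_Z Y=\{(x,y)\in X\times Y:\ f(x)=g(y)\}
\]
of $X\times Y$, equipped with the diagonal action. The two equivariant projections $P\to X$ and $P\to Y$ become equal upon composing into $Z$, so applying the orbit functor and invoking the universal property of the pullback of plain sets produces a natural comparison map
\[
\Phi\colon P/G\longrightarrow (X/G)\times_{Z/G}(Y/G).
\]
The entire content of the proposition is that $\Phi$ is a bijection, and I would prove this by checking surjectivity and injectivity separately.

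First I would dispose of surjectivity, which requires no freeness at all. Given a class in the target represented by a pair $(x,y)$ with $f(x)$ and $g(y)$ lying in the same orbit of $Z$, I would pick $h\in G$ with $f(x)=h\cdot g(y)=g(hy)$; then $(x,hy)$ lies in $P$ and maps under $\Phi$ to the chosen class.

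The hard part, and the only place the hypothesis enters, is injectivity. Suppose $(x,y)$ and $(x',y')$ in $P$ have equal image under $\Phi$, so that $x'=ax$ and $y'=by$ for some $a,b\in G$. Using the two pullback relations $f(x)=g(y)$ and $f(x')=g(y')$, I would compute
\[
a\cdot f(x)=f(ax)=f(x')=g(y')=g(by)=b\cdot g(y)=b\cdot f(x),
\]
so that $b^{-1}a$ stabilizes the point $f(x)\in Z$. This is exactly where freeness of the base $Z$ is used: the stabilizer of $f(x)$ is trivial, forcing $a=b$, and then this single group element carries $(x,y)$ to $(x',y')$, so the two represent the same orbit in $P$.

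I would finish with a brief remark on the hypotheses. The argument uses only that the base $Z$ is free; freeness of $X$ and $Y$ is never invoked, and is in any case automatic, since the stabilizer of a point of $X$ must fix its image in the free $G$-set $Z$ and hence is trivial. In particular $P$ is itself free, being a $G$-invariant subset of the free $G$-set $X\times Y$, so the proposition as stated for free $G$-sets follows at once.
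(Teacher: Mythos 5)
Your proof is correct, and its engine is the same as the paper's: surjectivity is handled by translating one coordinate so that the two images in the base agree on the nose (no freeness needed), and injectivity comes down to observing that the two group elements $a,b$ relating corresponding coordinates satisfy $a\cdot f(x)=b\cdot f(x)$, so freeness of the base forces $a=b$. This is precisely the computation in the paper's uniqueness step, where $g_1\cdot kj(a')=g_2\cdot kj(a')$ forces $g_1=g_2$; your version merely packages the verification as bijectivity of the comparison map $P/G\to(X/G)\times_{Z/G}(Y/G)$ rather than as a direct check of the universal property. Two differences are worth recording. First, because you work with the explicit pullback $P=\{(x,y):f(x)=g(y)\}$ carrying the diagonal action, your injectivity step closes immediately once $a=b$, since $(x',y')$ is literally $a\cdot(x,y)$; the paper, working with an abstract pullback square, needs one further appeal to the pullback property to conclude $a=g_1\cdot a'$. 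Second, and more substantively, your accounting of hypotheses is sharper: you isolate that freeness is used only for the base and only in the injectivity step, with freeness of the remaining corners (and of $P$ itself) following automatically. The paper's proof nominally invokes freeness of $B$ and $C$ as well, to obtain uniqueness of the elements $g_1$ and $g_2$, but as your argument makes clear only their existence is ever used. This weakening costs nothing in the paper's applications, where every set in sight is free, but it is the cleaner statement of what the proposition actually requires.
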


\begin{proof}
Suppose given a pullback diagram of free $G$-sets
\[
\xymatrix{
A\ar[r]^-{f}\ar[d]_-{j}&B\ar[d]^-{h}
\\C\ar[r]_-{k}&D.}
\]
We wish to show that the diagram of orbits
\[
\xymatrix{
A/G\ar[r]^-{f/G}\ar[d]_-{j/G}&B/G\ar[d]^-{h/G}
\\C/G\ar[r]_-{k/G}&D/G}
\]
is also a pullback.  So suppose given orbit classes $[c]\in C/G$ and $[b]\in B/G$
such that $[k(c)]=[h(b)]$ in $D/G$;  
we wish to show that there is a unique orbit class $[a]\in A/G$ such that $[f(a)]=[b]$
and $[j(a)]=[c]$.
Since $D$ is a free $G$-set, there is a unique
$g\in G$ such that $k(c)=g\cdot h(b)=h(g\cdot b)$.  Since the first square is a pullback,
there is a unique $a\in A$ such that $f(a)=g\cdot b$ and $j(a)=c$, and consequently
$[f(a)]=[b]$ and $[j(a)]=[c]$.  This establishes existence.  For uniqueness, suppose there
is also $a'\in A$ such that $[f(a')]=[b]$ and $[j(a')]=[c]$.  We need to show that $[a]=[a']$.
Since $[f(a)]=[f(a')]$, there is a unique $g_1\in G$ such that $f(a)=g_1\cdot f(a')
=f(g_1\cdot a')$, and since $[j(a)]=[j(a')]$, there is a unique $g_2\in G$ such that
$j(a)=g_2\cdot j(a')=j(g_2\cdot a')$.  Now we have
\[
g_1\cdot kj(a')=g_1\cdot hf(a') =hf(g_1\cdot a')=hf(a)=kj(a)=kj(g_2\cdot a')=g_2\cdot kj(a').
\]
Since the action of $G$ on $D$ is free, it follows that $g_1=g_2$.  We now have
\[
f(a)=f(g_1\cdot a')\text{ and }j(a)=j(g_2\cdot a')=j(g_1\cdot a'),
\]
so since $a$ and $g_1\cdot a'$ have the same images under both $f$ and $j$, we see that
$a=g_1\cdot a'$.  Therefore $[a]=[a']$, and uniqueness is established.
\end{proof}

This allows us to prove Lemma \ref{category object}, which says the set-monads $\{\D_0,\D_1\}$
give a category object in monads on $\Set$ that generates the remaining monads
$\D_2,\D_3,\dots$.
\begin{proof}
Since $\DD$ is an operad of categories, the operad structure maps commute with
the category structure maps, so we can just as well consider $\DD$ a category
object in operads (in $\Set$.)  Since a category object is defined by diagrams 
and a pullback condition defining the composables, any functor that preserves
pullbacks also preserves category objects.  The passage from the operad $\DD$ to
the monad $\D$ involves products, which preserve pullbacks, and orbits.  However,
we are assuming the groups $\Sigma_n$ act freely on $\DD_n$, and therefore the orbit 
functor is being taken only on free $\Sigma_n$ sets, and orbits of free actions
do preserve pullbacks, from Proposition \ref{orbitpullbacks}.  Consequently
$\{\D_0,\D_1\}$ does give a category object in monads on $\Set$.
Since the higher monads $\D_2,\D_3,\dots$ are also produced by pullbacks,
they are the further components of the nerve of this category object.
\end{proof}

We turn next to showing that all these monads are Cartesian.  Since all the operads
$\DD_n$ are $\Sigma$-free, this follows from Theorem \ref{cartesianmonad}, whose
proof is as follows.

\begin{proof}
Let's call the operad $\JJ$ and its associated monad $\J$; we must first show
that $\J$ preserves pullbacks.  Certainly the assignment 
\[
X\mapsto \JJ_n\times X^n
\]
preserves pullbacks, since it is a left adjoint.  Since $\JJ$ is $\Sigma$-free,
Proposition \ref{orbitpullbacks} now shows that 
\[
X\mapsto\JJ_n\times_{\Sigma_n}X^n
\]
also preserves pullbacks.  Finally, coproducts preserve pullbacks in $\Set$,
so 
\[
X\mapsto\coprod_{n\ge0}\JJ_n\times_{\Sigma_n}X^n
\]
also preserves pullbacks.  But this is the definition of $\J X$, so $\J$
preserves pullbacks.

Next, we observe that the naturality diagrams for the unit are all pullbacks.  For a
given $f:X\to Y$, we get the naturality diagram
\[
\xymatrix{
X\ar[r]^-{f}\ar[d]_-{\eta}&Y\ar[d]^-{\eta}
\\\coprod_{n\ge0}\JJ_n\times_{\Sigma_n}X^n\ar[r]_-{\J f}
&\coprod_{n\ge0}\JJ_n\times_{\Sigma_n}Y^n,}
\]
which restricts on the images of the unit maps to
\[
\xymatrix{
X\ar[r]^-{f}\ar[d]_-{\cong}&Y\ar[d]^-{\cong}
\\X\ar[r]_-{f}&Y,}
\]
which is trivially a pullback.

To see that the naturality square for the product $\mu$ is a pullback, we examine the square
\[
\xymatrix{
\coprod_{k\ge0}\JJ_k\times_{\Sigma_k}\left(\coprod_{n\ge0}\JJ_n\times_{\Sigma_n}X^n\right)^k
\ar[r]^-{\J^2f}\ar[d]_-{\mu}
&\coprod_{k\ge0}\JJ_k\times_{\Sigma_k}\left(\coprod_{n\ge0}\JJ_n\times_{\Sigma_n}Y^n\right)^k
\ar[d]^-{\mu}
\\\coprod_{n\ge0}\JJ_n\times_{\Sigma_n}X^n
\ar[r]_-{\J f}
&\coprod_{n\ge0}\JJ_n\times_{\Sigma_n}Y^n.}
\]
Let's write a typical element in the upper right corner as $[a;b_1,y_1,\dots,b_k,y_k]$,
and a typical element in the lower left $[c,x]$, with the understanding
that $x$ and all the $y$'s are lists of elements.  We need to show that
if these two elements map to the same one in the lower right, then
there is a unique element in the upper left mapping to them.  So we
assume that the images are the same.  But $[a;b_1,y_1,\dots,b_k,y_k]$
maps to $[\gamma(a;b_1,\dots,b_k),y]$, where $y=(y_1,\dots,y_k)$,
while $[c,x]$ maps to $[c,fx]$, with the obvious interpretation of $fx$.
Since the operad is $\Sigma$-free, there is a unique element $\sigma\in\Sigma_n$
such that
\[
c=\gamma(a;b_1,\dots,b_k)\cdot\sigma,
\]
so
\[
[c,x]=[\gamma(a;b_1\dots,b_k),\sigma\cdot x]
\]
and
\[
[c,fx]=[\gamma(a;b_1,\dots,b_k),\sigma\cdot fx]=[\gamma(a;b_1,\dots,b_k),f(\sigma\cdot x)].
\]
Now if we partition the entries in $\sigma\cdot x$ according to the dimensions
of $b_1,\dots,b_k$ into $((\sigma\cdot x)_1,\dots,(\sigma\cdot x)_k)$, we
can construct the element
\[
[a;b_1,(\sigma\cdot x)_1,\dots,b_k,(\sigma\cdot x)_k]\in\J^2X
\]
which maps to each of the elements chosen originally.  This establishes
existence for an element in the upper left corner.

For uniqueness, suppose we have given two elements,
\[
[a;b_1,x_1,\dots,b_k,x_k]\text{ and }[a';b_1',x_1',\dots,b_k',x_k'],
\]
both mapping to the same pair of elements in the upper right and
lower left corners of the diagram.  Let's call the images in the upper right
\[
[a;b_1,y_1,\dots,b_k,y_k]\text{ and }[a';b_1',y_1',\dots,b_k',y_k'].
\]
Now by freeness, there is a unique $\sigma\in\Sigma_k$ such that
$a'=a\cdot\sigma$, from which we get
\[
[a';b_1',y_1',\dots,b_k',y_k']=[a;b_{\sigma^{-1}(1)}',y_{\sigma^{-1}(1)}',\dots,b_{\sigma^{-1}(k)}',y_{\sigma^{-1}(k)}'].
\]
Next, again by freeness, there is a unique $\tau_j\in\Sigma_{n_j}$ for each
$1\le j\le k$ such that $b'_{\sigma^{-1}(i)}=b_i\cdot\tau_i$, which can also 
be written $b'_i=b_{\sigma(i)}\cdot\tau_{\sigma(i)}$.  
Now using May's notation
$\sigma\br{j_1,\dots,j_k}$ from \cite{M} 
(to be precise, May uses $\sigma(j_1,\dots,j_k)$) for the permutation that permutes blocks
of size $j_1,\dots,j_k$ in the same way $\sigma$ permutes letters, we have
\begin{gather*}
\gamma(a';b_1',\dots,b_k')=\gamma(a\cdot\sigma;b_{\sigma(1)}\cdot\tau_{\sigma(1)},\dots,b_{\sigma(k)}\cdot\tau_{\sigma(k)})
\\=\gamma(a;b_1\cdot\tau_1,\dots,b_k\cdot\tau_k)\cdot\sigma\br{j_1,\dots,j_k}
\\=\gamma(a;b_1,\dots,b_k)\cdot(\tau_1\oplus\cdots\oplus\tau_k)\cdot\sigma\br{j_1,\dots,j_k}.
\end{gather*}
Mapping both elements to the lower left corner, we now see that
\begin{gather*}
[\gamma(a;b_1,\dots,b_k),x]=[\gamma(a',b_1',\dots,b_k'),x']
\\=[\gamma(a;b_1,\dots,b_k),(\tau_1\oplus\cdots\oplus\tau_k)\cdot\sigma\br{j_1,\dots,j_k}\cdot x'].
\end{gather*}
By freeness, this means that 
\[
x=(\tau_1\oplus\cdots\oplus\tau_k)\cdot\sigma\br{j_1,\dots,j_k}\cdot x'
=(\tau_1\cdot x'_{\sigma^{-1}(1)},\dots,\tau_k\cdot x'_{\sigma^{-1}(k)})
\]
where we have partitioned $x'$ into blocks of the correct size for $\sigma\br{j_1,\dots,j_k}$ to act.
It follows that for $1\le i\le k$,
\[
x_i=\tau_i\cdot x'_{\sigma^{-1}(i)}.
\]
Now we can compute:
\begin{gather*}
[a';b_1',x_1',\dots,b_k',x_k']=[a\cdot\sigma,b_1',x_1',\dots,b_k',x_k']
\\=[a;b'_{\sigma^{-1}(1)},x'_{\sigma^{-1}(1)},\dots,b'_{\sigma^{-1}(k)},x'_{\sigma^{-1}(k)}]
\\=[a;b_1\cdot\tau_1,x'_{\sigma^{-1}(1)},\dots,b_k\cdot\tau_k,x'_{\sigma^{-1}(k)}]
\\=[a;b_1,\tau_1\cdot x'_{\sigma^{-1}(1)},\dots,b_k,\tau_k\cdot x'_{\sigma^{-1}(k)}]
\\=[a,b_1,x_1,\dots,b_k,x_k].
\end{gather*}
This establishes uniqueness, and therefore $\J$ is a Cartesian monad.
\end{proof}

\section{Presheaves and Cover Functors}\label{presheafproofs}
This section is devoted to the proofs of all the statements we will need concerning
cover functors and their relation to presheaves.  Recall that a \emph{target cover}
is a functor $F:\C\to\C'$ for which the square
\[
\xymatrix{
\C_1\ar[r]^-{T}\ar[d]_-{F_1}&\C_0\ar[d]^-{F_0}
\\\C'_1\ar[r]_-{T}&\C'_0}
\]
is a pullback, and a \emph{source cover} is a functor in which the pullback condition
applies to the analogous square in which the target maps are replaced with source maps.
We have the following consequence for all the squares in the map of nerves.
The analogous statement holds for source covers, with the same proof.

\begin{corollary}\label{coverpullbacks} 
If $F:\C\to\C'$ is a target cover, then all the squares
\[
\xymatrix{
\C_n\ar[r]^-{T}\ar[d]_-{F_n}&C_{n-1}\ar[d]^-{F_{n-1}}
\\\C_n'\ar[r]_-{T}&\C_{n-1}'}
\]
are pullbacks for $n\ge1$.
\end{corollary}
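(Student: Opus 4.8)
The plan is to reduce everything to the standard Segal decomposition of the nerve together with the single pullback square that defines a target cover. For the nerve of any small category one has the well-known isomorphism $\C_n\cong\C_{n-1}\times_{\C_0}\C_1$, and I would first pin down that, under this identification, the face map $T\colon\C_n\to\C_{n-1}$ appearing in the statement is exactly the projection onto the first factor; the other projection $q\colon\C_n\to\C_1$ picks out the source-most morphism, and the fibre product is formed over the overall source map $S\colon\C_{n-1}\to\C_0$ on the left and the target map $T\colon\C_1\to\C_0$ on the right. The identical decomposition holds for $\C'$, and since $F$ is a functor it induces a simplicial map of nerves; in particular $F$ commutes with $T$, $S$, and $q$, so that under these identifications $F_n$ is carried to $(\bar f,f_n)\mapsto(F_{n-1}(\bar f),F_1(f_n))$.

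With this in place I would verify the pullback property of the square directly on elements, which is legitimate in $\Set$ and is in the style of Proposition \ref{orbitpullbacks}. Suppose given $\bar f\in\C_{n-1}$ and $g'\in\C'_n$ with $F_{n-1}(\bar f)=T(g')$; the task is to produce a unique $g\in\C_n$ with $T(g)=\bar f$ and $F_n(g)=g'$. Writing $g'$ as the pair $(T(g'),q(g'))$ under the Segal decomposition for $\C'$, the only piece missing in order to build $g=(\bar f,f_n)$ is a morphism $f_n\in\C_1$ lifting $f_n':=q(g')$ and satisfying the composability condition $T(f_n)=S(\bar f)$.

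This is precisely where the target-cover hypothesis enters. The object $S(\bar f)\in\C_0$ maps under $F_0$ to $T(f_n')$, since $F_0(S(\bar f))=S(F_{n-1}(\bar f))=S(T(g'))=T(q(g'))=T(f_n')$, using functoriality of $F$ for the first equality, the hypothesis $F_{n-1}(\bar f)=T(g')$ for the second, and the composability built into $g'$ for the third. Because $F$ is a target cover there is then a unique $f_n\in\C_1$ with $F_1(f_n)=f_n'$ and $T(f_n)=S(\bar f)$. Setting $g:=(\bar f,f_n)$ gives $T(g)=\bar f$ and $F_n(g)=(F_{n-1}(\bar f),F_1(f_n))=(T(g'),f_n')=g'$, and uniqueness of $g$ follows from uniqueness of $f_n$. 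The case $n=1$ is just the defining square of a target cover, so this single argument handles all $n\ge1$ without a separate induction.

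I expect the only real friction to be the bookkeeping with the source/target conventions: one must confirm that the outer face called $T$ is indeed the projection in $\C_n\cong\C_{n-1}\times_{\C_0}\C_1$, so that the morphism being reconstructed is the source-most one, whose \emph{target} is prescribed, and hence that it is the target-cover (not the source-cover) lifting property that applies. Granting this, the source-cover statement follows from the mirror-image argument: $S\colon\C_n\to\C_{n-1}$ is the projection in the dual decomposition $\C_n\cong\C_1\times_{\C_0}\C_{n-1}$, the morphism one reconstructs is the target-most one with its source prescribed, and the source-cover lifting property supplies it.
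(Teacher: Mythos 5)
Your proof is correct, but it takes a genuinely different route from the paper's. The paper argues by induction on $n$: the case $n=1$ is the definition, and the inductive step pastes three pullback squares in one diagram, using the overlap decomposition $\C_n\cong\C_{n-1}\times_{\C_{n-2}}\C_{n-1}$ for the top and bottom squares and the inductive hypothesis for $F_{n-1}$ as the middle square, concluding that the outer square is a pullback. You instead peel off the source-most morphism, $\C_n\cong\C_{n-1}\times_{\C_0}\C_1$, identify $T$ with the first projection, and verify the universal property directly on elements: the only missing datum for a lift of $g'$ through $\bar f$ is the source-most morphism, whose \emph{target} is prescribed to be the overall source of $\bar f$, and the target-cover lifting property supplies it uniquely. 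Your handling of the convention-sensitive point (that $T$ drops the source-most arrow, so it is indeed the target-cover and not the source-cover property that applies) is exactly right, and your mirror-image remark about source covers matches the paper's. As for trade-offs: the paper's argument is purely diagrammatic, uses the $n=1$ square only as a black box, and would work verbatim in any category with pullbacks, at the cost of an induction; yours is uniform in $n$, needs no induction, and makes the combinatorics transparent, and although phrased with elements (legitimate here, since everything lives in $\Set$), it can be made element-free by replacing the element chase with the isomorphism $\C_1\cong\C_0\times_{\C_0'}\C_1'$ furnished by the target-cover hypothesis, which gives $\C_{n-1}\times_{\C_0}\C_1\cong\C_{n-1}\times_{\C_0'}\C_1'\cong\C_{n-1}\times_{\C_{n-1}'}\C_n'$ directly.
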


\begin{proof}
The case $n=1$ is the definition of a target cover.  Suppose for induction that 
the conclusion holds at $n-1$.  In the following diagram, the top, central, and
bottom subsquares are then pullbacks:
\[
\xymatrix{
C_n\ar[rrr]^-{T}\ar[dr]^-{S}\ar[ddd]_-{F_n}
&&&C_{n-1}\ar[dl]_-{S}\ar[ddd]^-{F_{n-1}}
\\&\C_{n-1}\ar[r]^-{T}\ar[d]_-{F_{n-1}}
&C_{n-2}\ar[d]^-{F_{n-2}}
\\&\C_{n-1}'\ar[r]_-{T}
&C_{n-2}'
\\\C_n'\ar[rrr]_-{T}\ar[ur]^-{S}
&&&C_{n-1}'\ar[ul]_-{S}}
\]
It now follows that the outer square is a pullback, establishing the claim.
\end{proof}

Theorem \ref{equivalentpresheaves} gives the key connection between
presheaves and target covers:
presheaves over the two categories connected by a target cover are
essentially equivalent; all they need is to have their structure maps factor
through the map on objects.
The proof is as follows.
\ignore

The following is a key connection between presheaves and target covers.

\begin{theorem}\label{equivalentpresheaves}
Let $F:\C\to\C'$ be a target cover.  Then a presheaf structure on $X$ over $\C$
with structure map $\e:X\to\C_0$ is equivalent to a presheaf structure on $X$
over $\C'$ with structure map $\e':X\to\C'_0$ together with an explicit factorization
$\e'=F_0\circ\e$.
\end{theorem}
\endignore

\begin{proof}[Proof of Theorem \ref{equivalentpresheaves}]
Since $F$ is a target cover, we have an explicit isomorphism
\[
\xymatrix{
\C_0\times_{\C_0'}\C'_1&\C_1\ar[l]_-{(T,F_1)}^-{\cong}}.
\]
Now an explicit factorization $\e'=F_0\circ\e$ gives us an explicit isomorphism
\[
\xymatrix{
X\times_{C_0}\C_1\ar[r]^-{1\times_{F_0}F_1}_-{\cong}& X\times_{\C_0'}\C_1',}
\]
and the correspondence between presheaf
action maps is given by the requirement that 
\[
\xymatrix{
X\times_{\C_0}\C_1\ar[rr]_-{\cong}^-{1\times_{F_0}F_1}\ar[dr]_-{\xi}
&&X\times_{\C_0'}\C_1'\ar[dl]^-{\xi'}
\\&X}
\]
commute.  Equivalence of the unital conditions for the two actions are a consequence
of the commutative diagram
\[
\xymatrix{
&X\ar[dl]^-{\cong}_-{1\times_{\e_0}\e_0}\ar[dr]_-{\cong}^-{1\times_{\e_0'}\e_0'}
\\X\times_{\C_0}\C_0\ar[rr]^-{\cong}_-{1\times_{F_0}F_0}\ar[d]_-{1\times I}
&&X\times_{\C_0'}\C_0'\ar[d]^-{1\times I}
\\X\times_{\C_0}\C_1\ar[rr]_-{\cong}^-{1\times_{F_0}F_1}\ar[dr]_-{\xi}
&&X\times_{\C_0'}\C_1'\ar[dl]^-{\xi'}
\\&X,}
\]
and equivalence of associativity is a consequence of the commutative cube
\[
\xymatrix{
X\times_{\C_0}\C_1\times_{\C_0}\C_1
\ar[rr]^-{1\times\mu}\ar[dr]_-{\cong}^-{1\times_{F_0} F_2}\ar[dd]_-{\xi\times1}
&&X\times_{\C_0}\C_1
\ar[dr]_-{\cong}^-{1\times_{F_0} F_1}\ar[dd]^(.65){\xi}
\\&X\times_{\C_0'}\C_1'\times_{C_0'}\C_1'
\ar[rr]^(.4){1\times\mu}\ar[dd]_(.35){\xi'\times1}
&&X\times_{\C_0'}\C_1'\ar[dd]^-{\xi'}
\\X\times_{\C_0}\C_1
\ar[rr]_(.75){\xi}\ar[dr]^-{\cong}_-{1\times_{F_0}F_1}
&&X\ar[dr]^-{=}
\\&X\times_{\C_0'}\C_1'\ar[rr]_-{\xi'}&&X.}
\]
\end{proof}

We begin our use of Theorem \ref{equivalentpresheaves} with the following trivial and very useful
corollary.

\begin{corollary}
Let $F:\C\to\C'$ be a target cover.  Then $\C_0$ is naturally a presheaf over $\C'$.
\end{corollary}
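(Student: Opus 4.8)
The plan is to read this off directly from Theorem \ref{equivalentpresheaves}. Since $F$ is a target cover, that theorem says that a presheaf structure on the set $\C_0$ over $\C'$ is exactly the same data as a presheaf structure on $\C_0$ over $\C$ together with a factorization of the structure map through $F_0$. So it suffices to exhibit a canonical presheaf structure on $\C_0$ over $\C$ whose structure map factors through $F_0$, and then transport it to $\C'$ by invoking the theorem.

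First I would give $\C_0$ its tautological presheaf structure over $\C$, taking the structure map to be the identity $\e=\id$ on $\C_0$. With the fiber-product conventions in force, $\C_0\times_{\C_0}\C_1$ is formed along $\id$ and the target map $T$, so projection identifies it with $\C_1$; I take the action map $\xi$ to be the source map $S\colon\C_1\to\C_0$ under this identification. Concretely this is nothing but the terminal presheaf over $\C$, i.e.\ the functor $\C^\op\to\Set$ that is constant at a one-point set. The presheaf axioms are then immediate: the compatibility $\e\circ\xi=S$ holds because $\e=\id$; unitality is just the statement that $S(\id_a)=a$; and associativity of the action is precisely associativity of composition in $\C$, since the iterated action composes morphisms and returns the source.

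Because $\e=\id$, the factorization demanded by Theorem \ref{equivalentpresheaves} is the tautological one, $F_0=F_0\circ\id$, so the theorem immediately converts the structure above into a presheaf structure on $\C_0$ over $\C'$ with structure map $F_0$. There is no genuine obstacle here---this is a purely formal consequence of the equivalence established in that theorem---and the only point worth a remark is naturality: the construction involves no choices, and the transport supplied by Theorem \ref{equivalentpresheaves} is compatible with morphisms of target covers, so the resulting presheaf over $\C'$ is natural in $F$.
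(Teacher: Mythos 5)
Your proposal is correct and is exactly the paper's own argument: the paper's proof also equips $\C_0$ with its tautological presheaf structure over $\C$ (structure map $\id_{\C_0}$, action given by the source map) and then invokes Theorem \ref{equivalentpresheaves} with the trivial factorization $F_0 = F_0\circ\id$ to transport it to a presheaf over $\C'$ with structure map $F_0$. The only difference is that you spell out the verification of the presheaf axioms, which the paper leaves implicit.
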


\begin{proof} 
First, $\C_0$ is a presheaf over $\C$ with structure map $\id_{\C_0}$.  It follows 
that it is also a presheaf over $\C'$ with structure map $F_0$.
\end{proof}

We have the following converse as well.

\begin{proposition}
Let $X$ be a presheaf over a category $\C$.  Then $X$ is canonically the
set of objects of a category $\C\int X$, together with a target cover $\C\int X\to\C$
whose map on objects is the structure map of $X$ as a presheaf.
\end{proposition}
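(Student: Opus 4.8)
The plan is to perform the Grothendieck construction (the \emph{category of elements}) on the presheaf $X$, and then observe that the evident projection to $\C$ is a target cover essentially by fiat; this realizes the construction inverse to passing to fibers of a target cover. I would define $\C\int X$ by taking its set of objects to be $X$ itself and its set of morphisms to be the pullback $X\times_{\C_0}\C_1$ that already serves as the domain of the action map $\xi$. Recall that an element of this pullback is a pair $(x,f)$ with $\e(x)=T(f)$; I would declare the target of the morphism $(x,f)$ to be $x$ (projection onto $X$) and its source to be $\xi(x,f)$ (the action), so that $(x,f)$ is a morphism from $\xi(x,f)$ to $x$ sitting over the morphism $f$ of $\C$. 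The structure-map-preservation of $\xi$ guarantees $\e(\xi(x,f))=S(f)$, so the source and target are correctly graded over $\C_0$.

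Next I would supply identities and composition. The identity at an object $x$ is the pair $(x,I_\C(\e(x)))$, where $I_\C:\C_0\to\C_1$ is the identity-assigning map of $\C$; the unit axiom for the presheaf action gives $\xi(x,I_\C(\e(x)))=x$, so this morphism has both source and target equal to $x$. For composition, two morphisms $(x,f)$ and $(x',f')$ are composable exactly when the target $x'$ of the one equals the source $\xi(x,f)$ of the other; in that case $f$ and $f'$ are composable in $\C$, and I would set their composite to be $(x,\gamma_\C(f,f'))$, using the composition map $\gamma_\C:\C_1\times_{\C_0}\C_1\to\C_1$ of $\C$ on the underlying morphisms. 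That this pair again lies in $X\times_{\C_0}\C_1$ follows from $\e(x)=T(f)=T(\gamma_\C(f,f'))$, and the associativity axiom for the presheaf action identifies its source $\xi(x,\gamma_\C(f,f'))$ with $\xi(\xi(x,f),f')=\xi(x',f')$, as required.

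I would then verify the category axioms. The unit laws reduce to the unit axioms for $\gamma_\C$ in $\C$ together with the presheaf unit axiom, and associativity reduces to associativity of $\gamma_\C$ combined with associativity of $\xi$; this associativity check is the one genuinely computational step, and I expect it to be the main (though routine) obstacle, since it is exactly the place where the associativity of composition in $\C$ and the associativity of the presheaf action must be invoked in tandem. The projection $\C\int X\to\C$ is then defined on objects by $\e:X\to\C_0$ and on morphisms by the projection $X\times_{\C_0}\C_1\to\C_1$, $(x,f)\mapsto f$; functoriality is immediate, since this projection visibly carries identities to identities and $(x,\gamma_\C(f,f'))$ to $\gamma_\C(f,f')$.

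Finally I would observe that the target-cover condition holds by construction. The relevant square
\[
\xymatrix{
X\times_{\C_0}\C_1\ar[r]^-{T}\ar[d]&X\ar[d]^-{\e}
\\\C_1\ar[r]_-{T}&\C_0,}
\]
whose top map is the projection onto $X$, whose left map is the projection onto $\C_1$, and whose bottom map is the target map of $\C$, is precisely the pullback square that defines the morphism set $X\times_{\C_0}\C_1$ (matching $\e$ against $T$). Hence it is a pullback, so by the definition of target cover the projection is one, and its map on objects is exactly the structure map $\e$ of the presheaf $X$, as claimed.
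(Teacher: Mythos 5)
Your proof is correct and follows the same route as the paper: both construct $\C\int X$ as the Grothendieck construction (category of elements), with objects $X$, morphisms $X\times_{\C_0}\C_1$, target given by projection, source given by the action map, and the target-cover condition holding because the defining square is already a pullback. The paper merely states this more tersely, leaving the identity, composition, and axiom verifications that you spell out as "easy to see."
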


\begin{proof} 
The category $\C\int X$ is just the Grothendieck construction on the composite
functor
\[
\xymatrix{
\C^\op\ar[r]^-{X}&\Set\ar[r]^-{\delta}&\Cat.}
\]
It has objects $X$, as required, and a morphism $\phi:a\to b$ is a morphism
$\phi\in\C(\e a,\e b)$ for which $a=b\cdot\phi$.  This can also be expressed by
saying that
\[
(\C\textstyle\int X)_1:=X\times_{\C_0}\C_1,
\]
with target given by the projection to $X$, and source given by the action map.
The projection to $\C_1$ gives the functor on morphisms, and it is easy to see
that this gives a target cover with $\e$ as the map on objects.
\end{proof}

There is a generalization that is sometimes useful, as follows.

\begin{lemma}
If $F:\C\to\C'$ is a target cover, then $\C_1$ is naturally a presheaf over $\C'$ consistent
with its left action on itself through composition.
\end{lemma}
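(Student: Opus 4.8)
The plan is to realize $\C_1$ as a presheaf over $\C$ itself in the most economical way and then transport it across the target cover $F$ by a direct appeal to Theorem \ref{equivalentpresheaves}, exactly paralleling the preceding corollary for $\C_0$. First I would observe that $\C_1$ carries a canonical presheaf structure over $\C$, taking the structure map to be the \emph{source} map $S\colon\C_1\to\C_0$ and the action map to be composition $\gamma\colon\C_1\times_{\C_0}\C_1\to\C_1$. With $\e=S$, the pullback $\C_1\times_{\C_0}\C_1$ governing the presheaf action carries its source map on the left (acted-on) factor and its target map on the right (acting) factor, so it is literally the object of composable pairs and the action is honest composition. The structure-map compatibility then reads $\e(g\circ f)=S(g\circ f)=S(f)$, which is the required condition, and associativity and unitality of the presheaf action are just associativity and unitality of composition in $\C$. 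Viewed contravariantly this is the presheaf $a\mapsto\{g:S(g)=a\}$ of morphisms out of $a$, with a morphism acting by precomposition; it is thus the right-module half of the $\C$-bimodule structure on $\C_1$ given by composition, and is tautologically consistent with the left action by postcomposition since $(h\circ g)\circ f=h\circ(g\circ f)$.

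Next I would invoke Theorem \ref{equivalentpresheaves}. Since $F$ is a target cover and $\e':=F_0\circ S\colon\C_1\to\C'_0$ is an explicit factorization of a structure map through $F_0$, the theorem converts the presheaf structure on $\C_1$ over $\C$ with structure map $S$ into an equivalent presheaf structure on $\C_1$ over $\C'$ with structure map $F_0\circ S$. Concretely, the target-cover isomorphism $\C_1\times_{\C'_0}\C'_1\cong\C_1\times_{\C_0}\C_1$ produced in the proof of that theorem lets one read off the transported action, whose defining commuting triangle pins it down uniquely. This is exactly the asserted presheaf structure over $\C'$.

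Finally I would check that the transported structure is still consistent with the left action by postcomposition and that the construction is natural. Consistency is inherited from the $\C$-level statement: the target-cover isomorphism identifies the $\C'$-action with composition in $\C$, which commutes with postcomposition by associativity, so the $\C'$-action does as well; thus $\C_1$ remains a $\C$-bimodule with its right-module side now recorded over $\C'$. Naturality is routine, since the source map, composition, and the isomorphism furnished by the target cover are all natural. The only point requiring any care — and the one I would flag as the crux — is the bookkeeping at the first step: one must select the source map (not the target map) as the presheaf structure map, so that the presheaf action coincides with genuine composition; once that identification is made, the lemma is an immediate corollary of Theorem \ref{equivalentpresheaves}.
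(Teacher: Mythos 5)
Your proposal is correct and takes essentially the same route as the paper's own proof: endow $\C_1$ with the presheaf structure over $\C$ whose structure map is the source map $S$ and whose action is composition, transport it to $\C'$ via Theorem \ref{equivalentpresheaves} using the factorization $F_0\circ S$, and deduce consistency with the left (post)composition action from associativity of composition carried across the target-cover isomorphism. The paper merely records this last step as an explicit commutative diagram, which your verbal argument reproduces.
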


\begin{proof}
First, $\C_1$ is a presheaf over $\C$ with structure map the source map 
$S:\C_1\to\C_0$, and action given by composition.  Therefore $\C_1$ is also
a presheaf over $\C'$ with structure map $F_0\circ S=S\circ F_1$, and 
the only issue is consistency with the left action of $\C_1$ on itself.  But we
have the action induced by the composition in $\C$, and consistency
then follows from associativity of composition via the following diagram:
\[
\xymatrix@C+15pt{
\C_1\times_{\C_0}\C_1\times_{\C_0'}\C_1'
\ar[d]_-{\mu\times1}
&\C_1\times_{\C_0}\C_1\times_{\C_0}\C_1
\ar[l]_-{1\times1\times_{F_0}F_1}^-{\cong}\ar[r]^-{1\times\mu}\ar[d]_-{\mu\times1}
&\C_1\times_{\C_0}\C_1
\ar[d]^-{\mu}
\\\C_1\times_{\C_0'}\C_1'
&\C_1\times_{\C_0}\C_1
\ar[l]^-{1\times_{F_0}F_1}_-{\cong}\ar[r]_-{\mu}
&\C_1.}
\] 
\end{proof}

The presheaf structure on $\C_0$ is just the specialization to identity arrows.

We turn now to the proof of Theorem \ref{preservescovers}, 
which says that the monads associated to $\Sigma$-free categorical operads
preserve target and source covers,
and the basic
proposition needed is as follows.

\begin{proposition}\label{orbitspreservecovers}
Let $G$ be a discrete group acting freely on a target cover $F:\C\to\C'$.  Then the map
on orbit categories $F/G:\C/G\to\C'/G$ is also a target cover.  The same is true with ``target''
replaced by ``source.''
\end{proposition}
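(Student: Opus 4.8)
The plan is to reduce the statement directly to Proposition \ref{orbitpullbacks}. The hypothesis that $G$ acts freely on the target cover $F\colon\C\to\C'$ I read as saying that $G$ acts on both $\C$ and $\C'$ through functors, that $F$ is $G$-equivariant, and that the action is free. The first thing I would record is that freeness of the action on \emph{objects} already forces freeness on \emph{morphisms}: if $g\in G$ fixes a morphism $f$, then, since $G$ acts compatibly with the source map, it fixes $S(f)$, and freeness on objects gives $g=\id$. Hence all four $G$-sets $\C_0$, $\C_1$, $\C'_0$, $\C'_1$ that will appear below are free.

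Next I would note how the orbit category is assembled. Because $G$ acts through functors, the source and target maps are $G$-equivariant and so descend to the quotients; thus $(\C/G)_0=\C_0/G$ and $(\C/G)_1=\C_1/G$, with structure maps $S/G$ and $T/G$, and likewise for $\C'$. In these terms the square that must be shown to be a pullback in order for $F/G$ to be a target cover is exactly the image, under the orbit functor, of the defining square of $F$,
\[
\xymatrix{
\C_1\ar[r]^-{T}\ar[d]_-{F_1}&\C_0\ar[d]^-{F_0}\\
\C'_1\ar[r]_-{T}&\C'_0,}
\]
all of whose maps are $G$-equivariant. Since $F$ is a target cover this square is a pullback of $G$-sets, and by the previous paragraph all four of its corners are free $G$-sets. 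Proposition \ref{orbitpullbacks} therefore applies and tells us that the square of orbits is again a pullback of sets. But that orbit square is precisely the defining square for $F/G$ to be a target cover, so $F/G$ is a target cover.

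The source-cover assertion is then proved by the identical argument with $S$ in place of $T$ throughout, using the source analogue of the pullback square and the same application of Proposition \ref{orbitpullbacks}. I expect no serious obstacle here: once Proposition \ref{orbitpullbacks} is available the content is essentially bookkeeping. The one point that genuinely requires the short argument given above, rather than being immediate, is the passage from freeness on objects to freeness on morphisms, since it is exactly what guarantees that \emph{every} corner of the square is a free $G$-set and hence that Proposition \ref{orbitpullbacks} may be invoked; the remaining work is just the identification $(\C/G)_1\cong\C_1/G$ together with the equivariance of $S$, $T$, $F_1$, and $F_0$.
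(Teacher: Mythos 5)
Your overall strategy is the paper's: apply Proposition \ref{orbitpullbacks} to the $G$-equivariant square expressing that $F$ is a target cover, and your preliminary observation that freeness on objects forces freeness on morphisms (via equivariance of $S$) is correct and worth recording. The gap is in the step you dismiss as bookkeeping: the identification $(\C/G)_1\cong\C_1/G$ (and likewise $(\C'/G)_1\cong\C'_1/G$). Equivariance of $S$ and $T$ only tells you that the quotient \emph{graph} $\C_1/G\rightrightarrows\C_0/G$ exists; it does not tell you that this graph underlies the orbit category. For that you must show that composition descends, i.e.\ that every composable pair in the quotient graph lifts, uniquely up to the action, to a composable pair in $\C$, which is precisely the statement that $\C_2/G\cong\C_1/G\times_{\C_0/G}\C_1/G$. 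That is another instance of Proposition \ref{orbitpullbacks}, applied this time to the pullback square defining the composables $\C_2=\C_1\times_{\C_0}\C_1$, and it is where the paper spends the first half of its own proof. The point is not cosmetic: for a non-free action the quotient in $\Cat$ genuinely has more morphisms than $\C_1/G$, because composites of pairs that become composable only after passing to orbits must be freely adjoined; so the identification you assert cannot follow from equivariance alone and must invoke freeness.

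Once that step is supplied, the rest of your argument is exactly the paper's: the cover square is a pullback of free $G$-sets (your freeness-on-morphisms remark guarantees all four corners are free), Proposition \ref{orbitpullbacks} shows the square of orbits is again a pullback, and that orbit square is the defining square for $F/G$ being a target cover; the source-cover case is verbatim with $S$ in place of $T$.
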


\begin{proof}
First, we note that since $G$ acts freely on $\C$, the quotient category $\C/G$ has
$(\C/G)_0\cong\C_0/G$ and $(\C/G)_1\cong\C_1/G$.  This is because the composables
$\C_2$ are given by a pullback diagram
\[
\xymatrix{
\C_2\ar[r]^-{T}\ar[d]_-{S}&\C_1\ar[d]_-{S}
\\\C_1\ar[r]_-{T}&\C_0,}
\]
so by Proposition \ref{orbitpullbacks}, we also have a pullback diagram 
\[
\xymatrix{
\C_2/G\ar[r]^-{T/G}\ar[d]_-{S/G}&\C_1/G\ar[d]^-{S/G}
\\\C_1/G\ar[r]_-{T/G}&\C_0/G.}
\]
Consequently the composition map $\mu:\C_1\times_{\C_0}\C_1\to\C_1$
descends to an induced map $\C_1/G\times_{\C_0/G}\C_1/G\to\C_1/G$ that
gives us the composition of a category, which clearly has the universal property
needed for $\C/G$.  Now the pullback diagram that tells us that $F$ is a target cover,
namely
\[
\xymatrix{
\C_1\ar[r]^-{T}\ar[d]_-{F_1}&\C_0\ar[d]^-{F_0}
\\\C'_1\ar[r]_-{T}&\C'_0,}
\]
gives a pullback diagram on orbits
\[
\xymatrix{
\C_1/G\ar[r]^-{T}\ar[d]_-{F_1/G}&\C_0/G\ar[d]^-{F_0/G}
\\\C'_1/G\ar[r]_-{T}&\C'_0/G,}
\]
which tells us that $F/G$ is also a target cover.  Similarly, passage to orbits of free actions
preserves source covers.
\end{proof}

\begin{proof}[Proof of Theorem \ref{preservescovers}]
First,
since both target and source covers are given by a pullback condition, they are both
preserved by products, and it is obvious that the identity functor is a cover.  Therefore, 
if we are given a target (or source) cover $F:\C\to\C'$, the induced functor
\[
\DD_n\times\C^n\to\DD_n\times(\C')^n
\]
is a target (or source) cover.  Now since $\Sigma_n$ acts freely on $\DD_n$, it also
acts freely on both $\DD_n\times\C^n$ and $\DD_n\times(\C')^n$.  It now follows
from Proposition \ref{orbitspreservecovers} that the induced functor
\[
\DD_n\times_{\Sigma_n}\C^n\to\DD_n\times_{\Sigma_n}(\C')^n
\]
is a target (or source) cover.
Further, it is elementary that coproducts preserve both
sorts of cover, so the induced map
\[
\coprod_{n\ge0}\DD_n\times_{\Sigma_n}\C^n\to\coprod_{n\ge0}\DD_n\times_{\Sigma_n}(\C')^n
\]
is also a target (or source) cover.  But this is precisely the functor
$\D F:\D\C\to\D\C'$, so we have proven Theorem \ref{preservescovers}.
\end{proof}

We can now give the proof of our basic Theorem \ref{Dpresheaf}, which says
that whenever $X$ has a presheaf structure over $\C$, $\D_0X$ has a presheaf
structure over $\D\C$.

\begin{proof}
Since $\e:X\to\C_0$ is a presheaf structure map for $X$ over $\C$, we have
a target cover $\C\int X\to\C$ for which $\e$ is the map on objects.  Since $\D$ 
preserves target covers, we have the target cover
\[
\D(\C\textstyle\int X)\to\D\C
\]
with $\D\e$ as the map on objects.
Since the objects of $\D(\C\int X)$ are $\D_0X$, it follows that they are a 
presheaf over $\D\C$ with structure map $\D\e$.
\end{proof}

Corollary \ref{D_0presheaf} now follows immediately.

\section{The Underlying $\D$-multicategory: Proofs}

We gave the structural data for the underlying $\D$-multicategory $U\C$ to
a $\D$-algebra $\C$ in Section \ref{underlying}.  In this section, we show that
these data do satisfy the necessary properties to define a $\D$-multicategory.
First, the formula given for the presheaf action really is a presheaf action.

\begin{theorem}
The composite
\begin{gather*}
\xymatrix{
U\C_1\times_{\D_0\C_0}\D_1\C_0\cong\C_1\times_{C_0}\D_1\C_0
\ar[r]^-{1\times\D_1I}
&\C_1\times_{\C_0}\D_1\C_1}
\\\xymatrix{
\ar[rr]^-{1\times(\xi_1,S_\D S)}
&&\C_1\times_{\C_0}\C_1\times_{\C_0}\D_0\C_0
\ar[r]^-{\gamma_\C\times1}
&\C_1\times_{\C_0}\D_0\C_0\cong U\C_1.}
\end{gather*}
defines a presheaf action on the morphisms $U\C_1$ of the underlying $\D$-multicategory.
\end{theorem}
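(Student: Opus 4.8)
The plan is to verify the two coherence axioms for a presheaf action—unitality and associativity—for the explicitly given composite action map. Since the action is built from several pieces (an isomorphism coming from the target-cover hypothesis via Theorem~\ref{equivalentpresheaves}, the structure map $\D_1I$, the functor map $(\xi_1,S_\D S)$, and the composition $\gamma_\C$ in $\C$), the entire verification will be a diagram chase. The key observation to exploit throughout is that $\xi:\D\C\to\C$ is a \emph{functor} between category objects in $\Set$, so it respects the category structure maps; in particular the source, target, identity, and composition maps commute with the $\xi_n$ in the appropriate sense. I would also repeatedly use that $U\C_1 = \C_1\times_{\C_0}\D_0\C_0$ so that I can track an element as a pair (a morphism of $\C$ together with a decomposition of its source), and that $\D_0$ is Cartesian (Theorem~\ref{cartesianmonad}) to identify the relevant fiber products.

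First I would check unitality. The unit of the presheaf action over $\D(\C_0^\delta)$ is the identity morphism in $\D_1\C_0$ lying over a given object of $\D_0\C_0$, attached via $1\times\D_1I$. Feeding an identity through the composite, I expect $\D_1I$ applied to an identity to land on an identity in $\D_1\C_1$, so that $(\xi_1,S_\D S)$ produces an identity morphism of $\C$ (using that $\xi_1$ carries identities to identities, since $\xi$ is a functor), and then $\gamma_\C$ composed with an identity returns the original morphism. The one point requiring care is that the decomposition-of-source factor $\D_0\C_0$ is genuinely preserved, i.e.\ that the $S_\D S$ component tracks the correct element of $\D_0\C_0$; this follows from the commuting source square displayed just before the definition of the action map.

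Next I would verify associativity, which is the substantive step. I would form the two iterated actions---acting by a composable pair $(d,d')$ of morphisms in $\D(\C_0^\delta)$ either one at a time or after composing them via $\gamma_\D$---and compare. Expanding both composites, the comparison reduces to two facts. The first is that $\xi_1$ is compatible with composition: the square relating $\D_1(\gamma_\C)$ (composition inside $\D_1$) to $\gamma_\C\circ(\xi_1\times\xi_1)$ commutes because $\xi$ is a functor, so it preserves composition of morphisms. The second is the associativity of $\gamma_\C$ itself in the category $\C$. Threading these together with the naturality of the fiber-product projections should collapse both sides to the same map $\C_1\times_{\C_0}\C_1\times_{\C_0}\C_1\times_{\C_0}\D_0\C_0\to U\C_1$.

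\textbf{The main obstacle} I anticipate is bookkeeping rather than conceptual: keeping the various fiber products over $\C_0$ and $\D_0\C_0$ straight while the action map shuttles between $\D_1\C_0$, $\D_1\C_1$, and $\C_1\times_{\C_0}\C_1\times_{\C_0}\D_0\C_0$. In particular, the step $1\times(\xi_1,S_\D S)$ simultaneously evaluates the operad action on morphisms \emph{and} records a residual element of $\D_0\C_0$, so I must confirm that the associativity of $\gamma_\C$ interacts correctly with that residual $\D_0\C_0$-coordinate under $\gamma_\D$. The clean way to manage this is to set up one large commutative diagram whose outer rectangle is the desired associativity square and whose interior is tiled by small squares, each commuting either by functoriality of $\xi$, by associativity of $\gamma_\C$, or by naturality; the result then follows by pasting, exactly in the style of the cube used in the proof of Theorem~\ref{equivalentpresheaves}.
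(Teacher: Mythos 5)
Your proposal takes essentially the same route as the paper's proof: unitality is checked exactly as you describe, via the square $\xi_1\circ I_\D I=I_\C\circ\xi_0$ (identities go to identities, and composing with identities is trivial), and associativity is verified by tiling a large two-part diagram with subsquares that commute by functoriality of $\xi$, associativity of $\gamma_\C$, and projection/naturality arguments, with the same care about the residual $\D_0\C_0$ coordinate. Two small points your expansion would need to make explicit: the map intertwined by $\xi$ is the composition $\gamma_{\D\C}$ of the category $\D\C$ (not $\D_1\gamma_\C$), and you also need the square $\gamma_{\D\C}\circ(\D_1I\times\D_1I)=\D_1I\circ\gamma_\D$, i.e.\ that composites of identities are identities, which the paper invokes as the remaining ingredient in its right-hand diagram.
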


\begin{proof}
In order to see that this action is unital, we need the diagram
\[
\xymatrix{
U\C_1\cong U\C_1\times_{\D_0\C_0}\D_0\C_0
\ar[r]^-{1\times I_\D}\ar[dr]_-{=}
&U\C_1\times_{\D_0\C_0}\D_1\C_0
\ar[d]^-{\sigma}
\\&U\C_1}
\]
to commute, where $\sigma$ is the presheaf action map.  But by expanding
the definition of $\sigma$, this becomes
\[
\xymatrix{
\C_1\times_{\C_0}\D_0\C_0\ar[r]^-{1\times I_\D}\ar[ddrr]_-{=}
&\C_1\times_{\C_0}\D_1\C_0\ar[r]^-{1\times\D_1I}
&\C_1\times_{\C_0}\D_1\C_1\ar[d]^-{1\times(\xi_1,S_D S)}
\\&&\C_1\times_{\C_0}\C_1\times_{\C_0}\D_0\C_0\ar[d]^-{\gamma_\C\times1}
\\&&\C_1\times_{C_0}\D_0\C_0.}
\]
If we stop just before the end, at the term $\C_1\times_{\C_0}\C_1\times_{\C_0}\D_0\C_0$,
and trace the projections to each factor, we find that the first factor of $\C_1$ is
identical to the original first factor, and the $\D_0\C_0$ on the end is also identical
to the original second factor.  For the $\C_1$ in the middle, we have the
commutative square
\[
\xymatrix{
\D_0\C_0\ar[r]^-{I_\D I}\ar[d]_-{\xi_0}
&\D_1\C_1\ar[d]^-{\xi_1}
\\\C_0\ar[r]_-{I}&\C_1}
\]
which shows that all the elements there are identities.  Consequently they
have no effect on the composition that takes place after, so the net result is
the identity on $U\C_1$, as desired.  

For associativity of the action, the diagram we need is a bit too wide to fit on the page,
so we split it up into two.  The left half is as follows:
\[
\xymatrix{
&\C_1\times_{\C_0}\D_1\C_0\times_{\D_0\C_0}\D_1\C_0
\ar[d]^-{1\times\D_1I\times\D_1I}
\ar[dl]_-{1\times\D_1I\times1}
\\\C_1\times_{\C_0}\D_1\C_1\times_{\D_0\C_0}\D_1\C_0
\ar[r]^-{1^2\times\D_1I}\ar[d]_-{1\times(\xi_1,S_\D S)\times1}
&\C_1\times_{\C_0}\D_1\C_1\times_{\D_0\C_0}\D_1\C_1
\ar[d]^-{1\times\xi_1\times_{\xi_0}(\xi_1,S_\D S)}
\\\C_1\times_{\C_0}\C_1\times_{\C_0}\D_0\C_0\times_{\D_0\C_0}\D_1\C_0
\ar[d]_-{\gamma_\C\times1}
&\C_1\times_{\C_0}\C_1\times_{\C_0}\C_1\times_{\C_0}\D_0\C_0
\ar[d]^-{\gamma_\C\times1^2}
\\\C_1\times_{\C_0}\D_1\C_0
\ar[r]_-{1\times(\xi_1,S_\D S)\circ\D_1I}
&\C_1\times_{\C_0}\C_1\times_{\C_0}\D_0\C_0,
}
\]
and the right half, which is to be glued onto the previous one, is as follows:
\[
\xymatrix{
\C_1\times_{\C_0}\D_1\C_0\times_{\D_0\C_0}\D_1\C_0
\ar[r]^-{1\times\gamma_\D}
\ar[d]^-{1\times\D_1I\times\D_1I}
&\C_1\times_{\C_0}\D_1\C_0
\ar[d]^-{1\times\D_1I}
\\
\C_1\times_{\C_0}\D_1\C_1\times_{\D_0\C_0}\D_1\C_1
\ar[d]^-{1\times\xi_1\times_{\xi_0}(\xi_1,S_\D S)}
\ar[r]^-{1\times\gamma_{\D\C}}
&\C_1\times_{\C_0}\D_1\C_1
\ar[d]^-{1\times(\xi_1,S_\D S)}
\\
\C_1\times_{\C_0}\C_1\times_{\C_0}\C_1\times_{\C_0}\D_0\C_0
\ar[r]^-{1\times\gamma_\C\times1}
\ar[d]^-{\gamma_\C\times1^2}
&\C_1\times_{\C_0}\C_1\times_{\C_0}\D_0\C_0
\ar[d]^-{\gamma_\C\times1}
\\
\C_1\times_{\C_0}\C_1\times_{\C_0}\D_0\C_0
\ar[r]_-{\gamma_\C\times1}
&\C_1\times_{\C_0}\D_0\C_0.}
\]
The top triangle of the left half commutes by definition.  The bottom part 
commutes by examining the projections onto each factor of the target, as follows.  The 
projection onto the first $\C_1$ involves only the $\C_1\times_{\C_0}\D_1\C_1$
of the source, and either way around the diagram gives us
\[
\xymatrix{
\C_1\times_{\C_0}\D_1\C_1\ar[r]^-{1\times\xi_1}
&\C_1\times_{\C_0}\C_1\ar[r]^-{\gamma_\C}&\C_1.}
\]
Projection to the last two factors $\C_1\times_{\C_0}\D_0\C_0$ involves
only the last factor $\D_1\C_0$ of the target, and either way around the 
diagram gives us
\[
\xymatrix{
\D_1\C_0\ar[r]^-{\D_1I}&\D_1\C_1
\ar[r]^-{(\xi_1,S_\D S)}
&\C_1\times_{\C_0}\D_0\C_0.}
\]
The left part of the associativity diagram therefore commutes.
The right half commutes, from top to bottom, by the identity properties 
of $\gamma_\C$, because $\xi:\D\C\to\C$ is a functor, and because 
$\gamma_\C$ is associative.  We may conclude that we have defined
a presheaf structure on $U\C_1$.
\end{proof}

We must also show that the source map $S:U\C_1\to\D_0\C_0$ preserves
the presheaf structure.  The presheaf structure on $\D_0\C_0$ is simply that
of the objects of $\D(\C_0^\delta)$, given by the composite
\[
\xymatrix{
\D_0\C_0\times_{\D_0\C_0}\D_1\C_0\cong\D_1\C_0\ar[r]^-{S_\D}&\D_0\C_0.}
\]
Further, the first part of the presheaf structure map on $U\C_1$ maps 
to the first part of this composite, using
the pullback diagram defining $U\C_1$, augmented slightly as follows,
\[
\xymatrix{
&U\C_1\ar[r]^-{\kappa_1}\ar[d]_-{S}
&\C_1\ar[d]^-{S}
\\\D_1\C_0\ar[r]_-{T_\D}
&\D_0\C_0\ar[r]_-{\xi_0}
&\C_0,}
\]
so we obtain the commuting diagram
\[
\xymatrix{
U\C_1\times_{\D_0\C_0}\D_1\C_0
\ar[r]^-{\kappa_1\times_{\xi_0}1}_-{\cong}
\ar[d]_-{S\times1}
&\C_1\times_{\C_0}\D_1\C_0
\ar[d]^-{p_2}
\\\D_0\C_0\times_{\D_0\C_0}\D_1\C_0
\ar[r]_-{\cong}
&\D_1\C_0.}
\]
It now follows that preservation of the presheaf actions by the source map reduces
to checking the diagram
\[
\xymatrix@C+15pt{
\C_1\times_{\C_0}\D_1\C_0
\ar[r]^-{1\times\D_1I}
\ar[d]_-{p_2}
&\C_1\times_{\C_0}\D_1\C_1
\ar[r]^-{1\times(\xi_1,S_\D S)}
&\C_1\times_{\C_0}\C_1\times_{\C_0}\D_0\C_0
\ar[r]^-{\gamma_\C\times1}
&\C_1\times_{\C_0}\D_0\C_0
\ar[d]^-{p_2}
\\\D_1\C_0
\ar[rrr]_-{S_\D}
&&&\D_0\C_0.}
\]
But this depends only on the last factors in the fiber products, and that reduces to
the commuting diagram
\[
\xymatrix{
\D_1\C_0
\ar[r]^-{\D_1I}
\ar[dr]^-{=}
\ar[ddr]_-{S_\D}
&\D_1\C_1
\ar[d]^-{\D_1S}
\\&\D_1\C_0
\ar[d]^-{S_\D}
\\&\D_0\C_0.}
\]
It follows that the source map on $U\C$ preserves the presheaf action.

For the unital property of the structure, recall that the identity map of 
the underlying multicategory is induced from the commutative square
\[
\xymatrix{
\C_0\ar[r]^-{I_\C}\ar[d]_-{\eta}&\C_1\ar[d]^-{S}
\\\D_0\C_0\ar[r]_-{\xi_0}&\C_0,}
\]
so we get
an induced map $I_{U\C}:\C_0\to\C_1\times_{\C_0}\D_0\C_0\cong(U\C)_1$.
It is now an easy exercise to see that
\[
\xymatrix{
&\C_0\ar[dl]_-{\eta}\ar[d]^-{I_{U\C}}\ar[dr]^-{=}
\\\D_0\C_0&(U\C)_1\ar[l]^-{S}\ar[r]_-{T}&\C_0}
\]
commutes.

For the remainder of the section, we verify the formal properties of the composition
law.  We must verify that it  preserves sources and targets, is unital, and is
associative.  To see that the composition preserves targets, we reduce to the corresponding
property for $\C$ by means of
the following diagram:
\[
\xymatrix{
(U\C)_2\ar[rr]^-{T}\ar[dd]_-{\gamma_{U\C}}\ar[dr]^-{\kappa_2}&&(U\C)_1\ar[d]^-{\kappa_1}
\\&\C_2\ar[r]^-{T}\ar[d]^-{\gamma_\C}&\C_1\ar[d]^-{T}
\\(U\C)_1\ar[r]_-{\kappa_1}&\C_1\ar[r]_-{T}&\C_0.}
\]
For preservation of sources, we have defined $\gamma_{U\C}$ so that the
diagram
\[
\xymatrix{
(U\C)_2\ar[rr]^-{\gamma_{U\C}}\ar[d]_-{S}&&(U\C)_1\ar[d]^-{S}
\\\D_0(U\C)_1\ar[r]_-{\D_0S}&\D_0^2\C_0\ar[r]_-{\mu}&\D_0\C_0}
\]
commutes, so $\gamma_{U\C}$ does preserve sources.

In verifying the unital properties of the composition, we introduce the notations
$I_L:M_1\to M_2$ and $I_R:M_1\to M_2$ for the composites that appear
in the unital conditions for composition in any $\D$-multicategory.  Specifically,
we use $I_L$ for the left (target) unit map defined by the composite
\[
\xymatrix{
M_1\cong M_0\times_{M_0}M_1
\ar[r]^-{I\times_\eta\eta}
&M_1\times_{\D_0M_0}\D_0M_1\cong M_2,}
\]
and $I_R$ for the right (source) unit map defined by the composite
\[
\xymatrix{
M_1\cong M_1\times_{\D_0M_0}\D_0M_0
\ar[r]^-{1\times\D_0I}
&M_1\times_{\D_0M_0}\D_0M_1\cong M_2.}
\]
Note that both triangles
\[
\xymatrix{
M_1\ar[r]^-{I_L}\ar[dr]_-{\eta}
&M_2\ar[d]^-{S}
\\&\D_0M_1}
\quad\text{and}\quad
\xymatrix{
M_1\ar[r]^-{I_R}\ar[dr]_-{\eta}
&M_2\ar[d]^-{S}
\\&\D_0M_1}
\]
commute.
To see that $\gamma_{U\C}$ is left (target) unital, we will need to show that
\[
\xymatrix{
(U\C)_1\ar[r]^-{\kappa_1}\ar[d]_-{I_L}&\C_1\ar[d]^-{I_L}
\\(U\C)_2\ar[r]_-{\kappa_2}&\C_2}
\]
commutes, in order to reduce to the analogous property of $\C$.  Since $\C_2$
is a pullback, the square commutes if and only if it does so after composing with
the two maps $S,T:\C_2\to\C_1$.  Composing with $T$, we obtain 
\[
\xymatrix{
(U\C)_1\ar[r]^-{\kappa_1}\ar[d]_-{I_L}&\C_1\ar[dr]^-{T}\ar[d]^-{I_L}
\\(U\C)_2\ar[r]^-{\kappa_2}\ar[dr]_-{T}&\C_2\ar[dr]^-{T}&\C_0\ar[d]^-{I_\C}
\\&(U\C)_1\ar[r]_-{\kappa_1}&\C_1.}
\]
Since the two new squares commute, this reduces us to verifying the commutativity
of the perimeter of the hexagon, which follows from replacing its interior as follows:
\[
\xymatrix{
(U\C)_1\ar[r]^-{\kappa_1}\ar[d]_-{I_L}&\C_1\ar[dr]^-{T}
\\(U\C)_2\ar[dr]_-{T}&&\C_0\ar[d]^-{I_\C}\ar[dl]_-{I_{U\C}}
\\&(U\C)_1\ar[r]_-{\kappa_1}&\C_1.}
\]
Continuing the verification that $\gamma_{U\C}$ is left unital, we now
compose the desired comparison square with $S$, and obtain
\[
\xymatrix{
&(U\C)_1\ar[r]^-{\kappa_1}\ar[d]^-{I_L}\ar[ddl]_-{\eta}
&\C_1\ar[d]_-{I_L}\ar[ddr]^-{=}
\\&(U\C)_2\ar[r]_-{\kappa_2}\ar[dl]^-{S}
&\C_2\ar[dr]_-{S}
\\\D_0(U\C)_1\ar[r]_-{\D_0\kappa_1}&\D_0\C_1\ar[r]_-{I_\D}&\D_1\C_1\ar[r]_-{\xi_1}&\C_1.}
\]
Again, we have reduced to the question of whether the perimeter commutes, and we 
rearrange the innards to obtain
\[
\xymatrix{
&(U\C)_1\ar[r]^-{\kappa_1}\ar[dl]_-{\eta}&\C_1\ar[dr]^-{=}\ar[d]^-{\eta}\ar[dl]_-{\eta}
\\\D_0(U\C)_1\ar[r]_-{\D_0\kappa_1}&\D_0\C_1\ar[r]_-{I_\D}&\D_1\C_1\ar[r]_-{\xi_1}&\C_1,}
\]
which does commute.  The comparison square therefore commutes.  We now use it
to show that $\gamma_{U\C}$ is left unital, which says that the diagram
\[
\xymatrix{
(U\C)_1\ar[r]^-{I_L}\ar[dr]_-{=}&(U\C)_2\ar[d]^-{\gamma_{U\C}}
\\&(U\C)_1}
\]
commutes.  Again, since $(U\C)_1$ is defined as a pullback, this diagram
commutes if and only if it does so after composition with $\kappa_1:(U\C)_1\to\C_1$
and $S:(U\C)_1\to\D_0\C_0$.  Composing with $\kappa_1$, we obtain the diagram
\[
\xymatrix{
(U\C)_1\ar[r]^-{I_L}\ar[d]_-{\kappa_1}&(U\C)_2\ar[r]^-{\gamma_{U\C}}\ar[d]^-{\kappa_2}
&(U\C)_1\ar[d]^-{\kappa_1}
\\\C_1\ar[r]_-{I_L}&\C_2\ar[r]_-{\gamma_\C}&\C_1.}
\]
Since the bottom row composes to $\id_{\C_1}$, we just get $\kappa_1$, as desired. 
Composing with $S$, we get
\[
\xymatrix{
(U\C)_1\ar[r]^-{I_L}\ar[dr]_-{\eta}\ar[dd]_-{S}&(U\C)_2\ar[r]^-{\gamma_{U\C}}\ar[d]^-{S}
&(U\C)_1\ar[dd]^-{S}
\\
&\D_0(U\C)_1\ar[d]^-{\D_0S}
\\
\D_0\C_0\ar[r]_-{\eta}&\D_0^2\C_0\ar[r]_-{\mu}&\D_0\C_0.}
\]
Again, the bottom row composes to the identity, so we just get $S$, as desired.  It follows
that $\gamma_{U\C}\circ I_L=\id_{(U\C)_1}$, so $\gamma_{U\C}$ is left unital.

To show that $\gamma_{U\C}$ is right (source) unital, we proceed in much the same way.  First,
we show that the square
\[
\xymatrix{
(U\C)_1\ar[r]^-{\kappa_1}\ar[d]_-{I_R}&\C_1\ar[d]^-{I_R}
\\(U\C)_2\ar[r]_-{\kappa_2}&\C_2}
\]
commutes, again by showing that the composites coincide after composing with both of
$S,T:\C_2\to\C_1$.  Composing with $T$, we obtain the diagram
\[
\xymatrix{&(U\C)_1\ar[r]^-{\kappa_1}\ar[d]^-{I_R}\ar[ddl]_-{=}
&\C_1\ar[ddr]^-{=}\ar[d]_-{I_R}
\\&(U\C)_2\ar[r]_-{\kappa_2}\ar[dl]^-{T}&\C_2\ar[dr]_-{T}
\\(U\C)_1\ar[rrr]_-{\kappa_1}&&&\C_1,}
\]
whose perimeter obviously commutes.  Composing with $S$, we obtain the diagram
\[
\xymatrix{
&&\D_0\C_0\ar[dr]^-{\xi_0}
\\&(U\C)_1\ar[r]_-{\kappa_1}\ar[d]^{I_R}\ar[ur]^-{S}\ar[dl]_-{S}
&\C_1\ar[r]_-{S}\ar[d]^-{I_R}&\C_0\ar[dd]^-{I_\C}
\\\D_0\C_0\ar[d]_-{\D_0I_{U\C}}&(U\C)_2\ar[r]_-{\kappa_2}\ar[dl]_-{S}&\C_2\ar[dr]^-{S}
\\\D_0(U\C)_1\ar[r]_-{\D_0\kappa_1}&\D_0\C_1\ar[r]_-{I_\D}&\D_1\C_1\ar[r]_-{\xi_1}&\C_1,}
\]
whose perimeter commutes by rearranging the insides as follows:
\[
\xymatrix{
(U\C)_1\ar[r]^-{S}&\D_0\C_0\ar[r]^-{\xi_0}\ar[dr]^-{I_{\D\C}}\ar[d]^-{\D_0I_\C}\ar[dl]_-{\D_0I_{U\C}}
&\C_0\ar[dr]^-{I_\C}
\\\D_0(U\C)_1\ar[r]_-{\D_0\kappa_1}&\D_0\C_1\ar[r]_-{I_\D}&\D_1\C_1\ar[r]_-{\xi_1}&\C_1.}
\]
The comparison square therefore commutes.  We use it to establish the right unital
condition for $\gamma_{U\C}$, which says that
\[
\xymatrix{
(U\C)_1\ar[r]^-{I_R}\ar[dr]_-{=}&(U\C)_2\ar[d]^-{\gamma_{U\C}}
\\&(U\C)_1}
\]
commutes.  As with the left unital condition, we compose with the two maps $\kappa_1:(U\C)_1\to\C_1$
and $S:(U\C)_1\to\D_0\C_0$ and verify that the resulting diagrams commute.  Composing
first with $\kappa_1$, we get the diagram
\[
\xymatrix{
(U\C)_1\ar[r]^-{I_R}\ar[d]_-{\kappa_1}&(U\C)_2\ar[r]^-{\gamma_{U\C}}\ar[d]^-{\kappa_2}
&(U\C)_1\ar[d]^-{\kappa_1}
\\\C_1\ar[r]_-{I_R}&\C_2\ar[r]_-{\gamma_\C}&\C_1,}
\]
and since the bottom row composes to $\id_{\C_1}$, we get the desired result.
Composing with $S$, we get the diagram
\[
\xymatrix{
(U\C)_1\ar[r]^-{I_R}\ar[dr]_-{\eta}\ar[dd]_-{S}&(U\C)_2\ar[r]^-{\gamma_{U\C}}\ar[d]^-{S}
&(U\C)_1\ar[dd]^-{S}
\\
&\D_0(U\C)_1\ar[d]^-{\D_0S}
\\
\D_0\C_0\ar[r]_-{\eta}&\D_0^2\C_0\ar[r]_-{\mu}&\D_0\C_0.}
\]
in which the bottom row composes to the identity.  This completes the check
of the right unital condition for $\gamma_{U\C}$.

It remains to verify associativity for $\gamma_{U\C}$.  
We will need the comparison maps $\kappa_2$ and $\kappa_3$
to be compatible with the two composition maps $\gamma_S,\gamma_T$
 in the sense that the two squares
 \[
 \xymatrix{
 (U\C)_3\ar[r]^-{\kappa_3}\ar[d]_-{\gamma_S}&\C_3\ar[d]^-{\gamma_S}
 \\(U\C)_2\ar[r]_-{\kappa_2}&\C_2}
 \quad\text{and}\quad
 \xymatrix{
 (U\C)_3\ar[r]^-{\kappa_3}\ar[d]_-{\gamma_T}&\C_3\ar[d]^-{\gamma_T}
 \\(U\C)_2\ar[r]_-{\kappa_2}&\C_2}
 \]
 both commute.  Despite their formal similarity, the second diagram requires more
 work to verify than the first one, so we begin with the first one.  Our strategy
 in both is to compose with the two maps $S,T:\C_2\to\C_1$ and verify the two
 resulting diagrams.  
 
Composing the compatibility diagram for $\gamma_S$ with $T$, we wish the perimeter of the diagram
\[
\xymatrix{
(U\C)_3\ar[r]^-{\kappa_3}\ar[d]_-{\gamma_S}&\C_3\ar[r]^-{T}\ar[d]^-{\gamma_S}&\C_2\ar[dd]^-{T}
\\(U\C)_2\ar[d]_-{T}\ar[r]_-{\kappa_2}&\C_2\ar[dr]^-{T}
\\(U\C)_1\ar[rr]_-{\kappa_1}&&\C_1}
\]
to commute, so we rearrange the interior to get the commutative diagram
\[
\xymatrix{
(U\C)_3\ar[rr]^-{\kappa_3}\ar[dr]^-{T}\ar[dd]_-{\gamma_S}&&\C_3\ar[d]^-{T}
\\&(U\C)_2\ar[r]^-{\kappa_2}\ar[d]_-{T}&\C_2\ar[d]^-{T}
\\(U\C)_2\ar[r]_-{T}&(U\C)_1\ar[r]_-{\kappa_1}&\C_1.}
\]
Composing instead with $S$, we wish the perimeter of the diagram
\[
\xymatrix{
(U\C)_3\ar[rr]^-{\kappa_3}\ar[d]_-{\gamma_S}&&\C_3\ar[r]^-{S}\ar[d]^-{\gamma_S}&\C_2\ar[dd]^-{\gamma}
\\(U\C)_2\ar[rr]_-{\kappa_2}\ar[d]_-{S}&&\C_2\ar[dr]^-{S}
\\\D_0(U\C)_1\ar[r]_-{\D_0\kappa_1}&\D_0\C_1\ar[r]_-{I_\D}&\D_1\C_1\ar[r]_-{\xi_1}&\C_1}
\]
to commute, but here we can rearrange the interior to obtain the commutative diagram
\[
\xymatrix{
(U\C)_3\ar[rrrr]^-{\kappa_3}\ar[dr]^-{S}\ar[dd]_-{\gamma_S}&&&&\C_3\ar[d]^-{S}
\\&\D_0(U\C)_2\ar[r]^-{\D_0\kappa_2}\ar[d]_-{\D_0\gamma}&\D_0\C_2\ar[r]^-{I_\D^2}\ar[d]^-{\D_0\gamma}
&\D_2\C_2\ar[r]^-{\xi_2}\ar[d]^-{\gamma_{\D\C}}&\C_2\ar[d]^-{\gamma}
\\(U\C)_2\ar[r]_-{S}&\D_0(U\C)_1\ar[r]_-{\D_0\kappa_1}&\D_0\C_1\ar[r]_-{I_\D}&\D_1\C_1\ar[r]_-{\xi_1}&\C_1.}
\]
Next, we verify the compatibility diagram for $\gamma_T$, and we first compose with $T$ to obtain
the diagram
\[
\xymatrix{
(U\C)_3\ar[r]^-{\kappa_3}\ar[d]_-{\gamma_T}&\C_3\ar[r]^-{T}\ar[d]^-{\gamma_T}&\C_2\ar[dd]^-{\gamma}
\\(U\C)_2\ar[r]_-{\kappa_2}\ar[d]_-{T}&\C_2\ar[dr]^-{T}
\\(U\C)_1\ar[rr]_-{\kappa_1}&&\C_1,}
\]
whose perimeter we want to commute.  However, we again rearrange the insides and get
\[
\xymatrix{
(U\C)_3\ar[rr]^-{\kappa_3}\ar[dd]_-{\gamma_T}\ar[dr]^-{T}&&\C_3\ar[d]^-{T}
\\&(U\C)_2\ar[r]^-{\kappa_2}\ar[d]_-{\gamma}&\C_2\ar[d]^-{\gamma}
\\(U\C)_2\ar[r]_-{T}&(U\C)_1\ar[r]_-{\kappa_1}&\C_1.}
\]
Composing with $S$, we find we wish the perimeter of
\[
\xymatrix{
(U\C)_3\ar[rr]^-{\kappa_3}\ar[d]_-{\gamma_T}&&\C_3\ar[r]^-{S}\ar[d]^-{\gamma_T}&\C_2\ar[dd]^-{S}
\\(U\C)_2\ar[rr]_-{\kappa_2}\ar[d]_-{S}&&\C_2\ar[dr]^-{S}
\\\D_0(U\C)_1\ar[r]_-{\D_0\kappa_1}&\D_0\C_1\ar[r]_-{I_\D}&\D_1\C_1\ar[r]_-{\xi_1}&\C_1}
\]
to commute.  We can fill in the interior with the following, in which we know
all but the large, irregular sub-diagram at the lower right commutes:
\[
\xymatrix{
(U\C)_3\ar[rrrr]^-{\kappa_3}\ar[dr]^-{S}\ar[ddd]_-{\gamma_T}&&&&\C_3\ar[d]^-{S}
\\&\D_0(U\C)_2\ar[r]^-{\D_0\kappa_2}\ar[d]_-{\D_0S}&\D_0\C_2\ar[r]_-{I_\D^2}
&\D_2\C_2\ar[r]_-{\xi_2}&\C_2\ar[dd]^-{S}
\\&\D_0^2(U\C)_1\ar[r]^-{\D_0^2\kappa_1}\ar[d]_-{\mu}&\D_0^2\C_1\ar[d]^-{\mu}
\\(U\C)_2\ar[r]_-{S}&\D_0(U\C)_1\ar[r]_-{\D_0\kappa_1}&\D_0\C_1\ar[r]_-{I_\D}&\D_1\C_1\ar[r]_-{\xi_1}&\C_1.}
\]
And the irregular sub-diagram can be expanded and filled in as follows:
\[
\xymatrix{
\D_0(U\C)_2\ar[rr]^-{\D_0\kappa_2}\ar[d]_-{\D_0S}&&\D_0\C_2\ar[r]^-{I_\D^2}\ar[dd]^-{\D_0S}
&\D_2\C_2\ar[r]^-{\xi_2}\ar[ddd]^-{S}&\C_2\ar[dddd]^-{S}
\\\D_0^2(U\C)_1\ar[d]_-{\D_0^2\kappa_1}
\\\D_0^2\C_1\ar[r]^-{\D_0I_\D}\ar[dd]_-{\mu}&\D_0\D_1\C_1\ar[r]^-{\D_0\xi_1}\ar[d]_-{I_\D}
&\D_0\C_1\ar[dr]^-{I_\D}
\\&\D_1^2\C_1\ar[rr]_-{\D_1\xi_1}\ar[d]^-{\mu_{\D_1}}&&\D_1\C_1\ar[dr]^-{\xi_1}
\\\D_0\C_1\ar[r]_-{I_\D}&\D_1\C_1\ar[rrr]_-{\xi_1}&&&\C_1.}
\]
We may conclude that the comparison diagram for $\gamma_T$ commutes.

Now we can verify the actual associativity diagram, which we do by again recognizing
the target of the diagram
\[
\xymatrix{
(U\C)_3\ar[r]^-{\gamma_T}\ar[d]_-{\gamma_S}&(U\C)_2\ar[d]^-{\gamma}
\\(U\C)_2\ar[r]_-{\gamma}&(U\C)_1}
\]
as a pullback, and composing with the two maps $S:(U\C)_1\to\D_0\C_0$ and
$\kappa_1:(U\C)_1\to\C_1$.  Composing with $\kappa_1$ results in the following diagram,
whose perimeter we wish to commute:
\[
\xymatrix{
(U\C)_3\ar[r]^-{\gamma_T}\ar[d]_-{\gamma_S}&(U\C)_2\ar[r]^-{\kappa_2}\ar[d]^-{\gamma}
&\C_2\ar[dd]^-{\gamma}
\\(U\C)_2\ar[r]_-{\gamma}\ar[d]_-{\kappa_2}&(U\C)_1\ar[dr]^-{\kappa_1}
\\\C_2\ar[rr]_-{\gamma}&&\C_1.}
\]
But this follows from the next diagram, in which we use the comparison diagrams
just verified:
\[
\xymatrix{
(U\C)_3\ar[rr]^-{\gamma_T}\ar[dd]_-{\gamma_S}\ar[dr]^-{\kappa_3}&&(U\C)_2\ar[d]^-{\kappa_2}
\\&\C_3\ar[r]^-{\gamma_T}\ar[d]_-{\gamma_S}&\C_2\ar[d]^-{\gamma}
\\(U\C)_2\ar[r]_-{\kappa_2}&\C_2\ar[r]_-{\gamma}&\C_1.}
\]
Composing instead with $S$, we get the following diagram, whose perimeter
we also wish to commute:
\[
\xymatrix{
(U\C)_3\ar[r]^-{\gamma_T}\ar[d]_-{\gamma_S}&(U\C)_2\ar[r]^-{S}\ar[d]^-{\gamma}
&\D_0(U\C)_1\ar[d]^-{\D_0S}
\\(U\C)_2\ar[r]_-{\gamma}\ar[d]_-{S}&(U\C)_1\ar[dr]^-{S}&\D_0^2\C_0\ar[d]^-{\mu}
\\\D_0M_1\ar[r]_-{\D_0S}&\D_0^2\C_0\ar[r]_-{\mu}&\D_0\C_0.}
\]
However, this follows from the commutativity of 
\[
\xymatrix{
(U\C)_3\ar[rrr]^-{\gamma_T}\ar[dr]^-{S}\ar[ddd]_-{\gamma_S}&&&(U\C)_2\ar[d]^-{S}
\\&\D_0(U\C)_2\ar[r]^-{\D_0S}\ar[dd]_-{\D_0\gamma}&\D_0^2(U\C)_1\ar[r]^-{\mu}\ar[d]_-{\D_0^2S}
&\D_0(U\C)_1\ar[d]^-{\D_0S}
\\&&\D_0^3\C_0\ar[r]^-{\mu}\ar[d]_-{\D_0\mu}&\D_0^2\C_0\ar[d]^-{\mu}
\\(U\C)_2\ar[r]_-{S}&\D_0(U\C)_1\ar[r]_-{\D_0S}&\D_0^2\C_0\ar[r]_-{\mu}&\D_0\C_0.}
\]
This completes the verification of associativity, and therefore $U\C$ has all the 
necessary properties of a $\D$-multicategory. 

\section{The provisional left adjoint: category structure}

In this section we show that the definition given for the provisional left adjoint
$\Lhat M$ for a $\D$-multicategory $M$ is actually a category.  Recall first that
the morphisms $\Lhat M_1$ are given by a pullback diagram
\[
\xymatrix{
\Lhat M_1\ar[r]^-{\That}\ar[dd]_-{\Shat}&\D_0M_1\ar[d]^-{S}
\\&\D_0^2M_0\ar[d]^-{\mu}
\\\D_1M_0\ar[r]_-{T_\D}&\D_0M_0,}
\]
so we can write
\[
\Lhat M_1\cong\D_0M_1\times_{\D_0M_0}\D_1M_0.
\]
We specify an identity map for $\Lhat M$ by means of the
commutative diagram
\[
\xymatrix{\D_0M_0\ar[r]^-{\D_0I}\ar[dd]_-{I_\D}&\D_0M_1\ar[d]^-{\D_0S}
\\&\D_0^2M_0\ar[d]^-{\mu}
\\\D_1M_0\ar[r]_-{T_\D}&\D_0M_0.}
\]
We see that both composites coincide with the identity on $\D_0M_0$, 
so the diagram commutes and we get a map to $\Lhat M_1$.  Further,
both composites
\[
\xymatrix{\D_0M_0\ar[r]^-{\D_0I}&\D_0M_1\ar[r]^-{\D_0T}&\D_0M_0}
\quad\text{and}\quad
\xymatrix{\D_0M_0\ar[r]^-{I_\D}&\D_1M_0\ar[r]^-{S_\D}&\D_0M_0}
\]
coincide with the identity on $\D_0M_0$, which verifies the necessary
properties for an identity map.

The remainder of this section is devoted to the formal properties of the 
composition on $\Lhat M$.  Recall that the composition map is defined
as the composite
\begin{gather*}
\Lhat M_2
\cong\D_0M_1\times_{\D_0M_0}\D_1M_0\times_{\D_0M_0}\D_0M_1\times_{\D_0M_0}\D_1M_0
\\
\xymatrix{\relax
&&\D_0M_1\times_{\D_0M_0}\D_1M_1\times_{\D_0M_0}\D_1M_0\ar[ll]_-{1\times(\D_1T,S_\D)\times1}^-{\cong}}
\\
\xymatrix{
\ar[rr]^-{1\times(T_\D,\theta)\times1}&&\D_0M_1\times_{\D_0M_0}\D_0M_1\times_{\D_0M_0}\D_1M_0\times_{\D_0M_0}\D_1M_0}
\\
\xymatrix{
\cong\D_0M_2\times_{\D_0M_0}\D_2M_0
\ar[rr]^-{\D_0\gamma_M\times\gamma_\D}&&\D_0M_1\times_{\D_0M_0}\D_1M_0\cong\Lhat M_1.}
\end{gather*}
We will often think of the first part of this construction as being given by an
interchange map
\[
\chi: \D_1M_0\times_{\D_0M_0}\D_0M_1\to\D_0M_1\times_{\D_0M_0}\D_1M_0
\]
given by the composite
\[
\xymatrix@C+10pt{
\D_1M_0\times_{\D_0M_0}\D_0M_1
&\D_1M_1\ar[l]_-{(\D_1T,S_\D)}^-{\cong}\ar[r]^-{(T_\D,\theta)}
&D_0M_1\times_{\D_0M_0}\D_1M_0,}
\]
where the map $\theta$ is the composite
\[
\xymatrix{
\D_1M_1\ar[r]^-{\D_1S}
&\D_1\D_0M_0\ar[r]^-{\D_1I_\D}
&\D_1^2M_0\ar[r]^-{\mu}
&\D_1M_0.}
\]
We must show that the composition preserves source and target maps, is left
and right unital, and is associative.  For preservation of source and target maps, 
we see from the defining diagram for $\Lhat M_1$ that we can write the target as
\[
\xymatrix{
\Lhat M_1\cong\D_0M_1\times_{\D_0M_0}\D_1M_0\ar[r]^-{p_1}
&\D_0M_1\ar[r]^-{\D_0T}&\D_0M_0.}
\]
From the same diagram, we can write the source as
\[
\xymatrix{
\Lhat M_1\cong\D_0M_1\times_{\D_0M_0}\D_1M_0\ar[r]^-{p_2}
&\D_1M_0\ar[r]^-{S_\D}&\D_0M_0.}
\]
We now verify preservation of targets from the commutativity of the
following diagram, where the subscript $\D_0M_0$'s have been 
suppressed in the interest of space:
\[
\xymatrix{
\D_0M_1\times\D_1M_0\times\D_0M_1\times\D_1M_0
\ar[rr]^-{p_{12}}\ar[d]_-{1\times\chi\times1}
&&\D_0M_1\times\D_1M_0\ar[dd]^-{p_1}
\\\D_0M_1\times\D_0M_1\times\D_1M_0\times\D_1M_0
\ar[r]^-{p_{12}}\ar[d]_-{\cong}
&\D_0M_1\times\D_0M_1\ar[dr]^-{p_1}\ar[d]_-{\cong}
\\\D_0M_2\times\D_2M_0\ar[r]^-{p_1}\ar[d]_-{\D_0\gamma\times\gamma_\D}
&\D_0M_2\ar[r]_-{\D_0T}\ar[d]_-{\D_0\gamma}&\D_0M_1\ar[d]^-{\D_0T}
\\\D_0M_1\times\D_1M_0\ar[r]_-{p_1}&\D_0M_1\ar[r]_-{\D_0T}&\D_0M_0.}
\]
Preservation of sources is a similar diagram:
\[
\xymatrix{
\D_0M_1\times\D_1M_0\times\D_0M_1\times\D_1M_0
\ar[rr]^-{p_{34}}\ar[d]_-{1\times\chi\times1}
&&\D_0M_1\times\D_1M_0\ar[dd]^-{p_2}
\\\D_0M_1\times\D_0M_1\times\D_1M_0\times\D_1M_0
\ar[r]^-{p_{34}}\ar[d]_-{\cong}
&\D_1M_0\times\D_1M_0\ar[dr]^-{p_2}\ar[d]_-{\cong}
\\\D_0M_2\times\D_2M_0\ar[r]^-{p_2}\ar[d]_-{\D_0\gamma\times\gamma_\D}
&\D_2M_0\ar[r]_-{S_\D}\ar[d]_-{\gamma_\D}&\D_1M_0\ar[d]^-{S_\D}
\\\D_0M_1\times\D_1M_0\ar[r]_-{p_2}&\D_1M_0\ar[r]_-{S_\D}&\D_0M_0.}
\]

The left unit map $I_L:\Lhat M_1\to\Lhat M_2$ is given by the composite
\begin{gather*}
\Lhat M_1\cong\D_0M_1\times_{\D_0M_0}\D_1M_0\cong
\D_0M_0\times_{\D_0M_0}\D_0M_1\times_{\D_0M_0}\D_1M_0
\\
\xymatrix{
\relax\ar[rr]^-{(\D_0I,I_\D)\times1}
&&\D_0M_1\times_{\D_0M_0}\D_1M_0\times_{\D_0M_0}\D_0M_1\times_{\D_0M_0}\D_1M_0
\cong\Lhat M_2,}
\end{gather*}
and similarly the right unit map $I_R$ is given by the composite
\begin{gather*}
\Lhat M_1\cong\D_0M_1\times_{\D_0M_0}\D_1M_0\cong
\D_0M_1\times_{\D_0M_0}\D_1M_0\times_{\D_0M_0}\D_0M_0
\\
\xymatrix{
\relax\ar[rr]^-{1\times(\D_0I,I_\D)}
&&\D_0M_1\times_{\D_0M_0}\D_1M_0\times_{\D_0M_0}\D_0M_1\times_{\D_0M_0}\D_1M_0
\cong\Lhat M_2.}
\end{gather*}
To show that $\gamma$ is unital,  we need to verify 
commutativity for
both triangles in the diagram
\[
\xymatrix{
\Lhat M_1\ar[r]^-{I_L}\ar[dr]_-{=}&\Lhat M_2\ar[d]_-{\gamma}&\Lhat M_1\ar[l]_-{I_R}\ar[dl]^-{=}
\\&\Lhat M_1.}
\]
This requires the following property of the interchange 
$\chi:\D_1M_0\times_{\D_0M_0}\D_0M_1\to\D_0M_1\times_{\D_0M_0}\D_1M_0$:

\begin{lemma}\label{composites}
The composites
\begin{gather*}
\xymatrix{
\D_0M_1\cong\D_0M_0\times_{\D_0M_0}\D_0M_1
\ar[r]^-{I_\D\times1}&\D_1M_0\times_{\D_0M_0}\D_0M_1}
\\
\xymatrix{
\relax\ar[r]^-{\chi}&\D_0M_1\times_{\D_0M_0}\D_1M_0
\ar[r]^-{p_1}&\D_0M_1}
\end{gather*}
and
\begin{gather*}
\xymatrix{
D_1M_0\cong\D_1M_0\times_{\D_0M_0}\D_0M_0
\ar[rr]^-{1\times\D_0I_M}&&\D_1M_0\times_{\D_0M_0}\D_0M_1}
\\
\xymatrix{
\relax\ar[r]^-{\chi}&\D_0M_1\times_{\D_0M_0}\D_1M_0
\ar[r]^-{p_2}&\D_1M_0}
\end{gather*}
both coincide with identity  maps on $\D_0M_1$ and $\D_1M_0$, 
respectively.
\end{lemma}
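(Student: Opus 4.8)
The plan is to compute both composites by exploiting the fact that the interchange $\chi$ is defined by running an isomorphism backwards. To evaluate $\chi$ on any map into its source $\D_1M_0\times_{\D_0M_0}\D_0M_1$, I would first lift that map along the isomorphism $(\D_1T,S_\D):\D_1M_1\to\D_1M_0\times_{\D_0M_0}\D_0M_1$ to a map with codomain $\D_1M_1$, and then read off $\chi$ as $(T_\D,\theta)$ applied to the lift. The two projections then collapse: $p_1\circ\chi$ becomes $T_\D$ evaluated on the lift, and $p_2\circ\chi$ becomes $\theta$ evaluated on the lift. So the whole problem reduces to identifying the correct lifts and then computing $T_\D$ and $\theta$ on them.

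For the first composite I claim the relevant lift is the identity-inclusion $I_\D:\D_0M_1\to\D_1M_1$ of the category object $\{\D_0,\D_1\}$ evaluated at $M_1$. To confirm this I would check that $(\D_1T,S_\D)\circ I_\D$ agrees with the starting map $(I_\D\circ\D_0T,\id):\D_0M_1\to\D_1M_0\times_{\D_0M_0}\D_0M_1$: the first coordinate uses naturality of $I_\D$ against $T:M_1\to M_0$, and the second uses $S_\D\circ I_\D=\id$, the source-of-an-identity law in the category object. Since $p_1\circ(T_\D,\theta)=T_\D$, the composite reduces to $T_\D\circ I_\D$, which is $\id_{\D_0M_1}$ by the target-of-an-identity law. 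This settles the first case.

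For the second composite the analogous lift is $\D_1I_M:\D_1M_0\to\D_1M_1$, where $I_M:M_0\to M_1$ is the unit of $M$. Here $(\D_1T,S_\D)\circ\D_1I_M$ matches the starting map $(\id,\D_0I_M\circ S_\D)$, since $\D_1T\circ\D_1I_M=\D_1(T\circ I_M)=\id$ because $I_M$ is a section of $T$, while $S_\D\circ\D_1I_M=\D_0I_M\circ S_\D$ by naturality of $S_\D$. Since $p_2\circ(T_\D,\theta)=\theta$, the composite reduces to $\theta\circ\D_1I_M$, and unravelling $\theta=\mu\circ\D_1I_\D\circ\D_1S$ gives $\mu\circ\D_1I_\D\circ\D_1(S\circ I_M)$.

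The main obstacle is the final identification $\theta\circ\D_1I_M=\id_{\D_1M_0}$, which is where the monad axioms genuinely enter. Applying the multicategory unit law $S\circ I_M=\eta$, with $\eta$ the unit of $\D_0$, turns the expression into $\mu\circ\D_1I_\D\circ\D_1\eta=\mu\circ\D_1(I_\D\circ\eta)$. Since the identity-inclusion $I_\D:\D_0\to\D_1$ is a map of monads, it carries the unit of $\D_0$ to the unit $\eta_{\D_1}$ of $\D_1$, so $I_\D\circ\eta=\eta_{\D_1}$ and the composite becomes $\mu\circ\D_1\eta_{\D_1}$, which is the identity by the right unit law for the monad $\D_1$. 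I expect this last step to need the most care, both in tracking which unit is which and in invoking the category-object structure as a map of monads; everything else is routine naturality together with the source- and target-of-identity laws.
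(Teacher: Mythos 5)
Your proof is correct and follows essentially the same route as the paper's: both arguments identify the lifts of the two starting maps through the isomorphism $(\D_1T,S_\D)$ as $I_\D:\D_0M_1\to\D_1M_1$ and $\D_1I_M:\D_1M_0\to\D_1M_1$ respectively, reduce the projections to $T_\D$ and $\theta$, and finish with $T_\D\circ I_\D=\id$ and $\theta\circ\D_1I_M=\id$ via the multicategory unit law $S\circ I_M=\eta$, the monad-map property $I_\D\circ\eta_{\D_0}=\eta_{\D_1}$, and the unit law $\mu\circ\D_1\eta_{\D_1}=\id$. The only difference is that you spell out the naturality checks for the lifts that the paper dismisses as ``easily'' seen from the pullback diagram.
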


\begin{proof}
The interchange map $\chi$ factors as
\[
\xymatrix{
\D_1M_0\times_{\D_0M_0}\D_0M_1\cong\D_1M_1
\ar[r]^-{(T_\D,\theta)}&\D_0M_1\times_{\D_0M_0}\D_1M_0,}
\]
and examining the pullback diagram giving the isomorphism
part of the composite shows easily that
\[
\xymatrix{
\D_0M_1\ar[r]^-{(I_\D,1)}&\D_1M_0\times_{\D_0M_0}\D_0M_1\cong\D_1M_1}
\]
coincides with $I_\D:\D_0M_1\to\D_1M_1$.  Now the composite
\[
\xymatrix{
\D_1M_1\ar[r]^-{(T_\D,\theta)}&\D_0M_1\times_{\D_0M_0}\D_1M_0\ar[r]^-{p_1}&\D_0M_1}
\]
coincides with $T_\D$, and since $T_\D\circ I_\D=\id_{\D_0M_1}$, we see
that the first composite is as claimed.  

For the second composite, we again examine the pullback diagram
giving $\D_1M_1\cong\D_1M_0\times_{\D_0M_0}\D_0M_1$ and conclude that
\[
\xymatrix{
\D_1M_0\ar[rr]^-{(1,\D_0I_M)}&&\D_1M_0\times_{\D_0M_0}\D_0M_1\cong\D_1M_1}
\]
coincides with $\D_1I_M$.  Now the conclusion about the second composite
follows from the commutative diagram
\[
\xymatrix{
\D_1M_1\ar[d]_-{\D_1S}
&\D_1M_0\ar[l]_-{\D_1I}\ar[dl]^-{\D_1\eta_{\D_0}}\ar[dr]_-{\D_1\eta_{\D_1}}\ar[r]^-{=}
&\D_1M_0
\\\D_1\D_0M_0\ar[rr]_-{\D_1I_\D}&&\D_1^2M_0.\ar[u]_-{\mu}}
\]
\end{proof}

Now the left unitality triangle commutes if and only if it does so after
composing with $p_1:\D_0M_1\times_{\D_0M_0}\D_1M_0\to\D_0M_1$
and $p_2:\D_0M_1\times_{\D_0M_0}\D_1M_0\to\D_1M_0$.  Composing with
$p_1$, we wish the composite
\begin{gather*}
\Lhat M_1\cong\D_0M_1\times_{\D_0M_0}\D_1M_0\cong
\D_0M_0\times_{\D_0M_0}\D_0M_1\times_{\D_0M_0}\D_1M_0
\\
\xymatrix{
\relax\ar[rr]^-{(\D_0I,I_\D)\times1}
&&\D_0M_1\times_{\D_0M_0}\D_1M_0\times_{\D_0M_0}\D_0M_1\times_{\D_0M_0}\D_1M_0}
\\
\xymatrix{
\relax\ar[rr]^-{1\times\chi\times1}
&&\D_0M_1\times_{\D_0M_0}\D_0M_1\times_{\D_0M_0}\D_1M_0\times_{\D_0M_0}\D_1M_0}
\\
\xymatrix{
\relax\ar[rr]^-{\D_0\gamma_M\times\gamma_\D}&&\D_0M_1\times_{\D_0M_0}\D_1M_0
\ar[r]^-{p_1}&\D_0M_1}
\end{gather*}
to coincide with just $p_1$.  The next diagram shows that we can project off the last factor
$\D_1M_0$ from the beginning, where we again have suppressed subscript $\D_0M_0$'s in the
interest of space:
\[
\xymatrix{
\D_0M_1\times\D_1M_0\ar[r]^-{p_1}\ar[d]_-{\cong}&\D_0M_1\ar[d]^-{\cong}
\\\D_0M_0\times\D_0M_1\times\D_1M_0\ar[r]^-{p_{12}}\ar[d]_-{(\D_0I,I_\D)\times1}
&\D_0M_0\times\D_0M_1\ar[d]^-{(\D_0I,I_\D)\times1}
\\\D_0M_1\times\D_1M_0\times\D_0M_1\times\D_1M_0\ar[r]^-{p_{123}}\ar[dd]^-{1\times\chi\times1}
&\D_0M_1\times\D_1M_0\times\D_0M_1\ar[d]^-{1\times\chi}
\\&\D_0M_1\times\D_0M_1\times\D_1M_0\ar[d]^-{p_{12}}
\\\D_0M_1\times\D_0M_1\times\D_1M_0\times\D_1M_0\ar[r]^-{p_{12}}\ar[d]_-{\D_0\gamma\times\gamma_\D}
&\D_0M_1\times\D_0M_1\ar[d]^-{\D_0\gamma}
\\\D_0M_1\times\D_1M_0\ar[r]_-{p_1}&\D_0M_1.}
\]
Now it suffices to have the right column in the above diagram compose to the identity
on $\D_0M_1$.  But the following diagram shows that the part before the final $\D_0\gamma$
coincides with $\D_0I_L$, since the right hand composite is $\id_{\D_0M_1}$ by Lemma \ref{composites}:
\[
\xymatrix{
&\D_0M_1\ar[dl]_-{\D_0T}\ar[d]^-{\cong}
\\\D_0M_0\ar[ddd]_-{\D_0I}&\D_0M_0\times\D_0M_1\ar[l]_-{p_1}\ar[dr]^-{I_\D\times1}\ar[d]_-{(\D_0I,I_\D)\times1}
\\&\D_0M_1\times\D_1M_0\times\D_0M_1\ar[r]_-{p_{23}}\ar[d]_-{1\times\chi}
&\D_1M_0\times\D_0M_1\ar[d]^-{\chi}
\\&\D_0M_1\times\D_0M_1\times\D_1M_0\ar[r]^-{p_{23}}
\ar[d]^-{p_{12}}
&\D_0M_1\times\D_1M_0\ar[d]_-{p_1}
\\\D_0M_1&\D_0M_1\times\D_0M_1\ar[l]^-{p_1}\ar[r]_-{p_2}&\D_0M_1.}
\]
Now composing, we have $\D_0\gamma\circ\D_0I_L=\id_{\D_0M_1}$ since $M$ is left unital.
This shows that $\gamma_{\Lhat M}$ is left unital after composing with $p_1$.

Showing that $\gamma_{\Lhat M}$ is left unital after composing with $p_2$
follows from the following diagram, in which we still need to verify the irregular
sub-diagram in the upper right, and as before, we suppress the subscript $\D_0M_0$'s:
\[
\xymatrix{
\D_0M_1\times\D_1M_0\ar[rr]^-{(\mu\circ\D_0S)\times1=p_2}\ar[d]_-{\cong}
&&\D_0M_0\times\D_1M_0\ar[ddd]^-{I_\D\times1}
\\\D_0M_0\times\D_0M_1\times\D_1M_0\ar[dr]^-{I_\D\times1}\ar[d]_-{(\D_0I,I_\D)\times1}
\\\D_0M_1\times\D_1M_0\times\D_0M_1\times\D_1M_0
\ar[r]_-{p_{234}}\ar[d]_-{1\times\chi\times1}
&\D_1M_0\times\D_0M_1\times\D_1M_0\ar[d]^-{\chi\times1}
\\\D_0M_1\times\D_0M_1\times\D_1M_0\times\D_1M_0
\ar[r]_-{p_{234}}\ar[d]_-{\D_0\gamma\times\gamma_\D}
&\D_0M_1\times\D_1M_0\times\D_1M_0\ar[r]_-{p_{23}}
&\D_1M_0\times\D_1M_0\ar[d]^-{\gamma_\D}
\\\D_0M_1\times\D_1M_0\ar[rr]_-{p_2}&&\D_1M_0.}
\]
The upper right sub-diagram is the product with $\D_1M_0$ of
a diagram that expresses another aspect of the interchange map $\chi$,
in which we also reflect across the main diagonal:
\[
\xymatrix{
\D_0M_1\ar[rr]^-{\cong}\ar[dr]^-{I_\D}\ar[dd]_-{\D_0S}
&&\D_0M_0\times_{\D_0M_0}\D_0M_1\ar[d]^-{I_\D\times1}
\\&\D_1M_1\ar[r]_-{\cong}\ar[d]^-{\D_1S}&\D_1M_0\times_{\D_0M_0}\D_0M_1\ar[ddd]^-{\chi}
\\\D_0^2M_0\ar[r]^-{I_\D}\ar[dr]_-{I_\D^2}\ar[dd]_-{\mu}&\D_1\D_0M_0\ar[d]^-{\D_1I_\D}
\\&\D_1^2M_0\ar[d]^-{\mu}
\\\D_0M_0\ar[r]_-{I_\D}&\D_1M_0&\D_0M_1\times_{\D_0M_0}\D_1M_0.\ar[l]^-{p_2}}
\]
We have now shown that the composition in $\Lhat M$ is left unital.

To show that the composition is right unital, we again compose with both
$p_1$ and $p_2$ and verify the resulting diagrams.  First composing with $p_2$, we
first project off the first factor by means of the following diagram:
\[
\xymatrix{
\D_0M_1\times\D_1M_0\ar[r]^-{p_2}\ar[d]_-{\cong}&\D_1M_0\ar[d]^-{\cong}
\\\D_0M_1\times\D_1M_0\times\D_0M_0\ar[r]^-{p_{23}}\ar[d]_-{1\times(\D_0I,I_\D)}
&\D_1M_0\times\D_0M_0\ar[d]^-{1\times(\D_0I,I_\D)}
\\\D_0M_1\times\D_1M_0\times\D_0M_1\times\D_1M_0\ar[r]^-{p_{234}}\ar[dd]_-{1\times\chi\times1}
&\D_1M_0\times\D_0M_1\times\D_1M_0\ar[d]^-{\chi\times1}
\\&\D_0M_1\times\D_1M_0\times\D_1M_0\ar[d]^-{p_{23}}
\\\D_0M_1\times\D_0M_1\times\D_1M_0\times\D_1M_0\ar[r]^-{p_{34}}\ar[d]_-{\D_0\gamma\times\gamma_\D}
&\D_1M_0\times\D_1M_0\ar[d]^-{\gamma_\D}
\\\D_0M_1\times\D_1M_0\ar[r]_-{p_2}&\D_1M_0.}
\]
It now suffices to show that the right column composes to the identity, but the part
before the $\gamma_\D$ coincides with the right unit map for $\D(M_0^\delta)$, because
of the following diagram, in which the right composite is the identity on $\D_1M_0$ by the
second part of Lemma \ref{composites}:
\[
\xymatrix{
&\D_1M_0\ar[dl]_-{S_\D}\ar[d]^-{\cong}
\\\D_0M_0\ar[ddd]_-{I_\D}
&\D_1M_0\times\D_0M_0
\ar[l]^-{p_2}
\ar[d]_-{1\times(\D_0I,I_\D)}
\ar[dr]^-{1\times\D_0I}
\\&\D_1M_0\times\D_0M_1\times\D_1M_0\ar[r]_-{p_{12}}\ar[d]_-{\chi\times1}
&\D_1M_0\times\D_0M_1\ar[d]^-{\chi}
\\&\D_0M_1\times\D_1M_0\times\D_1M_0\ar[r]_-{p_{12}}\ar[d]_-{p_{23}}
&\D_0M_1\times\D_1M_0\ar[d]^-{p_2}
\\\D_1M_0&\D_1M_0\times\D_1M_0\ar[l]^-{p_2}\ar[r]_-{p_1}&\D_1M_0.}
\]
The composite with $\gamma_\D$ is therefore the identity, showing that $\gamma_{\Lhat M}$
is right unital after composing with $p_2$.

Composing with $p_1$, we get a diagram similar to that for the left unital property composed
with $p_2$, namely
\[
\xymatrix{
\D_0M_1\times\D_1M_0\ar[rr]^-{1\times T_\D=p_1}\ar[d]_-{\cong}
&&\D_0M_1\times\D_0M_0\ar[ddd]^-{1\times\D_0I}
\\\D_0M_1\times\D_1M_0\times\D_0M_0\ar[d]_-{1\times(\D_0I,I_\D)}\ar[dr]^-{1\times\D_0I}
\\\D_0M_1\times\D_1M_0\times\D_0M_1\times\D_1M_0
\ar[r]_-{p_{123}}\ar[d]_-{1\times\chi\times1}
&\D_0M_1\times\D_1M_0\times\D_0M_1\ar[d]^-{1\times\chi}
\\\D_0M_1\times\D_0M_1\times\D_1M_0\times\D_1M_0\ar[r]_-{p_{123}}\ar[d]_-{\D_0\gamma\times\gamma_\D}
&\D_0M_1\times\D_0M_1\times\D_1M_0\ar[r]_-{p_{12}}
&\D_0M_1\times\D_0M_1\ar[d]^-{\D_0\gamma}
\\\D_0M_1\times\D_1M_0\ar[rr]_-{p_1}&&\D_0M_1.}
\]
Again, we need to verify the large sub-diagram in the upper right, but that follows from
the commutativity of
\[
\xymatrix{
\D_1M_0\ar[rr]^-{\cong}\ar[dd]_-{T_\D}\ar[dr]^-{\D_1I}&&\D_1M_0\times_{\D_0M_0}\D_0M_0\ar[d]^-{1\times\D_0I}
\\&\D_1M_1\ar[r]^-{\cong}\ar[d]_-{T_\D}&\D_1M_0\times_{\D_0M_0}\D_0M_1\ar[d]^-{\chi}
\\\D_0M_0\ar[r]_-{\D_0I}&\D_0M_1&\D_0M_1\times_{\D_0M_0}\D_1M_0\ar[l]^-{p_1}}
\]
after crossing on the left with $\D_0M_1$.
We conclude that $\gamma_{\Lhat M}$ is right unital.

It remains to show that $\gamma_{\Lhat M}$ is associative.  This reduces to showing 
commutativity for each of the sub-diagrams of a square diagram with four
sub-squares; however, the diagram is too large to fit onto a page, so we display the
left half and right half separately.  The left half is
\[
\xymatrix{
(\D_0M_1\times\D_1M_0)^3
\ar[r]^-{1\times\chi\times1^3}\ar[d]_-{1^3\times\chi\times1}
&(\D_0M_1)^2\times(\D_1M_0)^2\times\D_0M_1\times\D_1M_0
\ar[d]^-{1\times\chi^2\times1}
\\\D_0M_1\times\D_1M_0\times(\D_0M_1)^2\times(\D_1M_0)^2
\ar[r]^-{1\times\chi^2\times1}\ar[d]_-{1\times\D_0\gamma\times\gamma_\D}
&(\D_0M_1)^3\times(\D_1M_0)^3
\ar[d]^-{\D_0\gamma_S\times(\gamma_\D)_S}
\\\D_0M_1\times\D_1M_0\times\D_0M_1\times\D_1M_0
\ar[r]_-{1\times\chi\times1}
&(\D_0M_1)^2\times(\D_1M_0)^2,
}
\]
where as before we suppress subscript $\D_0M_0$'s, and the right half is
\[
\xymatrix@C+15pt{
(\D_0M_1)^2\times(\D_1M_0)^2\times\D_0M_1\times\D_1M_0
\ar[r]^-{\D_0\gamma\times\gamma_\D\times1}
\ar[d]_-{1\times\chi^2\times1}
&(\D_0M_1\times\D_1M_0)^2
\ar[d]^-{1\times\chi\times1}
\\
(\D_0M_1)^3\times(\D_1M_0)^3
\ar[r]^-{\D_0\gamma_T\times(\gamma_\D)_T}
\ar[d]_-{\D_0\gamma_S\times(\gamma_\D)_S}
&(\D_0M_1)^2\times(\D_1M_0)^2
\ar[d]^-{\D_0\gamma\times\gamma_\D}
\\
(\D_0M_1)^2\times(\D_1M_0)^2
\ar[r]_-{\D_0\gamma\times\gamma_\D}
&\D_0M_1\times\D_1M_0.
}
\]
The top half of the left diagram reduces to a diagram on the inner four
factors, and both ways around the square then reduce to the composite
\[
\xymatrix{
\D_1M_0\times\D_0M_1\times\D_1M_0\times\D_0M_1\ar[d]^-{\chi\times\chi}
\\\D_0M_1\times\D_1M_0\times\D_0M_1\times\D_1M_0\ar[d]^-{1\times\chi\times1}
\\\D_0M_1\times\D_0M_1\times\D_1M_0\times\D_1M_0.}
\]
The bottom half of the right diagram commutes because $M$ is 
a multicategory and $\D$ is a category object in monads on $\Set$.
This leaves us with the two other sub-squares.

We will continue to suppress subscript $\D_0M_0$'s for the remainder of this section.
For the lower left sub-square, we will need the fact,
to be verified, that the
interchange $\chi$ commutes with the composition in $M$, in the
sense that the diagram
\[
\xymatrix{
\D_1M_0\times\D_0M_1\times\D_0M_1\ar[r]^-{\chi\times1}\ar[d]_-{1\times\D_0\gamma}
&\D_0M_1\times\D_1M_0\times\D_0M_1\ar[r]^-{1\times\chi}
&\D_0M_1\times\D_0M_1\times\D_1M_0\ar[d]^-{\D_0\gamma\times1}
\\\D_1M_0\times\D_0M_1\ar[rr]_-{\chi}&&\D_0M_1\times\D_1M_0}
\]
commutes.  We need to recall that the interchange is 
given by the composite of an isomorphism to $\D_1M_1$ with the map
\[
\xymatrix{
\D_1M_1\ar[r]^-{(T_\D,\theta)}
&\D_0M_1\times_{\D_0M_0}\D_1M_0,}
\]
where the map $\theta$ is the composite
\[
\xymatrix{
\D_1M_1\ar[r]^-{\D_1S}
&\D_1\D_0M_0\ar[r]^-{\D_1I_\D}
&\D_1^2M_0\ar[r]^-{\mu}
&\D_1M_0.}
\]
We also agree to write $\theta$ for the related composite
\[
\xymatrix{
D_1M_2\ar[r]^-{\D_1S}&\D_1\D_0M_1\ar[r]^-{\D_1I_\D}&\D_1^2M_1\ar[r]^-{\mu}&\D_1M_1.}
\]
Now turning our diagram on its side, we can fill it in as follows with the sub-diagrams
still to be explained and verified:
\[
\xymatrix@C-20pt{
\D_1M_0\times\D_0M_1\times\D_0M_1
\ar[rrr]^-{1\times\D_0\gamma}\ar[dr]^-{\cong}\ar[dd]_-{\chi\times1}
&&&\D_1M_0\times\D_0M_1\ar[ddd]^-{\chi}
\\&\D_1M_2\ar[r]^-{\D_1\gamma}\ar[dr]_-{(T_\D,\theta^2)}\ar[d]_-{(T_\D\circ\D_1T,\theta)}
&\D_1M_1\ar[ur]^-{\cong}
\ar[ddr]^-{(T_\D,\theta)}
\\\D_0M_1\times\D_1M_0\times\D_0M_1
\ar[r]^-{\cong}\ar[dr]_-{1\times\chi}
&\D_0M_1\times\D_1M_1
\ar[d]^-{1\times(T_\D,\theta)}
&\D_0M_2\times\D_1M_0
\ar[dr]_-{\D_0\gamma\times1}\ar[dl]^-{\cong}
\\&\D_0M_1\times\D_0M_1\times\D_1M_0\ar[rr]_-{\D_0\gamma\times1}
&&\D_0M_1\times\D_1M_0.}
\]

The first thing we explain is the isomorphism out of
the upper left corner
\[
\D_1M_0\times\D_0M_1\times\D_0M_1\cong\D_1M_2,
\]
where we have suppressed subscript $\D_0M_0$'s.  This is a consequence
\ignore
of our known isomorphism $\D_1M_0\times_{\D_0M_0}\D_0M_1\cong\D_1M_1$, together
with the following diagram in which we claim all the squares are pullbacks:
\[
\xymatrix{
\D_1M_2\ar[r]^-{\D_1T}\ar[d]_-{\D_1S}&\D_1M_1\ar[d]^-{\D_1S}
\\\D_1\D_0M_1\ar[r]^-{\D_1\D_0T}\ar[d]_-{\D_1I_\D}&\D_1\D_0M_0\ar[d]^-{\D_1I_\D}
\\\D_1^2M_1\ar[r]^-{\D_1^2T}\ar[d]_-{\mu}&\D_1^2M_0\ar[d]^-{\mu}
\\\D_1M_1\ar[r]^-{\D_1T}\ar[d]_-{S_\D}&\D_1M_0\ar[d]^-{S_\D}
\\\D_0M_1\ar[r]_-{\D_0T}&\D_0M_0.}
\]
The top square is a pullback from the definition of $M_2$ and the fact
that $\D_1$ preserves pullbacks, the bottom square is a pullback since
$T:M_1^\delta\to M_0^\delta$ is a cover and $\D$ preserves covers,
and the square with the $\mu$'s is a pullback since $\D_1$ is Cartesian.
The remaining square is the application of $\D_1$ to a square that is
a pullback by the following lemma.

\begin{lemma}
For any map of sets $f:X\to Y$, the naturality square
\[
\xymatrix{
\D_0X\ar[r]^-{\D_0f}\ar[d]_-{I_\D}&\D_0Y\ar[d]^-{I_\D}
\\\D_1X\ar[r]_-{\D_1f}&\D_1Y}
\]
is a pullback.
\end{lemma}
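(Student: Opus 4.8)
The plan is to deduce the lemma from the fact that $\D$ preserves covers, Theorem~\ref{preservescovers}, applied to the discrete categories generated by $X$ and $Y$. The set map $f$ determines a functor $f:X^\delta\to Y^\delta$ between discrete categories, and every functor between discrete categories is a cover (both its source and target squares are trivially pullbacks, since $S=T=\id$). Hence by Theorem~\ref{preservescovers} the functor $\D f:\D(X^\delta)\to\D(Y^\delta)$ is a cover, in particular a target cover. Now the objects of $\D(X^\delta)$ are $\D_0X$ and its morphisms are $\D_1X$, with identity-assigning map $I_\D:\D_0X\to\D_1X$, and likewise for $Y$; moreover the object and morphism maps of $\D f$ are $\D_0f$ and $\D_1f$. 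Thus the naturality square in the statement is precisely the square comparing the identity-assigning maps of $\D(X^\delta)$ and $\D(Y^\delta)$ through the target cover $\D f$, and it suffices to prove a general fact about target covers.

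I would isolate the following claim: for any target cover $F:\C\to\C'$, the square
\[
\xymatrix{
\C_0\ar[r]^-{F_0}\ar[d]_-{I}&\C'_0\ar[d]^-{I}
\\\C_1\ar[r]_-{F_1}&\C'_1}
\]
is a pullback, where $I$ is the identity-assigning map in each category. To prove it I would use the defining isomorphism of a target cover, $(T,F_1):\C_1\xrightarrow{\cong}\C_0\times_{\C'_0}\C'_1$, the same one exploited in the proof of Theorem~\ref{equivalentpresheaves}. Under this isomorphism the section $I:\C_0\to\C_1$ is carried to $c\mapsto(c,\id_{F_0(c)})$, since $T(\id_c)=c$ and $F_1(\id_c)=\id_{F_0(c)}$. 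Given a test cone consisting of $g$ into $\C'_0$ and $h$ into $\C_1$ with $I\circ g=F_1\circ h$, writing $h=(Th,F_1h)$ under the isomorphism shows that the cone condition forces $F_1h=\id_{g}$ and hence, by the pullback constraint on $h$, that $g=F_0\circ Th$. The unique filler into $\C_0$ is then forced to be $Th$, and one checks directly that $I\circ(Th)=h$ and $F_0\circ(Th)=g$, so the square is a pullback.

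Applying this claim to $F=\D f$, which the first paragraph identified as a target cover, produces exactly the naturality square of the lemma, completing the argument. The step I expect to require the most care is the bookkeeping in the first paragraph: one must confirm that the morphism part of the functor $f:X^\delta\to Y^\delta$ is again $f$ (so that $(\D f)_1=\D_1f$), that $(\D f)_0=\D_0f$, and that the identity-assigning map of $\D(X^\delta)$ is the natural transformation $I_\D$. Each of these is routine once one recalls that in a discrete category the object set and morphism set coincide; granting them, the general cover fact does all the real work, and no appeal to $\Sigma$-freeness is needed beyond what is already packaged inside Theorem~\ref{preservescovers}.
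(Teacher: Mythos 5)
Your argument is correct, but it takes a genuinely different route from the paper's. The paper proves the lemma by a direct element chase using $\Sigma$-freeness: writing $[\phi,x]\in\D_1X$ and $[a,y]\in\D_0Y$ for elements with $[\phi,fx]=[\id_a,y]$, freeness yields a unique $\sigma\in\Sigma_n$ with $\phi=\id_a\cdot\sigma=\id_{a\cdot\sigma}$, hence $y=f(\sigma\cdot x)$, so $[a,\sigma\cdot x]\in\D_0X$ is the required lift; uniqueness holds because $I_\D$ is injective, being split by $T_\D$ and $S_\D$. You instead quarantine all use of $\Sigma$-freeness inside Theorem~\ref{preservescovers} and reduce the lemma to a purely categorical fact: for any target cover $F\colon\C\to\C'$, the identity-assigning square is a pullback. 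Your verification of that fact is sound --- any filler is forced to equal $T\circ h$ since $T$ splits $I$, and the one nontrivial check, $I\circ(Th)=h$, follows from injectivity of $(T,F_1)\colon\C_1\to\C_0\times_{\C'_0}\C'_1$, both morphisms having the same target and the same $F_1$-image $\id_{g}$. The bookkeeping identifications you single out ($(\D f)_0=\D_0 f$, $(\D f)_1=\D_1 f$, and the identity map of $\D(X^\delta)$ being $I_\D$) are exactly the ones the paper relies on implicitly, including in its own proof of this statement, so they are safe. Your route buys reusability and consistency with the paper's practice nearby in the same argument, where pullback squares such as $\D_1M_1\cong\D_1M_0\times_{\D_0M_0}\D_0M_1$ are deduced precisely from the fact that functors between discrete categories are covers and $\D$ preserves covers; your general target-cover fact would sit naturally among the results of Section~\ref{presheafproofs}. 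The paper's route buys self-containment: it exhibits the lift explicitly at the level of operad elements, makes visible exactly where freeness enters, and needs no auxiliary lemma about covers.
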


\begin{proof}
Suppose given elements
\[
[\phi,x]\in\D_1X=\coprod_{n\ge0}(\DD_n)_1\times_{\Sigma_n}X^n
\quad\text{and}\quad
[a,y]\in\D_0Y=\coprod_{n\ge0}(\DD_n)_0\times_{\Sigma_n}Y^n,
\]
with the understanding that $x$ and $y$ are lists of elements, such that
$\D_1f[\phi,x]=I_\D[a,y]$, that is, 
\[
[\phi,fx]=[\id_a,y].
\]
Then since $\DD$ is $\Sigma$-free, there is a unique $\sigma\in\Sigma_n$
such that $\phi=\id_a\cdot\sigma=\id_{a\cdot\sigma}$, where the
last equality follows from $\DD$ being a $\Cat$-operad.  Now we have
\[
[\phi,fx]=[\id_{a\cdot\sigma},fx]=[\id_a,\sigma\cdot fx]
=[\id_a,f(\sigma\cdot x)].
\]
Since this coincides with $[\id_a,y]$, it follows from freeness that
$y=f(\sigma\cdot x)$.  Now we see that the element
$
[a,\sigma\cdot x]$ will map to both elements we started with,
showing existence.  Uniqueness follows from the fact that $I_\D$
is injective, having both $T_\D$ and $S_\D$ as left inverses.
\end{proof}
\endignore
of the following diagram, in which all the squares are pullbacks:
\[
\xymatrix{
\D_1M_2\ar[r]^-{\D_1T}\ar[d]_-{S_\D}
&\D_1M_1\ar[r]^-{\D_1T}\ar[d]^-{S_\D}
&\D_1M_0\ar[d]^-{S_\D}
\\\D_0M_2\ar[r]^-{\D_0T}\ar[d]_-{\D_0S}
&\D_0M_1\ar[r]_-{\D_0T}\ar[d]^-{\D_0S}
&\D_0M_0
\\\D_0^2M_1\ar[r]^-{\D_0^2T}\ar[d]_-{\mu}
&\D_0^2M_0\ar[d]^-{\mu}
\\\D_0M_1\ar[r]_-{\D_0T}&\D_0M_0.}
\]
Now since the upper right isomorphism $\D_1M_1\cong\D_1M_0\times\D_0M_1$
is given by $(\D_1T,S_\D)$, the top part of the diagram is a consequence of
the commutative squares
\[
\xymatrix{
\D_1M_2\ar[r]^-{\D_1\gamma}\ar[d]_-{\D_1T}&\D_1M_1\ar[d]^-{\D_1T}
\\\D_1M_1\ar[r]_-{\D_1T}&\D_1M_0}
\quad\text{and}\quad
\xymatrix{
\D_1M_2\ar[r]^-{\D_1\gamma}\ar[d]_-{S_\D}&\D_1M_1\ar[d]^-{S_\D}
\\\D_0M_2\ar[r]_-{\D_0\gamma}&\D_0M_1.}
\]
For the left part of the diagram, the same display of pullback squares shows
us that the inverse of the upper left isomorphism can be expressed as
\[
(\D_1T,\mu\circ\D_0S\circ S_\D):\D_1M_2\to\D_1M_1\times_{\D_0M_0}\D_0M_1,
\]
while the inverse of the isomorphism below it is induced by
\[
(\D_1T,S_\D):\D_1M_1\to\D_1M_0\times_{\D_0M_0}\D_0M_1.
\]
Meanwhile, we recall that the interchange $\chi$ is given by
\[
(T_\D,\theta):\D_1M_1\to\D_0M_1\times_{\D_0M_0}\D_1M_0.
\]
Now proceeding from $\D_1M_2$ both ways to $\D_0M_1\times\D_1M_0\times\D_0M_1$ and
projecting to the left $\D_0M_1$, we see that both ways coincide with $T_\D\circ\D_1T$.
Projecting to $\D_1M_0$, we find we require $\theta\circ T_\D=T_\D\circ\theta$,
but that is expressed by the commutative diagram
\[
\xymatrix{
\D_1M_2\ar[r]^-{D_1T}\ar[d]_-{\D_1S}&\D_1M_1\ar[d]^-{\D_1S}
\\\D_1\D_0M_1\ar[r]^-{\D_1\D_0T}\ar[d]_-{\D_1I_\D}&\D_1\D_0M_0\ar[d]^-{\D_1I_\D}
\\\D_1^2M_1\ar[r]^-{\D_1^2T}\ar[d]_-{\mu}&\D_1^2M_0\ar[d]^-{\mu}
\\\D_1M_1\ar[r]_-{\D_1T}&\D_1M_0.}
\]
And projecting to the right hand $\D_0M_1$ requires 
\[
\mu\circ\D_0S\circ S_\D=\S_\D\circ\theta,
\]
but that follows from the following commutative diagram:
\[
\xymatrix{
\D_1M_2\ar[r]^-{S_\D}\ar[d]_-{\D_1S}&\D_0M_2\ar[dd]^-{\D_0S}
\\\D_1\D_0M_1\ar[dr]^-{S_\D}\ar[d]_-{\D_0I_\D}
\\\D_1^2M_1\ar[r]_-{S_\D^2}\ar[d]_-{\mu}&\D_0^2M_1\ar[d]^-{\mu}
\\\D_1M_1\ar[r]_-{S_\D}&\D_0M_1.}
\]
The left quadrilateral of the diagram therefore commutes.

The bottom left triangle commutes by the definition of $\chi$.

We come next to the (somewhat distorted) square 
\[
\xymatrix@C+20pt{
\D_1M_2\ar[r]^-{(T_\D,\theta^2)}\ar[d]_-{(T_\D\circ\D_1T,\theta)}&\D_0M_2\times\D_1M_0\ar[d]^-{\cong}
\\\D_0M_1\times\D_1M_1\ar[r]_-{1\times(T_\D,\theta)}&\D_0M_1\times\D_0M_1\times\D_1M_0.}
\]
The right vertical isomorphism is a consequence of the diagram
of pullback squares
\[
\xymatrix{
\D_0M_2\ar[r]^-{\D_0T}\ar[d]_-{\D_0S}&\D_0M_1\ar[d]^-{\D_0S}
\\\D_0^2M_1\ar[r]^-{\D_0^2T}\ar[d]_-{\mu}&\D_0^2M_0\ar[d]^-{\mu}
\\\D_0M_1\ar[r]_-{\D_0T}&\D_0M_0,}
\]
so can be expressed as
\[
(D_0T,\mu\circ\D_0S):\D_0M_2\to\D_0M_1\times_{\D_0M_0}\D_0M_1.
\]
We see immediately that the desired square commutes after projecting to
the first $\D_0M_1$ or the $\D_1M_0$.  Projecting to the middle $\D_0M_1$, we 
require
\[
\mu\circ\D_0S\circ T_\D=T_\D\circ\theta,
\]
but that follows from the commutativity of
\[
\xymatrix{
\D_1M_2\ar[r]^-{T_\D}\ar[d]_-{\D_1S}&\D_0M_2\ar[dd]^-{\D_0S}
\\\D_1\D_0M_1\ar[dr]^-{T_\D}\ar[d]_-{\D_1I_\D}
\\\D_1^2M_1\ar[r]_-{T_\D^2}\ar[d]_-{\mu}&\D_0^2M_1\ar[d]^-{\mu}
\\\D_1M_1\ar[r]_-{T_\D}&\D_0M_1.}
\]

The triangle at the bottom of our desired diagram simply expresses the
definition of $\D_0\gamma$.

Now we have another somewhat distorted square to verify, namely
\[
\xymatrix@C+15pt{
\D_1M_2\ar[r]^-{\D_1\gamma}\ar[d]_-{(T_\D,\theta^2)}&\D_1M_1\ar[d]^-{(T_\D,\theta)}
\\\D_0M_2\times\D_1M_0\ar[r]_-{\D_0\gamma\times1}&\D_0M_1\times\D_1M_0,}
\]
as usual with subscript $\D_0M_0$'s suppressed.  Projecting to the factor
of $\D_0M_1$, this is just the naturality of $T_\D$ with respect to $\gamma:M_2\to M_1$.
Projecting to the factor of $\D_1M_0$, we need to verify that the square
\[
\xymatrix{
\D_1M_2\ar[r]^-{\D_1\gamma}\ar[d]_-{\theta}&\D_1M_1\ar[d]^-{\theta}
\\\D_1M_1\ar[r]_-{\theta}&\D_1M_0}
\]
commutes.  This follows from its expansion as follows, using the definition of $\theta$:
\[
\xymatrix{
\D_1M_2\ar[rrr]^-{\D_1\gamma}\ar[d]_-{\D_1S}
&&&\D_1M_1\ar[d]^-{\D_1S}
\\\D_1\D_0M_1\ar[r]^-{\D_1\D_0S}\ar[d]_-{\D_1I_\D}
&\D_1\D_0^2M_0\ar[rr]^-{\D_1\mu}\ar[d]^-{\D_1I_\D}
&&\D_1\D_0M_0\ar[d]^-{\D_1I_\D}
\\\D_1^2M_1\ar[r]^-{\D_1^2S}\ar[d]_-{\mu}
&\D_1^2M_0\ar[r]^-{\D_1^2I_\D}\ar[d]^-{\mu}
&\D_1^3M_0\ar[r]^-{\D_1\mu}\ar[d]_-{\mu}
&\D_1^2M_0\ar[d]^-{\mu}
\\\D_1M_1\ar[r]_-{\D_1S}
&\D_1\D_0M_0\ar[r]_-{\D_1I_\D}
&\D_1^2M_0\ar[r]_-{\mu}
&\D_1M_0.}
\]
The final right hand triangle simply exhibits the definition of $\chi$.  This completes
the verification that $\chi$ respects the multiplication $\D_0\gamma$.

We return now to the lower part of the left half of the associativity diagram,
\[
\xymatrix{
\D_0M_1\times\D_1M_0\times(\D_0M_1)^2\times(\D_1M_0)^2
\ar[r]^-{1\times\chi^2\times1}\ar[d]_-{1\times\D_0\gamma\times\gamma_\D}
&(\D_0M_1)^3\times(\D_1M_0)^3
\ar[d]^-{\D_0\gamma_S\times(\gamma_\D)_S}
\\\D_0M_1\times\D_1M_0\times\D_0M_1\times\D_1M_0
\ar[r]_-{1\times\chi\times1}
&(\D_0M_1)^2\times(\D_1M_0)^2,
}
\]
and note that $\D_0\gamma_S$ is really just
\[
1\times\D_0\gamma:\D_0M_1\times(\D_0M_1)^2\to\D_0M_1\times\D_0M_1,
\]
where we have again suppressed subscript $\D_0M_0$'s.  Similarly, $(\gamma_\D)_S$
is also really just $1\times\gamma_\D$, so this diagram is just the identity on the left
$\D_0M_1$, $\gamma_\D$ on the right two $\D_1M_0$'s, and the rest is the 
diagram we just verified.  This concludes the verification that the lower left square of
the associativity diagram commutes.

For the upper right square, 
\[
\xymatrix@C+15pt{
(\D_0M_1)^2\times(\D_1M_0)^2\times\D_0M_1\times\D_1M_0
\ar[r]^-{\D_0\gamma\times\gamma_\D\times1}
\ar[d]_-{1\times\chi^2\times1}
&(\D_0M_1\times\D_1M_0)^2
\ar[d]^-{1\times\chi\times1}
\\
(\D_0M_1)^3\times(\D_1M_0)^3
\ar[r]^-{\D_0\gamma_T\times(\gamma_\D)_T}
&(\D_0M_1)^2\times(\D_1M_0)^2,
}
\]
we note that, similarly to the above, 
\[
\D_0\gamma_T=\D_0\gamma\times1
\quad\text{and}\quad
(\gamma_\D)_T=\gamma_\D\times1.
\]
Consequently the diagram decomposes to just $\D_0\gamma$ on the 
first two factors, and $\id_{\D_1M_0}$ on the last one, leaving us with the
diagram
\[
\xymatrix{
\D_1M_0\times\D_1M_0\times\D_0M_1\ar[r]^-{\gamma_\D\times1}\ar[d]_-{1\times\chi}
&\D_1M_0\times\D_0M_1\ar[dd]^-{\chi}
\\\D_1M_0\times\D_0M_1\times\D_1M_0\ar[d]_-{\chi\times1}
\\\D_0M_1\times\D_1M_0\times\D_1M_0\ar[r]_-{1\times\gamma_\D}
&\D_0M_1\times\D_1M_0}
\]
to verify.  We fill it in with an interior analogous to the one we used
for the lower left square:
\[
\xymatrix@C-20pt{
\D_1M_0\times\D_1M_0\times\D_0M_1\ar[rrr]^-{\gamma_\D\times1}
\ar[dd]_-{1\times\chi}
&&&\D_1M_0\times\D_0M_1\ar[ddd]^-{\chi}
\\&\D_2M_1
\ar[ul]_-{\cong}\ar[r]^-{\gamma_\D}
\ar[dr]^-{(T_\D^2,\hat{\theta})}
\ar[d]_-{(T_\D,\theta\circ S_\D)}
&\D_1M_1\ar[ur]^-{\cong}\ar[ddr]^-{(T_\D,\theta)}
\\\D_1M_0\times\D_0M_1\times\D_1M_0\ar[dr]_-{\chi\times1}
&\D_1M_1\times\D_1M_0\ar[l]_-{\cong}\ar[d]^-{(T_\D,\theta)\times1}
&\D_0M_1\times\D_2M_0\ar[dl]_-{\cong}\ar[dr]_-{1\times\gamma_\D}
\\&\D_0M_1\times\D_1M_0\times\D_1M_0
\ar[rr]_-{1\times\gamma_\D}
&&\D_0M_1\times\D_1M_0.}
\]
Here we use the notation $\hat\theta$ for the composite
\[
\xymatrix{
\D_2M_1\ar[r]^-{\D_2S}&\D_2\D_0M_0\ar[r]^-{\D_2I_\D^2}&\D_2^2M_0\ar[r]^-{\mu}&\D_2M_0,}
\]
and the map $I_\D^2:\D_0M_0\to\D_2M_0$ means either of the coincident composites
\[
\xymatrix{
\D_0M_0\ar[r]^-{I_\D}&\D_1M_0\ar[r]^-{(I_\D)_L}&\D_2M_0}
\quad\text{or}\quad
\xymatrix{
\D_0M_0\ar[r]^-{I_\D}&\D_1M_0\ar[r]^-{(I_\D)_R}&\D_2M_0.}
\]
(The composites coincide since $\D$ is a category object.)  

We proceed to 
verify the sub-diagrams.  The upper left isomorphism is a consequence of
the diagram of pullback squares
\[
\xymatrix{
\D_2M_1\ar[r]^-{\D_2T}\ar[d]_-{S_\D}
&\D_2M_0\ar[r]^-{T_\D}\ar[d]^-{S_\D}
&\D_1M_0\ar[d]^-{S_\D}
\\\D_1M_1\ar[r]^-{\D_1T}\ar[d]_-{S_\D}
&\D_1M_0\ar[r]_-{T_\D}\ar[d]^-{S_\D}
&\D_0M_0
\\\D_0M_1\ar[r]_-{D_0T}&\D_0M_0,}
\]
where the right hand square is a pullback since $\D$ is a category object,
the lower left square is a pullback since $T:M_1^\delta\to M_0^\delta$ is a cover
and $\D$ preserves covers, and the upper left square is a pullback from an application
of the source cover version of Corollary \ref{coverpullbacks}  to the cover $\D T:\D(M_1^\delta)\to\D(M_0^\delta)$.

Now the top part of our desired diagram is a consequence of the two commutative
squares
\[
\xymatrix{
\D_2M_1\ar[r]^-{\D_2T}\ar[d]_-{\gamma_\D}&\D_2M_0\ar[d]^-{\gamma_\D}
\\\D_1M_1\ar[r]_-{\D_1T}&\D_1M_0}
\quad\text{and}\quad
\xymatrix{
\D_2M_1\ar[r]^-{S_\D}\ar[d]_-{\gamma_\D}&\D_1M_1\ar[d]^-{S_\D}
\\\D_1M_1\ar[r]_-{S_\D}&\D_0M_1.}
\]

For the left part of the diagram, we first use the display of pullbacks giving
the upper left isomorphism to rewrite it as
\[
(T_\D\circ\D_2T,S_\D):\D_2M_1\to\D_1M_0\times_{\D_0M_0}\D_1M_1.
\]
Also recalling again that $\chi$ is induced by the map
\[
(T_\D,\theta):\D_1M_1\to\D_0M_1\times_{\D_0M_0}\D_1M_0,
\]
we verify the left part after projecting to each of the three factors
in the lower left corner.  Projecting to the right hand $\D_1M_0$ gives
in each case $\theta\circ S_\D$, so that checks.  Projecting to
the left hand $\D_1M_0$ requires 
\[
T_\D\circ\D_2T=\D_1T\circ S_\D,
\]
but that follows from the naturality of $S_\D$.  And projecting to $\D_0M_1$,
we require $T_\D\circ S_\D=S_\D\circ T_\D$, but that is a consequence
of the defining diagram for $\D_2M_1$.  The left part of the diagram
therefore commutes.

For the square
\[
\xymatrix@C+20pt{
\D_2M_1\ar[r]^-{(T_\D^2,\hat\theta)}\ar[d]_-{(T_\D,\theta\circ S_\D)}
&\D_0M_1\times\D_2M_0\ar[d]^-{\cong}
\\\D_1M_1\times\D_1M_0\ar[r]_-{(T_\D,\theta)\times1}
&\D_0M_1\times\D_1M_0\times\D_1M_0,}
\]
the right hand isomorphism is induced by the isomorphism 
\[
(T_\D,S_\D):\D_2M_0\to\D_1M_0\times_{\D_0M_0}\D_1M_0,
\]
so we can again verify commutativity by checking after projection to
each of the three factors on the lower right.  Projecting to the $\D_0M_1$, 
both composites are $T_\D^2$.  Projecting to the right hand $\D_1M_0$, 
we require $\theta\circ S_\D=S_\D\circ\hat\theta$.  This follows from the 
following diagram, in which we are careful to use $(I_\D)_L$ rather
than $(I_\D)_R$ so that the inner triangle commutes:
\[
\xymatrix{
\D_2M_1\ar[r]^-{S_\D}\ar[d]_-{\D_2S}
&\D_1M_1\ar[d]^-{\D_1S}
\\\D_2\D_0M_0\ar[r]^-{S_\D}\ar[d]_-{\D_2I_\D}
&\D_1\D_0M_0\ar[dd]^-{\D_1I_\D}
\\\D_2\D_1M_0\ar[dr]^-{S_\D}\ar[d]_-{\D_2(I_\D)_L}
\\\D_2^2M_0\ar[r]_-{S_\D^2}\ar[d]_-{\mu}
&\D_1^2M_0\ar[d]^-{\mu}
\\\D_2M_0\ar[r]_-{S_\D}&\D_1M_0.}
\]
And projecting to the middle $\D_1M_0$ requires us to verify
$\theta\circ T_\D=T_\D\circ\hat\theta$.  This follows from the following diagram,
in which we now use $(I_\D)_R$ so the triangle commutes:
\[
\xymatrix{
\D_2M_1\ar[r]^-{T_\D}\ar[d]_-{\D_2S}
&\D_1M_1\ar[d]^-{\D_1S}
\\\D_2\D_0M_0\ar[r]^-{T_\D}\ar[d]_-{\D_2I_\D}
&\D_1\D_0M_0\ar[dd]^-{\D_1I_\D}
\\\D_2\D_1M_0\ar[dr]^-{T_\D}\ar[d]_-{\D_2(I_\D)_R}
\\\D_2^2M_0\ar[r]_-{T_\D^2}\ar[d]_-{\mu}
&\D_1^2M_0\ar[d]^-{\mu}
\\\D_2M_0\ar[r]_-{T_\D}&\D_1M_0.}
\]

We next have to verify the square
\[
\xymatrix{
\D_2M_1\ar[r]^-{\gamma_\D}\ar[d]_-{(T_\D^2,\hat\theta)}
&\D_1M_1\ar[d]^-{(T_\D,\theta)}
\\\D_0M_1\times\D_2M_0\ar[r]_-{1\times\gamma_\D}
&\D_0M_1\times\D_1M_0.}
\]
Projecting to $\D_0M_1$ reduces to the commutative square
\[
\xymatrix{
\D_2M_1\ar[r]^-{\gamma_\D}\ar[d]_-{T_\D}
&\D_1M_1\ar[d]^-{T_\D}
\\\D_1M_1\ar[r]_-{T_\D}&\D_0M_1,}
\]
while projecting to $\D_1M_0$ requires $\theta\circ\gamma_\D
=\gamma_\D\circ\hat\theta$.  This follows from the following diagram,
in which the triangle commutes since $\gamma_\D$ is unital (we could
use either $(I_\D)_L$ or $(I_\D)_R$), and the bottom square is
a result of $\D$ being a category object in monads on $\Set$:
\[
\xymatrix{
\D_2M_1\ar[r]^-{\gamma_\D}\ar[d]_-{\D_2S}
&\D_1M_1\ar[d]^-{\D_1S}
\\\D_2\D_0M_0\ar[r]^-{\gamma_\D}\ar[d]_-{\D_2I_\D}
&\D_1\D_0M_0\ar[dd]^-{\D_1I_\D}
\\\D_2\D_1M_0\ar[dr]^-{\gamma_\D}\ar[d]_-{\D_2(I_\D)_R}
\\\D_2^2M_0\ar[r]_-{\gamma_\D^2}\ar[d]_-{\mu}
&\D_1^2M_0\ar[d]^-{\mu}
\\\D_2M_0\ar[r]_-{\gamma_\D}&\D_1M_0.}
\]
The left triangle simply records the definition of $\chi$.  We have
completed the verification of the upper right square of the associativity
diagram, and therefore completed the verification that the composition
on $\Lhat M$ is associative.

\section{The provisional left adjoint: $\D$-algebra structure}

In this section we specify a $\D$-algebra structure on $\Lhat M$
by specifying a $\D_0$-action on $\Lhat M_0=\D_0M_0$ and
a $\D_1$-action on $\Lhat M_1$.  We then verify that the
structure maps for a category are preserved, so we actually get
an action $\D\Lhat M\to\Lhat M$ that is a functor.  

First, since $\Lhat M_0=\D_0M_0$ is the free $\D_0$-algebra
on $M_0$, we use that as its algebra structure over $\D_0$.  
Explicitly, we have an action map given by
\[
\xymatrix{
\D_0(\Lhat M_0)=\D_0^2M_0\ar[r]^-{\mu}&\D_0M_0.}
\]

We specify a $\D_1$-action on $\Lhat M_1=\D_0M_1\times_{\D_0M_0}\D_1M_0$
by giving its projections to the two factors $\D_0M_1$ and $\D_1M_0$, and
then verifying that we do get a map to the pullback.  Note that since
$\D_1$ preserves pullbacks, we have
\[
\D_1(\Lhat M_1)\cong\D_1(\D_0M_1\times_{\D_0M_0}\D_1M_0)
\cong\D_1\D_0M_1\times_{\D_1\D_0M_0}\D_1^2M_0.
\]
We specify a map $\xi_1:\D_1\Lhat M_1\to\Lhat M_1$ by specifying
$p_1\circ\xi_1:\D_1\Lhat M_1\to\D_0M_1$ and 
$p_2\circ\xi_1:\D_1\Lhat M_1\to\D_1M_0$.  For the first, we
specify $p_1\circ\xi_1$ to be given by the composite
\[
\xymatrix{
\D_1\Lhat M_1\cong\D_1\D_0M_1\times_{\D_1\D_0M_0}\D_1^2M_0
\ar[r]^-{p_1}&\D_1\D_0M_1\ar[r]^-{T_\D}&\D_0^2M_1\ar[r]^-{\mu}&\D_0M_1,}
\]
and for the second we specify $p_2\circ\xi_1$ to be given by the composite
\[
\xymatrix{
\D_1\Lhat M_1\cong\D_1\D_0M_1\times_{\D_1\D_0M_0}\D_1^2M_0
\ar[r]^-{p_2}&\D_1^2M_0\ar[r]^-{\mu}&\D_1M_0.}
\]
This does give us a map to $\Lhat M_1=\D_0M_1\times_{\D_0M_0}\D_1M_0$
because of the following commuting diagram; note that the left rectangle is 
simply $\D_1$ applied to the pullback defining $\Lhat M_1$:
\[
\xymatrix{
\D_1\Lhat M_1\ar[r]^-{p_1}\ar[dd]_-{p_2}
&\D_1\D_0M_1\ar[r]^-{T_\D}\ar[d]_-{\D_1\D_0S}
&\D_0^2M_1\ar[r]^-{\mu}\ar[d]_-{\D_0^2S}
&\D_0M_1\ar[d]^-{\D_0S}
\\&\D_1\D_0^2M_0\ar[r]^-{T_\D}\ar[d]_-{\D_1\mu}
&\D_0^3M_0\ar[r]^-{\mu}\ar[d]_-{\D_0\mu}
&\D_0^2M_0\ar[d]^-{\mu}
\\\D_1^2M_0\ar[r]^-{\D_1T_\D}\ar[drr]_-{\mu}
&\D_1\D_0M_0\ar[r]^-{T_\D}
&\D_0^2M_0\ar[r]^-{\mu}
&\D_0M_0.
\\&&\D_1M_0\ar[ur]_-{T_\D}}
\]

We wish to show that this really is an action, that is, that $\xi_1$ is unital
and associative.  To be unital, the diagram
\[
\xymatrix{
\Lhat M_1\ar[r]^-{\eta}\ar[dr]_-{=}&\D_1\Lhat M_1\ar[d]^-{\xi_1}
\\&\Lhat M_1}
\]
must commute, but that will follow if the diagram commutes after 
composing with both projections $p_1$ and $p_2$ with targets
$\D_0M_1$ and $\D_1M_0$ respectively.  For composition with $p_1$, we
have the diagram
\[
\xymatrix{
\Lhat M_1\ar[r]^-{\eta}\ar[dd]_-{p_1}&\D_1\Lhat M_1\ar[d]^-{\D_1p_1}
\\&\D_1\D_0M_1\ar[d]^-{T_\D}
\\\D_0M_1\ar[ur]^-{\eta_{\D_1}}\ar[r]_-{\eta_{\D_0}}\ar[dr]_-{=}
&\D_0^2M_1\ar[d]^-{\mu}
\\&\D_0M_1,}
\]
where the central triangle commutes since $T_\D:\D_1\to\D_0$ is a map
of monads.  For composition with $p_2$, we have the commuting diagram
\[
\xymatrix{
\Lhat M_1\ar[r]^-{\eta}\ar[d]_-{p_2}&\D_1\Lhat M_1\ar[d]^-{\D_1p_2}
\\\D_1M_0\ar[r]^-{\eta}\ar[dr]_-{=}&\D_1^2M_0\ar[d]^-{\mu}
\\&\D_1M_0,}
\]
and it follows that $\xi_1$ is unital.  

To show that the action is associative, we need to show that the diagram
\[
\xymatrix{
\D_1^2\Lhat M_1\ar[r]^-{\D_1\xi_1}\ar[d]_-{\mu}&\D_1\Lhat M_1\ar[d]^-{\xi_1}
\\\D_1\Lhat M_1\ar[r]_-{\xi_1}&\Lhat M_1}
\]
commutes, which again happens if and only if it does after composing
with $p_1$ and $p_2$.  Composing with $p_1$, we find the diagram
commutes as a result of the following:
\[
\xymatrix{
&&\D_1\Lhat M_1\ar[dr]^-{\D_1p_1}
\\\D_1^2\Lhat M_1\ar[urr]^-{\D_1\xi_1}\ar[r]_-{\D_1^2p_1}\ar[dd]_-{\mu}
&\D_1^2\D_0M_1\ar[r]_-{\D_1T_\D}\ar[dd]_-{\mu}
&\D_1\D_0^2M_1\ar[r]_-{\D_1\mu}\ar[d]_-{T_\D}
&\D_1\D_0M_1\ar[d]^-{T_\D}
\\&&\D_0^3M_1\ar[r]^-{\D_0\mu}\ar[d]_-{\mu}
&\D_0^2M_1\ar[d]^-{\mu}
\\\D_1\Lhat M_1\ar[r]_-{\D_1p_1}
&\D_1\D_0M_1\ar[r]_-{T_\D}
&\D_0^2M_1\ar[r]_-{\mu}
&\D_0M_1.}
\]
And composing with $p_2$, we have the following commutative diagram:
\[
\xymatrix{
\D_1^2\Lhat M_1\ar[rr]^-{\D_1\xi_1}\ar[dr]^-{\D_1^2p_2}\ar[dd]_-{\mu}
&&\D_1\Lhat M_1\ar[d]^-{\D_1p_2}
\\&\D_1^3M_0\ar[r]^-{\D_1\mu}\ar[d]_-{\mu}
&\D_1^2M_0\ar[d]^-{\mu}
\\\D_1\Lhat M_1\ar[r]_-{\D_1p_2}
&\D_1^2M_0\ar[r]_-{\mu}
&\D_1M_0.}
\]
It now follows that $\xi_1$ is an action of the monad $\D_1$ on $\Lhat M_1$.

We need to show that the actions $\xi_0$ and $\xi_1$ preserve 
the identity, source, target, and composition maps, so that we
actually get a functor $\D\Lhat M\to\Lhat M$.  For preservation of $I$, 
we need the diagram
\[
\xymatrix{
\D_0\Lhat M_0\ar[r]^-{\xi_0}\ar[d]_-{I_\D I_{\Lhat M}}&\Lhat M_0\ar[d]^-{I_{\Lhat M}}
\\\D_1\Lhat M_1\ar[r]_-{\xi_1}&\Lhat M_1}
\]
to commute.  Since the target of the diagram is the pullback 
$\Lhat M_1=\D_0M_1\times_{\D_0M_0}\D_1M_0$, we project onto each
factor and verify the resulting diagrams.  Composing with $p_1$ and using
the defining property that $p_1\circ\xi_1=\mu\circ T_\D\circ p_1$, as well as
$\D_0\Lhat M_0=\D_0^2M_0$ and $p_1\circ I_{\Lhat M}=\D_0I$, the result
for $p_1$ follows from the commutative diagram
\[
\xymatrix{
\D_0^2M_0\ar[d]_-{I_\D}\ar[drr]^-{=}
\\\D_1\D_0M_0\ar[rr]^-{T_\D}\ar[dr]^-{\D_1\D_0I}\ar[d]_-{\D_1I_{\Lhat M}}
&&\D_0^2M_0\ar[d]_-{\D_0^2I}\ar[r]^-{\mu}
&\D_0M_0\ar[d]^-{\D_0I}
\\\D_1\Lhat M_1\ar[r]_-{\D_1p_1}
&\D_1\D_0M_1\ar[r]_-{T_\D}
&\D_0^2M_1\ar[r]_-{\mu}
&\D_0M_1.}
\]
Composing with $p_2$ and recalling that $p_2\circ\xi_1=\mu\circ p_2$,
we wish the perimeter of
\[
\xymatrix{
\D_0^2M_0\ar[rrr]^-{\mu}\ar[dd]_-{I_\D}\ar[dr]^-{I_\D I_{\Lhat M}}
&&&\D_0M_0\ar[dl]_-{I_{\Lhat M}}\ar[dd]^-{I_\D}
\\&\D_1\Lhat M_1\ar[r]^-{\xi_1}\ar[d]^-{p_2}
&\Lhat M_1\ar[dr]^-{p_2}
\\\D_1\D_0M_0\ar[r]_-{\D_1I_\D}&\D_1^2M_0\ar[rr]_-{\mu}&&\D_1M_0}
\]
to commute, but that is simply a property of a a category object in monads
on $\Cat$.  It follows that our action maps preserve the identity structure maps.

To show that our action maps preserve the source structure maps, we
recall that the source map for $\Lhat M$ is given by
\[
\xymatrix{
\Lhat M_1\cong\D_0M_1\times_{\D_0M_0}\D_1M_0\ar[r]^-{p_2}
&\D_1M_0\ar[r]^-{S_\D}&\D_0M_0,}
\]
and we wish
\[
\xymatrix{
\D_1\Lhat M_1\ar[r]^-{\xi_1}\ar[d]_-{S_\D S_{\Lhat M}}
&\Lhat M_1\ar[d]^-{S_{\Lhat M}}
\\\D_0^2M_0\ar[r]_-{\mu}&\D_0M_0}
\]
to commute.  But this expands to
\[
\xymatrix{
\D_1\Lhat M_1\ar[rr]^-{\xi_1}\ar[d]_-{\D_1p_2}
&&\Lhat M_1\ar[d]^-{p_2}
\\\D_1^2M_0\ar[rr]_-{\mu}\ar[d]_-{\D_1S_\D}
&&\D_1M_0\ar[d]^-{S_\D}
\\\D_1\D_0M_0\ar[r]_-{S_\D}
&\D_0^2M_0\ar[r]_-{\mu}&\D_0M_0,}
\]
in which the top square commutes by the definition of $\xi_1$ and
the bottom rectangle because we have a category object in monads
on $\Cat$.

For preservation of the target structure maps, recall that the target
on $\Lhat M$ is given by the composite
\[
\xymatrix{
\Lhat M_1\cong\D_0M_1\times_{\D_0M_0}\D_0M_1\ar[r]^-{p_1}
&\D_0M_1\ar[r]^-{\D_0T}&\D_0M_0}
\]
Then the desired square
\[
\xymatrix{
\D_1\Lhat M_1\ar[r]^-{\xi_1}\ar[d]_-{T_\D T_{\Lhat M}}
&\Lhat M_1\ar[d]^-{T_{\Lhat M}}
\\\D_0^2M_0\ar[r]_-{\mu}&\D_0M_0}
\]
expands to
\[
\xymatrix{
\D_1\Lhat M_1\ar[rr]^-{\xi_1}\ar[d]_-{\D_1p_1}
&&\Lhat M_1\ar[d]^-{p_1}
\\\D_1\D_0M_1\ar[r]^-{T_\D}\ar[d]_-{\D_1\D_0T}
&\D_0^2M_1\ar[r]^-{\mu}\ar[d]_-{\D_0^2T}
&\D_0M_1\ar[d]^-{\D_0T}
\\\D_1\D_0M_0\ar[r]_-{T_\D}
&\D_0^2M_0\ar[r]_-{\mu}
&\D_0M_0,}
\]
in which the top part follows from our definition of $\xi_1$, and
the bottom from having a category object in monads in $\Cat$.

It remains to show that the actions commute with the composition
map $\gamma_{\Lhat M}:\Lhat M_2\to\Lhat M_1$.  In order for 
this to make sense, we need an action of $\D_2$ on $\Lhat M_2$; 
this is given by a map $\xi_2:\D_2\Lhat M_2\to\Lhat M_2$ such that the two diagrams
\[
\xymatrix@C+25pt{
\D_2\Lhat M_2\ar[r]^-{\xi_2}\ar@<-.5ex>[d]_-{S_\D S_{\Lhat M}}\ar@<.5ex>[d]^-{T_\D T_{\Lhat M}}
&\Lhat M_2\ar@<-.5ex>[d]_-{S_{\Lhat M}}\ar@<.5ex>[d]^-{T_{\Lhat M}}
\\\D_1\Lhat M_1\ar[r]_-{\xi_1}&\Lhat M_1}
\]
commute.  We then want the diagram
\[
\xymatrix{
\D_2\Lhat M_2\ar[r]^-{\xi_2}\ar[d]_-{\gamma_\D\gamma_{\Lhat M}}
&\Lhat M_2\ar[d]^-{\gamma_{\Lhat M}}
\\\D_1\Lhat M_1\ar[r]_-{\xi_1}&\Lhat M_1}
\]
to commute.  For this purpose, we want an explicit expression for $\xi_2$, which
is given by the following lemma.

\begin{lemma}
The action map $\xi_2:\D_2\Lhat M_2\to\Lhat M_2$ can be expressed as the composite
\[
\xymatrix{
\D_2\D_0M_1\times_{\D_2\D_0M_0}\D_2\D_1M_1\times_{\D_2\D_0M_0}\D_2\D_1M_0
\ar[d]^-{T_\D^2\times_{T_\D^2}T_\D\times_{T_\D S_\D}S_\D}
\\\D_0^2M_1\times_{\D_0^2M_0}\D_1^2M_1\times_{\D_0^2M_0}\D_1^2M_0
\ar[d]^-{\mu\times_\mu\mu\times_\mu\mu}
\\\D_0M_1\times_{\D_0M_0}\D_1M_1\times_{\D_0M_0}\D_1M_0.}
\]
\end{lemma}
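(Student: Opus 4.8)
The plan is to exploit that $\xi_2$ is not an independent choice: since $\Lhat M$ is a $\D$-algebra and $\{\D_0,\D_1,\dots\}$ is the nerve of a category object in monads (Lemma~\ref{category object}), $\xi_2$ is the degree-two component of the action, hence the unique map $\D_2\Lhat M_2\to\Lhat M_2$ compatible with $\xi_1$ under both face operators, i.e.\ making the two squares displayed just before the lemma commute. Because $\Lhat M_2=\Lhat M_1\times_{\D_0M_0}\Lhat M_1$ is a pullback, a map into it is determined by its two projections $S_{\Lhat M}$ and $T_{\Lhat M}$ to $\Lhat M_1$, so it suffices to show that the stated composite lands in $\Lhat M_2$ and satisfies $S_{\Lhat M}\circ\xi_2=\xi_1\circ(S_\D S_{\Lhat M})$ and $T_{\Lhat M}\circ\xi_2=\xi_1\circ(T_\D T_{\Lhat M})$; uniqueness then identifies it with $\xi_2$.

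First I would record that $\D_2$ is Cartesian (Theorem~\ref{cartesianmonad}), so it preserves the threefold pullback $\Lhat M_2\cong\D_0M_1\times_{\D_0M_0}\D_1M_1\times_{\D_0M_0}\D_1M_0$, which produces the source $\D_2\D_0M_1\times_{\D_2\D_0M_0}\D_2\D_1M_1\times_{\D_2\D_0M_0}\D_2\D_1M_0$ appearing in the statement. The candidate map then acts factor by factor, projecting the outer $\D_2$ down onto the appropriate inner monad---$T_\D^2$ onto the target factor $\D_0M_1$, $T_\D$ onto the middle factor $\D_1M_1$, and $S_\D$ onto the source factor $\D_1M_0$---and collapsing each with $\mu$, exactly paralleling the factorwise description $p_1\circ\xi_1=\mu\circ T_\D\circ p_1$, $p_2\circ\xi_1=\mu\circ p_2$ already used for $\xi_1$. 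That the three resulting factors remain compatible over the common base $\D_0M_0$ (so the composite really lands in $\Lhat M_2$) is a diagram chase whose only inputs are naturality of $T_\D,S_\D,\mu$ and the category-object identities among them.

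For the two compatibility squares I would project onto the two copies of $\Lhat M_1=\D_0M_1\times_{\D_0M_0}\D_1M_0$ and compare with the explicit projections of $\xi_1$. Composing the candidate with $T_{\Lhat M}$ gives $\mu\circ T_\D^2$ on $\D_0M_1$ and the target-face contribution of $\mu$ on $\D_1M_1$, which coincides with $\xi_1\circ(T_\D T_{\Lhat M})$ after rewriting $T_\D\circ T_\D=T_\D^2$; composing with $S_{\Lhat M}$ is the dual computation using $S_\D$. The main obstacle will be the middle factor $\D_1M_1$: through the isomorphism $(\D_1T,S_\D)\colon\D_1M_1\cong\D_1M_0\times_{\D_0M_0}\D_0M_1$ it carries both a target-type and a source-type structure map, so it feeds into both face maps at once, and the delicate point is to check that the single $\mu\colon\D_1^2M_1\to\D_1M_1$ applied to it distributes correctly into the target face and the source face, matching the two presentations of $\xi_1$. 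Everything reduces to the simplicial identities of the category object in monads together with the Cartesian property, with no further appeal to $\Sigma$-freeness.
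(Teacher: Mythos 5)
Your proposal is correct and takes essentially the same route as the paper: you characterize $\xi_2$ as the unique map satisfying the two face-compatibility squares (uniqueness coming from $\Lhat M_2$ being a pullback), and then verify that the stated composite satisfies them by projecting onto the factors of $\Lhat M_1=\D_0M_1\times_{\D_0M_0}\D_1M_0$ via $p_1$ and $p_2$. This is precisely the paper's proof, which reduces the claim to the same four diagram checks (one for each pairing of $S,T$ with $p_1,p_2$), handling the middle factor $\D_1M_1$ exactly as you anticipate.
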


\begin{proof}
This is true if and only if the two diagrams defining $\xi_2$ commute with this expression
in place of $\xi_2$, and in turn, each of those diagrams commute if and only if they commute
when composed with each of $p_1:\Lhat M_1\to\D_0M_1$ and $p_2:\Lhat M_1\to\D_1M_0$.
We can expand $S, T:\Lhat M_2\to\Lhat M_1$ as in the following two diagrams:
\[
\xymatrix{
\D_0M_1\times_{\D_0M_0}\D_1M_1\times_{\D_0M_0}\D_1M_0
\ar[r]^-{=}\ar[d]_-{p_{23}}
&\Lhat M_2\ar[dd]^-{S}
\\\D_1M_1\times_{\D_0M_0}\D_1M_0
\ar[d]_-{S_\D\times1}
\\\D_0M_1\times_{\D_0M_0}\D_1M_0\ar[r]_-{=}&\Lhat M_1}
\]
and
\[
\xymatrix{
\D_0M_1\times_{\D_0M_0}\D_1M_1\times_{\D_0M_0}\D_1M_0
\ar[r]^-{=}\ar[d]_-{p_{12}}
&\Lhat M_2\ar[dd]^-{T}
\\\D_0M_1\times_{\D_0M_0}\D_1M_1
\ar[d]_-{1\times\D_1T}
\\\D_0M_1\times_{\D_0M_0}\D_1M_0\ar[r]_-{=}&\Lhat M_1.}
\]
Consequently, we can express four of our composites of interest as follows:
\begin{gather*}
\xymatrix{
p_1\circ T:\Lhat M_2\ar[r]^-{p_1}&\D_0M_1,}
\\
\xymatrix{
p_2\circ T:\Lhat M_2\ar[r]^-{p_2}&\D_1M_1\ar[r]^-{\D_1T}&\D_1M_0,}
\\
\xymatrix{
p_1\circ S:\Lhat M_2\ar[r]^-{p_2}&\D_1M_1\ar[r]^-{S_\D}&\D_0M_1,}
\\
\xymatrix{
p_2\circ S:\Lhat M_2\ar[r]^-{p_2}&\D_1M_0.}
\end{gather*}
Further, we already have expressions for $p_1\circ\xi_1$ and $p_2\circ\xi_1$, namely
\begin{gather*}
\xymatrix{
p_1\circ\xi_1:\D_1\Lhat M_1\ar[r]^-{\D_1p_1}&\D_1\D_0M_1\ar[r]^-{T_\D}
&\D_0^2M_1\ar[r]^-{\mu}&\D_0M_1,}
\\
\xymatrix{
p_2\circ\xi_1:\D_1\Lhat M_1\ar[r]^-{\D_1p_2}&\D_1^2M_0\ar[r]^-{\mu}&\D_1M_0.}
\end{gather*}
These expressions allow us to verify the four diagrams we want, which we take in
order.  First, the one for $p_2$ and $S$ becomes
\[
\xymatrix{
\D_2\Lhat M_2\ar[dr]^-{p_3}\ar[d]_-{p_{23}}
\\\D_2\D_1M_1\times_{\D_2\D_0M_0}\D_2\D_1M_0
\ar[r]_-{p_2}\ar[d]_-{S_\D\times_{S_\D}S_\D}
&\D_2\D_1M_0\ar[dd]^-{S_\D}
\\\D_1^2M_2\times_{\D_1\D_0M_0}\D_1^2M_0\ar[d]_-{\D_1S_\D\times1}
\\\D_1\D_0M_1\times_{\D_1\D_0M_0}\D_1^2M_0\ar[r]_-{p_2}
&\D_1^2M_0\ar[r]_-{\mu}
&\D_1M_0.}
\]
The one for $p_1$ and $S$ becomes
\[
\xymatrix{
\D_2\Lhat M_2\ar[r]^-{p_2}\ar[d]_-{p_{23}}
&\D_2\D_1M_1\ar[r]^-{T_\D}\ar[ddd]^-{S_\D^2}
&\D_1^2M_1\ar[r]^-{\mu}\ar[ddd]^-{S_\D^2}
&\D_1M_1\ar[ddd]^-{S_\D}
\\\D_2\D_1M_1\times_{\D_2\D_0M_0}\D_2\D_1M_0
\ar[ur]^-{p_2}\ar[d]_-{S_\D\times_{S_\D}S_\D}
\\\D_1^2M_1\times_{\D_1\D_0M_0}\D_1^2M_0
\ar[d]_-{\D_1S_\D\times1}
\\\D_1\D_0M_1\times_{\D_1\D_0M_0}\D_1^2M_0
\ar[r]_-{\D_1p_1}
&\D_1\D_0M_1\ar[r]_-{T_\D}
&\D_0^2M_1\ar[r]_-{\mu}
&\D_0M_1.}
\]
The one for $p_2$ and $T$ becomes
\[
\xymatrix{
\D_2\Lhat M_2\ar[dr]^-{p_2}\ar[d]_-{p_{12}}
\\\D_2\D_0M_1\times_{\D_2\D_0M_0}\D_2\D_1M_1
\ar[r]_-{p_2}\ar[d]_-{T_\D\times_{T_\D}T_\D}
&\D_2\D_1M_1\ar[d]^-{T_\D}
\\\D_1\D_0M_1\times_{\D_1\D_0M_0}\D_1^2M_1
\ar[r]^-{p_2}\ar[d]_-{1\times\D_1^2T}
&\D_1^2M_1\ar[r]^-{\mu}\ar[d]^-{\D_1^2T}
&\D_1M_1\ar[d]^-{\D_1T}
\\\D_1\D_0M_1\times_{\D_1\D_0M_0}\D_1^2M_0
\ar[r]_-{\D_1p_2}
&\D_1^2M_0\ar[r]_-{\mu}
&\D_1M_0.}
\]
And the diagram for $p_1$ and $T$ becomes
\[
\xymatrix{
\D_2\Lhat M_2\ar[dr]^-{p_1}\ar[d]_-{p_{12}}
\\\D_2\D_0M_1\times_{\D_2\D_0M_0}\D_2\D_1M_1
\ar[r]_-{p_1}\ar[d]_-{T_\D\times_{T_\D}T_\D}
&\D_2\D_0M_1
\ar[dd]^-{T_\D}\ar[ddr]^-{T_\D^2}
\\\D_1\D_0M_1\times_{\D_1\D_0M_0}\D_1^2M_1
\ar[dr]^-{p_1}\ar[d]_-{1\times\D_1^2T}
\\\D_1\D_0M_1\times_{\D_1\D_0M_0}\D_1^2M_0
\ar[r]_-{\D_1p_1}
&\D_1\D_0M_1\ar[r]_-{T_\D}
&\D_0^2M_1\ar[r]_-{\mu}
&\D_0M_1.}
\]
Since all four diagrams commute, the expression for $\xi_2$ is correct.
\end{proof}
It will also be convenient for us to realize that the expression we already
have in place for $\xi_1:\D_1\Lhat M_1\to\Lhat M_1$ can be written as
the composite
\[
\xymatrix@C+10pt{
\D_1\D_0M_1\times_{\D_1\D_0M_0}\D_1^2M_0
\ar[r]^-{T_\D\times_{T_\D}1}
&\D_0^2M_1\times_{\D_0^2M_0}\D_1^2M_0
\ar[r]^-{\mu\times_\mu\mu}
&\D_0M_1\times_{\D_0M_0}\D_1M_0.}
\]
We now verify our desired square
\[
\xymatrix{
\D_2\Lhat M_2\ar[r]^-{\xi_2}\ar[d]_-{\gamma_\D\gamma_{\Lhat M}}
&\Lhat M_2\ar[d]^-{\gamma_{\Lhat M}}
\\\D_1\Lhat M_1\ar[r]_-{\xi_1}&\Lhat M_1}
\]
by composing with $p_1:\Lhat M_1\to\D_0M_1$ and $p_2:
\Lhat M_1\to\D_1M_0$ and checking commutativity of the resulting
diagrams.  Composing with $p_1$, we can rewrite $p_1\circ\gamma_{\Lhat M}$
as the composite
\begin{gather*}
\xymatrix{
\D_0M_1\times_{\D_0M_0}\D_1M_1\times_{\D_0M_0}\D_1M_0
\ar[r]^-{p_{12}}&\D_0M_1\times_{\D_0M_0}\D_1M_1}
\\
\xymatrix{
\relax\ar[r]^-{1\times T_\D}
&\D_0M_1\times_{\D_0M_0}\D_0M_1\cong\D_0M_2\ar[r]^-{\D_0\gamma_M}
&\D_0M_1.}
\end{gather*}
Consequently, we can express $p_1\circ\gamma_{\Lhat M}\circ\xi_2$ as
\begin{gather*}
\xymatrix{
\D_2\D_0M_1\times_{\D_2\D_0M_0}\D_2\D_1M_1\times_{\D_2\D_0M_0}\D_2\D_1M_0
\ar[r]^-{p_{12}}&\D_2\D_0M_1\times_{\D_2\D_0M_0}\D_2\D_1M_1}
\\
\xymatrix{
\relax\ar[r]^-{T_\D^2\times_{T_\D^2}T_\D}
&\D_0^2M_1\times_{\D_0^2M_0}\D_1^2M_1
\ar[r]^-{\mu\times_\mu\mu}&\D_0M_1\times_{\D_0M_0}\D_1M_1}
\\
\xymatrix{
\relax\ar[r]^-{1\times T}&\D_0M_1\times_{\D_0M_0}\D_0M_1
\cong\D_0M_2\ar[r]^-{\D_0\gamma_M}&\D_0M_1.}
\end{gather*}
In the other direction, we can write $p_1\circ\xi_1$ as the composite
\begin{gather*}
\xymatrix{
\D_1\Lhat M_1\cong\D_1\D_0M_1\times_{\D_1\D_0M_0}\D_1^2M_0
\ar[r]^-{p_1}&\D_1\D_0M_1}
\\
\xymatrix{
\relax\ar[r]^-{T_\D}&\D_0^2M_1\ar[r]^-{\mu}&\D_0M_1,}
\end{gather*}
so we can express $p_1\circ\xi_1\circ\gamma_\D\gamma_{\Lhat M}$ as 
\begin{gather*}
\xymatrix{
\D_2\D_0M_1\times_{\D_2\D_0M_0}\D_2\D_1M_1\times_{\D_2\D_0M_0}\D_2\D_1M_0
\ar[r]^-{p_{12}}&\D_2\D_0M_1\times_{\D_2\D_0M_0}\D_2\D_1M_1}
\\
\xymatrix{
\relax\ar[r]^-{\gamma_\D\times_{\gamma_\D}\gamma_\D}
&\D_1\D_0M_1\times_{\D_1\D_0M_0}\D_1^2M_1\ar[r]^-{1\times\D_1 T}
&\D_1(\D_0M_1\times_{\D_0M_0}\D_0M_1)\cong\D_1\D_0M_2}
\\
\xymatrix{
\relax\ar[r]^-{\D_1\D_0\gamma_M}&\D_1\D_0M_1
\ar[r]^-{T_\D}&\D_0^2M_1\ar[r]^-{\mu}&\D_0M_1.}
\end{gather*}
Since both expressions we wish to coincide begin with $p_{12}$, we can
simply ask that the rest of the expressions coincide.  This follows from 
the following commutative diagram:
\[
\xymatrix{
\D_2\D_0M_1\times_{\D_2\D_0M_0}\D_2\D_1M_1
\ar[r]^-{T_\D^2\times_{T_\D^2}T_\D}\ar[d]_-{\gamma_\D\times_{\gamma_\D}\gamma_\D}
&\D_0^2M_1\times_{\D_0^2M_0}\D_1^2M_1
\ar[r]^-{\mu\times_\mu\mu}\ar[d]^-{1\times T_\D}
&\D_0M_1\times_{\D_0M_0}\D_1M_1\ar[dd]^-{1\times T_\D}
\\\D_1\D_0M_1\times_{\D_1\D_0M_0}\D_1^2M_1
\ar[r]^-{T_\D\times_{T_\D}T_\D}\ar[d]_-{1\times\D_1T_\D}
&\D_0^2M_1\times_{\D_0^2M_0}\D_0\D_1M_1
\ar[d]^-{1\times\D_0T_\D}
\\\D_1\D_0M_1\times_{\D_1\D_0M_0}\D_1\D_0M_1
\ar[r]^-{T_\D\times_{T_\D}T_\D}\ar[d]_-{\cong}
&\D_0^2M_1\times_{\D_0^2M_0}\D_0^2M_1
\ar[r]^-{\mu\times_\mu\mu}\ar[d]^-{\cong}
&\D_0M_1\times_{\D_0M_0}\D_0M_1\ar[d]^-{\cong}
\\\D_1\D_0M_2\ar[r]^-{T_\D}\ar[d]_-{\D_1\D_0\gamma_M}
&\D_0^2M_2\ar[r]^-{\mu}\ar[d]^-{\D_0^2\gamma_M}
&\D_0M_2\ar[d]^-{\D_0\gamma_M}
\\\D_1\D_0M_1\ar[r]_-{T_\D}
&\D_0^2M_1\ar[r]_-{\mu}
&\D_0M_1.}
\]
For the composition with $p_2$, we can first rewrite $p_2\circ\gamma_{\Lhat M}$
as the composite
\begin{gather*}
\xymatrix{
\D_0M_1\times_{\D_0M_0}\D_1M_1\times_{\D_0M_0}\D_1M_0
\ar[r]^-{p_{23}}&\D_1M_1\times_{D_0M_0}\D_1M_0}
\\
\xymatrix{
\relax\ar[r]^-{\D_1S\times1}&\D_1\D_0M_0\times_{\D_0M_0}\D_1M_0
\ar[r]^-{\D_1I_\D\times1}&\D_1^2M_0\times_{\D_0M_0}\D_1M_0}
\\
\xymatrix{
\relax\ar[r]^-{\mu\times1}&\D_1M_0\times_{\D_0M_0}\D_1M_0\cong\D_2M_0
\ar[r]^-{\gamma_\D}&\D_1M_0.}
\end{gather*}
Consequently, we can write $p_2\circ\gamma_{\Lhat M}\circ\xi_2$ as the composite
\begin{gather*}
\xymatrix{
\D_2\D_0M_1\times_{\D_2\D_0M_0}\D_2\D_1M_1\times_{\D_2\D_0M_0}\D_2\D_1M_0
\ar[r]^-{p_{23}}&\D_2\D_1M_1\times_{\D_2\D_0M_0}\D_2\D_1M_0}
\\
\xymatrix{
\relax\ar[rr]^-{T_\D\times_{T_\D S_\D}S_\D}&&\D_1^2M_1\times_{\D_0^2M_0}\D_1^2M_0
\ar[r]^-{\mu\times_\mu\mu}&\D_1M_1\times_{\D_0M_0}\D_1M_0}
\\
\xymatrix{
\relax\ar[r]^-{\D_1S\times1}&\D_1\D_0M_0\times_{\D_0M_0}\D_1M_0
\ar[r]^-{\D_1I_\D\times1}&\D_1^2M_0\times_{\D_0M_0}\D_1M_0}
\\
\xymatrix{
\relax\ar[r]^-{\mu\times1}&\D_1M_0\times_{\D_0M_0}\D_1M_0\cong\D_2M_0
\ar[r]^-{\gamma_\D}&\D_1M_0.}
\end{gather*}
In the other direction, we can write $p_2\circ\xi_1$ as the composite
\[
\xymatrix{
\D_1\Lhat M_1\cong\D_1\D_0M_1\times_{\D_1\D_0M_0}\D_1^2M_0
\ar[r]^-{p_2}&\D_1^2M_0\ar[r]^-{\mu}&\D_1M_0,}
\]
and consequently we can write $p_2\circ\xi_1\circ\gamma_\D\gamma_{\Lhat M}$
as the composite
\begin{gather*}
\xymatrix{
\D_2\D_0M_1\times_{\D_2\D_0M_0}\D_2\D_1M_1\times_{\D_2\D_0M_0}\D_2\D_1M_0
\ar[r]^-{p_{23}}&\D_2\D_1M_1\times_{\D_2\D_0M_0}\D_2\D_1M_0}
\\
\xymatrix{
\relax\ar[rr]^-{\gamma_\D\times_{\gamma_\D}\gamma_\D}&&\D_1^2M_1\times_{\D_1\D_0M_0}\D_1^2M_0
\ar[r]^-{\D_1^2S\times1}&\D_1^2\D_0M_0\times_{\D_1\D_0M_0}\D_1^2M_0}
\\
\xymatrix{
\relax\ar[r]^-{\D_1^2I_\D}&\D_1^3M_0\times_{\D_1\D_0M_0}\D_1^2M_0
\ar[r]^-{\D_1\mu\times1}&\D_1^2M_0\times_{\D_1\D_0M_0}\D_1^2M_0}
\\
\xymatrix{
\relax\ar[r]^-{\cong}&\D_1\D_2M_0\ar[r]^-{\D_1\gamma_\D}&\D_1^2M_0
\ar[r]^-{\mu}&\D_1M_0.}
\end{gather*}
Adopting our previous notation $\theta$ for the composite of source type
\[
\xymatrix{
\D_1M_1\ar[r]^-{\D_1S}&\D_1\D_0M_0\ar[r]^-{\D_1I_\D}&\D_1^2M_0\ar[r]^-{\mu}&\D_1M_0,}
\]
we can see that these coincide by means of the following commutative diagram:
\[
\xymatrix@C+15pt{
\D_2\D_1M_1\times_{\D_2\D_0M_0}\D_2\D_1M_0
\ar[r]^-{T_\D\times_{T_\D S_\D}S_\D}\ar[d]_-{\D_2\theta\times1}
&\D_1^2M_1\times_{\D_0^2M_0}\D_1^2M_0
\ar[r]^-{\mu\times_\mu\mu}\ar[d]^-{\D_1\theta\times1}
&\D_1M_1\times_{\D_0M_0}\D_1M_0\ar[d]^-{\theta\times1}
\\\D_2\D_1M_0\times_{\D_2\D_0M_0}\D_2\D_1M_0
\ar[r]_-{T_\D\times_{T_\D S_\D}S_\D}\ar[d]_-{\gamma_\D\times_{\gamma_\D}\gamma_\D}
&\D_1^2M_0\times_{\D_0^2M_0}\D_1^2M_0
\ar[r]_-{\mu\times_\mu\mu}\ar[d]^-{\cong}
&\D_1M_0\times_{\D_0M_0}\D_1M_0\ar[d]^-{\cong}
\\\D_1^2M_0\times_{\D_1\D_0M_0}\D_1^2M_0\ar[d]_-{\cong}
&\D_2^2M_0\ar[r]^-{\mu}\ar[dl]_-{\gamma_\D}\ar[d]^-{\gamma_\D^2}
&\D_2M_0\ar[d]^-{\gamma_\D}
\\\D_1\D_2M_0\ar[r]_-{\D_1\gamma_\D}
&\D_1^2M_0\ar[r]_-{\mu}
&\D_1M_0.}
\]
Consequently, our action maps preserve the composition, and we have defined
a $\D$-algebra structure on $\Lhat M$.

\section{The adjunction structure I: the unit map}

This section is devoted to constructing a unit map $M\to U\Lhat M$ for a
$\D$-multicategory $M$  and verifying its properties.
Together with the counit described in the next section, this map almost
(but not quite) determines an adjunction between $U$ and $\Lhat$.
They will do most of the heavy lifting
for the actual adjunction once we construct the actual left adjoint $L$.  The 
reason we don't get an adjunction immediately is that the unit map 
of this section
isn't quite a map of $\D$-multicategories: it doesn't preserve the presheaf
structure on the sets of morphisms.

The unit map consists of maps $\eta_0:M_0\to U\Lhat M_0$
and $\eta_1:M_1\to U\Lhat M_1$.  Since $(U\C)_0=\C_0$ for a $\D$-algebra $\C$
and $\Lhat M_0=\D_0M_0$, we just use the unit of the monad $\D_0$
\[
\eta_0=\eta:M_0\to\D_0M_0
\]
as the map on objects of our unit.

To define the unit map on morphisms, we recall that $\Lhat M_1$ is given
by the pullback diagram
\[
\xymatrix{
\Lhat M_1\ar[r]^-{p_1}\ar[dd]_-{p_2}&\D_0M_1\ar[d]^-{\D_0S}
\\&\D_0^2M_0\ar[d]^-{\mu}
\\\D_1M_0\ar[r]_-{T_\D}&\D_0M_0,}
\]
that is, $\Lhat M_1\cong\D_0M_1\times_{\D_0M_0}\D_1M_0$.  Further,
the underlying multicategory $U\C$ of a $\D$-algebra $\C$ (such as
$\Lhat M$) is given by the pullback
\[
\xymatrix{
(U\C)_1\ar[r]^-{p_1}\ar[d]_-{S}&\C_1\ar[d]^-{S}
\\\D_0\C_0\ar[r]_-{\xi_0}&\C_0,}
\]
where $\xi_0:\D_0\C_0\to\C_0$ is the restriction to objects of
the action of $\D$ on its algebra $\C$, 
and $p_1$ coincides with the comparison map $\kappa_1$.  Combining the two diagrams,
we see that $(U\Lhat M)_1$ is given by the pullback in the diagram
\[
\xymatrix{
U\Lhat M_1\ar[r]^-{p_1}\ar[ddd]_-{S}
&\Lhat M_1\ar[r]^-{p_1}\ar[dd]_-{p_2}
&\D_0M_1\ar[d]^-{\D_0S}
\\&&\D_0^2M_0\ar[d]^-{\mu}
\\&\D_1M_0\ar[r]_-{T_\D}\ar[d]^-{S}&\D_0M_0
\\\D_0^2M_0\ar[r]_-{\mu}&\D_0M_0,}
\]
so we can write
\[
U\Lhat M_1\cong\D_0M_1\times_{\D_0M_0}\D_1M_0\times_{\D_0M_0}\D_0^2M_0.
\]
Consequently, any map
to $U\Lhat M_1$ is given by maps to $\D_0^2M_0$, to $\D_1M_0$,
and to $\D_0M_1$, subject to compatibility relations.  We agree to define
$\eta:M_1\to U\Lhat M_1$ by the following maps:
\begin{enumerate}
\item
To $\D_0^2M_0$: $\xymatrix{M_1\ar[r]^-{S}&\D_0M_0\ar[r]^-{\D_0\eta}&\D_0^2M_0}$.
\item
To $\D_1M_0$: $\xymatrix{M_1\ar[r]^-{S}&\D_0M_0\ar[r]^-{I_\D}&\D_1M_0}$.
\item
To $\D_0M_1$: $\xymatrix{M_1\ar[r]^-{\eta}&\D_0M_1}$.
\end{enumerate}
We will show this is almost a map of $\D$-multicategories: 
the only defect is that the map on morphisms doesn't preserve the presheaf structure.

  First, we observe that we have
a well-defined map.  For this, we have the two diagrams
\[
\xymatrix{
M_1\ar[d]_-{S}
\\\D_0M_0\ar[r]^-{I_\D}\ar[dr]^-{=}\ar[d]_-{\D_0\eta}&\D_1M_0\ar[d]^-{S}
\\\D_0^2M_0\ar[r]_-{\mu}&\D_0M_0}
\text{ and }
\xymatrix{
M_1
\ar[r]^-{\eta}
\ar[d]_-{S}
&\D_0M_1
\ar[d]^-{\D_0S}
\\\D_0M_0
\ar[r]^-{\eta}
\ar[dr]^-{=}\ar[d]_-{I_\D}
&\D_0^2M_0
\ar[d]^-{\mu}
\\\D_1M_0\ar[r]_-{T}&\D_0M_0,
}
\]
which verify that we have actually defined a map to $U\Lhat M_1$.

We will show that this map satisfies all the rest of the properties of a map
of $\D$-multicategories, namely, that it preserves $I$, $S$, $T$, and $\gamma$.  For preservation
of $I$, we need the diagram
\[
\xymatrix{
M_0\ar[r]^-{\eta}\ar[d]_-{I}&\D_0M_0\ar[d]^-{I}
\\M_1\ar[r]_-{\eta}&U\Lhat M_1}
\]
to commute, which we show by composing to the three components $\D_0^2M_0$,
$\D_1M_0$, and $\D_0M_1$.  First, we recall that $I:\D_0M_0\to U\Lhat M_1$ is
a special case of the more general $I:\C_0\to U\C_1$ for a $\D$-algebra $\C$, which
is given by the commutative diagram
\[
\xymatrix{
\C_0\ar[r]^-{I}\ar[dr]^-{=}\ar[d]_-{\eta}&\C_1\ar[d]^-{S}
\\\D_0\C_0\ar[r]_-{\xi_0}&\C_0,}
\]
which defines a map to the pullback $U\C_1$ of the square.  When $\C=\Lhat M$,
we have the identity map of $\Lhat M$ induced by the diagram
\[
\xymatrix{
\D_0M_0\ar[r]^-{\D_0I}\ar[dr]^-{\D_0\eta}\ar[ddr]_-{=}\ar[dd]_-{I_\D}
&\D_0M_1\ar[d]^-{\D_0S}
\\&\D_0^2M_0\ar[d]^-{\mu}
\\\D_1M_0\ar[r]_-{T}&\D_0M_0,}
\]
and then $\eta:\D_0M_0\to\D_0^2M_0$ satisfies commutativity in
\[
\xymatrix{
\D_0M_0\ar[r]^-{I_\D}\ar[dr]^-{=}\ar[d]_-{\eta}&\D_1M_0\ar[d]^-{S}
\\\D_0^2M_0\ar[r]_-{\mu}&\D_0M_0,}
\]
and so induces the identity map $I:\D_0M_0\to U\Lhat M_1$.  We
can now verify preservation of $I$ by checking the composites to
$\D_0^2M_0$, $\D_0M_0$, and $\D_0M_1$.  To $\D_0^2M_0$,
we check that
\[
\xymatrix{
M_0\ar[rr]^-{\eta}\ar[dr]^-{\eta}\ar[d]_-{I}&&\D_0M_0\ar[d]^-{\eta}
\\M_1\ar[r]_-{S}&\D_0M_0\ar[r]_-{\D_0\eta}&\D_0^2M_0}
\]
commutes by naturality of $\eta$.  To $\D_1M_0$, we have the
trivial diagram
\[
\xymatrix{
M_0\ar[rr]^-{\eta}\ar[dr]^-{\eta}\ar[d]_-{I}&&\D_0M_0\ar[d]^-{I}
\\M_1\ar[r]_-{S}&\D_0M_0\ar[r]_-{I}&\D_2M_0,}
\]
and to $\D_0M_1$, we have
\[
\xymatrix{
M_0\ar[r]^-{\eta}\ar[d]_-{I}&\D_0M_0\ar[d]^-{\D_0I}
\\M_1\ar[r]_-{\eta}&\D_0M_1,}
\]
which commutes by naturality of $\eta$.  Consequently, $I$ is preserved.

For preservation of $S$, we have
\[
\xymatrix{
M_1\ar[r]^-{\eta}\ar[d]_-{S}&U\Lhat M_1\ar[d]^-{S}
\\\D_0M_0\ar[r]_-{\D_0\eta}&\D_0^2M_0,}
\]
which is why $S\circ\eta$ is defined the way it is, and for
preservation of $T$, we have
\[
\xymatrix{
&M_1\ar[rr]^-{T}\ar[dr]^-{\eta}\ar[dl]_-{\eta}&&M_0\ar[d]^-{\eta}
\\U\Lhat M_1\ar[r]_-{p_1}&\Lhat M_1\ar[r]_-{p_1}&\D_0M_1\ar[r]_-{\D_0T}&\D_0M_0,}
\]
which is another application of naturality of $\eta$.  This leaves
preservation of $\gamma$ to be verified.

For preservation of $\gamma$, what we need is commutativity of
the diagram
\[
\xymatrix{
M_1\times_{\D_0M_0}\D_0M_1\ar[r]^-{\gamma_M}\ar[d]_-{\eta_1\times_{\D_0\eta_0}\D_0\eta_1}
&M_1\ar[d]^-{\eta_1}
\\U\Lhat M_1\times_{\D_0U\Lhat M_0}\D_0U\Lhat M_1\ar[r]_-{\gamma_{U\Lhat M}}
&U\Lhat M_1.}
\]
We already have the expression $\D_0M_1\times_{\D_0M_0}\D_1M_0\times_{\D_0M_0}\D_0^2M_0$
for $U\Lhat M_1$, so expanding out the lower left corner, which is really $U\Lhat M_2$, we get
the following isomorphism by canceling the $\D_0^2M_0$'s in the middle:
\begin{gather*}
\D_0M_1\times_{\D_0M_0}\D_1M_0\times_{\D_0M_0}\D_0^2M_0
\times_{\D_0^2M_0}\D_0^2M_1\times_{D_0^2M_0}\D_0\D_1M_0\times_{\D_0^2M_0}\D_0^3M_0
\\
\cong\D_0M_1\times_{\D_0M_0}\D_1M_0\times_{\D_0M_0}
\D_0^2M_1\times_{D_0^2M_0}\D_0\D_1M_0\times_{\D_0^2M_0}\D_0^3M_0,
\end{gather*}
Now the definition
of composition for $U\C$ is in terms of the comparison map $\kappa_2:U\C_2\to\C_2$, which
can be expressed as follows.  First, we have 
\begin{gather*}
U\C_2:=U\C_1\times_{\D_0U\C_0}\D_0U\C_1
\\\cong\C_1\times_{\C_0}\D_0\C_0\times_{\D_0\C_0}\D_0\C_1\times_{\D_0\C_0}\D_0^2\C_0
\\\cong\C_1\times_{\C_0}\D_0\C_1\times_{\D_0\C_0}\D_0^2\C_0,
\end{gather*}
and then the comparison map $\kappa_2$ can be expressed as the composite
\[
\xymatrix{
\C_1\times_{\C_0}\D_0\C_1\times_{\D_0\C_0}\D_0^2\C_0
\ar[r]^-{p_{12}}
&\C_1\times_{\C_0}\D_0\C_1\ar[r]^-{1\times I_\D}
&\C_1\times_{\C_0}\D_1\C_1\ar[r]^-{1\times\xi_1}
&\C_1\times_{\C_0}\C_1=\C_2.}
\]
Now we can write the composition in $U\C$ in terms of its projections
to the factors of $U\C_1=\C_1\times_{\C_0}\D_0\C_0$.  The projection to 
$\C_1$ is just $\kappa_2$ as expressed above, composed with $\gamma_\C:\C_2\to\C_1$.
The projection to $\D_0\C_0$ consists of projection to the last factor $\D_0^2\C_0$ of
$U\C_2$, composed with $\mu:\D_0^2\C_0\to\D_0\C_0$.  This now has to be
interpreted in the case where $\C=\Lhat M$, in which case we have the actual
composition taking place in the composite
\begin{align*}
&\xymatrix{
&&\D_0M_1\times_{\D_0M_0}\D_1M_0\times_{\D_0M_0}
\D_0^2M_1\times_{D_0^2M_0}\D_0\D_1M_0\times_{\D_0^2M_0}\D_0^3M_0}
\\
&\xymatrix{
\relax\ar[rr]^-{p_{1234}}
&&\D_0M_1\times_{\D_0M_0}\D_1M_0\times_{\D_0M_0}
\D_0^2M_1\times_{D_0^2M_0}\D_0\D_1M_0}
\\
&\xymatrix{
\relax\ar[rr]^-{1\times1\times I_\D\times I_\D}
&&\D_0M_1\times_{\D_0M_0}\D_1M_0\times_{\D_0M_0}
\D_1\D_0M_1\times_{D_1\D_0M_0}\D_1^2M_0}
\\
&\xymatrix{
\relax\ar[rr]^-{1\times1\times T_\D\times_{T_\D}1}
&&\D_0M_1\times_{\D_0M_0}\D_1M_0\times_{\D_0M_0}
\D_0^2M_1\times_{D_0^2M_0}\D_1^2M_0}
\\
&\xymatrix{
\relax\ar[rr]^-{1\times1\times\mu\times_\mu\mu}
&&\D_0M_1\times_{\D_0M_0}\D_1M_0\times_{\D_0M_0}
\D_0M_1\times_{\D_0M_0}\D_1M_0}
\\
&\xymatrix{
\relax
&&\D_0M_1\times_{\D_0M_0}\D_1M_1\times_{\D_0M_0}\D_1M_0
\ar[ll]_-{1\times(\D_1T,S_\D)\times1}^-{\cong}}
\\
&\xymatrix{
\relax\ar[rr]^-{1\times(T_\D,\theta)\times1}
&&\D_0M_1\times_{\D_0M_0}\D_0M_1\times_{\D_0M_0}
\D_1M_0\times_{\D_0M_0}\D_1M_0}
\\
&\xymatrix{
\relax\ar[rr]^-{\D_0\gamma_M\times\gamma_\D}
&&\D_0M_1\times_{\D_0M_0}\D_1M_0.}
\end{align*}
The other factor is much more straightforward: it is just
\[
\xymatrix{
U\Lhat M_1\ar[r]^-{p_5}&\D_0^3M_0\ar[r]^-{\mu}&\D_0^2M_0.}
\]
We verify preservation of $\gamma$ by projecting
to each of the three factors of $U\Lhat M_1=
\D_0M_1\times_{\D_0M_0}\D_1M_0\times_{\D_0M_0}\D_0^2M_0$,
and the last factor of $\D_0^2M_0$ doesn't present much difficulty:
it's a consequence of the diagram
\[
\xymatrix{
M_1\times_{\D_0M_0}\D_0M_1\ar[rr]^-{\gamma_M}\ar[d]_-{p_2=S}
&&M_1\ar[d]^-{S}
\\\D_0M_1\ar[r]^-{\D_0S}
&\D_0^2M_0\ar[r]^-{\mu}\ar[d]_-{\D_0^2\eta}
&\D_0M_0\ar[d]^-{\D_0\eta}
\\&\D_0^3M_0\ar[r]_-{\mu}
&\D_0^2M_0.}
\]

We next consider the projection to the first factor $\D_0M_1$, and observe
first that the long composite above, projected to $\D_0M_1$, depends
only on the first three factors of $U\Lhat M_2$, and after projecting
onto those, we can express the map as follows:
\begin{align*}
&\xymatrix{
&&\D_0M_1\times_{\D_0M_0}\D_1M_0\times_{\D_0M_0}\D_0^2M_1}
\\
&\xymatrix{
\relax\ar[rr]^-{1\times1\times\mu_\D}
&&\D_0M_1\times_{\D_0M_0}\D_1M_0\times_{\D_0M_0}\D_0M_1}
\\
&\xymatrix{
\relax\ar[rr]^-{1\times\chi}
&&\D_0M_1\times_{\D_0M_0}\D_0M_1\times_{\D_0M_0}\D_1M_0}
\\
&\xymatrix{
\relax\ar[rr]^-{\D_0\gamma_M\circ p_{12}}&&\D_0M_1.}
\end{align*}
Next, the portion of the unit
map $\eta_2=\eta_1\times_{\D_0\eta_0}\D_0\eta_1:M_1\times_{\D_0M_0}\D_0M_1
\to U\Lhat M_2$ that actually maps to the first three factors consists of
\[
(\eta,I_\D\circ S,\D_0\eta\circ S):
M_1\to\D_0M_1\times_{\D_0M_0}\D_1M_0\times_{\D_0M_0}\D_0^2M_0
\]
and
\[
\D_0\eta:\D_0M_1\to\D_0^2M_1.
\]
However, since we're taking a fiber product over $\D_0^2M_0$ of these two
maps, the result is just $(\eta,I_\D\circ S)\times\D_0\eta$.  Further, the diagram
\[
\xymatrix{
\D_0M_1\ar[r]^-{\D_0\eta}\ar[dr]_-{=}&\D_0^2M_1\ar[d]^-{\mu}
\\&\D_0M_1}
\]
allows us to simplify and write the material part of the composite of $\eta$ and $\gamma_{\Lhat M}$
with $p_1$ as follows:
\begin{align*}
&\xymatrix{
&&M_1\times_{\D_0M_0}\D_0M_1}
\\
&\xymatrix{
\ar[rr]^-{(\eta,I_\D\circ S)\times1}
&&\D_0M_1\times_{\D_0M_0}\D_1M_0\times_{\D_0M_0}\D_0^2M_1}
\\
&\xymatrix{
\relax\ar[rr]^-{1\times1\times\mu_\D}
&&\D_0M_1\times_{\D_0M_0}\D_1M_0\times_{\D_0M_0}\D_0M_1}
\\
&\xymatrix{
\relax\ar[rr]^-{1\times\chi}
&&\D_0M_1\times_{\D_0M_0}\D_0M_1\times_{\D_0M_0}\D_1M_0}
\\
&\xymatrix{
\relax\ar[rr]^-{\D_0\gamma_M\circ p_{12}}&&\D_0M_1.}
\end{align*}
The diagram we wish to commute then becomes the perimeter of the following one,
in which as usual we suppress subscript $\D_0M_0$'s:
\[
\xymatrix{
M\times\D_0M_1\ar[rrr]^-{\gamma_M}\ar[drr]_-{\cong}\ar[d]_-{(\eta,I_\D\circ S)\times1}
&&&M_1\ar[ddd]^-{\eta}
\\\D_0M_1\times\D_1M_0\times\D_0M_1\ar[dd]^-{1\times\chi}&&M_2\ar[ur]_-{\gamma_M}\ar[d]^-{\eta}
\\&\D_0M_1\times\D_1M_1
\ar[ul]_-{1\times(\D_1T,S_\D)}^-{\cong}
\ar[dl]^-{1\times(T_\D,\theta)}\ar[dr]_-{1\times T_\D}
&\D_0M_2\ar[dr]^-{\D_0\gamma_M}\ar[d]^-{\cong}
\\\D_0M_1\times\D_0M_1\times\D_1M_0\ar[rr]_-{p_{12}}
&&\D_0M_1\times\D_0M_1\ar[r]_-{\D_0\gamma_M}
&\D_0M_1.}
\]
The lower left triangle records the definition of $\chi$, and all the rest of 
the sub-diagrams are clear with the exception of the central slanted
rectangle.  We claim that the following diagram commutes, which will
allow us to rewrite the upper right composite in the remaining sub-diagram:
\[
\xymatrix{
M_1\times_{\D_0M_0}\D_0M_1\ar[r]^-{\cong}\ar[d]_-{\eta\times1}
&M_2\ar[d]^-{\eta}
\\\D_0M_1\times_{\D_0M_0}\D_0M_1\ar[r]_-{\cong}&\D_0M_2.}
\]
The main reason this is true is that the unit map $\eta:M_1\to\D_0M_1$
preserves the structure map of source type, as seen in the following
diagram:
\[
\xymatrix{
M_1\ar[r]^-{\eta}\ar[d]_-{S}&\D_0M_1\ar[d]^-{\D_0S}
\\\D_0M_0\ar[r]^-{\eta}\ar[dr]_-{=}&\D_0^2M_0\ar[d]^-{\mu}
\\&\D_0M_0.}
\]
Consequently, the following cube commutes, with the front and back faces
being pullbacks:
\[
\xymatrix{
M_2
\ar[rr]^-{T}\ar[dr]^-{\eta}\ar[dd]_-{S}&&M_1\ar[dr]^-{\eta}\ar[dd]^(0.7){S}
\\&\D_0M_2\ar[rr]^(0.3){\D_0T}\ar[dd]^(0.7){\mu\circ\D_0S}&&\D_0M_1\ar[dd]^-{\mu\circ\D_0S}
\\\D_0M_1\ar[rr]^(0.3){\D_0T}\ar[dr]_-{=}&&\D_0M_0\ar[dr]^-{=}
\\&\D_0M_1\ar[rr]_-{\D_0T}&&\D_0M_0.}
\]
This implies that the claimed square commutes.  We can now replace
the desired sub-diagram with the following one:
\[
\xymatrix@C+15pt{
M\times\D_0M_1\ar[ddrr]^-{\eta\times1}\ar[dd]_-{(\eta,I_\D\circ S)\times1}
\\\relax
\\\D_0M_1\times\D_1M_0\times\D_0M_1
&\D_0M_1\times\D_1M_1\ar[l]^-{1\times(\D_1T,S_\D)}_-{\cong}\ar[r]_-{1\times T_\D}
&\D_0M_1\times\D_0M_1.}
\]
After composition with projection to the first $\D_0M_1$, both composites coincide with 
$\eta$, so we can concentrate on the composition with the second factor of $\D_0M_1$.
Here we find the diagram commutes as a consequence of the following filling:
\[
\xymatrix{
M_1\times\D_0M_1\ar[dr]^-{p_2}\ar[d]_-{S\times1}
\\\D_0M_0\times\D_0M_1\ar[d]_-{I_\D\times1}
&\D_0M_1\ar[l]^-{(\D_0T,\id)}_-{\cong}\ar[dr]^-{=}\ar[d]_-{I_\D}
\\\D_1M_0\times\D_0M_1
&\D_1M_1\ar[l]^-{(D_1T,S_\D)}_-{\cong}\ar[r]_-{T_\D}&\D_0M_1.}
\]
We may conclude that the unit maps preserve $\gamma$ after projection onto
the first factor.

It remains to verify preservation of $\gamma$ after projection onto the second factor,
$\D_1M_0$.  Looking again at the long composite giving the composition in $\Lhat M$,
we see that the projection onto $\D_1M_0$ depends only on the middle three factors.
Further, the portion of the unit mapping to these three factors consists of the following, 
in which we again suppress the subscripts for the fiber products:
\[
\xymatrix{
M_1\times\D_0M_1\ar[rrr]^-{(I_\D\circ S)\times(\D_0\eta,\D_0(I_\D\circ S))}
&&&\D_1M_0\times\D_0^2M_1\times\D_0\D_1M_0.}
\]
We now simplify as follows:

\begin{lemma}
The composite of the previous part of the unit with the portion of the composition
in $\Lhat M$ ending at $\D_1M_0\times\D_0M_1\times\D_1M_0$
coincides with 
\[
(I_\D\circ S)\times(1,I_D\circ\mu\circ\D_0S).
\]
\end{lemma}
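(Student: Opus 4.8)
The plan is to use that the target $\D_1M_0\times_{\D_0M_0}\D_0M_1\times_{\D_0M_0}\D_1M_0$ is a fiber product, so that the asserted equality of two maps out of $M_1\times_{\D_0M_0}\D_0M_1$ can be verified one factor at a time by composing with each of the three projections. Recall that the portion of the unit in question is $(I_\D\circ S)\times(\D_0\eta,\D_0(I_\D\circ S))$, landing in $\D_1M_0\times\D_0^2M_1\times\D_0\D_1M_0$, and that the relevant portion of the composition in $\Lhat M$ restricts on these three factors to the composite of $1\times I_\D\times I_\D$, then $1\times T_\D\times1$, then $1\times\mu\times\mu$; I would trace each of the three factors through this composite.

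For the left factor $\D_1M_0$ there is nothing to do: the unit contributes $I_\D\circ S$ out of $M_1$, and all three maps of the composition portion are the identity on this factor, so the result is $I_\D\circ S$. For the middle factor, the unit contributes $\D_0\eta\colon\D_0M_1\to\D_0^2M_1$ while the composition portion applies $\mu\circ T_\D\circ I_\D$; since $T_\D\circ I_\D=\id$ (a defining relation of the category object $\{\D_0,\D_1\}$) and $\mu\circ\D_0\eta=\id$ (the right unit law of the monad $\D_0$), the composite collapses to the identity on $\D_0M_1$, which is the first entry of $(1,I_\D\circ\mu\circ\D_0S)$.

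The essential point is the right factor. Here the unit contributes $\D_0(I_\D\circ S)=\D_0I_\D\circ\D_0S\colon\D_0M_1\to\D_0\D_1M_0$ and the composition portion applies $\mu\circ I_\D$ (the $T_\D$-step being the identity on this factor), so the composite is $\mu\circ I_\D\circ\D_0I_\D\circ\D_0S$. Recognizing that $I_\D\circ\D_0I_\D\colon\D_0^2M_0\to\D_1^2M_0$ is precisely the Godement composite $I_\D I_\D$ of the natural transformation $I_\D\colon\D_0\to\D_1$ with itself, the desired reduction to $I_\D\circ\mu\circ\D_0S$ is exactly the compatibility $\mu_{\D_1}\circ I_\D I_\D=I_\D\circ\mu_{\D_0}$ of $I_\D$ with the two multiplications. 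This holds because $I_\D$ is a map of monads, which is part of the assertion that $\{\D_0,\D_1\}$ forms a category object in monads on $\Set$ (Lemma \ref{category object}). Assembling the three factors yields $(I_\D\circ S)\times(1,I_\D\circ\mu\circ\D_0S)$, as claimed.

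I expect the main obstacle to be organizational rather than conceptual: keeping straight the two nested copies of $I_\D$ --- the inner one $\D_0I_\D$ supplied by the unit on $\D_0\D_1M_0$ and the outer one supplied by the composition on $\D_1^2M_0$ --- and seeing that their composite is the horizontal composite to which the monad-map identity for $I_\D$ applies. Once that identification is made, the remaining manipulations are routine applications of the monad unit law and the category-object relations.
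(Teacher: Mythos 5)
Your proof is correct and takes essentially the same route as the paper's: verify the equality on each of the three projections, collapse the middle factor using $T_\D\circ I_\D=\id$ together with the monad unit law $\mu\circ\D_0\eta=\id$, and reduce the right factor via $\mu\circ I_\D\circ\D_0I_\D=I_\D\circ\mu$. The paper simply displays this last identity as a commutative diagram without comment, so your explicit identification of $I_\D\circ\D_0I_\D$ as the Godement composite and the appeal to $I_\D$ being a map of monads (part of the category-object structure of $\{\D_0,\D_1\}$) is just a spelled-out version of the same step.
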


\begin{proof}
Tracing the projections to each factor of the target, the first one is clear: both
coincide with $I_\D\circ S$.  For the middle factor, we do just get the identity on $\D_0M_1$
since 
\[
\xymatrix{
\D_0M_1\ar[r]^-{\D_0\eta}\ar[dr]_-{=}&\D_0^2M_1\ar[d]^-{\mu}
\\&\D_0M_1}
\]
commutes.  The expressions for the last $\D_1M_0$ coincide by the commutativity of
\[
\xymatrix{
\D_0M_1\ar[r]^-{\D_0S}&\D_0^2M_0\ar[r]^-{\D_0I_\D}\ar[d]_-{\mu}
&\D_0\D_1M_0\ar[r]^-{I_\D}&\D_1^2M_0\ar[d]^-{\mu}
\\&\D_0M_0\ar[rr]_-{I_\D}&&\D_1M_0.}
\]
\end{proof}

Now we simplify the composite of $p_2\circ\gamma_{\Lhat M}$ with the unit map even
further by means of the following commutative diagram, in which it is extremely important
to realize that the subscript $\D_0M_0$'s have been suppressed:
\[
\xymatrix@C+25pt{
M_1\times\D_0M_1\ar[dr]^-{p_2}\ar[d]_-{S\times1}
\\\D_0M_0\times\D_0M_1\ar[d]_-{I_\D\times1}
&\D_0M_1\ar[l]^-{(\D_0T,1)}_-{\cong}\ar[d]^-{I_\D}
\\\D_1M_0\times\D_0M_1\ar[d]_-{1\times(1,\D_0S)}
&\D_1M_1\ar[dd]^-{=}\ar[l]_-{(\D_1T,S_\D)}^-{\cong}
\\\D_1M_0\times\D_0M_1\times\D_0^2M_0\ar[d]_-{1\times1\times\mu}
\\\D_1M_0\times\D_0M_1\times\D_0M_0\ar[d]_-{1\times1\times I_\D}
&\D_1M_1\ar[l]_-{(D_1T,S_\D,\mu\circ\D_0S\circ S_\D)}^-{\cong}
\ar[d]^-{(1,I_\D\circ\mu\circ\D_0S\circ S_\D)}
\\\D_1M_0\times\D_0M_1\times\D_1M_0\ar[d]_-{\chi\times1}
&\D_1M_1\times\D_1M_0\ar[l]_-{(\D_1T,S_\D)\times1}^-{\cong}
\ar[dl]^-{(S_\D,\theta)\times1}\ar[d]^-{\theta\times1}
\\\D_0M_1\times\D_1M_0\times\D_1M_0\ar[r]_-{p_{23}}
&\D_1M_0\times\D_1M_0\ar[r]_-{\gamma_\D}&\D_1M_0.}
\]
Next, we claim the following diagram commutes:
\[
\xymatrix{
\D_1M_1\ar[dr]^-{\,\,\,\,\,\,\,(\theta,\mu\circ\D_0S\circ S_\D)}\ar[d]_-{\theta}
\\\D_1M_0\ar[r]^-{(1,S_\D)}_-{\cong}\ar[dr]_-{=}
&\D_1M_0\times\D_0M_0\ar[r]^-{(1,I_\D)}
&\D_1M_0\times\D_1M_0\ar[dl]^-{\gamma_\D}
\\&\D_1M_0.}
\]
The lower triangle commutes because $\gamma_\D$ is right unital,
and the upper one commutes because of the commutative diagram
\[
\xymatrix{
\D_1M_1\ar[r]^-{\D_1S}\ar[d]_-{S_\D}
&\D_1\D_0M_0\ar@<.5ex>[r]^-{\D_1I_\D}\ar[d]_-{S_\D}
&\D_1^2M_0\ar[r]^-{\mu}
\ar@{.>}@<.5ex>[l]^-{\D_1 S_\D}
&\D_1M_0\ar[d]^-{S_\D}
\\\D_0M_1\ar[r]_-{\D_0S}
&\D_0^2M_0\ar[rr]_-{\mu}
&&\D_0M_0.}
\]
The right square commutes because $\D_1S_\D$ splits $\D_1I_\D$
and the square commutes when starting at $\D_1^2M_0$.
We can now replace the composite in the previous large diagram
starting at $\D_1M_1$ with just $\theta:\D_1M_1\to\D_1M_0$, which
expands to $\mu_\D\circ\D_1I_\D\circ\D_1S$.  The whole diagram
we wish to commute then reduces to the perimeter of the following
diagram, which does commute:
\[
\xymatrix{
M_1\times_{\D_0M_0}\D_0M_1\ar[r]^-{\cong}\ar[d]_-{p_2}
&M_2\ar[dl]^-{S}\ar[r]^-{\gamma_M}
&M_1\ar[d]^-{S}
\\\D_0M_1\ar[r]_-{\D_0S}\ar[d]_-{I_\D}
&\D_0^2M_0\ar[r]^-{\mu}\ar[d]_-{I_\D}
&\D_0M_0\ar[dd]^-{I_\D}
\\\D_1M_1\ar[r]_-{\D_1S}
&\D_1\D_0M_0\ar[d]_-{\D_1I_\D}
\\&\D_1^2M_0\ar[r]_-{\mu}
&\D_1M_0.}
\]
This completes the verification that the unit map preserves the compositions.
The only thing missing in showing that it is a map of $\D$-multicategories
is to show that $\eta_1:M_1\to\Lhat M_1$ is a map of $\D(*)$-presheaves.
However, this is in general false, and is the reason we need to descend to 
a quotient, which will give us the actual left adjoint.

\section{The adjunction structure II: the counit map}

Next we define the counit map $\e:\Lhat U\C\to\C$, and
verify its properties.  On objects, we have 
\[
\Lhat U\C_0=\D_0\C_0,
\]
so for the counit $\e_0$ we use the action map $\xi_0:\D_0\C_0\to\C_0$ given by the
algebra structure on $\C$ over $\D$.  The significant part is the definition
of the map on morphisms, $\e_1:(\Lhat U\C)_1\to\C_1$.  First, we can apply $\D_0$ to 
the pullback diagram defining $(U\C)_1$ to get another pullback diagram
\[
\xymatrix{
\D_0(U\C)_1\ar[r]^-{\D_0\kappa_1}\ar[d]_-{\D_0S}&\D_0C_1\ar[d]^-{\D_0S}
\\\D_0^2\C_0\ar[r]_-{\D_0\xi_0}&\D_0\C_0.}
\]
Next, the definition of morphisms in $\Lhat M$ applied to $M=U\C$ gives us
a pullback diagram
\[
\xymatrix{
(\Lhat U\C)_1\ar[r]\ar[dd]&\D_0U\C_1\ar[d]^-{\D_0S}
\\&\D_0^2\C_0\ar[d]^-{\mu}
\\\D_1\C_0\ar[r]_-{T}&\D_0\C_0.}
\]
These paste together to form the core of the following diagram, which
allows us to form a composable pair in $\C_1$ from an element of $\Lhat U\C_1$;
we define the counit on morphisms by the resulting composite:
\[
\xymatrix{
(\Lhat U\C)_1\ar[r]\ar[dd]&\D_0U\C_1\ar[r]^-{\D_0\kappa_1}\ar[d]_-{\D_0S}
&\D_0\C_1\ar[dr]^-{I_\D}\ar[d]_-{\D_0S}
\\&\D_0^2\C_0\ar[r]^-{\D_0\xi_0}\ar[d]_-{\mu}
&\D_0\C_0\ar[d]_-{\xi_0}&\D_1\C_1\ar[l]_-{S}\ar[d]^-{\xi_1}
\\\D_1\C_0\ar[r]^-{T}\ar[dr]_-{\D_1I}&\D_0\C_0\ar[r]^-{\xi_0}
&\C_0&\C_1\ar[l]_-{S}
\\&\D_1\C_1\ar[r]_-{\xi_1}\ar[u]_-{T}&\C_1\ar[u]_-{T}&\C_2\ar[l]^-{S}\ar[u]_-{T}\ar[dr]^-{\gamma}
\\&&&&\C_1.}
\]
We can further augment this diagram to verify that the resulting
composite has the correct source and target:
\[
\xymatrix{
(\Lhat U\C)_1\ar[r]\ar[dd]&\D_0U\C_1\ar[r]^-{\D_0\kappa_1}\ar[d]_-{\D_0S}
&\D_0\C_1\ar[dr]_-{I_\D}\ar[d]_-{\D_0S}\ar[drr]^-{\D_0T}
\\&\D_0^2\C_0\ar[r]^-{\D_0\xi_0}\ar[d]_-{\mu}
&\D_0\C_0\ar[d]_-{\xi_0}&\D_1\C_1\ar[l]_-{S}\ar[d]^-{\xi_1}\ar[r]_-{T}&\D_0\C_0\ar[d]^-{\xi_0}
\\\D_1\C_0\ar[r]^-{T}\ar[dr]^-{\D_1I}\ar[ddr]_-{S}&\D_0\C_0\ar[r]^-{\xi_0}
&\C_0&\C_1\ar[l]_-{S}\ar[r]_-{T}&\C_0
\\&\D_1\C_1\ar[r]_-{\xi_1}\ar[u]_-{T}\ar[d]^-{S}
&\C_1\ar[u]_-{T}\ar[d]_-{S}&\C_2\ar[l]^-{S}\ar[u]_-{T}\ar[dr]^-{\gamma}
\\&\D_0\C_0\ar[r]_-{\xi_0}&\C_0&&\C_1.\ar[ll]^-{S}\ar[uu]_-{T}}
\]
To complete showing that $\e$ is a functor, we need to show that it preserves
identity and compositions.  We then wish to show that it is a map of $\D$-algebras,
for which we need to show that $\e$ preserves $\xi_0$ and $\xi_1$.

For preservation of identities, we wish 
\[
\xymatrix{
\Lhat \C_0\ar[r]^-{I_{\Lhat U\C}}\ar[d]_-{\e_0}&\Lhat U\C_1\ar[d]^-{\e_1}
\\\C_0\ar[r]_-{I_\C}&\C_1}
\]
to commute.  The identity map for $\Lhat M$ is given by 
\[
\xymatrix{
\Lhat M_0=\D_0M_0\ar[rr]^-{(\D_0I_M,I_\D)}&&\D_0M_1\times_{\D_0M_0}\D_1M_0=\Lhat M_1,}
\]
so the identity map for $\Lhat U\C$ is given by
\[
\xymatrix{
\Lhat U\C_0=\D_0\C_0\ar[rr]^-{(\D_0I_{U\C},I_\D)}&&\D_0U\C_1\times_{\D_0\C_0}\D_1\C_0.}
\]
Now the counit map $\e_1:\Lhat U\C_1\to\C_1$ is a composite of a map
to $\C_1\times_{\C_0}\C_1$ with the composition $\gamma_\C$.  Deferring
the composition for the time being, the projection to the first factor of $\C_1$ can
be expressed as the composite
\[
\xymatrix{
\Lhat U\C_1\cong\D_0U\C_1\times_{\D_0\C_0}\D_1\C_0
\ar[r]^-{p_1}&\D_0U\C_1
\ar[r]^-{\D_0\kappa_1}&\D_0\C_1
\ar[r]^-{I_\D}&\D_1\C_1
\ar[r]^-{\xi_1}&\C_1.}
\]
Since this depends only on the first factor $\D_0U\C_1$ of $\Lhat U\C_1$, we can
compose just with the first term for the identity map, and we get the following
commutative diagram:
\[
\xymatrix{
\D_0\C_0\ar[d]_-{\D_0I_{U\C}}\ar[rrr]^-{\xi_0}\ar[dr]^-{\D_0I_\C}
&&&\C_0\ar[d]^-{I_\C}
\\\D_0U\C_1\ar[r]_-{\D_0\kappa_1}
&\D_0\C_1\ar[r]_-{I_\D}
&\D_1\C_1\ar[r]_-{\xi_1}
&\C_1.}
\]
The projection to the second factor of $\C_1$ is given by the composite
\[
\xymatrix{
\Lhat U\C_1\cong\D_0U\C_1\times_{\D_0\C_0}\D_1\C_0
\ar[r]^-{p_2}&\D_1\C_0
\ar[r]^-{\D_1I_\C}&\D_1\C_1
\ar[r]^-{\xi_1}&\C_1.}
\]
This only depends on the second factor $\D_1\C_0$ of $\Lhat U\C_1$, so composing
with just the second term of the identity map for $\Lhat U\C$, we get the
commutative diagram
\[
\xymatrix{
\D_0\C_0\ar[rr]^-{\xi_0}\ar[d]_-{I_\D}
&&\C_0\ar[d]^-{I_\C}
\\\D_1\C_0\ar[r]_-{\D_1I_\C}
&\D_1\C_1\ar[r]_-{\xi_1}
&\C_1.}
\]
Consequently, both composites to $\C_1$ consist of $I_\C\circ\xi_0$.  
Now the diagram we wish to commute collapses simply to
\[
\xymatrix{
\D_0\C_0\ar[r]^-{\xi_0}
&\C_0\ar[r]^-{(I_\C,I_\C)}\ar[dr]_-{I_\C}
&\C_1\times_{\C_0}\C_1\ar[d]^-{\gamma_\C}
\\&&\C_1.}
\]
The triangle commutes because $\gamma_\C$ is unital, so the counit does
preserve the identity maps.

To show that the counit preserves compositions, we need to show that the diagram
\[
\xymatrix@C+10pt{
\Lhat U\C_1\times_{\Lhat U\C_0}\Lhat U\C_1
\ar[r]^-{\e_1\times_{\e_0}\e_1}\ar[d]_-{\gamma_{\Lhat U\C}}
&\C_1\times_{\C_0}\C_1\ar[d]^-{\gamma_\C}
\\\Lhat U\C_1\ar[r]_-{\e_1}&\C_1}
\]
commutes.  It will be convenient for us to have a formula for the counit $\e_1$ on morphisms,
and examination of the defining diagram shows that it can be expressed as follows:
\begin{align*}
&\xymatrix@C+17pt{\Lhat U\C_1\cong\D_0U\C_1\times_{\D_0\C_0}\D_1\C_0
\ar[r]^-{\D_0\kappa_1\times_{\xi_0}1}&\D_0\C_1\times_{\C_0}\D_1\C_0}
\\&\xymatrix@C+10pt{
\relax\ar[r]^-{I_\D\times\D_1I}&\D_1\C_1\times_{\C_0}\D_1\C_1
\ar[r]^-{\xi_1\times\xi_1}&\C_1\times_{\C_0}\C_1\ar[r]^-{\gamma_\C}&\C_1.}
\end{align*}
Meanwhile, the definition  gives the following expression
for the composition in $\Lhat U\C$:
\begin{align*}
&
\D_0U\C_1\times_{\D_0\C_0}\D_1\C_0\times_{\D_0\C_0}\D_0U\C_1\times_{\D_0\C_0}\D_1\C_0
\\&\xymatrix{
\relax&&\D_0U\C_1\times_{\D_0\C_0}\D_1U\C_1\times_{\D_0\C_0}\D_1\C_0
\ar[ll]_-{1\times(\D_1T,S_\D)\times1}^-{\cong}}
\\&\xymatrix{
\relax\ar[rr]^-{1\times(T_\D,\theta)\times1}
&&\D_0U\C_1\times_{\D_0\C_0}\D_0U\C_1\times_{\D_0\C_0}\D_1\C_0\times_{\D_0\C_0}\D_1\C_0}
\\&\xymatrix@C+20pt{
\relax\ar[r]^-{\D_0\gamma_{U\C}\times\gamma_\D}
&\D_0U\C_1\times_{\D_0\C_0}\D_1\C_0.}
\end{align*}

The total diagram we need in order to show that the counit preserves composition
is too large to display as one piece, so we break it up into three pieces.  The first one is the left part,
and looks like this, where the unadorned products have a suppressed subscript $\D_0\C_0$:
\[
\xymatrix@C+63pt{
\D_0U\C_1\times\D_1\C_0\times\D_0U\C_1\times\D_1\C_0
\ar[r]^-{\D_0\kappa_1\times_{\xi_0}1\times\D_0\kappa_1\times_{\xi_0}1}
&\D_0\C_1\times_{\C_0}\D_1\C_0\times\D_0\C_1\times_{\C_0}\D_1\C_0
\\\D_0U\C_1\times\D_1U\C_1\times\D_1\C_0
\ar[u]^-{1\times(\D_1T,S_\D)\times1}_-{\cong}
\ar[r]_-{\D_0\kappa_1\times_{\xi_0}\D_1\kappa_1\times_{\xi_0}1}
\ar[d]_-{1\times(T_\D,\theta)\times1}
&\D_0\C_1\times_{\C_0}\D_1\C_1\times_{\C_0}\D_1\C_0
\ar[u]^-{\cong}_-{1\times(\D_1T,S_\D)\times1}
\\\D_0U\C_1\times\D_0U\C_1\times\D_1\C_0\times\D_1\C_0
\ar[r]^-{\D_0\kappa_1\times_{\xi_0}\D_0\kappa_1\times_{\xi_0}1\times_{\xi_0}1}
&\D_0\C_1\times_{\C_0}\D_0\C_1\times_{\C_0}\D_1\C_0\times_{\C_0}\D_1\C_0
\\\D_0U\C_2\times\D_2\C_0
\ar[u]_-{(\D_0T,\mu\circ\D_0S)\times(T_\D,S_\D)}^-{\cong}
\ar[r]_-{\D_0\kappa_2\times_{\xi_0}1}
\ar[d]_-{\D_0\gamma_{U\C}\times\gamma_\D}
&\D_0\C_2\times_{\C_0}\D_2\C_0
\ar[d]^-{\D_0\gamma_\C\times\gamma_\D}
\\\D_0U\C_1\times\D_1\C_0
\ar[r]_-{\D_0\kappa_1\times_{\xi_0}1}
&\D_0\C_1\times_{\C_0}\D_1\C_0.}
\]
The central piece, which is to be glued to the right of the previous one, is as follows:
\[
\xymatrix@C+15pt{
\D_0\C_1\times_{\C_0}\D_1\C_0\times\D_0\C_1\times_{\C_0}\D_1\C_0
\ar[r]^-{(I_\D\times\D_1I)^2}
&(\D_1\C_1)^4
\ar[r]^-{\xi_1^2\times_{\xi_0}\xi_1^2}
&\C_1^4
\ar[d]^-{1\times\gamma_\C\times1}
\\\D_0\C_1\times_{\C_0}\D_1\C_1\times_{\C_0}\D_1\C_0
\ar[u]^-{1\times(\D_1T,S_\D)\times1}_-{\cong}
\ar[r]_-{I_\D\times1\times\D_1I}
&(\D_1\C_1)^3
\ar[r]_-{\xi_1^3}
&\C_1^3
\\\D_0\C_1\times_{\C_0}\D_0\C_1\times_{\C_0}\D_1\C_0\times_{\C_0}\D_1\C_0
\ar[r]^-{I_\D^2\times(\D_1I)^2}
&(\D_1\C_1)^4
\ar[r]^-{\xi_1^4}
&\C_1^4
\ar[u]_-{1\times\gamma_\C\times1}
\\\D_0\C_2\times_{\C_0}\D_2\C_0
\ar[r]^-{(\D_0T,\D_0S)\times(T_\D,S_\D)}
\ar[d]^-{\D_0\gamma_\C\times\gamma_\D}
&(\D_0\C_1)^2\times_{\C_0}(\D_1\C_0)^2
\ar[r]_-{I_\D^2\times(\D_1I)^2}
&(\D_1\C_1)^4
\ar[u]_-{\xi_1^4}
\\\D_0\C_1\times_{\C_0}\D_1\C_0
\ar[r]_-{I_\D\times\D_1I}
&\D_1\C_1\times_{\C_0}\D_1\C_1
\ar[r]_-{\xi_1\times\xi_1}
&\C_1\times_{\C_0}\C_1.}
\]
And the right hand piece, to be glued to the right of the previous one, is
\[
\xymatrix{
\C_1^4
\ar[rr]^-{\gamma_\C\times\gamma_\C}
\ar[d]_-{1\times\gamma_\C\times1}
&&\C_1^2
\ar[dddd]^-{\gamma_\C}
\\\C_1^3
\ar[dr]^-{\gamma_\C\times1}
\\\C_1^4
\ar[u]^-{1\times\gamma_\C\times1}
\ar[ddr]^-{\gamma_\C\times\gamma_\C}
&\C_1^2
\ar[ddr]^-{\gamma_\C}
\\(\D_1\C_1)^4
\ar[u]^-{\xi_1^4}
\\\C_1\times_{\C_0}\C_1
\ar[r]_-{=}
&\C_1^2
\ar[r]_-{\gamma_\C}
&\C_1.}
\]

We proceed to verify that all the subdiagrams commute, therefore verifying that the
counit preserves compositions.  We begin in the upper left corner,
so at the top of the left portion displayed, in which 
the only part that needs explanation is the part originating
with the $\D_1U\C_1$ in the lower left and ending in the upper right,. But 
both components agree because first $T_{U\C}:=T_\C\circ\kappa_1$ by definition, and
second $S_\D$ is natural.  

Proceeding to the rectangle next to the right, so at the top of
the middle portion displayed, we find that commutation 
only needs to be explained starting with the
middle $\D_1\C_1$ in the lower left corner.  This is equivalent to checking
commutativity of the diagram
\[
\xymatrix@C+10pt{
&\D_1\C_1\ar[dl]_-{(\D_1T,S_\D)}^-{\cong}\ar[dr]^-{=}
\\\D_1\C_0\times_{\D_0\C_0}\D_0\C_1
\ar[r]_-{\D_1I\times I_\D}
&\D_1\C_1\times_{\D_0\C_0}\D_1\C_1
\ar[d]_-{\xi_1\times_{\xi_0}\xi_1}
\ar[r]_-{\gamma_{\D\C}}
&\D_1\C_1\ar[d]^-{\xi_1}
\\&\C_1\times_{\C_0}\C_1\ar[r]_-{\gamma_\C}&\C_1,}
\]
in which the bottom square commutes because $\xi:\D\C\to\C$ is a functor. 
We still need to check the top triangle.  This amounts to checking the following 
diagram:
\[
\xymatrix@C+45pt{
\D_1\C_0\times_{\D_0\C_0}\D_0\C_1
\ar[d]_-{\D_1I\times1}
&\D_1\C_1\ar[l]_-{(\D_1T,S_\D)}^-{\cong}
\ar[d]_-{\D_1I_L}
\ar[dr]^-{=}
\\\D_1\C_1\times_{\D_0\C_0}\D_0\C_1
\ar[d]_-{1\times I_\D}
&\D_1\C_2
\ar[l]_-{(\D_1T,S_\D\circ\D_1S)}^-{\cong}
\ar[r]^-{\D_1\gamma_\C}
\ar[dr]^-{=}
\ar[d]_-{(I_R)_\D}
&\D_1\C_1
\\\D_1\C_1\times_{\D_0\C_0}\D_1\C_1
&\D_2\C_2
\ar[l]_-{(T_\D\circ\D_2T,S_\D\circ\D_2S)}^-{\cong}
\ar[r]_-{\gamma_\D}
&\D_1\C_2.\ar[u]_-{\D_1\gamma_\C}}
\]
The right side of the diagram commutes by the unital properties of composition
in $\C$ and the category object $\D$.  The top left square expands to the
commutative diagram
\[
\xymatrix{
\D_1\C_1\ar[rr]^-{(\D_1T,S_\D)}_-{\cong}
\ar[dd]_-{\D_1I_L}
\ar[dr]^-{(\D_1(I\circ T),1)}
&&\D_1\C_0\times_{\D_0\C_0}\D_0\C_1
\ar[dd]^-{\D_1I\times1}
\\&\D_1\C_1\times_{\D_1\C_0}\D_1\C_1
\ar[dr]^-{1\times_{S_\D}S_\D}_-{\cong}
\\\D_1\C_2
\ar[ur]^-{\cong}_-{(\D_1T,\D_1S)}
\ar[rr]^-{\cong}_-{(\D_1T,S_\D\circ\D_1S)}
&&\D_1\C_1\times_{\D_0\C_0}\D_0\C_1.}
\]
The left triangle commutes by the definition of $I_L$, and the rest of the
diagram is straightforward.  The lower left square 
commutes as a result of its expansion as follows:
\[
\xymatrix{
\D_1\C_2\ar[rr]^-{(\D_1T,S_\D\circ\D_1S)}_-{\cong}
\ar[dd]_-{(I_R)_\D}
\ar[dr]^-{(1,I_\D\circ S_\D)}
&&\D_1\C_1\times_{\D_0\C_0}\D_0\C_1
\ar[dd]^-{1\times I_\D}
\\&\D_1\C_2\times_{\D_0\C_2}\D_1\C_2
\ar[dr]^-{\D_1T\times_{\D_0(T\circ S)}\D_1S}_-{\cong}
\\\D_2\C_2
\ar[ur]^-{\cong}_-{(T_\D,S_\D)}
\ar[rr]^-{\cong}_-{(T_\D\circ\D_1T,S_\D\circ\D_1S)}
&&\D_1\C_1\times_{\D_0\C_0}\D_1\C_1,}
\]
where now the left triangle commutes as a consequence of the
definition of $I_R$ for the category object $\D$.

Returning to the large multi-part diagram, we proceed next to
the second rectangle down on the left, which extends through 
both the left and middle portions.  When restricted to the outer factors,
both ways of traversing the rectangle coincide directly, leaving us with
the part originating with $\D_1U\C_1$.   To verify this part, we
begin with the following diagram:
\[
\xymatrix@C+7pt{
\D_1U\C_1\ar[r]^-{\D_1\kappa_1}\ar[dd]_-{(T_\D,\theta)}
&\D_1\C_1\ar[d]^-{(T_\D,\D_1S)}
\\&\D_0\C_1\times_{\D_0\C_0}\D_1\C_0
\ar[r]^-{I_\D\times\D_1I}
\ar@{.>}[d]^-{1\times_{\xi_0}1}
&\D_1\C_1\times_{\D_0\C_0}\D_1\C_1
\ar[dd]^-{\xi_1\times_{\xi_0}\xi_1}
\\\D_0U\C_1\times_{\D_0\C_0}\D_1\C_0
\ar[r]_-{\D_0\kappa_1\times_{\xi_0}1}
&\D_0\C_1\times_{\C_0}\D_1\C_0\ar[d]^-{I_\D\times\D_1I}
\\&\D_1\C_1\times_{\C_0}\D_1\C_1\ar[r]_-{\xi_1\times\xi_1}
&\C_1\times_{\C_0}\C_1.
}
\]
The reader is warned that with the central dotted arrow included, the
left rectangle does \emph{not} commute, however tempting it may be to
claim it does.  The problem is that when projected to the right factor
$\D_1\C_0$ of the target of the dotted arrow,
there is an ambiguity in the structure map for $\D_1\D_0\C_0$.  The use
of the map $\theta$ dictates the use of $\mu\circ\D_1I_\D$,
while the use of $\kappa_1$ dictates the use of $\D_1\xi_0$.   They aren't the same.  However,
the entire large diagram \emph{does} commute.  The projection onto
the left factor causes no problems; we can even use the dotted arrow,
although it is easy enough to see that the composites coincide.  Projection 
onto the right factor is a consequence of the following diagram:
\[
\xymatrix{
\D_1U\C_1\ar[rr]^-{\D_1\kappa_1}\ar[d]_-{\D_1S}
&&\D_1\C_1\ar[d]^-{\D_1S}
\\\D_1\D_0C_0\ar[rr]^-{\D_1\xi_0}\ar[d]_-{\D_1I_\D}
&&\D_1\C_0\ar[d]^-{\D_1I}
\\\D_1^2\C_0\ar[r]^-{\D_1^2I}\ar[d]_-{\mu}
&\D_1^2\C_1\ar[r]^-{\D_1\xi_1}\ar[d]_-{\mu}
&\D_1\C_1\ar[d]^-{\xi_1}
\\\D_1\C_0\ar[r]_-{\D_1I}
&\D_1\C_1\ar[r]_-{\xi_1}
&\C_1}
\]
The left vertical composite expresses $\theta$, so the diagram says that
although $\theta\ne\D_1S\circ\D_1\kappa_1$, they do
coincide after composing with $\xi_1\circ\D_1I$.  This is the principle reason 
the left part of the previous diagram doesn't commute with the dotted
arrow included. 

Now we paste onto the right of the double rectangle diagram the following one,
which completes the rectangle we wish to verify:
\[
\xymatrix@C+7pt{
\D_1\C_1\ar[d]_-{(T_\D,\D_1S)}\ar[drr]^-{=}
\\\D_0\C_1\times_{\D_0\C_0}\D_1\C_0
\ar[r]_-{I_\D\times\D_1I}
&\D_1\C_1\times_{\D_0\C_0}\D_1\C_1
\ar[r]_-{\gamma_{\D\C}}\ar[d]_-{\xi_1\times_{\xi_0}\xi_1}
&\D_1\C_1
\ar[d]^-{\xi_1}
\\&\C_1\times_{\C_0}\C_1\ar[r]_-{\gamma_\C}&\C_1.}
\]
The square is one we've checked before, and the triangle is analogous to the one
we checked above.  If the reader wants the details, here they are: the triangle
amounts to checking commutativity of
\[
\xymatrix@C+45pt{
 \D_0\C_1\times_{\D_0\C_0}\D_1\C_0
 \ar[d]_-{1\times\D_1I}
 &\D_1\C_1
 \ar[l]_-{(T_\D,\D_1S)}^-{\cong}\ar[d]_-{\D_1I_R}\ar[dr]^-{=}
 \\\D_0\C_1\times_{\D_0\C_0}\D_1\C_1
 \ar[d]_-{I_\D\times1}
 &\D_1\C_2
 \ar[r]^-{\D_1\gamma_\C}\ar[dr]^-{=}\ar[d]_-{(I_L)_\D}\ar[l]_-{(T_\D\circ\D_1T,\D_1S)}^-{\cong}
 &\D_1\C_1
 \\\D_1\C_1\times_{\D_0\C_0}\D_1\C_1
 &\D_1\C_2
 \ar[r]_-{\gamma_\D}\ar[l]_-{(T_\D\circ\D_2T,S_\D\circ\D_2S)}^-{\cong}
 &\D_1\C_2.\ar[u]_-{\D_1\gamma_\C}}
 \]
The top left square follows from being filled in as follows:
\[
\xymatrix{
\D_1\C_1\ar[rr]^-{(T_\D,\D_1S)}\ar[dd]_-{\D_1I_R}
\ar[dr]^-{(1,\D_1(I\circ S))}
&&\D_0\C_1\times_{\D_0\C_0}\D_1\C_0\ar[dd]^-{1\times\D_1I}
\\&\D_1\C_1\times_{\D_1\C_0}\D_1\C_1\ar[dr]^-{T_\D\times_{T_\D}1}_-{\cong}
\\\D_1\C_2\ar[ur]^-{\cong}_-{(\D_1T,\D_1S)}
\ar[rr]_-{(T_\D\circ\D_1T,\D_1S)}^-{\cong}
&&\D_0\C_1\times_{\D_0\C_0}\D_1\C_1.} 
 \]
 The bottom left square fills as follows,
 \[
\xymatrix{
\D_1\C_2\ar[rr]^-{(T_\D\circ\D_1T,\D_1S)}\ar[dd]_-{(I_L)_\D}
\ar[dr]^-{(I_\D\circ T_\D,1)}
&&\D_0\C_1\times_{\D_0\C_0}\D_1\C_1\ar[dd]^-{I_\D\times1}
\\&\D_1\C_2\times_{\D_0\C_2}\D_1\C_2\ar[dr]^-{\D_1T\times_{\D_0(T\circ S)}\D_1S}_-{\cong}
\\\D_2\C_2\ar[ur]^-{\cong}_-{(T_\D,S_\D)}
\ar[rr]_-{(T_\D\circ\D_1T,S_\D\circ\D_1S)}^-{\cong}
&&\D_1\C_1\times_{\D_0\C_0}\D_1\C_1,} 
 \]
 and the rest follows from unital properties of $\gamma_\C$ and $\gamma_{\D\C}$.
 
 We return again to the large multi-part diagram, and proceed to the next 
 rectangle down on the left: the second one that spans both the left and middle
 displays.  The portion starting with $\D_2\C_0$ is unproblematic: both composites
 coincide explicitly.  For the portion starting with $\D_0U\C_2$,
 the part proceeding first via $\D_0T$
 commutes because of the square
 \[
\xymatrix{
\D_0U\C_2\ar[r]^-{\D_0T}\ar[d]_-{\D_0\kappa_2}
&\D_0U\C_1\ar[d]^-{\D_0\kappa_1}
\\\D_0\C_2\ar[r]_-{\D_0T}
&\D_0\C_1,}
\]
to which $\xi_1\circ I_\D$ is then appended.  The part proceeding 
from $\D_0U\C_2$ via $\mu\circ\D_0S$ is more of a challenge,
but it too commutes as a result of the following diagram:
 \[
\xymatrix{
\D_0U\C_2
\ar[rrr]^-{\D_0\kappa_2}
\ar[d]_-{\D_0S}
&&&\D_0\C_2
\ar[d]^-{\D_0S}
\\\D_0^2U\C_1
\ar[r]^-{\D_0^2\kappa_1}
\ar[dd]_-{\mu}
&\D_0^2\C_1
\ar[r]^-{\D_0I_\D}
\ar[dd]_-{\mu}
&\D_0\D_1\C_1
\ar[r]^-{\D_0\xi_1}
\ar[d]_-{I_\D}
&\D_0\C_1
\ar[d]^-{I_\D}
\\&&\D_1^2\C_1
\ar[r]^-{\D_1\xi_1}
\ar[d]_-{\mu}
&\D_1\C_1
\ar[d]^-{\xi_1}
\\\D_0U\C_1
\ar[r]_-{\D_0\kappa_1}
&\D_0\C_1
\ar[r]_-{I_\D}
&\D_1\C_1
\ar[r]_-{\xi_1}
&\C_1.}
\]
In particular, this diagram exhibits the fact that although 
$\D_0\kappa_1\circ\mu\circ\D_0S\ne\D_0S\circ\D_0\kappa_2$, 
they do coincide when composed with $\xi_1\circ I_\D$.  This is 
the reason the rectangle has no shorter fill.

Returning again to the large, multi-part diagram, we have arrived at 
the bottom of the left hand portion, which commutes because of the
definition of $\gamma_{U\C}$.  The subdiagram to its right,
which includes portions in both the middle and right displays, is the 
product of two diagrams, one with source $\D_0\C_2$ and the other
with source $\D_2\C_0$.  The one with source $\D_0\C_2$ commutes
as a result of the following diagram:
\[
\xymatrix@C+7pt{
\D_0\C_1\times_{\D_0\C_0}\D_0\C_1
\ar[r]^-{I_\D\times I_\D}
&\D_1\C_1\times_{\D_0\C_0}\D_1\C_1
\ar[r]^-{\xi_1\times_{\xi_0}\xi_1}
&\C_1\times_{\C_0}\C_1
\\\D_0\C_2
\ar[u]^-{(\D_0T,\D_0S)}_-{\cong}
\ar[r]^-{I_\D^2}
\ar[d]_-{\D_0\gamma_\C}
&\D_2\C_2
\ar[u]^-{(T_\D T,S_\D S)}_-{\cong}
\ar[r]^-{\xi_2}
\ar[d]_-{\gamma_{\D\C}}
&\C_2
\ar[u]_-{(T,S)}^-{\cong}
\ar[d]^-{\gamma_\C}
\\\D_0\C_1\ar[r]_-{I_\D}
&\D_1\C_1\ar[r]_-{\xi_1}
&\C_1.}
\]
And the one starting with $\D_2\C_0$ commutes as a result
of the following analogous diagram:
\[
\xymatrix@C+7pt{
\D_1\C_0\times_{\D_0\C_0}\D_1\C_0
\ar[r]^-{\D_1I\times\D_1I}
&\D_1\C_1\times_{\D_0\C_0}\D_1\C_1
\ar[r]^-{\xi_1\times_{\xi_0}\xi_1}
&\C_1\times_{\C_0}\C_1
\\\D_2\C_0
\ar[u]^-{(T_\D,S_\D)}_-{\cong}
\ar[r]^-{\D_2I^2}
\ar[d]_-{\gamma_\D}
&\D_2\C_2
\ar[u]^-{(T_\D T,S_\D S)}_-{\cong}
\ar[r]^-{\xi_2}
\ar[d]_-{\gamma_{\D\C}}
&\C_2
\ar[u]_-{(T,S)}^-{\cong}
\ar[d]^-{\gamma_\C}
\\\D_1\C_0\ar[r]_-{\D_1I}
&\D_1\C_1\ar[r]_-{\xi_1}
&\C_1.}
\]

The remaining parts of the large, multi-part diagram are both in the right
hand display, and are a consequence of associativity for $\gamma_\C$.  
We may conclude that the counit preserves composition, and is therefore
a functor.

We still must show that the counit respects the $\D$-algebra structure 
on $\Lhat M$, that is, that the diagram
\[
\xymatrix{
\D_1\Lhat U\C_1
\ar[r]^-{\xi_1}
\ar[d]_-{\D_1\e}
&\Lhat U\C_1
\ar[d]^-{\e}
\\\D_1\C_1
\ar[r]_-{\xi_1}
&\C_1}
\]
commutes.  We expand using the definitions of $\e$ and $\xi_1$ and
fill as follows:
\[
\xymatrix@C+7pt{
\D_1\D_0U\C_1\times_{\D_1\D_0\C_0}\D_1^2\C_0
\ar[r]^-{T_\D\times_{T_\D}1}
\ar[d]_-{\D_1\D_0\kappa_1\times_{\D_0\xi_0}1}
&\D_0^2U\C_1\times_{\D_0^2\C_0}\D_1^2\C_0
\ar[r]^-{\mu\times_\mu\mu}
\ar[d]_-{\D_0^2\kappa_1\times_{\D_0\xi_0}1}
&\D_0U\C_1\times_{\D_0\C_0}\D_1\C_0
\ar[d]^-{\D_0\kappa\times_{\xi_0}1}
\\\D_1\D_0\C_1\times_{\D_1\C_0}\D_1^2\C_0
\ar[r]^-{T_\D\times_{T_\D}1}
\ar[d]_-{\D_1I_\D\times\D_1^2I}
&\D_0^2\C_1\times_{\D_0\C_0}\D_1^2\C_0
\ar[r]^-{\mu\times_{\xi_0}\mu}
\ar[d]_-{I_\D^2\times\D_1^2I}
&\D_0\C_1\times_{\C_0}\D_1\C_0
\ar[d]^-{I_\D\times\D_1I}
\\\D_1^2\C_1\times_{\D_1\C_0}\D_1^2\C_1
\ar[r]^-{(I_\D\circ T_\D)\times_{T_\D}1}
\ar[d]_-{\D_1\xi_1\times\D_1\xi_1}
&\D_1^2\C_1\times_{\D_0\C_0}\D_1^2\C_1
\ar[r]^-{\mu\times_{\xi_0}\mu}
\ar[d]_-{\D_1\xi_1\times\D_1\xi_1}
&\D_1\C_1\times_{\C_0}\D_1\C_1
\ar[d]^-{\xi_1\times\xi_1}
\\\D_1\C_1\times_{\D_1\C_0}\D_1\C_1
\ar[r]^-{(I_\D\circ T_\D)\times_{T_\D}1}
\ar[d]_-{\D_1\gamma_\C}
&\D_1\C_1\times_{\D_0\C_0}\D_1\C_1
\ar[dl]^-{\gamma_{\D\C}}
\ar[r]^-{\xi_1\times_{\xi_0}\xi_1}
&\C_1\times_{\C_0}\C_1
\ar[d]^-{\gamma_\C}
\\\D_1\C_1
\ar[rr]_-{\xi_1}
&&\C_1.}
\]
The one part of this fill that may be less than clear is the triangle
in the lower left corner, but this is a consequence of the left
unital property of the category structure on $\D$.  In particular, we
can expand the triangle as follows:
\[
\xymatrix{
\D_1\C_1\times_{\D_1\C_0}\D_1\C_1
\ar[rr]^-{(I_\D\circ T_\D)\times_{T_\D}1}
&&\D_1\C_1\times_{\D_0\C_0}\D_1\C_1
\\\D_1\C_2
\ar[u]^-{(\D_1T,\D_1S)}_-{\cong}
\ar[r]^-{(I_\D\circ T_\D,1)}
\ar[dr]_-{=}
&\D_1\C_2\times_{\D_0\C_2}\D_1\C_2
\ar[ur]^-{(\D_1T,\D_1S)}_-{\cong}
&\D_2\C_2
\ar[u]_-{(T_\D T,S_\D S)}^-{\cong}
\ar[l]_-{(T_\D,S_\D)}^-{\cong}
\ar[dl]_-{\gamma_\D}
\ar[d]^-{\gamma_{\D\C}}
\\&\D_1\C_2
\ar[r]_-{\D_1\gamma_\C}
&\D_1\C_1.}
\]
Here the lower part of the diagram captures the left unital property
for the category structure on $\D$, and therefore the diagram commutes.
It follows that the counit preserves the $\D$-algebra structure on $\Lhat M$.

\section{The adjunction structure III: commuting triangles}

We wish to verify the adjunction triangles
\[
\xymatrix{
U\C\ar[r]^-{\eta}\ar[dr]_-{=}&U\Lhat U\C\ar[d]^-{U\e}
\\&U\C}
\quad\text{and}\quad
\xymatrix{
\Lhat M\ar[r]^-{\Lhat\eta}\ar[dr]_-{=}&\Lhat U\Lhat M\ar[d]^-{\e}
\\&\Lhat M.}
\]
Both of these are straightforward when restricted to objects: we have
the actual adjunction between sets and $\D_0$-algebras.  We therefore
concentrate on the maps on morphisms, and start with the first triangle.

In general we have $U\C_1\cong\C_1\times_{\C_0}\D_0\C_0$, so replacing
$\C$ with $\Lhat U\C$, we have
\[
U\Lhat U\C_1\cong\D_0U\C_1\times_{\D_0\C_0}\D_1\C_0\times_{\D_0\C_0}\D_0^2\C_0,
\]
where the structure map on the $\D_0^2\C_0$ on the end is given by $\mu$.  
Next, in general we have 
\[
U\Lhat M_1\cong \D_0M_1\times_{\D_0M_0}\D_1M_0\times_{\D_0M_0}\D_0^2M_0,
\]
and the unit $\eta_1:M_1\to U\Lhat M_1$ is given by $(\eta_\D,I_\D\circ S,\D_0\eta\circ S)$.
Replacing $M_1$ with $U\C_1$ and rewriting $\D_0U\C_1$ as $\D_0\C_1\times_{\D_0\C_0}\D_0^2\C_0$,
we see that the unit $\eta_1:U\C_1\to U\Lhat U\C_1$ is a map
\[
\C_1\times_{\C_0}\D_0\C_0\to
\D_0\C_1\times_{\D_0\C_0}\D_0^2\C_0\times_{\D_0\C_0}\D_1\C_0\times_{\D_0\C_0}\D_0^2\C_0
\]
given by the four components $(\eta_\D\circ p_1,\eta_\D\circ p_2,I_\D\circ p_2,\D_0\eta\circ p_2)$.
The structure maps on the two copies of $\D_0^2\C_0$ need particular care: the first one has 
structure map of target type $\D_0\xi_0$ and source type $\mu$, while the second one has structure
map of target type $\mu$.

Next, the counit $\e_1:\Lhat U\C_1\to\C_1$ consists of the following composite:
\begin{align*}
\Lhat U\C_1
&\xymatrix@C+15pt{
\relax\ar[r]^-{\cong}&\D_0U\C_1\times_{\D_0\C_0}\D_1\C_0
\ar[r]^-{\D_0\kappa_1\times_{\xi_0}1}&\D_0\C_1\times_{\C_0}\D_1\C_0}
\\&\xymatrix@C+15pt{
\relax\ar[r]^-{I_\D\times\D_1I}&\D_1\C_1\times_{\C_0}\D_1\C_1
\ar[r]^-{\xi_1\times\xi_1}&\C_1\times_{\C_0}\C_1
\ar[r]^-{\gamma_\C}&\C_1.}
\end{align*}
Since $U\Lhat U\C_1\cong\Lhat U\C_1\times_{\Lhat U\C_0}\D_0\Lhat U\C_0
=\Lhat U\C_1\times_{\D_0\C_0}\D_0^2\C_0$, what $U\e_1$ does is $\e_1$ on 
the first factor, and $\D_0\e_0=\D_0\xi_0$ on the second factor.  Writing $\xi_0^2$ for
either way of composing in the commutative square
\[
\xymatrix{
\D_0^2\C_0\ar[r]^-{\D_0\xi_0}\ar[d]_-{\mu}
&\D_0\C_0\ar[d]^-{\xi_0}
\\\D_0\C_0\ar[r]_-{\xi_0}&\C_0,}
\]
we can express the induced map $U\e_1$ as the following composite,
in which the $\xi_0^2$ simply has the effect of deleting the $\D_0^2\C_0$ term:
\begin{align*}
&\D_0\C_1\times_{\D_0\C_0}\D_0^2\C_0\times_{\D_0\C_0}\D_1\C_0\times_{\D_0\C_0}\D_0^2\C_0
\\&\xymatrix{
\relax\ar[rr]^-{1\times_{\xi_0}\xi_0^2\times_{\xi_0}1\times1}
&&\D_0\C_1\times_{\C_0}\D_1\C_0\times_{\D_0\C_0}\D_0^2\C_0}
\\&\xymatrix{
\relax\ar[rr]^-{I_\D\times\D_1I\times1}
&&\D_1\C_1\times_{\C_0}\D_1\C_1\times_{\D_0\C_0}\D_0^2\C_0}
\\&\xymatrix{
\relax\ar[rr]^-{\xi_1\times\xi_1\times_{\xi_0}\D_0\xi_0}
&&\C_1\times_{\C_0}\C_1\times_{\C_0}\D_0\C_0}
\\&\xymatrix{
\relax\ar[rr]^-{\gamma_\C\times1}
&&\C_1\times_{\C_0}\D_0\C_0=U\C_1.}
\end{align*}
We proceed by identifying the composite up to, but not including, the very last step,
which is the use of $\gamma_\C\times1$, so the target is $\C_1\times_{\C_0}\C_1\times_{\C_0}\D_0\C_0$.
if we project to the first factor of $\C_1$, tracing through shows that the composite 
depends only on the factor of $\C_1$ in the initial source, and consists of either way
around the triangle
\[
\xymatrix@C+20pt{
\C_1\ar[r]^-{\eta_{\D_0}}\ar[dr]^-{\eta_{\D_1}}\ar[ddr]_-{=}
&\D_0\C_1\ar[d]^-{I_\D}
\\&\D_1\C_1\ar[d]^-{\xi_1}
\\&\C_1.}
\]
The top part commutes because the unit $\eta:1\to\D$ is natural in the simplicial structure of $\D$, 
and the bottom because $\C_1$ is a $\D_1$-algebra.

If we project to the second factor of $\C_1$, we see that the map depends only
on the factor of $\D_0\C_0$ in the initial source, and consists of either way around
the commutative rectangle
\[
\xymatrix{
\D_0\C_0\ar[r]^-{I_\D}\ar[d]_-{\xi_0}
&\D_1\C_0\ar[r]^-{\D_1I}
&\D_1\C_1\ar[d]^-{\xi_1}
\\\C_0\ar[rr]_-{I}&&\C_1.}
\]
Consequently, the image in this factor of $\C_1$ consists entirely of identity maps,
so won't change the first factor upon composing.  We see therefore that the composite
including $\gamma_\C\times1$ is the identity on the factor of $\C_1$.

Projecting to the factor of $\D_0\C_0$, the composite consists of either way
around the commuting triangle
\[
\xymatrix{
\D_0\C_0\ar[r]^-{\D_0\eta}\ar[dr]_-{=}&\D_0^2\C_0\ar[d]^-{\D_0\xi_0}
\\&\D_0\C_0.}
\]
The entire composite is the identity, and we see that the first adjunction
triangle commutes.

For the second adjunction triangle, we need explicit expressions for both
$\Lhat\eta_1:\Lhat M_1\to\Lhat U\Lhat M_1$ and $\e_1:\Lhat U\Lhat M_1\to\Lhat M_1$.
We start by recalling $\Lhat M_1\cong\D_0M_1\times_{\D_0M_0}\D_1M_0$, 
and also $\Lhat M_0:=\D_0M_0$.  Then since in general $U\C_1=\C_1\times_{\C_0}\D_0\C_0$,
we have
\[
U\Lhat M_1\cong\Lhat M_1\times_{\D_0M_0}\D_0^2M_0
\cong\D_0M_1\times_{\D_0M_0}\D_1M_0\times_{\D_0M_0}\D_0^2M_0,
\]
where the structure map for $\D_0^2M_0$ is $\mu$.  We also have $U\Lhat M_0=\Lhat M_0=\D_0M_0$.
Applying $\Lhat $ to all this, we get
\begin{gather*}
\Lhat U\Lhat M_1=\D_0U\Lhat M_1\times_{\D_0U\Lhat M_0}\D_1U\Lhat M_0
\\\cong\D_0^2M_1\times_{\D_0^2M_0}\D_0\D_1M_0\times_{\D_0^2M_0}\D_0^3M_0
\times_{\D_0^2M_0}\D_1\D_0M_0.
\end{gather*}
(We must take care to note that the structure maps on $\D_0^3M_0$ are $\mu$ for
target type, but $\D_0\mu$ for source type.)
Now $\eta:1\to U\Lhat$ has components $\eta_0:M_0\to U\Lhat M_0=\D_0M_0$ given by
$\eta_{\D_0}$, the unit for the monad $\D_0$, and $\eta_1:M_1\to U\Lhat M_1$ given by
\[
M_1\to\D_0M_1\times_{\D_0M_0}\D_1M_0\times_{\D_0M_0}\D_0^2M_0
\]
given by component maps $(\eta_{\D_0},I_\D\circ S,\D_0\eta\circ S)$.  Applying $\Lhat$ 
to these data, we find the induced map $\Lhat\eta_1:\Lhat M_1\to\Lhat U\Lhat M_1$ is
given by the map
\[
\D_0M_1\times_{\D_0M_0}\D_1M_0
\to\D_0^2M_1\times_{\D_0^2M_0}\D_0\D_1M_0\times_{\D_0^2M_0}\D_0^3M_0
\times_{\D_0^2M_0}\D_1\D_0M_0
\]
with components $(\D_0\eta\circ p_1,\D_0(I_\D\circ S)\circ p_1,
\D_0(\D_0\eta\circ S)\circ p_1,\D_1\eta_{\D_0}\circ p_2)$.  In particular, the projection
to the first three terms depend only on the first term of the source, and the projection to 
the fourth depends only on the last term in the source.

For the counit map, we have in general the composite given above, namely
\begin{align*}
\Lhat U\C_1
&\xymatrix@C+15pt{
\relax\ar[r]^-{\cong}&\D_0U\C_1\times_{\D_0\C_0}\D_1\C_0
\ar[r]^-{\D_0\kappa_1\times_{\xi_0}1}&\D_0\C_1\times_{\C_0}\D_1\C_0}
\\&\xymatrix@C+15pt{
\relax\ar[r]^-{I_\D\times\D_1I}&\D_1\C_1\times_{\C_0}\D_1\C_1
\ar[r]^-{\xi_1\times\xi_1}&\C_1\times_{\C_0}\C_1
\ar[r]^-{\gamma_\C}&\C_1.}
\end{align*}
When we have $\C=\Lhat M$, this becomes in particular the following composite,
in which $\mu^2:\D_0^3M_0\to\D_0M_0$ is either way around the evident square,
and simply has the effect of deleting the $\D_0^3M_0$ term:
\begin{align*}
&\D_0^2M_1\times_{\D_0^2M_0}\D_0\D_1M_0\times_{\D_0^2M_0}\D_0^3M_0
\times_{\D_0^2M_0}\D_1\D_0M_0
\\&\xymatrix{
\relax\ar[rr]^-{1\times1\times_\mu\mu^2\times_\mu1}
&&\D_0^2M_1\times_{\D_0^2M_0}\D_0\D_1M_0\times_{\D_0M_0}\D_1\D_0M_0}
\\&\xymatrix@C+15pt{
\relax\ar[rr]^-{I_\D\times_{I_\D}I_\D\times(\D_1\D_0I,I_\D)}
&&\D_1\D_0M_1\times_{\D_1\D_0M_0}\D_1^2M_0\times_{\D_0M_0}\D_1\D_0M_1
\times_{\D_1\D_0M_0}\D_1^2M_0}
\\&\xymatrix{
\relax\ar[rr]^-{[(\mu\circ T_\D\times_{\mu\circ T_\D}\mu]^2}
&&\D_0M_1\times_{\D_0M_0}\D_1M_0\times_{\D_0M_0}\D_0M_1\times_{\D_0M_0}\D_1M_0}
\\&\xymatrix{
\relax&&\D_0M_1\times_{\D_0M_0}\D_1M_1\times_{\D_0M_0}\D_1M_0
\ar[ll]_-{1\times(\D_1T,S_\D)\times1}^-{\cong}}
\\&\xymatrix{
\relax\ar[rr]^-{1\times(T_\D,\theta)\times1}
&&\D_0M_1\times_{\D_0M_0}\D_0M_1\times_{\D_0M_0}\D_1M_0\times_{\D_0M_0}\D_1M_0}
\\&\xymatrix{
\relax\ar[rr]^-{\D_0\gamma_M\times\gamma_\D}
&&\D_0M_1\times_{\D_0M_0}\D_1M_0=\Lhat M_1.}
\end{align*}
We analyze the composite of this with $\Lhat \eta$ by first stopping halfway through
the counit at the term that gives $\Lhat M_2$, namely
\[
\D_0M_1\times_{\D_0M_0}\D_1M_0\times_{\D_0M_0}\D_0M_1\times_{\D_0M_0}\D_1M_0.
\]
Projecting to the first term, and restricting to the first term in the source of $\Lhat \eta$, we 
get the following commutative diagram:
\[
\xymatrix{
\D_0M_1\ar[r]^-{\D_0\eta}\ar[ddrr]_-{=}
&\D_0^2M_1\ar[r]^-{I_\D}\ar[dr]_-{=}
&\D_1\D_0M_1\ar[d]^-{T_\D}
\\&&\D_0^2M_1\ar[d]^-{\mu}
\\&&\D_0M_1.}
\]
Projecting to the second term, and again restricting to $\D_0M_1$ in the source, we
have the diagram
\[
\xymatrix{
\D_0M_1\ar[r]^-{\D_0S}
&\D_0^2M_0\ar[r]^-{\D_0I_\D}\ar[d]_-{\mu}
&\D_0\D_1M_0\ar[r]^-{I_\D}
&\D_1^2M_0\ar[d]^-{\mu}
\\&\D_0M_0\ar[rr]_-{I_\D}
&&\D_1M_0.}
\]
Notice that $\mu\circ\D_0S$ is the structure map for $\D_0M_1$ of source type.  Projecting
to the third term, and now restricting to $\D_1M_0$, we get the commuting diagram
\[
\xymatrix{
\D_1M_0\ar[r]^-{\D_1I_\D}\ar[d]_-{T_\D}
&\D_1\D_0M_0\ar[r]^-{\D_1\D_0I}\ar[d]^-{T_\D}
&\D_1\D_0M_1\ar[d]^-{T_\D}
\\\D_0M_0\ar[r]^-{\D_0\eta_\D}\ar[dr]_-{=}
&\D_0^2M_0\ar[r]^-{\D_0^2I}\ar[d]^-{\mu}
&\D_0^2M_1\ar[d]^-{\mu}
\\&\D_0M_0\ar[r]_-{\D_0I}
&\D_0M_1.}
\]
Notice that $T_\D$ is the structure map for $\D_1M_0$ of target type.  Projecting
to the fourth term, and still restricting to $\D_1M_0$, we get
\[
\xymatrix@C+25pt{
\D_1M_0\ar[r]^-{\D_1\eta_{\D_0}}\ar[dr]^-{\D_1\eta_{\D_1}}\ar[ddr]_-{=}
&\D_1\D_0M_0\ar[d]^-{\D_1I_\D}
\\&\D_1^2M_0\ar[d]^-{\mu}
\\&\D_1M_0.}
\]
The net result of these calculations is that the composite map to $\Lhat M_2$ can 
be written as the map $1\times(\mu\circ\D_0S,T_\D)\times1$, as follows:
\begin{gather*}
\D_0M_1\times_{\D_0M_0}\D_1M_0
\\\xymatrix@C+10pt{
\relax
\ar[rr]^-{1\times(\mu\circ\D_0S,T_\D)\times1}
&&\D_0M_1\times_{\D_0M_0}\D_1M_0\times_{\D_0M_0}\D_0M_1\times_{\D_0M_0}\D_1M_0,}
\end{gather*}
where the source of the middle terms is the $\D_0M_0$ over which the pullback
defining the overall source is defined.  We examine what happens to these middle terms
as we go on in the counit: the next term is the backwards arrow (which is however
an isomorphism) and the forwards one, both from $\D_1M_1$.  We obtain the following
diagram, which we claim commutes:
\[
\xymatrix@C+15pt{
\D_0M_0\ar[r]^-{(I_\D,\D_0I)}\ar[dr]^-{I_\D I}\ar[ddr]_-{(\D_0I,I_\D)}
&\D_1M_0\times_{\D_0M_0}\D_0M_1
\\&\D_1M_1\ar[u]_-{(\D_1T,S_\D)}^-{\cong}\ar[d]^-{(T_\D,\theta)}
\\&\D_0M_1\times_{\D_0M_0}\D_1M_0.}
\]
The only part of the diagram that isn't immediate from properties of categories and multicategories
is the part involving $\theta$, but that follows from the following diagram,
in which the bottom row displays $\theta$ explicitly:
\[
\xymatrix@R+15pt{
&\D_0M_0\ar[dl]_-{I_\D I}\ar[d]_-{I_\D\eta_{\D_0}\!\!}\ar[dr]_-{I_\D\eta_{\D_1}\!\!\!\!\!\!}\ar[drr]^-{I_\D}
\\\D_1M_1\ar[r]_-{\D_1S}
&\D_1\D_0M_0\ar[r]_-{\D_1I_\D}
&\D_1^2M_0\ar[r]_-{\mu}
&\D_1M_0.}
\]
It now follows that the middle terms in the part of the counit just before 
the composition at the end,
\begin{gather*}
\D_0M_1\times_{\D_0M_0}\D_0M_1\times\D_1M_0\times_{\D_0M_0}\D_1M_0
\\\xymatrix{
\relax\ar[rr]^-{\D_0\gamma_M\times\gamma_\D}
&&\D_0M_1\times_{\D_0M_0}\D_1M_0,}
\end{gather*}
consist only of identity elements, and so contribute nothing after composition.
Since the terms on the end are given by identity maps in the total composition, 
we conclude that the second adjunction triangle commutes.

\section{The actual left adjoint: category structure}

In this section we begin the proof of Theorem \ref{maintheorem}.  Recall that 
the morphism set $LM_1$ of the actual left adjoint is given by a coequalizer of two arrows 
to the morphism set $\Lhat M_1$, while the objects are just the objects $\D_0M_0$
of $\Lhat M$.

We first give the category structure of $LM$.  The unit map $I:LM_0\to LM_1$
is just the unit map $\Lhat M_0\to\Lhat M_1$ composed with the quotient map
$\Lhat M_1\to LM_1$.  Source and target maps are also inherited from
$\Lhat M$, since the coequalization takes place in the middle of the 
two terms defining $\Lhat M$, while the source and target maps are on
the ``outside.''  The significant problem is verifying that the composition map
on $\Lhat M$ induces a map on $LM$.  For this, we need to see that
composition in $\Lhat M$ preserves equivalence classes.  Our strategy for
doing so is to lift the composition in $\Lhat M$ to the source of the coequalizer
defining $LM_1$ in two different ways so that the composition in $\Lhat M$
then descends to the coequalizer.  Here are the details.  

We abbreviate the coequalizer diagram as
\[
\xymatrix{
QM\ar@<.5ex>[r]^-{\psi_*}\ar@<-.5ex>[r]_-{\gamma_*}
&\Lhat M_1\ar[r]
&LM_1,}
\]
where we have written $QM$ for $\D_0M_1\times_{\D_0^2M_0}\D_0\D_1M_0\times_{\D_0M_0}\D_1M_0$,
$\psi_*$ for the map $\D_0\psi_1\times1$, and $\gamma_*$
for the map $1\times\gamma_\D((\mu\circ I_\D)\times1)$.
We will construct a map $\Gamma_R:QM\times_{\D_0M_0}\Lhat M_1\to QM$ for which 
both top and bottom choices of horizontal arrow in the following diagram commute:
\[
\xymatrix{
QM\times\Lhat M_1
\ar[d]_-{\Gamma_R}
\ar@<-.5ex>[r]_-{\gamma_*\times1}\ar@<.5ex>[r]^-{\psi_*\times1}
&\Lhat M_1\times\Lhat M_1
\ar[d]^-{\gamma_{\Lhat M}}
\\QM
\ar@<.5ex>[r]^-{\psi_*}\ar@<-.5ex>[r]_-{\gamma_*}
&\Lhat M_1.}
\]
This will show that equivalent morphisms in the left slot give rise to equivalent
composites.  We also construct a map $\Gamma_L:\Lhat M_1\times_{\D_0M_0}QM\to QM$
for which both top and bottom choices of horizontal arrow in the diagram
\[
\xymatrix{
\Lhat M_1\times QM
\ar[d]_-{\Gamma_L}
\ar@<-.5ex>[r]_-{1\times\gamma_*}\ar@<.5ex>[r]^-{1\times\psi_*}
&\Lhat M_1\times\Lhat M_1
\ar[d]^-{\gamma_{\Lhat M}}
\\QM
\ar@<.5ex>[r]^-{\psi_*}\ar@<-.5ex>[r]_-{\gamma_*}
&\Lhat M_1}
\]
commute.  This will show that equivalent morphisms in the right slot give 
rise to equivalent composites, and therefore the composition descends 
to $LM$.  

We define $\Gamma_R$
as the following composite, which requires some
explanation about the map $\tilde{\chi}$:
\[
\xymatrix{
\D_0M_1\times_{\D_0^2M_0}\D_0\D_1M_0\times_{\D_0M_0}\D_1M_0
\times_{\D_0M_0}\D_0M_1\times_{\D_0M_0}\D_1M_0
\ar[d]^-{1^2\times\chi\times1}
\\\D_0M_1\times_{\D_0^2M_0}\D_0\D_1M_0\times_{\D_0M_0}\D_0M_1
\times_{\D_0M_0}\D_1M_0\times_{\D_0M_0}\D_1M_0
\ar[d]^-{1\times\tilde{\chi}\times1^2}
\\\D_0M_1\times_{\D_0^2M_0}\D_0^2M_1\times_{\D_0^2M_0}\D_0\D_1M_0
\times_{\D_0M_0}\D_1M_0\times_{\D_0M_0}\D_1M_0
\ar[d]^-{\D_0\gamma_M\times1\times\gamma_\D}
\\\D_0M_1\times_{\D_0^2M_0}\D_0\D_1M_0\times_{\D_0M_0}\D_1M_0}
\]
To explain $\tilde{\chi}$, we observe that both squares in the diagram
\[
\xymatrix{
\D_0\D_1M_1\ar[r]^-{\D_0\D_1T}\ar[d]_-{\D_0S_\D}
&\D_0\D_1M_0\ar[d]^-{\D_0S_\D}
\\\D_0^2M_1\ar[r]^-{\D_0^2T}\ar[d]_-{\mu}
&\D_0^2M_0\ar[d]^-{\mu}
\\\D_0M_1\ar[r]_-{\D_0T}&\D_0M_0}
\]
are pullbacks, the bottom one since $\D_0$ is Cartesian, and the top one 
since $\D_0$ preserves pullbacks (being Cartesian.)  Consequently, we
can define $\tilde{\chi}$ as the composite
\[
\xymatrix@C+35pt{
\D_0\D_1M_0\times_{\D_0M_0}\D_0M_1
&\D_0\D_1M_1
\ar[l]^-{\cong}_-{(\D_0\D_1T,\mu\circ\D_0S_\D)}\ar[r]^-{(\D_0T_\D,\D_0\theta)}
&\D_0^2M_1\times_{\D_0^2M_0}\D_0\D_1M_0.}
\]

There are now two diagrams we wish to commute involving
$\Gamma_R$.  The first one, using the
arrows $\psi_*\times1$ and $\psi_*$ in the 
diagram given above, doesn't involve the final factor of $\D_1M_0$ before
it is multiplied, so can be omitted from the diagram.  With subscripts of
$\D_0M_0$ again suppressed, we wish the following to commute:
\[
\xymatrix{
\D_0M_1\times_{\D_0^2M_0}\D_0\D_1M_0\times\D_1M_0\times\D_0M_1
\ar[r]^-{\D_0\psi\times1^2}
\ar[d]_-{1^2\times\chi}
&\D_0M_1\times\D_1M_0\times\D_0M_1
\ar[d]^-{1\times\chi}
\\\D_0M_1\times_{\D_0^2M_0}\D_0\D_1M_0\times\D_0M_1\times\D_1M_0
\ar[r]_-{\D_0\psi\times1^2}
\ar[d]_-{1\times\tilde{\chi}\times1}
&\D_0M_1\times\D_0M_1\times\D_1M_0
\ar[dd]^-{\D_0\gamma_M\times1}
\\\D_0M_1\times_{\D_0^2M_0}\D_0^2M_1\times_{\D_0^2M_0}
\D_0\D_1M_0\times\D_1M_0
\ar[d]_-{\D_0\gamma_M\times1^2}
\\\D_0M_1\times_{\D_0^2M_0}\D_0\D_1M_0\times\D_1M_0
\ar[r]_-{\D_0\psi\times1}
&\D_0M_1\times\D_1M_0.}
\]
However, the top rectangle commutes by naturality, so we are left
with the bottom rectangle, in which the $\D_1M_0$ at the end plays no role.
We are left with wanting the rectangle
\[
\xymatrix{
\D_0M_1\times_{\D_0^2M_0}\D_0\D_1M_0\times\D_0M_1
\ar[r]^-{\D_0\psi\times1}
\ar[d]_-{1\times\tilde{\chi}}
&\D_0M_1\times\D_0M_1
\ar[dd]^-{\D_0\gamma_M}
\\\D_0M_1\times_{\D_0^2M_0}\D_0^2M_1\times_{\D_0^2M_0}\D_0\D_1M_0
\ar[d]_-{\D_0\gamma_M\times1}
\\\D_0M_1\times_{\D_0^2M_0}\D_0\D_1M_0
\ar[r]_-{\D_0\psi}
&\D_0M_1}
\]
to commute.  In order to verify this, we need some observations about
the presheaf actions on $M_1$ and $M_2$.

First, the target map $M_2\to M_1$ is supposed to be a map of
presheaves over the functor $\D\e:\D^2(*)\to\D(*)$.  In order to lift 
this to the actions by $\D^2M_0$ and $\D M_0$, we need the 
following diagram, in which the dotted arrow labeled $\e_*$ is induced as a map
of pullbacks from the left tall rectangle to the right one:
\[
\xymatrix{
&\D_0M_1
\ar[rr]^-{\D_0T}
\ar[dd]_(.7){\D_0S}
&&\D_0M_0
\ar[dddd]^-{\D_0\e}
\\\D_0M_1\times_{\D_0^2M_0}\D_1^2M_0
\ar@{.>}[rr]^(.7){\e_*}
\ar[dd]
\ar[ur]
&&\D_1M_0
\ar[ur]^-{T_\D}
\ar[dddd]^-{\D_1\e}
\\&\D_0^2M_0
\ar[dd]_-{\D_0^2\e}
\\\D_1^2M_0
\ar[ur]^-{T_\D^2}
\ar[dd]_-{\D_1^2\e}
\\&\D_0^2(*)
\ar[rr]^(.3){\D_0\e}
&&\D_0(*).
\\\D_1^2(*)
\ar[ur]^-{T_\D^2}
\ar[rr]_-{\D_1\e}
&&\D_1(*)
\ar[ur]_-{T_\D}}
\]
We can glue the following pullback cube to the top of the diagram,
where the map $1\times\e_*$ is so named since it really expresses
the composite
\[
\xymatrix{
M_2\times_{\D_0^2M_0}\D_1^2M_0
\ar[r]^-{\cong}
&M_1\times_{\D_0M_0}\D_0M_1\times_{\D_0^2M_0}\D_1^2M_0
\ar[r]^-{1\times\e_*}
&M_1\times_{\D_0M_0}\D_1M_0:
}
\]
\[
\xymatrix{
&M_2\ar[rr]^-{T}\ar[dd]_(.3){S}
&&M_1\ar[dd]^-{S}
\\M_2\times_{\D_0^2M_0}\D_1^2M_0
\ar[ur]^-{p_1}
\ar@{.>}[rr]^(.65){1\times\e_*}
\ar[dd]_-{S\times1}
&&M_1\times\D_1M_0
\ar[ur]^-{p_1}
\ar[dd]^(.3){p_2}
\\&\D_0M_1\ar[rr]^(.3){D_0T}
&&\D_0M_0.
\\\D_0M_1\times_{\D_0^2M_0}\D_1^2M_0
\ar@{.>}[rr]_-{\e_*}
\ar[ur]^-{p_1}
&&\D_1M_0\ar[ur]_-{T_\D}}
\]
We can express the fact that the target map $T:M_2\to M_1$ is
a map of presheaves over $\D\e$ by the diagram
\[
\xymatrix@C+15pt{
M_2\times_{\D_0^2(*)}\D_1^2(*)
\ar[r]^-{T\times_{\D_0\e}\D_1\e}
\ar[d]_-{\psi_2}
&M_1\times_{\D_0(*)}\D_1(*)
\ar[d]^-{\psi_1}
\\M_2\ar[r]_-{T}
&M_1.}
\]
But the previous two cubes allow us to rewrite the top
arrow to give the equivalent diagram
\[
\xymatrix{
M_2\times_{\D_0^2M_0}\D_1^2M_0
\ar[r]^-{\cong}
\ar[d]_-{\psi_2}
&M_1\times_{\D_0M_0}\D_0M_1\times_{\D_0^2M_0}\D_1^2M_0
\ar[r]^-{1\times\e_*}
&M_1\times_{\D_0M_0}\D_1M_0
\ar[d]^-{\psi_1}
\\M_2
\ar[rr]_-{T}
&&M_1.
}
\]
Meanwhile, the equivariance of $S:M_2\to\D_0M_1$ as a presheaf over $\D^2M_0$
can be expressed by the commutative rectangle
\[
\xymatrix{
M_2\times_{\D_0^2M_0}\D_1^2M_0
\ar[r]^-{S\times1}
\ar[d]_-{\psi_2}
&\D_0M_1\times_{\D_0^2M_0}\D_1^2M_0
\ar[d]^-{\D\psi_1}
\\M_2\ar[r]_-{S}&\D_0M_1,}
\]
and the equivariance of $\gamma_M:M_2\to M_1$ as a map over $\mu:\D^2(*)\to\D(*)$
lifts directly to $\mu:\D^2M_0\to\D M_0$ in the rectangle
\[
\xymatrix{
M_2\times_{\D_0^2M_0}\D_1^2M_0
\ar[r]^-{\gamma_M\times_\mu\mu}
\ar[d]_-{\psi_2}
&M_1\times_{\D_0M_0}\D_1M_0
\ar[d]^-{\psi_1}
\\M_2\ar[r]_-{\gamma_M}
&M_1.}
\]

We return now to our previous reduction of the first diagram, which
we expand and augment as follows:
\[
\xymatrix@C-45pt{
\D_0M_1\times_{\D_0^2M_0}\D_0\D_1M_0\times\D_0M_1
\ar[rr]^-{\D_0\psi_1\times1}
&&\D_0M_1\times\D_0M_1
\\\D_0M_1\times_{\D_0^2M_0}\D_0\D_1M_0\times_{\D_0^2M_0}\D_0^2M_1
\ar[u]^-{1\times_\mu\mu}_-{\cong}
\ar[rr]^-{\D_0\psi\times1}
&&\D_0M_1\times_{\D_0^2M_0}\D_0^2M_1
\ar[u]_-{1\times_\mu\mu}^-{\cong}
\ar[dddd]^-{\D_0\gamma_M}
\\\D_0M_1\times_{\D_0^2M_0}\D_0\D_1M_1
\ar[u]^-{1\times(\D_0\D_1T,\D_0S_\D)}_-{\cong}
\ar[dr]^-{\,\,\,\,\,\,\,\,\,\,\,\,\,\,\,\,\,1\times(\D_0T_\D,\D_0\D_1(I_\D\circ S))}
\ar[dd]_-{1\times(\D_0T_\D,\D_0\theta)}
\\&\D_0M_1\times_{\D_0^2M_0}\D_0^2M_1\times_{\D_0^3M_0}\D_0\D_1^2M_0
\ar[uur]^-{\D_0\psi_2}
\ar[dl]^-{\,\,\,\,\,\,\,\,1^2\times_{\D_0\mu}\D_0\mu}
\\\D_0M_1\times_{\D_0^2M_0}\D_0^2M_1\times_{\D_0^2M_0}\D_0\D_1M_0
\ar[d]_-{\D_0\gamma_M\times1}
\\\D_0M_1\times_{\D_0^2M_0}\D_0\D_1M_0
\ar[rr]_-{\D_0\psi_1}
&&\D_0M_1.}
\]
All the sub-diagrams here already commute, with the exception of the 
second one down: the one with the second $\D_0\psi_1\times1$ as its top arrow.  This
sub-diagram can be turned upside down, and have a $\D_0$ stripped off, to become
the following diagram of our desire, in which, as usual, we suppress subscript $\D_0M_0$'s
on products:
\[
\xymatrix@C+15pt{
M_1\times\D_1M_1
\ar[r]^-{1\times(T_\D,\D_1(I_\D\circ S))}\ar[d]_-{1\times(\D_1T,S_\D)}
&M_1\times\D_0M_1\times_{\D_0^2M_0}\D_1^2M_0
\ar[d]^-{\psi_2}
\\M_1\times\D_1M_0\times\D_0M_1
\ar[r]_-{\psi_1\times1}
&M_1\times\D_0M_1.}
\]
This diagram commutes precisely when it does so after projecting onto each of the
factors in the target $M_1\times_{\D_0M_0}\D_0M_1$
(which is the same thing as $M_2$), to which we can then apply
the equivariance diagrams we already know about $\psi_2$, namely
\begin{gather*}
\xymatrix{
M_1\times\D_0M_1\times_{\D_0^2M_0}\D_1^2M_0
\ar[r]^-{1\times\e_*}\ar[d]_-{\psi_2}
&M_1\times\D_1M_0
\ar[d]^-{\psi_1}
\\M_1\times\D_0M_1
\ar[r]_-{T=p_1}
&M_1}
\\
\text{and}
\\
\xymatrix{
M_1\times\D_0M_1\times_{\D_0^2M_0}\D_1^2M_0
\ar[r]^-{p_{23}}\ar[d]_-{\psi_2}
&\D_0M_1\times_{\D_0^2M_0}\D_1^2M_0
\ar[d]^-{\D\psi_1}
\\M_1\times\D_0M_1
\ar[r]_-{S=p_2}
&\D_0M_1,}
\end{gather*}
where $\D\psi_1$ indicates the action induced by the $\D^2(*)$ action
on $\D_0M_1$.  Combining these with the previous diagram, we find
that the diagram for $\Gamma_R$ with the top arrows commutes as
a consequence of the following two lemmas.

\begin{lemma}
The diagram
\[
\xymatrix@C+40pt{
M\times\D_1M_1
\ar[r]^-{1\times\e_*(T_\D,\D_1(I_\D\circ S))}
\ar[d]_-{1\times\D_1 T}
&M\times\D_1M_0
\ar[d]^-{\psi_1}
\\M\times\D_1M_0
\ar[r]_-{\psi_1}
&M_1}
\]
commutes.
\end{lemma}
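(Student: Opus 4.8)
The plan is to collapse the square to a single equality of maps $\D_1M_1\to\D_1M_0$. Both composites in the diagram have the form $\psi_1\circ(1\times f)$ with the left-hand factor $M$ left untouched, so it suffices to prove that the two second-factor maps coincide, that is, that
\[
\e_*\circ(T_\D,\D_1(I_\D\circ S))=\D_1T\colon\D_1M_1\to\D_1M_0.
\]
Granting this, the top-then-right and left-then-bottom composites are literally the same map $\psi_1\circ(1\times\D_1T)$, and the lemma follows with no appeal to any special property of $\psi_1$.

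To prove the displayed equality I would exploit the pullback presentation $\D_1M_0\cong\D_0M_0\times_{\D_0(*)}\D_1(*)$, with projections $T_\D$ and $\D_1\e$, which was established earlier from the fact that $\e\colon M_0^\delta\to*$ is a cover and that $\D$ preserves covers. It then suffices to compare the two maps after projecting along $T_\D$ and along $\D_1\e$. As a preliminary I would check that the pair $(T_\D,\D_1(I_\D\circ S))$ genuinely lands in the pullback $\D_0M_1\times_{\D_0^2M_0}\D_1^2M_0$ defining the source of $\e_*$; this reduces to the identity $T_\D^2\circ\D_1I_\D=T_\D$, which follows from naturality of $T_\D$ with respect to $I_\D$ together with the category-object relation $T_\D\circ I_\D=\id$.

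For the $T_\D$ projection, the $\D_0M_0$-component of $\e_*$ is $\D_0T$ applied to the $\D_0M_1$-factor, so the composite becomes $\D_0T\circ T_\D$; by naturality of $T_\D$, this equals $T_\D\circ\D_1T$, which is exactly the $T_\D$ projection of $\D_1T$. For the $\D_1\e$ projection, both sides reduce to $\D_1$ of the terminal map $M_1\to*$: on the $\D_1T$ side this is $\D_1(\e\circ T)$, while unwinding the $\D_1(*)$-component of $\e_*$ and using naturality of $I_\D$ leaves $\D_1$ of the terminal composite $M_1\xrightarrow{S}\D_0M_0\xrightarrow{I_\D}\D_1M_0\to*$; since any two maps to $*$ agree, these coincide. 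I expect the only real friction—the main obstacle—to be bookkeeping: $\e_*$ is defined only implicitly as an induced map of pullbacks, so the work is in writing out its two components explicitly and tracking which naturality square and which category-object identity is invoked at each step. Once those components are in hand the verification is purely formal.
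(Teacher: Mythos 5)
Your proposal is correct and follows essentially the same route as the paper's proof: reduce the square to the single equality $\e_*(T_\D,\D_1(I_\D\circ S))=\D_1T$, then verify it against the two projections of the pullback $\D_1M_0\cong\D_0M_0\times_{\D_0(*)}\D_1(*)$, using naturality of $T_\D$ for the first component and terminality of $\e:M_1\to *$ for the second. Your preliminary check that $(T_\D,\D_1(I_\D\circ S))$ genuinely lands in the pullback defining the source of $\e_*$ is a detail the paper leaves implicit, but it does not constitute a different argument.
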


\begin{proof}
It suffices to show that
\[
\D_1T=\e_*(T_\D,\D_1(I_\D\circ S)):\D_1M_1\to\D_1M_0.
\]
From the diagram defining $\e_*$, since its target is the pullback
\[
\D_1M_0\cong\D_0M_0\times_{\D_0(*)}\D_1(*),
\] 
we see that
$\e_*$ is completely determined by the two composites
\[
T_\D\circ\e_*=\D_0T\circ p_1
\quad\text{and}\quad
(\D_1\e)\circ\e_*=\D_1\e\circ\D_1^2\e\circ p_2.
\]
We can therefore compose our desired equality with $T_\D$
and $\D_1\e$ to see if it is true.  Composing with $T_\D$, we find that
\begin{gather*}
T_\D\circ\e_*(T_\D,\D_1(I_\D\circ S))
\\=\D_0T\circ p_1(T_\D,\D_1(I_\D\circ S))
\\=\D_0T\circ T_\D=T_\D\circ\D_1T.
\end{gather*}
Composing with $\D_1\e$, we have the diagram
\[
\xymatrix{
\D_1M_1\ar[r]^-{\D_1S}\ar[dd]_-{\D_1T}\ar[ddrr]^-{\D_1\e}
&\D_1\D_0M_0\ar[r]^-{\D_1I_\D}
&\D_1^2M_0\ar[d]^-{\D_1^2\e}
\\&&\D_1^2(*)\ar[d]^-{\D_1\e}
\\\D_1M_0\ar[rr]_-{\D_1\e}
&&\D_1(*),}
\]
which commutes since $\e:M_1\to*$ is terminal.  We may conclude
that the diagram commutes.
\end{proof}

\begin{lemma}
The diagram
\[
\xymatrix@C+30pt{
\D_1M_1
\ar[r]^-{(T_\D,\D_1(I_\D\circ S))}
\ar[dr]_-{S_\D}
&\D_0M_1\times_{\D_0^2M_0}\D_1^2M_0
\ar[d]^-{\D\psi_1}
\\&\D_0M_1}
\]
commutes.
\end{lemma}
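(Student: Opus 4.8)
The plan is to recognize $\D\psi_1$ as the presheaf action produced in the proof of Theorem~\ref{Dpresheaf}, and then to identify the incoming map $(T_\D,\D_1(I_\D\circ S))$ with $\D_1$ of an identity morphism, so that the whole composite collapses by the unit axiom for $\psi_1$. Write $P:=\D(M_0^\delta)\gr M_1$ for the Grothendieck construction of $M_1$ viewed as a presheaf over $\D(M_0^\delta)$ with structure map $S$, so that $P_0=M_1$, $P_1\cong M_1\times_{\D_0M_0}\D_1M_0$, the target map is the projection $p_1$, the functor $P\to\D(M_0^\delta)$ is $p_2$ on morphisms, and the source map (the action) is $\psi_1$. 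The identity of $P$ is $I_P=(\id_{M_1},I_\D\circ S)\colon M_1\to P_1$, since a target cover sends the identity at $x$ to the pair consisting of $x$ and the identity $I_\D(S(x))$ of its image.

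First I would unwind Theorem~\ref{Dpresheaf}. Since $\D$ preserves the target cover $P\to\D(M_0^\delta)$, the functor $\D P\to\D^2(M_0^\delta)$ is again a target cover, with $\D_0S$ on objects; this yields the isomorphism $(\D P)_1=\D_1P_1\xrightarrow{\ \cong\ }\D_0M_1\times_{\D_0^2M_0}\D_1^2M_0$ given by $(T_{\D P},\D_1p_2)$, under which the action $\D\psi_1$ is exactly the source map of $\D P$, namely $\D_0\psi_1\circ S_\D$. This is the presheaf structure on $\D_0M_1$ over $\D^2(*)$ transported along the cover $\D^2\e\colon\D^2(M_0^\delta)\to\D^2(*)$, which is the action named $\D\psi_1$ in the lemma.

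The crux is to exhibit the preimage of $(T_\D,\D_1(I_\D\circ S))$ under this isomorphism; I claim it is $\D_1I_P$. This is checked componentwise: on the functor factor, $\D_1p_2\circ\D_1I_P=\D_1(p_2\circ I_P)=\D_1(I_\D\circ S)$; on the target factor, naturality of $T_\D$ gives $T_{\D P}\circ\D_1I_P=\D_0p_1\circ\D_0I_P\circ T_\D=\D_0(p_1\circ I_P)\circ T_\D=T_\D$, since $p_1\circ I_P=\id_{M_1}$. As the map is an isomorphism onto the pullback, these two identities pin down the preimage.

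With the preimage identified, the unit axiom finishes the job: $\D\psi_1\circ(T_\D,\D_1(I_\D\circ S))=\D_0\psi_1\circ S_\D\circ\D_1I_P=\D_0\psi_1\circ\D_0I_P\circ S_\D$ by naturality of $S_\D$, and this equals $\D_0(\psi_1\circ I_P)\circ S_\D=\D_0(\id_{M_1})\circ S_\D=S_\D$, because $\psi_1\circ I_P$ is the action of an identity morphism and hence is the identity by unitality of the presheaf $M_1$. The main obstacle is purely organizational: keeping straight the two descriptions of $(\D P)_1$—the one coming from the target cover and the one from $\D_1$ being Cartesian—and transporting $I_P$ correctly between them; once $\D\psi_1=\D_0\psi_1\circ S_\D$ is in place and $I_P$ is recognized, the collapse is immediate.
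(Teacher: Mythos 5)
Your proof is correct and follows essentially the same route as the paper's: both identify $\D\psi_1$, via Theorem \ref{Dpresheaf} and the target cover $\D P\to\D^2(M_0^\delta)$, with the source map of $\D P$ precomposed with the inverse of the target-cover isomorphism, then recognize $(T_\D,\D_1(I_\D\circ S))$ as $\D_1$ applied to the identity-insertion $(\id_{M_1},I_\D\circ S)$, and collapse everything by the unit axiom for $\psi_1$. The only difference is organizational: you pin down the preimage by projecting to the two pullback factors, while the paper threads the same identification through a chain of canonical isomorphisms in one large diagram.
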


\begin{proof}
We need some generalities about how the action
of $\D^2(M_0^\delta)$ on $\D_0M_1$ can be expressed.  Whenever we have
a presheaf $X$ over a category $\C$, we know that $\D_0X$ is a presheaf
over $\D\C$.  This is because $\D$ preserves target covers, which are equivalent
to presheaf structures on the objects of the source category of a target cover.
In particular, we may conclude that 
\[
\xymatrix{
[\D(\C\int X)]_1\ar[r]^-{T}\ar[d]
&[\D(\C\int X)]_0\ar[d]
\\(\D\C)_1\ar[r]_-{T}
&(\D\C)_0}
\]
is a pullback diagram, which with a little unpacking becomes
\[
\xymatrix{
\D_1X\times_{\D_1\C_0}\D_1\C_1\ar[r]^-{T}\ar[d]
&\D_0X\ar[d]^-{\D_0\e}
\\\D_1\C_1\ar[r]_-{T_\D T}
&\D_0\C_0.}
\]
This in turn expands to
\[
\xymatrix{
\D_1X\times_{\D_1\C_0}\D_1\C_1\ar[r]^-{T_\D}\ar[d]
&\D_0X\times_{\D_0\C_0}\D_0\C_1\ar[r]^-{\D_0 T}\ar[d]
&\D_0X\ar[d]^-{\D_0\e}
\\\D_1\C_1\ar[r]_-{T_\D}
&\D_0\C_1\ar[r]_-{\D_0 T}
&\D_0\C_0,}
\]
both of whose squares are pullbacks, the left one because of the general
principle that whenever we have a function $f:X\to Y$, there is a 
canonical isomorphism
\[
\D_0X\times_{\D_0Y}\D_1Y\cong\D_1X.
\]
(This is a consequence of the fact that $f^\delta:X^\delta\to Y^\delta$ 
is a cover, and therefore so is $\D(f^\delta)$.)  Now tracing definitions shows
that the action of $\D_1\C_1$ on $\D_0X$ is given by the maps
\begin{align*}
\D_0X\times_{\D_0\C_0}\D_1\C_1
&\cong\D_0X\times_{\D_0\C_0}\D_0\C_1\times_{\D_0\C_1}\D_1\C_1
\\&\cong\D_1X\times_{\D_1\C_0}\D_1\C_1\cong\D_1(X\times_{\C_0}\C_1)
\\
&\xymatrix{
\relax
\ar[r]^-{\D_1\xi}&\D_1X\ar[r]^-{S_\D}&\D_0X.}
\end{align*}
In our case, we have $M_1$ in the role of $X$ and $\D(M_0^\delta)$ in the
role of $\C$, so we can express the action of $\D_1^2M_0$ on $\D_0M_1$
as the composite
\begin{align*}
\D_0M_1\times_{\D_0^2M_0}\D_1^2M_0
&\cong\D_0M_1\times_{\D_0^2M_0}\D_0\D_1M_0\times_{\D_0\D_1M_0}\D_1^2M_0
\\&\cong\D_1M_1\times_{\D_1\D_0M_0}\D_1^2M_0\cong\D_1(M_1\times_{\D_0M_0}\D_1M_0)
\\&\xymatrix{
\relax\ar[r]^-{\D_1\psi_1}&\D_1M_1\ar[r]^-{S_\D}&\D_0M_1,}
\end{align*}
which is the map denoted $\D\psi_1$ in the diagram 
we are currently trying to verify.  We can now do so by means of the following diagram:
\[
\xymatrix{
\D_1M_1\ar[r]^-{(T_\D,\D_1S)}_-{\cong}\ar[dr]_-{\cong}
&\D_0M_1\times_{\D_0^2M_0}\D_1\D_0M_0
\ar[r]^-{1\times\D_1I_\D}\ar[d]^-{\cong}
&\D_0M_1\times_{\D_0^2M_0}\D_1^2M_0
\ar[d]^-{\cong}
\\&\D_1M_1\times_{\D_1\D_0M_0}\D_1\D_0M_0
\ar[r]^-{1\times\D_1I_\D}\ar[dr]_-{\cong}
&\D_1M_1\times_{\D_1\D_0M_0}\D_1^2M_0
\ar[d]^-{\D_1\psi_1}
\\&&\D_1M_1\ar[d]^-{S_\D}
\\&&\D_0M_1.}
\]
The two diagonal arrows are the canonical isomorphisms, so compose to the identity,
and the only nontrivial sub-diagram is the lower triangle, which commutes 
because of the unit property of the action $\psi_1$.  We have completed
verifying coherence of the first of our four directions for descent of the 
product on $\Lhat M$ to $LM$.
\end{proof}

The 
diagram involving $\Gamma_R$ with the bottom choice of horizontal arrows
again doesn't involve 
the last factor of $\D_1M_0$, but omitting it still results in 
a diagram that is too wide to fit on the page.  The left half is
the following,
\[
\xymatrix@C+10pt{
\D_0M_1\times_{\D_0^2M_0}\D_0\D_1M_0
\ar[r]^-{1\times_\mu(\mu\circ I_\D)\times1^2}
\ar[d]_-{1^2\times\chi}
&\D_0M_1\times(\D_1M_0)^2\times\D_0M_1
\ar[d]^-{1^2\times\chi}
\\\D_0M_1\times_{\D_0^2M_0}\D_0\D_1M_0\times\D_0M_1\times\D_1M_0
\ar[r]^-{1\times_\mu(\mu\circ I_\D)\times1^2}
\ar[d]_-{1\times\tilde\chi\times1}
&\D_0M_1\times\D_1M_0\times\D_0M_1\times\D_1M_0
\ar[d]^-{1\times\chi\times1}
\\\D_0M_1\times_{\D_0^2M_0}\D_0^2M_1\times_{\D_0^2M_0}\D_0\D_1M_0\times\D_1M_0
\ar[r]^-{1\times_\mu\mu\times_\mu(\mu\circ I_\D)\times1}
\ar[d]_-{\D_0\gamma_M\times1^2}
&(\D_0M_1)^2\times(\D_1M_0)^2
\ar[d]^-{\D_0\gamma_M\times1}
\\\D_0M_1\times_{\D_0^2M_0}\D_0\D_1M_0\times\D_1M_0
\ar[r]_-{1\times_\mu(\mu\circ I_\D)\times1}
&\D_0M_1\times(\D_1M_0)^2,}
\]
and the right half, to be pasted to the right of the above part, is
\[
\xymatrix{
\D_0M_1\times(\D_1M_0)^2\times\D_0M_1
\ar[r]^-{1\times\gamma_\D\times1}
\ar[d]^-{1^2\times\chi}
&\D_0M_1\times\D_1M_0\times\D_0M_1
\ar[dd]^-{1\times\chi}
\\\D_0M_1\times\D_1M_0\times\D_0M_1\times\D_1M_0
\ar[d]^-{1\times\chi\times1}
\\(\D_0M_1)^2\times(\D_1M_0)^2
\ar[r]^-{1^2\times\gamma_\D}
\ar[d]^-{\D_0\gamma_M\times1}
&(\D_0M_1)^2\times\D_1M_0
\ar[d]^-{\D_0\gamma_M\times1}
\\\D_0M_1\times(\D_1M_0)^2
\ar[r]_-{1\times\gamma_\D}
&\D_1M_1\times\D_1M_0.}
\]
The top part of the right half is the product on the right with $\D_0M_1$ of a
diagram we verified as part of showing that $\gamma_{\Lhat M}$ was
associative, and the bottom part is a simple naturality diagram.  
The top part of the left half is also a naturality diagram, and the rest
of the left half doesn't involve the last factor of $\D_1M_0$.  Deleting it,
we are left with verifying the following diagram:
\[
\xymatrix@C+35pt{
\D_0M_1\times_{\D_0^2M_0}\D_0\D_1M_0\times\D_0M_1
\ar[r]^-{1\times_\mu(\mu\circ I_\D)\times1}
\ar[d]_-{1\times\tilde\chi}
&\D_0M_1\times\D_1M_0\times\D_1M_1
\ar[d]^-{1\times\chi}
\\\D_0M_1\times_{\D_0^2M_0}\D_0^2M_1\times_{\D_0^2M_0}\D_0\D_1M_0
\ar[r]^-{1\times_\mu\mu\times_\mu(\mu\circ I_\D)}
\ar[d]_-{\D_0\gamma_M\times1}
&\D_0M_1\times\D_0M_1\times\D_1M_0
\ar[d]^-{\D_0\gamma_M\times1}
\\\D_0M_1\times_{\D_0^2M_0}\D_0\D_1M_0
\ar[r]_-{1\times_\mu(\mu\circ I_\D)}
&\D_0M_1\times\D_1M_0.}
\]
The bottom half of the diagram follows from the fact that the map
\[
\xymatrix{
\D_0M_1\times_{\D_0^2M_0}\D_0^2M_1
\ar[r]^-{1\times_\mu\mu}
&\D_0M_1\times\D_1M_0}
\]
is a canonical isomorphism between two expressions for $\D_0M_2$:
the relevant diagram is as follows, and consists of two stacked pullbacks:
\[
\xymatrix{
\D_0M_2
\ar[r]^-{\D_0T}
\ar[d]_-{\D_0S}
&\D_0M_1
\ar[d]^-{\D_0S}
\\\D_0^2M_1
\ar[r]^-{\D_0^2T}
\ar[d]_-{\mu}
&\D_0^2M_0
\ar[d]^-{\mu}
\\\D_0M_1
\ar[r]_-{\D_0T}
&\D_0M_0.}
\]

Expanding the top half of the previous diagram 
by deleting the initial $\D_0M_1$, which plays no role, and
using the definitions of $\tilde\chi$
and $\chi$, we get the following fill:
\[
\xymatrix{
\D_0\D_1M_0
\ar[r]^-{I_\D\times1}
&\D_1^2M_0\times\D_0M_1
\ar[r]^-{\mu\times1}
&\D_1M_0\times\D_0M_1
\\\D_0\D_1M_1
\ar[r]^-{I_\D}
\ar[u]^-{(\D_0\D_1T,\D_0S_\D)}_-{\cong}
\ar[d]_-{(\D_0T_\D,\D_0\theta)}
&\D_1^2M_1
\ar[r]^-{\mu}
\ar[u]_-{(\D_1^2T,S_\D\circ\mu)}
\ar[d]^-{(T_\D^2,\D_1\theta)}
&\D_1M_1
\ar[u]_-{(\D_1T,S_\D)}^-{\cong}
\ar[d]^-{(T_\D,\theta)}
\\\D_0^2M_1\times_{\D_0^2M_0}\D_0\D_1M_0
\ar[r]_-{1\times I_\D}
&\D_0^2M_1\times_{\D_0^2M_0}\D_1^2M_0
\ar[r]_-{\mu\times_\mu\mu}
&\D_0M_1\times\D_1M_0.}
\]
Here the projection of the top left square to the factor of $\D_1^2M_0$
is just a naturality diagram, and the projection to $\D_0M_1$ follows from the
following:
\[
\xymatrix{
\D_0\D_1M_1
\ar[r]^-{I_\D}
\ar[dd]_-{D_0S_\D}
&\D_1^2M_1
\ar[ddl]_-{S_\D^2}
\ar[d]^-{\mu}
\\&\D_1M_1
\ar[d]^-{S_\D}
\\\D_0^2M_1
\ar[r]_-{\mu}
&\D_0M_1.}
\]
The upper right square, projected to $\D_1M_0$, is again a naturality diagram,
and the projection to $\D_0M_1$ is trivial: both composites are $S_\D\circ\mu$.
The lower left square, projected to $\D_0^2M_1$, follows from the fact that
$T_\D\circ I_\D=1$, and the projection to $\D_1^2M_0$ follows from the fact that
$I_\D$ is a natural map from $\D_0$ to $\D_1$; this is part of the category structure
on $\{\D_0,\D_1\}$.  In the lower right square, projection to $\D_0M_1$ is again
a naturality diagram, while projection to $\D_1M_0$ uses the expansion of $\theta$
as the composite
\[
\xymatrix{
\D_1M_1
\ar[r]^-{\D_1S}
&\D_1\D_0M_0
\ar[r]^-{\D_1I_\D}
&\D_1^2M_0
\ar[r]^-{\mu}
&\D_1M_0}
\]
to give the following fill:
\[
\xymatrix{
\D_1^2M_1
\ar[r]^-{\D_1^2S}
\ar[d]_-{\mu}
&\D_1^2\D_0M_0
\ar[r]^-{\D_1^2I_\D}
\ar[d]^-{\mu}
&\D_1^3M_0
\ar[r]^-{\D_1\mu}
\ar[d]^-{\mu}
&\D_1^2M_0
\ar[d]^-{\mu}
\\\D_1M_1
\ar[r]^-{\D_1S}
&\D_1\D_0M_0
\ar[r]^-{\D_1I_\D}
&\D_1^2M_0
\ar[r]^-{\mu}
&\D_1M_0.}
\]
This completes the verification of the diagram involving $\Gamma_R$
with the bottom choice of horizontal arrows.  It follows that
composition in $\Lhat M$ preserves equivalence classes in
the left slot.

\begin{figure}[b]
\hrule
\[
\xymatrix{
\D_0M_1\times\D_1M_0\times\D_0M_1
\times_{\D_0^2M_0}\D_0\D_1M_0\times\D_1M_0
\\\D_0M_1\times\D_1M_1\times_{\D_0^2M_0}
\D_0\D_1M_0\times\D_1M_0
\ar[u]^-{\cong}_-{1\times(\D_1T,S_\D)\times1^2}
\ar[d]_-{\cong}^-{1\times(T_\D,\D_1S)\times1^2}
\\\D_0M_1\times\D_0M_1\times_{\D_0^2M_0}\D_1\D_0M_0
\times_{\D_0^2M_0}\D_0\D_1M_0\times
\D_1M_0
\\\D_0M_1\times\D_0M_1\times_{\D_0^2M_0}
\D_1^2M_0\times\D_1M_0
\ar[u]^-{\cong}_-{1^2\times(\D_1T_\D,S_\D)\times1}
\ar[d]_-{\cong}^-{1^2\times(T_\D,\D_1S_\D)\times1}
\\\D_0M_1\times\D_0M_1\times_{\D_0^2M_0}
\D_0\D_1M_0\times_{\D_0^2M_0}\D_1\D_0M_0\times\D_1M_0
\\\D_0M_1\times_{\D_0^2M_0}\D_0^2M_1
\times_{\D_0^3M_0}\D_0^2\D_1M_0\times_{\D_0^2M_0}
\D_1\D_0M_0\times\D_1M_0
\ar[u]_-{1\times_\mu\mu\times_\mu\mu\times1^2}^-{\cong}
\ar[d]^-{\D_0\gamma_M\times_{\D_0\mu}1^3}
\\\D_0M_1\times_{\D_0^2M_0}\D_0^2\D_1M_0
\times_{\D_0^2M_0}\D_1\D_0M_0\times\D_1M_0
\ar[d]^-{1\times\D_0(\mu\circ I_\D)\times_\mu(\mu\circ\D_1I_\D)\times1}
\\\D_0M_1\times_{\D_0^2M_0}\D_0\D_1M_0\times\D_1M_0\times\D_1M_0
\ar[d]^-{1^2\times\gamma_\D}
\\\D_0M_1\times_{\D_0^2M_0}\D_0\D_1M_0\times\D_1M_0.}
\]
\caption{}
\label{Fi:secondrearrangement}
\end{figure}

We display $\Gamma_L$ as the composite
in figure \ref{Fi:secondrearrangement}, 
where the unadorned products have the usual suppressed subscript
$\D_0M_0$.  
Some care is necessary in reading this picture; in particular, the
temptation to use the composition $\D_0\gamma_M:\D_0M_1\times\D_0M_1\to\D_0M_1$, 
suggested by the fact that $\D_0M_1\times\D_0M_1\cong\D_0M_2$, must be avoided,
since the map does not preserve the structure map of source type to $\D_0^2M_0$.
Instead, we must use the somewhat more complicated formalism displayed.  The
preservation of structure maps along the $\mu$ in the middle of the next to the last arrow of the figure is
a consequence of the following diagram:
\[
\xymatrix{
&\D_0^2\D_1M_0
\ar[r]^-{\D_0I_\D}
\ar[d]_-{\D_0^2S_\D}
&\D_0\D_1^2M_0
\ar[r]^-{\D_0\mu}
\ar[dl]^-{\D_0S_\D^2}
&\D_0\D_1M_0
\ar[d]^-{S_\D}
\\&\D_0^3M_0\ar[rr]^-{\D_0\mu}
\ar[d]_-{\mu}
&&\D_0^2M_0
\ar[d]^-{\mu}
\\\D_1\D_0M_0
\ar[r]^-{T_\D}
\ar[dr]_-{\D_0I_\D}
&\D_0^2M_0
\ar[rr]^-{\mu}
&&\D_0M_0.
\\&\D_1^2M_0
\ar[rr]_-{\mu}
\ar[u]_-{T_\D^2}
&&\D_1M_0
\ar[u]_-{T_\D}}
\]

The 
diagram with $\Gamma_L$ and the top choice of horizontal arrows 
now consists of a left column as
in figure \ref{Fi:secondrearrangement}, connected to a right column as below
by an arrow $1^2\times\D_0\psi_1\times1$ on top and one on the bottom labeled
$\D_0\psi_1\times1$.  The right column expresses the multiplication
in $\Lhat M$, as in the first diagram, but we expand it a bit
for the purposes of filling in; it appears as follows:
\[
\xymatrix{
\D_0M_1\times\D_1M_0\times\D_0M_1\times\D_1M_0
\\\D_0M_1\times\D_1M_1\times\D_1M_0
\ar[u]^-{\cong}_-{1\times(\D_1T,S_\D)\times1}
\ar[d]^-{1\times(T_\D,\D_1S)\times1}_-{\cong}
\\\D_0M_1\times\D_0M_1\times_{\D_0^2M_0}\D_1\D_0M_0\times\D_1M_0
\\\D_0M_1\times_{\D_0^2M_0}\D_0^2M_1\times_{\D_0^2M_0}\D_1\D_0M_0\times\D_1M_0
\ar[u]^-{\cong}_-{1\times_\mu\mu\times1^2}
\ar[d]^-{\D_0\gamma_M\times_\mu(\mu\circ I_\D)\times1}
\\\D_0M_1\times\D_1M_0\times\D_1M_0
\ar[d]^-{1\times\gamma_\D}
\\\D_0M_1\times\D_1M_0}
\]

We now proceed to fill in 
the diagram with $\Gamma_L$ and the top choice of horizontal arrows,
and start at the top, where the first
part of the fill doesn't involve either the first factor of $\D_0M_1$ or the last factor
of $\D_1M_0$, so we omit them.  The claim is now that the diagram
\[
\xymatrix{
\D_1M_1\times_{\D_0^2M_0}\D_0\D_1M_0\ar[d]_-{\cong}^-{(\D_1T,S_\D)}
&\D_1M_1\times_{\D_1\D_0M_0}\D_1^2M_0
\ar[l]_-{1\times_{\S_\D}S_\D}^-{\cong}
\ar[r]^-{\D_1\psi_1}
&\D_1M_1
\ar[d]^-{(\D_1T,S_\D)}_-{\cong}
\\\D_1M_0\times\D_0M_1\times_{\D_0^2M_0}\D_0\D_1M_0
\ar[rr]_-{1\times\D_0\psi_1}
&&\D_1M_0\times\D_0M_1}
\]
commutes; it has to be turned upside down in order to fit into the larger diagram
(which we haven't actually written down completely, for reasons of size.)  The 
projection to the first factor of the lower right corner commutes since it's $\D_1$
applied to the following diagram, which expresses the fact that $\psi_1$
preserves targets:
\[
\xymatrix{
M_1\times\D_1M_0\ar[r]^-{\psi_1}\ar[dr]_-{T\circ p_1}
&M_1\ar[d]^-{T}
\\&M_0.}
\]
Projection to the second factor amounts just to commutativity of the naturality
diagram
\[
\xymatrix{
\D_1(M_1\times\D_1M_0)\ar[r]^-{\D_1\psi_1}\ar[d]_-{S_\D}
&\D_1M_1\ar[d]^-{S_\D}
\\\D_0(M_1\times\D_1M_0)\ar[r]_-{\D_0\psi_1}
&\D_0M_1.}
\]
We may conclude that the first part of the fill commutes.

The next part of the fill also doesn't need the first or last factors, and consists of
the following diagram:
\[
\xymatrix@C-30pt{
\D_1M_1\times_{\D_0^2M_0}\D_0\D_1M_0
\ar[d]_-{(T_\D,\D_1S)\times1}^-{\cong}
&\D_1M_1\times_{\D_1\D_0M_0}\D_1^2M_0
\ar[l]_-{1\times_{S_\D}S_\D}^-{\cong}
\ar[r]^-{\D_1\psi_1}
\ar@/^3.5pc/[ddl]^-{T_\D\times_{T_\D}1}_-{\cong}
&\D_1M_1
\ar[ddd]^-{(T_\D,\D_1S)}
\\\D_0M_1\times_{\D_0^2M_0}\D_1\D_0M_0\times_{\D_0^2M_0}\D_0\D_1M_0
\\\D_0M_1\times_{\D_0^2M_0}\D_1^2M_0
\ar[u]^-{1\times(\D_1T_\D,S_\D)}_-{\cong}
\ar[d]^-{1\times(T_\D,\D_1S_\D)}_-{\cong}
\\\D_0M_1\times_{\D_0^2M_0}\D_0\D_1M_0\times_{\D_0^2M_0}\D_1\D_0M_0
\ar[rr]_-{\D_0\psi_1\times1}
&&\D_0M_1\times_{\D_0^2M_0}\D_1\D_0M_0.}
\]
The upper left portion consists entirely of isomorphisms, and commutes
by projecting to each of the three factors of the target, which is second down
on the left column.  The projections to the outer two factors commute trivially,
and the one to the central $\D_1\D_0M_0$ commutes since they coincide
with the structure maps in the pullback $\D_1M_1\times_{\D_1\D_0M_0}\D_1^2M_0$
that gives the source of the sub-diagram.  We are left for this portion of the fill
with the diagram
\[
\xymatrix@C+10pt{
\D_1M_1\times_{\D_1\D_0M_0}\D_1^2M_0
\ar[r]^-{\D_1\psi_1}
\ar[d]_-{T_\D\times_{T_\D}1}^-{\cong}
&\D_1M_1
\ar[dd]^-{(T_\D,\D_1S)}_-{\cong}
\\\D_0M_1\times_{\D_0^2M_0}\D_1^2M_0
\ar[d]_-{1\times(T_\D,\D_1S_\D)}^-{\cong}
\\\D_0M_1\times_{\D_0^2M_0}\D_0\D_1M_0\times_{\D_0^2M_0}\D_1\D_0M_0
\ar[r]_-{\D_0\psi_1\times1}
&\D_0M_1\times_{\D_0^2M_0}\D_1\D_0M_0.}
\]
Projecting to the factor of $\D_0M_1$ in the target, we see that
the diagram reduces to the naturality diagram
\[
\xymatrix{
\D_1(M_1\times\D_1M_0)\ar[r]^-{\D_1\psi_1}\ar[d]_-{T_\D}
&\D_1M_1\ar[d]^-{T_\D}
\\\D_0(M_1\times\D_1M_0)\ar[r]_-{\D_0\psi_1}
&\D_0M_1.}
\]
And projection to the factor of $\D_1\D_0M_0$ consists of $\D_1$ applied
to the diagram
\[
\xymatrix{
M_1\times\D_1M_0\ar[r]^-{\psi_1}\ar[d]_-{p_2}
&M_1\ar[d]^-{S}
\\\D_1M_0\ar[r]_-{S_\D}
&\D_0M_0,}
\]
which records the fact that $\psi_1$ preserves source structure maps.  This
part of the fill is now complete.

The next part of the fill is just the naturality diagram
\[
\xymatrix{
\D_0^2M_1\times_{\D_0^3M_0}\D_0^2\D_1M_0
\ar[r]^-{\D_0^2\psi_1}
\ar[d]_-{\mu\times_\mu\mu}
&\D_0^2M_1\ar[d]^-{\mu}
\\\D_0M_1\times_{\D_0^2M_0}\D_0\D_1M_0
\ar[r]_-{\D_0\psi_1}
&\D_0M_1}
\]
turned upside down and with $\D_0M_1$ attached at the front,
and $\D_1\D_0M_0\times\D_1M_0$ at the back.  

We come next to the following diagram, in which a $\D_1M_0$ at
the back has been suppressed:
\[
\xymatrix@C-55pt{
\D_0M_1\times_{\D_0^2M_0}\D_0^2M_1\times_{\D_0^3M_0}
\D_0^2\D_1M_0\times_{\D_0^2M_0}\D_1\D_0M_0
\ar[dr]^-{1\times\D_0^2\psi_1\times1}
\ar[dd]_-{1^2\times\D_0I_\D\times1}
\\&\D_0M_1\times_{\D_0^2M_0}\D_0^2M_1\times_{\D_0^2M_0}\D_1\D_0M_0
\ar[dd]^-{\D_0\gamma_M\times_\mu(\mu\circ\D_1I_\D)}
\\\D_0M_1\times_{\D_0^2M_0}\D_0^2M_1\times_{\D_0^3M_0}
\D_0\D_1^2M_0\times_{\D_0^2M_0}\D_1\D_0M_0
\ar[d]_-{\D_0\gamma_M\times_{\D_0\mu}\D_0\mu\times_\mu(\mu\circ\D_1I_\D)}
\\\D_0M_1\times_{\D_0^2M_0}\D_0\D_1M_0\times\D_1M_0
\ar[r]_-{\D_0\psi_1\times1}
&\D_0M_1\times\D_1M_0.}
\]
While it is clear that the part involving the last factor commutes, since it is just the
composite
\[
\xymatrix{
\D_1\D_0M_0\ar[r]^-{\D_1I_\D}&\D_1^2M_0\ar[r]^-{\mu}&\D_1M_0}
\]
either way around the diagram, it is displayed here to emphasize that
the structure maps are preserved by the arrows in the diagram, especially
the second one in the left column.  In the source of the arrow, the structure
map of source type for $\D_0\D_1^2M_0$ is given, by examining the previous
fill diagram, by the composite
\[
\xymatrix{
\D_0\D_1^2M_0\ar[r]^-{\D_0S_\D^2}&\D_0^3M_0\ar[r]^-{\D_0\mu}&\D_0^2M_0.}
\]
This does map to the structure map for $\D_0\D_1M_0$ in the next term 
because of the commutativity of
\[
\xymatrix{
\D_0\D_1^2M_0
\ar[r]^-{\D_0S_\D^2}
\ar[d]_-{\D_0\mu}
&\D_0^3M_0
\ar[r]^-{\mu}
\ar[d]^-{\D_0\mu}
&\D_0^2M_0
\ar[d]^-{\mu}
\\\D_0\D_1M_0
\ar[r]_-{\D_0S_\D}
&\D_0^2M_0
\ar[r]_-{\mu}
&\D_0M_0,
}
\]
in which the structure map for $\D_0\D_1^2M_0$ now appears at a right angle.  

Stripping off the last term, we wish to see that
\[
\xymatrix@C+10pt{
\D_0M_1\times_{\D_0^2M_0}\D_0^2M_1\times_{\D_0^3M_0}
\D_0^2\D_1M_0
\ar[r]^-{1\times\D_0^2\psi_1}
\ar[d]_-{1^2\times\D_0I_\D}
&\D_0M_1\times_{\D_0^2M_0}\D_0^2M_1
\ar[dd]^-{\D_0\gamma_M}
\\\D_0M_1\times_{\D_0^2M_0}\D_0^2M_1\times_{\D_0^3M_0}
\D_0\D_1^2M_0
\ar[d]_-{\D_0\gamma_M\times_{\D_0\mu}\D_0\mu}
\\\D_0M_1\times_{\D_0^2M_0}\D_0\D_1M_0
\ar[r]_-{\D_0\psi_1}
&\D_0M_1
}
\]
commutes.  However, this can now have a $\D_0$ stripped off, resulting in
the diagram of our desire looking like
\[
\xymatrix@C+10pt{
M_1\times\D_0M_1\times_{\D_0^2M_0}
\D_0\D_1M_0
\ar[r]^-{1\times\D_0\psi_1}
\ar[d]_-{1^2\times I_\D}
&M_1\times\D_0M_1
\ar[dd]^-{\gamma_M}
\\M_1\times\D_0M_1\times_{\D_0^2M_0}
\D_1^2M_0
\ar[d]_-{\gamma_M\times_{\mu}\mu}
\\M_1\times\D_1M_0
\ar[r]_-{\psi_1}
&M_1.
}
\]
Filling this diagram requires the following lemma connecting
the presheaf actions on $M_2$ and $M_1$.

\begin{lemma}
The following diagram commutes.
\[
\xymatrix@C+7pt{
M_2\times_{\D_0^2M_0}\D_0\D_1M_0
\ar[r]^-{(T,S)\times1}_-{\cong}
\ar[d]_-{1\times I_\D}
&M_1\times\D_0M_1\times_{\D_0^2M_0}\D_0\D_1M_0
\ar[d]^-{\cong}
\\M_2\times_{\D_0^2M_0}\D_1^2M_0
\ar[d]_-{\psi_2}
&M_1\times\D_0(M_1\times\D_1M_0)
\ar[d]^-{1\times\D_0\psi_1}
\\M_2\ar[r]_-{(T,S)}^-{\cong}
&M_1\times\D_0M_1.}
\]
\end{lemma}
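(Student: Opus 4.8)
The plan is to use the defining isomorphism $(T,S)\colon M_2\xrightarrow{\cong}M_1\times_{\D_0M_0}\D_0M_1$ to reduce the square to a statement about the source factor $\D_0M_1$ alone. Since the common target $M_1\times_{\D_0M_0}\D_0M_1$ is a pullback, it is enough to verify that the two composites agree after projecting to $M_1$ and after projecting to $\D_0M_1$.

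The first thing to pin down is how $M_2$ carries its $\D^2(*)$-presheaf structure. From the definition of $M_2$ this structure is the one transported from $\D_0M_1$ (a presheaf over $\D^2(*)$ by Corollary~\ref{D_0presheaf}) along the source projection $S=p_2$, with the target projection $T$ only a presheaf map over $\D\e$. Passing to the equivalent $\D^2(M_0^\delta)$-structure via Theorem~\ref{equivalentpresheaves}, this says that under $(T,S)$ the action $\psi_2$ is the product of the identity on the $M_1$-factor with the $\D^2(M_0^\delta)$-action on $\D_0M_1$; that this product again lands in the pullback is precisely the statement that $\D_0T$ is a map of presheaves. Granting this, the projection of both composites to $M_1$ is immediate, since the action fixes the $T$-component and both sides reduce to $T$ composed with the projection off $\D_0\D_1M_0$.

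The real content is the projection to $\D_0M_1$, which amounts to the following: the $\D^2(M_0^\delta)$-action on $\D_0M_1$, restricted along $I_\D\colon\D_0\D_1M_0\to\D_1^2M_0$ to the degenerate (outer-identity) morphisms, coincides with $\D_0\psi_1$ under the isomorphism $\D_0(M_1\times_{\D_0M_0}\D_1M_0)\cong\D_0M_1\times_{\D_0^2M_0}\D_0\D_1M_0$ supplied by Theorem~\ref{cartesianmonad}. Here I would invoke the explicit formula for this action derived earlier, namely that it is $S_\D\circ\D_1\psi_1$ after the identification $\D_0M_1\times_{\D_0^2M_0}\D_1^2M_0\cong\D_1(M_1\times_{\D_0M_0}\D_1M_0)$. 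The restriction along $I_\D$ then collapses by two formal properties of the category object $\{\D_0,\D_1\}$: naturality of $I_\D$ applied to $\psi_1$ gives $\D_1\psi_1\circ I_\D=I_\D\circ\D_0\psi_1$, and $S_\D\circ I_\D=\id$ since the source of an identity is its object, so that
\[
S_\D\circ\D_1\psi_1\circ I_\D=S_\D\circ I_\D\circ\D_0\psi_1=\D_0\psi_1,
\]
which is exactly what is required.

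The main obstacle is bookkeeping rather than conceptual: one must confirm that the map $I_\D\colon\D_0\D_1M_0\to\D_1^2M_0$ threads correctly through the chain of canonical isomorphisms coming from the Cartesianness of $\D_0$ and $\D_1$, so that feeding $I_\D$ into the full action genuinely corresponds to the natural map $I_\D\colon\D_0(M_1\times_{\D_0M_0}\D_1M_0)\to\D_1(M_1\times_{\D_0M_0}\D_1M_0)$ to which naturality applies. I would settle this exactly as in the earlier presheaf-action verifications, by projecting each isomorphism onto the factors of its defining pullback, where the relevant identifications have already been established.
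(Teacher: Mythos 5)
Your reduction to the two projections of $M_2\cong M_1\times_{\D_0M_0}\D_0M_1$, and your argument for the projection to $\D_0M_1$, are essentially the paper's: it too invokes the identification of the action on $\D_0M_1$ as $S_\D\circ\D_1\psi_1$ under $\D_0M_1\times_{\D_0^2M_0}\D_1^2M_0\cong\D_1(M_1\times_{\D_0M_0}\D_1M_0)$, and then collapses the restriction along $I_\D$ using naturality of $I_\D$ and $S_\D\circ I_\D=\id$, together with the equivariance of $S\colon M_2\to\D_0M_1$ and the isomorphism bookkeeping you flag at the end.

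The gap is in the projection to $M_1$, which you declare ``immediate.'' Your justification is that under $(T,S)$ the action $\psi_2$ ``is the product of the identity on the $M_1$-factor with the $\D^2(M_0^\delta)$-action on $\D_0M_1$.'' That description is false: such a product does not even preserve the pullback, since acting on $\langle g_i\rangle\in\D_0M_1$ changes $\D_0T\langle g_i\rangle$ by the outer part of the acting morphism while leaving $S(f)$ fixed (in the symmetric case, acting by $(\sigma;\tau_1,\dots,\tau_n)$ on $(f;g_1,\dots,g_n)$ must replace $f$ by $f\cdot\sigma$). The correct statement --- and the reason $T$ is only a presheaf map over $\D\e$, as you yourself note --- is that the $M_1$-component transforms by the image of the acting morphism under $\D\e$, or in the lifted setting under the map the paper calls $\e_*$. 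Hence the assertion that the $M_1$-component is fixed after restricting along $I_\D\colon\D_0\D_1M_0\to\D_1^2M_0$ is not definitional; it is exactly the nontrivial half of the lemma, and granting it is circular. The paper proves it in three steps: (i) the lifted equivariance of $T$, namely $T\circ\psi_2=\psi_1\circ(1\times\e_*)$ modulo the canonical isomorphisms; (ii) the computation $\e_*\circ(1\times I_\D)=I_\D\circ\D_0T\circ p_1$, which requires expanding the pullback definition of $\e_*$ and uses $T_\D\circ I_\D=\id$ together with the target-cover pullbacks; and (iii) the unit condition for $\psi_1$, so that the resulting identity morphisms act trivially. None of this appears in your argument, so the first projection is unproven as written; the repair is precisely these steps, in the same spirit as your own treatment of the second projection.
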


\begin{proof}
If we project onto the second factor, the resulting diagram commutes as a 
consequence of the following one:
\[
\xymatrix{
M_2\times_{\D_0^2M_0}\D_0\D_1M_0
\ar[rr]^-{S\times1}
\ar[d]_-{1\times I_\D}
&&\D_0M_1\times_{\D_0^2M_0}\D_0\D_1M_0
\ar[dl]^-{1\times I_\D}
\ar[dd]^-{\cong}
\\M_2\times_{\D_0^2M_0}\D_1^2M_0
\ar[r]^-{S\times1}
\ar[ddd]_-{\psi_2}
&\D_0M_1\times_{\D_0^2M_0}\D_1^2M_0
\ar[d]^-{\cong}
\\&\D_1(M_1\times\D_1M_0)
\ar[d]^-{\D_1\psi_1}
&\D_0(M_1\times\D_1M_0)
\ar[l]_-{I_\D}
\ar[d]^-{\D_0\psi_1}
\\&\D_1M_1
\ar[d]_-{S_\D}
&\D_0M_1
\ar[l]_-{I_\D}
\ar[dl]^-{=}
\\M_2\ar[r]_-{S}
&\D_0M_1.}
\]
Here the lower right rectangle commutes because of the requirement that
$S:M_2\to\D_0M_1$ be a map of $\D^2(M_0^\delta)$ presheaves, along
with our previous identification of the presheaf action on $\D_0M_1$.  The only
other part of the diagram that isn't immediate is the upper left (somewhat distorted)
square, but that is a consequence of the fact that $I_\D$ splits $T_\D$ in the
following naturality diagram, in which the right square is a pullback:
\[
\xymatrix{
\D_0(M_1\times\D_1M_0)
\ar[r]^-{I_\D}
\ar[d]_-{\D_0p_2}
&\D_1(M_1\times\D_1M_0)
\ar[r]^-{T_\D}
\ar[d]^-{\D_1p_2}
&\D_0(M_1\times\D_1M_0)
\ar[d]_-{\D_0p_2}
\\\D_0\D_1M_0
\ar[r]_-{I_\D}
&\D_1^2M_0
\ar[r]_-{T_\D}
&\D_0\D_1M_0.}
\]

Projection onto the first factor requires us to see that the diagram
\[
\xymatrix{
M_2\times_{\D_0^2M_0}\D_0\D_1M_0
\ar[rr]^-{p_1}
\ar[d]_-{1\times I_\D}
&&M_2
\ar[d]^-{T}
\\M_2\times_{\D_0^2M_0}\D_1^2M_0
\ar[r]_-{\psi_2}
&M_2\ar[r]_-{T}&M_1}
\]
commutes.  However, this follows as a result of the following larger
expansion of the diagram:
\[
\xymatrix{
M_2\times_{\D_0^2M_0}\D_0\D_1M_0
\ar[rr]^-{(T,S)\times1}_-{\cong}
\ar[d]_-{1\times I_\D}
&&M_1\times\D_0M_1\times_{\D_0^2M_0}\D_0\D_1M_0
\ar[dl]_-{1^2\times I_\D}
\ar[d]^-{p_{12}}
\ar@/^4pc/[dd]^-{1\times\e_*}
\\M_2\times_{\D_0^2M_0}\D_1^2M_0
\ar[r]^-{(T,S)\times1}_-{\cong}
\ar[dd]_-{\psi_2}
&M_1\times\D_0M_1\times_{\D_0^2M_0}\D_1^2M_0
\ar[d]_-{1\times\e_*}
&M_1\times\D_0M_1
\ar[d]^-{1\times\D_0T}
\\&M_1\times\D_1M_0
\ar[d]_-{\psi_1}
&M_1\times\D_0M_0
\ar[l]_-{1\times I_\D}
\ar[dl]^-{\cong}
\\M_2\ar[r]_-{T}&M_1.}
\]
The only part of this expansion that hasn't been previously established is
the pentagon incorporated in the right hand column.  However, we can 
expand the diagram defining $\e_*$ to include $I_\D$'s as follows:
\[
\xymatrix@C-20pt{
\D_0M_1\times_{\D_0^2M_0}\D_0\D_1M_0
\ar[rr]^-{1\times I_\D}
\ar@{.>}[dr]^-{\e_*}
\ar[dd]_-{p_2}
&&\D_0M_1\times_{\D_0^2M_0}\D_1^2M_0
\ar[rr]^-{p_1}
\ar@{.>}[dr]^-{\e_*}
\ar[dd]_(.7){p_2}
&&\D_0M_1
\ar[dr]^-{\D_0T}
\ar[dd]^(.7){\D_0S}
\\&\D_0M_0
\ar[rr]^(.3){I_\D}
\ar[ddd]_-{\D_0\e}
&&\D_1M_0
\ar[rr]^(.3){T_\D}
\ar[ddd]_-{\D_1\e}
&&\D_0M_0
\ar[ddd]^-{\D_0\e}
\\\D_0\D_1M_0
\ar[rr]_(.3){I_\D}
\ar[d]_-{\D_0\D_1\e}
&&\D_1^2M_0
\ar[rr]_(.3){T_\D^2}
\ar[d]_-{\D_1^2\e}
&&\D_0^2M_0
\ar[d]^-{\D_0^2\e}
\\\D_0\D_1(*)
\ar[rr]^(.3){I_\D}
\ar[dr]_-{\D_0\e}
&&\D_1^2(*)
\ar[rr]^(.3){T_\D^2}
\ar[dr]_-{\D_1\e}
&&\D_0^2(*)
\ar[dr]_-{\D_0\e}
\\&\D_0(*)
\ar[rr]_-{I_\D}
&&\D_1(*)
\ar[rr]_-{T_\D}
&&\D_0(*).}
\]
In this diagram, the left $\e_*$ does have $\D_0M_0$ as its target, since
both the front squares are pullbacks: the right one since $\e:M_0^\delta\to*$
is a target cover, and the left one since $T_\D\circ I_\D=\id$.  Consequently,
the top of the diagram tells us that $\e_*=\D_0T\circ p_1$, as necessary
in the previous diagram.  This establishes projection onto the first factor
in the diagram of the lemma, concluding its proof.
\end{proof}

We can now proceed with the current fill diagram, above the statement
of the lemma, which follows from its expansion as follows:
\[
\xymatrix{
M_1\times\D_0M_1\times_{\D_0^2M_0}\D_0\D_1M_0
\ar[rrr]^-{1\times\D_0\psi_1}
\ar[dd]_-{1^2\times I_\D}
&&&M_1\times\D_0M_1
\ar[ddd]^-{\gamma_M}
\\&M_2\times_{\D_0^2M_0}\D_0\D_1M_0
\ar[ul]_-{(T,S)\times1}^-{\cong}
\ar[d]^-{1\times I_\D}
\\M_1\times\D_0M_1\times_{\D_0^2M_0}\D_1^2M_0
\ar[d]_-{\gamma_M\times_\mu\mu}
&M_2\times_{\D_0^2M_0}\D_1^2M_0
\ar[l]_-{(T,S)\times1}^-{\cong}
\ar[dl]^-{\gamma_M\times_\mu\mu}
\ar[r]_-{\psi_2}
&M_2
\ar[uur]^-{(T,S)}_-{\cong}
\ar[dr]_-{\gamma_M}
\\M_1\times\D_1M_0
\ar[rrr]_-{\psi_1}
&&&M_1.}
\]
Here the irregular pentagon in the upper right is a consequence of
the previous lemma, and the distorted square at the bottom expresses
the equivariance of $\gamma:M_2\to M_1$ as a presheaf map over
the monad multiplication $\mu:\D^2\to\D$.

Now the final part of the fill is just the naturality diagram
\[
\xymatrix@C+10pt{
\D_0M_1\times_{\D_0^2M_0}\D_0\D_1M_0\times\D_1M_0\times\D_1M_0
\ar[r]^-{\D_0\psi_1\times1^2}
\ar[d]_-{1^2\times\gamma_\D}
&\D_0M_1\times\D_1M_0\times\D_1M_0
\ar[d]^-{1\times\gamma_\D}
\\\D_0M_1\times_{\D_0^2M_0}\D_0\D_1M_0\times\D_1M_0
\ar[r]_-{\D_0\psi_1\times1}
&\D_0M_1\times\D_1M_0.}
\]
We may conclude that the 
diagram with $\Gamma_L$ and the top choice of horizontal arrows
does in fact commute.

The diagram with $\Gamma_L$ and the bottom choices
of horizontal arrows is as with the top choices, but with the two 
horizontal arrows replaced with $1^2\times\gamma_\D\circ((\mu\circ I_\D)\times1)$
on top, and similarly on the bottom without the square on the 1.
In pursuit of this diagram,
which amounts to a glorified
associativity diagram, we need a few lemmas.

\begin{lemma}
We have a natural isomorphism of natural transformations
\[
\xymatrix{
\D_2^2
\ar[rr]^-{(T_\D^2,S_\D^2)}
&&\D_1^2\times_{\D_0^2}\D_1^2.}
\]
\end{lemma}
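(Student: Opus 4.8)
The statement unwinds to the assertion that, naturally in the argument, the comparison map
\[
(T_\D^2,S_\D^2)\colon \D_2^2=\D_2\D_2\longrightarrow \D_1^2\times_{\D_0^2}\D_1^2=\D_1\D_1\times_{\D_0\D_0}\D_1\D_1
\]
is an isomorphism, where here $T_\D^2=T_\D\ast T_\D$ and $S_\D^2=S_\D\ast S_\D$ denote the horizontal (Godement) composites of the target and source faces with themselves, and the pullback is formed along $S_\D^2$ on the left factor and $T_\D^2$ on the right, following our source-to-the-right convention. Equivalently, the square obtained by horizontally composing the spine pullback of $\{\D_0,\D_1\}$ with itself is again a pullback. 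The plan is to carry out the whole argument at the level of natural transformations of endofunctors of $\Set$, so that it covers every argument set simultaneously.

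Both ingredients are already in hand. By Lemma~\ref{category object}, $\{\D_0,\D_1\}$ is a category object in monads, so for every set $Y$ the spine map $(T_\D,S_\D)\colon\D_2 Y\to\D_1 Y\times_{\D_0 Y}\D_1 Y$ is an isomorphism; that is, the square $P$ with edges $T_\D,S_\D\colon\D_2\to\D_1$ and $S_\D,T_\D\colon\D_1\to\D_0$ is a natural pullback. By Theorem~\ref{cartesianmonad} each $\D_n$ is Cartesian, hence preserves pullbacks. First I would apply the inner spine inside the outer monad: since $\D_2$ preserves pullbacks, applying it to the isomorphism $\D_2 X\cong\D_1 X\times_{\D_0 X}\D_1 X$ gives $\D_2\D_2 X\cong \D_2\D_1 X\times_{\D_2\D_0 X}\D_2\D_1 X$ via $(\D_2 T_\D,\D_2 S_\D)$. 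Next I would whisker $P$ on the right by $\D_1$ and by $\D_0$, i.e. evaluate the spine at the sets $\D_1 X$ and $\D_0 X$, to identify $\D_2\D_1\cong\D_1\D_1\times_{\D_0\D_1}\D_1\D_1$ and $\D_2\D_0\cong\D_1\D_0\times_{\D_0\D_0}\D_1\D_0$. Substituting these exhibits $\D_2\D_2$ as an iterated pullback assembled from four copies of $\D_1\D_1$ over copies of $\D_0\D_1$, $\D_1\D_0$, and $\D_0\D_0$.

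The remaining step, which I expect to be the only real work, is to reassociate this iterated pullback to the single pullback $\D_1\D_1\times_{\D_0\D_0}\D_1\D_1$ and to check that the composite isomorphism is precisely $(T_\D^2,S_\D^2)$. This is an instance of the pasting lemma for pullbacks: the gluing conditions coming from the inner and outer spines collapse the iterated pullback to a pair of factors glued over $\D_0\D_0$, and the identification of those two factors with the images of $S_\D^2$ and $T_\D^2$ rests on the interchange relation $(\D_1 T_\D)(T_\D\D_2)=(T_\D\D_1)(\D_2 T_\D)$ and its source analogue, which hold because $T_\D$ and $S_\D$ are maps of monads, so the two orders of forming the horizontal composites agree. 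No further use of the $\Sigma$-free hypothesis is required beyond the Cartesian property already invoked, and the bookkeeping is of exactly the same pullback-pasting kind used throughout the preceding sections; indeed the calculation simultaneously records that $\{\D_0^2,\D_1^2\}$ is again a category object in monads whose composables are $\D_2^2$.
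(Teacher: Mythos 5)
Your steps (1) and (2) are correct, but note what they give you: they are exactly the two \emph{unmixed} pullback squares (the outer spine evaluated at the sets $\D_1X$ and $\D_0X$, and the inner spine pushed through the pullback-preserving outer monad), and these alone do not prove the lemma. After substitution you have exhibited $\D_2^2$ as an iterated pullback with \emph{four} corner copies of $\D_1^2$: for $x\in\D_2^2$ the corners are the four horizontal composites $(T_\D\ast T_\D)x$, $(S_\D\ast T_\D)x$, $(T_\D\ast S_\D)x$, $(S_\D\ast S_\D)x$, glued pairwise over $\D_0\D_1$ and $\D_1\D_0$. The target $\D_1^2\times_{\D_0^2}\D_1^2$ retains only the two corners $T_\D^2x$ and $S_\D^2x$. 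Collapsing four corners to two is where the entire content of the lemma sits, and it is \emph{not} an instance of the pasting lemma: pasting composes pullback squares you already possess, and the interchange relations you cite only assert that the relevant squares \emph{commute} (which is automatic for any natural transformation, monad maps or not); commutativity is what shows your map lands in the fiber product, but it says nothing about bijectivity. To reconstruct the two discarded mixed corners uniquely from the retained ones you need the two \emph{mixed} naturality squares
\[
\xymatrix{
\D_1\D_1\ar[r]^-{T_\D}\ar[d]_-{\D_1S_\D}&\D_0\D_1\ar[d]^-{\D_0S_\D}
\\\D_1\D_0\ar[r]_-{T_\D}&\D_0\D_0}
\qquad\text{and}\qquad
\xymatrix{
\D_1\D_1\ar[r]^-{S_\D}\ar[d]_-{\D_1T_\D}&\D_0\D_1\ar[d]^-{\D_0T_\D}
\\\D_1\D_0\ar[r]_-{S_\D}&\D_0\D_0}
\]
to be pullbacks (outer face horizontal, inner face vertical), and neither of your ingredients supplies them.

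These mixed squares are precisely the upper-left and lower-right squares of the $3\times3$ diagram in the paper's own proof, and the paper does not get them from Cartesian-ness: it obtains them by regarding $S_\D$ and $T_\D$ as functors between discrete categories, hence covers, and invoking Theorem \ref{preservescovers} together with Corollary \ref{coverpullbacks} --- cover preservation being a separate consequence of $\Sigma$-freeness, proved via Proposition \ref{orbitspreservecovers}. So your closing claim that ``no further use of the $\Sigma$-free hypothesis is required beyond the Cartesian property'' is exactly where the argument breaks: the Cartesian property controls pullback preservation and the naturality squares of $\mu$ and $\eta$, but says nothing about naturality squares of the face maps $T_\D,S_\D$ against arbitrary functions, and no amount of pasting or interchange manufactures those pullbacks from the unmixed ones. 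Once the two mixed squares are added your grid does collapse correctly --- but at that point you have simply reassembled the paper's $3\times3$ pasting argument, cover machinery and all.
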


\begin{proof}
There are at least three diagrams that will demonstrate this; the one we
want is the following one:
\[
\xymatrix{
\D_2^2
\ar[r]^-{T_\D}
\ar[d]_-{\D_2S_\D}
&\D_1\D_2
\ar[r]^-{\D_1T_\D}
\ar[d]_-{\D_1S_\D}
&\D_1^2
\ar[d]^-{\D_1S_\D}
\\\D_2\D_1
\ar[r]^-{T_\D}
\ar[d]_-{S_\D}
&\D_1^2
\ar[r]^-{\D_1T_\D}
\ar[d]_-{S_\D}
&\D_1\D_0
\ar[d]^-{S_\D}
\\\D_1^2
\ar[r]_-{T_\D}
&\D_0\D_1
\ar[r]_-{\D_0T_\D}
&\D_0^2.}
\]
The upper right and lower left squares are pullbacks by definition.
The upper left square is a pullback since $S_\D:\D_2X^\delta\to\D_1X^\delta$
is a target cover for any set $X$, which $\D_2$ preserves, and the lower right square is a 
pullback since $T_\D:\D_1X^\delta\to\D_0X^\delta$ is a source cover,
which $\D_1$ preserves.  The total diagram is therefore a pullback.
\end{proof}

From this, we can now proceed to the following lemma.

\begin{lemma}
The diagram
\[
\xymatrix@C+20pt{
\D_1^2
\ar[r]^-{(\D_1T,S_\D)}_-{\cong}
\ar[ddrr]_-{=}
&\D_1\D_0\times_{\D_0^2}\D_0\D_1
\ar[r]^-{\D_1I_\D\times I_\D}
&\D_1^2\times_{\D_0^2}\D_1^2
\\&&\D_2^2
\ar[u]_-{(T_\D^2,S_\D^2)}^-{\cong}
\ar[d]^-{\gamma_\D^2}
\\&&\D_1^2}
\]
commutes.
\end{lemma}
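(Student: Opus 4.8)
The plan is to read everything off from the preceding lemma, which exhibits the isomorphism $(T_\D^2,S_\D^2)\colon\D_2^2\xrightarrow{\cong}\D_1^2\times_{\D_0^2}\D_1^2$ and thereby presents $\{\D_0^2,\D_1^2\}$ as a category object in monads whose object of composables is $\D_2^2$, whose face maps are $T_\D^2,S_\D^2$, and whose composition is $\gamma_\D^2$; its identity insertion is the horizontal composite $I_\D^2=\D_1 I_\D\circ I_\D$. In particular both unit laws for this category object are available, as are the unit laws for the original $\{\D_0,\D_1\}$ whiskered by $\D_1$ on either side. First I would record the two components of $\beta:=(T_\D^2,S_\D^2)^{-1}\circ(\D_1 I_\D\times I_\D)\circ(\D_1 T,S_\D)$: by construction $T_\D^2\circ\beta=\D_1(I_\D T)$ and $S_\D^2\circ\beta=I_\D S_\D$. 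Thus $\beta(\alpha)$ is the unique composable pair whose target factor is the \emph{inner} identity $\D_1(I_\D T)(\alpha)$ (in the image of the whiskered inner unit $\D_1 I_\D$) and whose source factor is the \emph{outer} identity $I_\D S_\D(\alpha)$ (in the image of the whiskered outer unit $I_\D\D_1$). The assertion $\gamma_\D^2\circ\beta=\id_{\D_1^2}$ is therefore a combined unit law: composing these two degenerate factors must return $\alpha$.

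Next I would split the composition as the horizontal composite $\gamma_\D^2=\D_1\gamma_\D\circ\gamma_\D\D_2$, so that composing is carried out first in the outer and then in the inner $\D$-direction, and verify the identity in two stages. In the outer stage, the outer-source factor of $\beta(\alpha)$ is an outer identity, namely the outer identity on the outer source of the outer-target factor; the outer right unit law — the unit law for $\{\D_0,\D_1\}$ whiskered by $\D_2$ — then collapses $\gamma_\D\D_2(\beta(\alpha))$ to its outer-target factor $\Phi\in\D_1\D_2$. A short computation identifies $\Phi$ as the inner composable pair $(\D_1(I_\D T)(\alpha),\alpha)$, whose inner-target factor is the inner identity on the inner target of $\alpha$. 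In the inner stage, the inner left unit law, i.e.\ $\D_1$ applied to the left unit law of $\{\D_0,\D_1\}$, then collapses $\D_1\gamma_\D(\Phi)$ back to $\alpha$, giving the claim.

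I would implement each stage as a diagram chase, checking commutativity after projecting onto the separate factors of the nested fibre products $\D_1\D_0\times_{\D_0^2}\D_0\D_1$ and $\D_1^2\times_{\D_0^2}\D_1^2$. Every resulting cell should reduce either to a naturality square, to the category-object relations for $\{\D_0,\D_1\}$ (the splittings $T_\D\circ I_\D=\id=S_\D\circ I_\D$ and the unit laws for $\gamma_\D$), or to the compatibilities of $\mu$ with $I_\D$ and with $S_\D,T_\D$ that were already assembled in the $\theta$-computations earlier in this section.

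The step I expect to be the main obstacle is the identification in the outer stage: confirming that the outer-source factor of $\beta(\alpha)$ is \emph{precisely} the outer identity on the outer source of the outer-target factor, so that the outer right unit law genuinely applies. This requires threading the inverse isomorphism $(T_\D^2,S_\D^2)^{-1}$ through the pullback-pasting of the preceding lemma and keeping track of which structure map is in force on the middle $\D_0$-terms (for instance $\mu\circ\D_1 I_\D$ against a bare $I_\D$), exactly the kind of ambiguity in structure maps that produced the non-commuting ``dotted arrow'' phenomena earlier in the paper. Once the two degenerate factors are correctly recognized as honest identities for the respective whiskered unit laws, the rest is routine bookkeeping.
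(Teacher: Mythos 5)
Your proposal is correct and is essentially the paper's own argument: the paper likewise identifies the top composite, transported through $(T_\D^2,S_\D^2)^{-1}$, as a double unit insertion (an outer right-unit insertion $(I_R)_\D$ together with an inner left-unit insertion $\D_2(I_L)_\D$) and then cancels it against $\gamma_\D^2$ using exactly the two whiskered unit laws you invoke. The only difference is a mirror-image one: the paper performs the inner composition first, routing through $\D_2\D_1$ via $\gamma_\D^2=\gamma_\D\circ\D_2\gamma_\D$, whereas you compose outer-first through $\D_1\D_2$ via $\gamma_\D^2=\D_1\gamma_\D\circ\gamma_\D\D_2$; the two orders are interchanged by naturality, and the ``main obstacle'' you flag is handled in the paper by the two small cells expressing $I_R$ and $I_L$ through the pullback descriptions of $\D_2$.
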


\begin{proof}
We can fill the diagram as follows.
\[
\xymatrix@C+20pt{
\D_1^2
\ar[r]^-{(\D_1T,S_\D)}
\ar[drr]_-{(I_R)_\D}
\ar[ddrr]_-{=}
&\D_1\D_0\times_{\D_0^2}\D_0\D_1
\ar[r]^-{1\times I_\D}
&\D_1\D_0\times_{\D_0^2}\D_1^2
\ar[r]^-{\D_1I_\D\times1}
&\D_1^2\times_{\D_0^2}\D_1^2
\\&&\D_2\D_1
\ar[u]_-{(T_\D^2,S_\D)}^-{\cong}
\ar[r]^-{\D_2(I_L)_\D}
\ar[dr]^-{=}
\ar[d]^-{\gamma_\D}
&\D_2^2
\ar[u]_-{(T_\D^2,S_\D^2)}^-{\cong}
\ar[d]^-{\D_2\gamma_\D}
\\&&\D_1^2
&\D_2\D_2.
\ar[l]^-{\gamma_\D}}
\]
The upper left triangle commutes since $I_R$ can be expressed as
the composite
\[
\xymatrix{
\D_1
\ar[r]^-{\cong}
&\D_0\times_{\D_0}\D_1
\ar[r]^-{I_\D\times1}
&\D_1\times_{\D_0}\D_1
\ar[r]^-{\cong}
&\D_2,}
\]
and similarly the upper right square commutes since $I_L$ can be expressed as
the composite
\[
\xymatrix{
\D_1
\ar[r]^-{\cong}
&\D_1\times_{\D_0}\D_0
\ar[r]^-{1\times I_\D}
&\D_1\times_{\D_0}\D_1
\ar[r]^-{\cong}
&\D_2.}
\]
The rest of the diagram expresses the unit conditions for $\gamma_\D$.
\end{proof}

The next lemma is the one we will really need for the diagram with $\Gamma_L$
and the bottom choice of horizontal arrows:

\begin{lemma}\label{muandgamma}
The following diagram commutes:
\[
\xymatrix@C+10pt{
\D_1^2
\ar[r]^-{(\D_1T_\D,S_\D)}
\ar[ddr]_-{\mu}
&\D_1\D_0\times_{\D_0^2}\D_0\D_1
\ar[d]^-{(\mu\circ\D_1I_\D)\times_\mu(\mu\circ I_\D)}
\\&\D_1\times_{\D_0}\D_1
\ar[d]^-{\gamma_\D}
\\&\D_1.}
\]
\end{lemma}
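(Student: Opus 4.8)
The plan is to reduce the statement, which is an identity of natural transformations between monads on $\Set$, to the functoriality (interchange law) of the operad composition $\gamma$ on the morphism categories of $\DD$. The standard device of ``projecting onto the factors of the target pullback'' is unavailable here, since the target $\D_1$ is not itself a pullback; I would therefore verify the identity by evaluating both composites on a typical element and comparing. Concretely, I would write a typical element of $\D_1^2 X = \coprod_m (\DD_m)_1\times_{\Sigma_m}(\D_1 X)^m$ as $[\Phi;\xi_1,\dots,\xi_m]$, where $\Phi$ is a morphism of $\DD_m$ and each $\xi_j=[\phi_j;x^j]$ is a morphism $\phi_j$ of $\DD_{n_j}$ decorated by a list $x^j$ of elements of $X$; unwinding the monad multiplication then gives $\mu[\Phi;\xi_\bullet]=[\gamma(\Phi;\phi_1,\dots,\phi_m);x^1,\dots,x^m]$, with $\gamma(\Phi;\phi_\bullet)$ the operad composite of morphisms.

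Next I would compute the two factors of $(\mu\circ\D_1 I_\D)\times_\mu(\mu\circ I_\D)$ applied after $(\D_1 T_\D,S_\D)$. Tracing $\D_1 T_\D$, then $\D_1 I_\D$, then $\mu$ shows the left (source-projection) factor is $[A;x^\bullet]$ with $A=\gamma(\Phi;\id_{T\phi_1},\dots,\id_{T\phi_m})$, a morphism $\gamma(S\Phi;T\phi_\bullet)\to\gamma(T\Phi;T\phi_\bullet)$; tracing $S_\D$, then $I_\D$, then $\mu$ shows the right (target-projection) factor is $[B;x^\bullet]$ with $B=\gamma(\id_{S\Phi};\phi_1,\dots,\phi_m)$, a morphism $\gamma(S\Phi;S\phi_\bullet)\to\gamma(S\Phi;T\phi_\bullet)$. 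A short check confirms $S(A)=\gamma(S\Phi;T\phi_\bullet)=T(B)$, so the pair $([A;x^\bullet],[B;x^\bullet])$ genuinely lands in the pullback $\D_2 X=\D_1 X\times_{\D_0 X}\D_1 X$ and $\gamma_\D$ may be applied.

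Applying $\gamma_\D$ then composes $A$ and $B$ in $\DD_N$, where $N=\sum_j n_j$, and the crux is functoriality of the structure map $\gamma\colon \DD_m\times\prod_j\DD_{n_j}\to\DD_N$, which is a functor precisely because $\DD$ is a $\Cat$-operad. Functoriality yields $A\circ B=\gamma(\Phi\circ\id_{S\Phi};\ \id_{T\phi_\bullet}\circ\phi_\bullet)=\gamma(\Phi;\phi_\bullet)$, whence $\gamma_\D([A;x^\bullet],[B;x^\bullet])=[\gamma(\Phi;\phi_\bullet);x^\bullet]=\mu[\Phi;\xi_\bullet]$, as required. I expect the only real obstacle to be the bookkeeping of source and target objects across the two fiber products (over $\D_0$ and over $\D_0^2$): one must check the composability condition $S(A)=T(B)$ and confirm that the identities introduced by the two copies of $I_\D$ are exactly the identities at $T\phi_\bullet$ in the inner slots and at $S\Phi$ in the outer slot that make the interchange collapse to $\gamma(\Phi;\phi_\bullet)$. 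A reader wishing to avoid elements can instead postcompose the identity of the previous lemma with $\mu$ and slide $\mu$ past $\gamma_\D^2$ using that $\{\D_0,\D_1\}$ is a category object in monads (Lemma \ref{category object}); the mathematical content is the same interchange, repackaged as the compatibility of $\mu$ with composition.
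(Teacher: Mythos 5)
Your element-wise argument is correct, but it is a genuinely different route from the paper's. The paper proves this lemma diagrammatically: it expands the triangle into a larger diagram whose left cell is exactly the preceding lemma (the statement that $\gamma_\D^2\circ(T_\D^2,S_\D^2)^{-1}\circ(\D_1I_\D\times I_\D)\circ(\D_1T_\D,S_\D)=\id$ on $\D_1^2$), and whose remaining two cells say that $\mu$ commutes with $(T_\D,S_\D)$ and with $\gamma_\D$ --- i.e.\ precisely the ``slide $\mu$ past $\gamma_\D^2$ using the category object in monads'' repackaging you sketch in your final sentence. Your main proof instead unwinds the operadic formula $\D_1^2X=\coprod_m(\DD_m)_1\times_{\Sigma_m}(\D_1X)^m$ and reduces the identity to the interchange law $\gamma(\Phi;\id_{T\phi_\bullet})\circ\gamma(\id_{S\Phi};\phi_\bullet)=\gamma(\Phi;\phi_\bullet)$, which holds because $\gamma$ is a functor for the $\Cat$-operad $\DD$; your bookkeeping of sources and targets ($S(A)=\gamma(S\Phi;T\phi_\bullet)=T(B)$) is the right composability check, and evaluating well-defined maps on orbit representatives is legitimate, so there is no gap. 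The trade-off: your computation is self-contained and exposes the combinatorial content (it needs neither the preceding lemma nor the pullback identification $\D_2^2\cong\D_1^2\times_{\D_0^2}\D_1^2$, only the functoriality of $\gamma$ and, tacitly, the $\Sigma$-free identification $\D_2\cong\D_1\times_{\D_0}\D_1$ to apply $\gamma_\D$ to the pair), whereas the paper's proof stays at the level of natural transformations, never chooses representatives, and gets maximal mileage out of lemmas it has already paid for --- which is why the paper can dispatch it in three lines.
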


\begin{proof}
We can expand and fill in the diagram as follows:
\[
\xymatrix@C+10pt{
\D_1^2
\ar[r]^-{(\D_1T_\D,S_\D)}_-{\cong}
\ar[ddrr]_-{=}
&\D_1\D_0\times_{\D_0^2}\D_0\D_1
\ar[r]^-{\D_1I_\D\times I_\D}
&\D_1^2\times_{\D_0^2}\D_1^2
\ar[r]^-{\mu\times_\mu\mu}
&\D_1\times_{\D_0}\D_1
\\&&\D_2^2
\ar[u]_-{(T_\D^2,S_\D^2)}^-{\cong}
\ar[r]^-{\mu}
\ar[d]^-{\gamma_\D^2}
&\D_2
\ar[u]_-{(T_\D,S_\D)}^-{\cong}
\ar[d]^-{\gamma_\D}
\\&&D_1^2
\ar[r]_-{\mu}
&\D_1.}
\]
The left triangle commutes by the previous lemma, the upper right
square because $\mu$ commutes with $T_\D$ and $S_\D$, 
and the lower right square because $\mu$ commutes with $\gamma_\D$;
all of these are aspects of the fact that $\D$ is a category object
in monads on $\Set$.
\end{proof}

We can now begin filling in the 
diagram with $\Gamma_L$ and the bottom choice of horizontal arrows,
which has its left
column the 
expression for $\Gamma_L$
displayed in figure \ref{Fi:secondrearrangement},
connected to a right column we may need to display after filling, and 
connected across the top using the action $\phi:\D_0\D_1M_0\times\D_1M_0
\to\D_1M_0$ given by the composite 
\[
\xymatrix{
\D_0\D_1M_0\times\D_1M_0
\ar[r]^-{I_\D\times1}
&\D_1^2M_0\times\D_1M_0
\ar[r]^-{\mu\times1}
&\D_1M_0\times\D_1M_0
\ar[r]^-{\gamma_\D}
&\D_1M_0.}
\]
The first part of the fill consists of the following diagram,
from which an initial $\D_0\D_1$ has been suppressed, since all its morphisms
are the identity:
\[
\xymatrix@C-45pt{
\D_1M_0\times\D_0M_1\times_{\D_0^2M_0}\D_0\D_1M_0\times\D_1M_0
\ar[r]^-{1^2\times_\mu\phi}
&\D_1M_0\times\D_0M_1\times\D_1M_0
\\\D_1M_1\times_{\D_0^2M_0}\D_0\D_1M_0\times\D_1M_0
\ar[u]^-{(\D_1T,S_\D)\times1^2}_-{\cong}
\ar[r]^-{1\times_\mu\phi}
\ar[d]_-{(T_\D,\D_1S)\times1}^-{\cong}
&\D_1M_1\times\D_1M_0
\ar[u]_-{(D_1T,S_\D)\times1}^-{\cong}
\ar[dd]^-{(T_\D,\D_1S)\times1}_-{\cong}
\\\D_0M_1\times_{\D_0^2M_0}\D_1\D_0M_0\times_{\D_0^2M_0}\D_0\D_1M_0\times\D_1M_0
\ar[dr]^-{1^2\times_\mu\phi}
\\&\D_0M_1\times_{\D_0^2M_0}\D_1\D_0M_0\times\D_1M_0\times\D_1M_0.}
\]
For the rest of the fill, we can delete all terms involving $M_1$, since they just get
multiplied together and have no effect on the rest of the diagram.  Since the rest of
the diagram involves only $M_0$, we can suppress it, and now wish to verify the 
following diagram:
\[
\xymatrix{
\D_1\D_0\times_{\D_0^2}\D_0\D_1\times\D_1
\ar[dr]^-{1\times_\mu(\mu\circ I_\D)\times1}
\\&\D_1\D_0\times\D_1\times\D_1
\ar[r]^-{1\times\gamma_\D}
\ar[d]_-{(\mu\circ\D_1I_\D)\times1^2}
&\D_1\D_0\times\D_1
\ar[d]^-{(\mu\circ\D_1I_\D)\times1}
\\&\D_1\times\D_1\times\D_1
\ar[r]^-{1\times\gamma_\D}
\ar[d]^-{\gamma_\D\times1}
&\D_1\times\D_1
\ar[ddd]^-{\gamma_\D}
\\\D_1^2\times\D_1
\ar[uuu]^-{(\D_1T_\D,S_\D)\times1}_-{\cong}
\ar[r]^-{\mu\times1}
\ar[d]_-{(T_\D,\D_1S_\D)\times1}^-{\cong} 
&\D_1\times\D_1
\ar[ddr]^-{\gamma_\D}
\\\D_0\D_1\times_{\D_0^2}\D_1\D_0\times\D_1
\ar[dr]^(.44){\phantom{Mjj}(\mu\circ I_\D)\times_\mu(\mu\circ\D_1I_\D)\times1}
\\\D_0^2\D_1\times_{\D_0^2}\D_1\D_0\times\D_1
\ar[u]^-{\mu\times1^2}
\ar[d]_-{\D_0(\mu\circ I_\D)\times_\mu(\mu\circ\D_1I_\D)\times1}
&\D_1\times\D_1\times\D_1
\ar[uu]_-{\gamma_\D\times1}
\ar[dd]_-{1\times\gamma_\D}
&\D_1
\\\D_0\D_1\times\D_1\times\D_1
\ar[ur]^-{(\mu\circ I_\D)\times1^2}
\ar[d]_-{1\times\gamma_\D}
\\\D_0\D_1\times\D_1
\ar[r]_-{(\mu\circ I_\D)\times1}
&\D_1\times\D_1
\ar[uur]_-{\gamma_\D}}
\]
The upper left part commutes because of lemma \ref{muandgamma}, 
the part below it by its analogue switching $T_\D$ and $S_\D$,
and
the only other part of the diagram requiring verification is the third part down
on the left.  That diagram, however, easily commutes on projection to the 
second and third factors of $\D_1$ in the target, leaving us only with
projection to the first factor.  That in turn falls to commutativity of the
following diagram:
\[
\xymatrix{
\D_0^2\D_1
\ar[rr]^-{\mu}
\ar[d]_-{\D_0I_\D}
&&\D_0\D_1
\ar[d]^-{I_\D}
\\\D_0\D_1^2
\ar[r]^-{I_\D}
\ar[d]_-{\D_0\mu}
&\D_1^3
\ar[r]^-{\mu}
\ar[d]_-{\D_0\mu}
&\D_1^2
\ar[d]^-{\mu}
\\\D_0\D_1
\ar[r]_-{I_\D}
&\D_1^2
\ar[r]_-{\mu}
&\D_1.}
\]
We have verified all four diagrams we needed in order to see that composition
in $\Lhat M$ descends to composition in $LM$.  It follows that $LM$ inherits
a category structure from $\Lhat M$.

\section{The actual left adjoint: $\D$-algebra structure}

We next need to show that $LM$ inherits a $\D$-algebra structure from $\Lhat M$.
For this purpose, it is convenient to recognize that the composite
\[
\xymatrix{
\D_0M_1\times_{\D_0M_0}\D_1M_0
\ar[d]^-{\cong}
\\\D_0M_1\times_{\D_0^2M_0}\D_0^2M_0\times_{\D_0M_0}\D_1M_0
\ar[d]^-{1\times\D_0I_\D}
\\\D_0M_1\times_{\D_0^2M_0}\D_0\D_1M_0\times_{\D_0M_0}\D_1M_0}
\]
provides a common right inverse for the two maps coequalized by the
map from $\Lhat M_1$ to $LM_1$.  Consequently, the coequalizer is 
reflexive, and so is preserved by products.  This is most of the proof of
the following lemma.

\begin{lemma}
Let $\D$ be the monad associated to an operad.  Then $\D$ preserves
reflexive coequalizers.
\end{lemma}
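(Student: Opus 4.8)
The plan is to decompose $\D$ into its constituent operations and to check that each one preserves reflexive coequalizers, so that the composite does as well. Recall that the monad associated to an operad $\JJ$ is given by
\[
\J X=\coprod_{n\ge0}\JJ_n\times_{\Sigma_n}X^n,
\]
so $\J$ is built from four ingredients: the $n$-fold power $X\mapsto X^n$, multiplication by the fixed object $\JJ_n$, passage to $\Sigma_n$-orbits, and the coproduct over $n$. The preceding observation that the common right inverse makes the coequalizer defining $LM_1$ reflexive is precisely what licenses restricting attention to reflexive coequalizers here; the general statement is what I would prove.

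First I would dispose of the two colimit-type operations. The orbit functor $(-)/\Sigma_n$ and the coproduct $\coprod_{n\ge0}$ are both left adjoints (the former to the functor equipping an object with the trivial action, the latter to the diagonal), so they preserve all colimits, and in particular reflexive coequalizers. Likewise $\JJ_n\times(-)$ is a left adjoint, hence preserves reflexive coequalizers. Note that, unlike in Theorem \ref{cartesianmonad} and Proposition \ref{orbitpullbacks}, the $\Sigma$-free hypothesis plays no role here: orbits preserve colimits whether or not the action is free.

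The one substantive ingredient is the power functor $(-)^n$. Here I would invoke the classical fact that reflexive coequalizers commute with finite products in $\Set$ (equivalently, that reflexive coequalizers are sifted colimits, which commute with finite products): given a reflexive pair with coequalizer $q\colon A\to A'$, the $n$-fold product of the diagram with itself is again a coequalizer, with colimit $(A')^n$. This is exactly the step where reflexivity is essential, as it fails for arbitrary coequalizers, and I expect it to be the main point requiring care; I would either cite it or give the short direct argument in $\Set$, using the common section to lift a pair of elements of $(A')^n$ to a compatible preimage in $A^n$. The same fact holds in $\Cat$, so the argument applies equally whether $\D$ is taken on $\Set$ or on $\Cat$.

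Finally I would assemble the pieces. A functor preserving reflexive coequalizers automatically carries a reflexive pair to a reflexive pair, since it sends the common section $s$ (with $d_0s=d_1s=\id$) to a common section of the image pair; hence the class of such functors is closed under composition. Since $\D$ is the composite of the four operations above, each of which preserves reflexive coequalizers, $\D$ preserves reflexive coequalizers, completing the proof.
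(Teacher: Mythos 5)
Your proposal is correct and follows essentially the same route as the paper: both decompose $\D$ into powers, product with the operad spaces, orbits, and coproducts, with the key point being that finite products preserve reflexive coequalizers and that orbits and coproducts, being colimits, commute with coequalizers while preserving the section. The only cosmetic difference is that you justify the $\JJ_n\times(-)$ step via adjunction, whereas the paper treats it as a product with the reflexive identity coequalizer on $\DD_n$; your added remark that $\Sigma$-freeness is not needed here is also accurate.
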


\begin{proof}
Let $
\xymatrix@1{
A\ar@<1ex>[r]^-{f}\ar@<-1ex>[r]_-{g}&B\ar[r]^-{h}\ar@{.>}[l]&C}
$
be a reflexive coequalizer.  Then since products preserve reflexive
coequalizers, 
\[
\xymatrix{
A^n\ar@<1ex>[r]^-{f^n}\ar@<-1ex>[r]_-{g^n}&B^n\ar[r]^-{h^n}\ar@{.>}[l]&C^n}
\]
is also a reflexive coequalizer.  The identity coequalizer on $\DD_n$ is
also reflexive, so 
\[
\xymatrix{
\DD_n\times A^n\ar@<1ex>[r]^-{1\times f^n}\ar@<-1ex>[r]_-{1\times g^n}
&\DD_n\times B^n\ar[r]^-{1\times h^n}\ar@{.>}[l]&\DD_n\times C^n}
\]
is also a reflexive coequalizer.  Now passage to orbits, being a colimit, 
commutes with coequalizers, and the section map is also preserved.
\end{proof}

We wish to show that the $\D$-algebra structure on $\Lhat M$ descends
to one on $LM$, and since the objects of $\Lhat M$ and of $LM$ coincide
(both are $\D_0M_0$),
and we have shown that the quotient map preserves the category structure,
it suffices to show that the $\D_1$-algebra structure
on $\Lhat M_1$ descends to $LM_1$.  Since $LM_1$ is defined as the coequalizer
of a reflexive fork, we can use the previous lemma to express $\D_1LM_1$
as the coequalizer of $\D_1$ applied to that same fork.  Now we want the 
action map
\[
\xymatrix{
\D_1\Lhat M_1\ar[r]^-{\xi_1}&\Lhat M_1}
\]
to descend to one on $LM_1$, that is, we want to define an action map
on $LM_1$ so that the diagram
\[
\xymatrix{
\D_1\Lhat M_1\ar[r]^-{\xi_1}\ar[d]&\Lhat M_1\ar[d]
\\\D_1LM_1\ar[r]_-{\xi_1}&LM_1}
\]
commutes, where the vertical arrows are given by the descent map from
$\Lhat M_1$ to $LM_1$.  In order to do so, we recall that the action map
on $\Lhat M_1$ is given by the map
\[
\xymatrix{
\D_1(\D_0M_1\times\D_1M_0)\cong\D_1\D_0M_1\times_{\D_1\D_0M_0}\D_1^2M_0
\ar[rr]^-{(\mu\circ I_\D)\times_{(\mu\circ I_\D)}\mu}
&&\D_0M_1\times\D_1M_0.}
\]
What we will use is an action map $\hat\xi$ on the source of the coequalizer, and show
that the resulting two diagrams
\[
\xymatrix{
\D_1(\D_0M_1\times_{\D_0^2M_0}\D_0\D_1M_0\times\D_1M_0)
\ar[r]^-{\hat\xi}
\ar@<.5ex>[d]_-{\D_1(\D_0\psi_1\times1)\phantom{j}}
\ar@<-.5ex>[d]^-{\phantom{j}\D_1(1\times\phi)}
&\D_0M_1\times_{\D_0^2M_0}\D_0\D_1M_0\times\D_1M_0
\ar@<.5ex>[d]_-{\D_0\psi_1\times1\phantom{j}}
\ar@<-.5ex>[d]^-{\phantom{j}1\times\phi}
\\\D_1(\D_0M_1\times\D_1M_0)
\ar[r]_-{\xi_1}
&\D_0M_1\times\D_1M_0,}
\]
one each for choice of left vertical arrows or right vertical arrows, commute.  
It will then follow that $\xi_1$ induces an action map on $LM_1$, as desired.

For $\hat\xi$ we choose the map
\[
\xymatrix{
\D_1\D_0M_1\times_{\D_1\D_0^2M_0}\D_1\D_0\D_1M_0\times_{\D_1\D_0M_0}\D_1^2M_0
\ar[d]_-{(\mu\circ I_\D)\times_{\mu\circ I_\D}(\mu\circ I_\D)\times_{\mu\circ I_\D}\mu}
\\\D_0M_1\times_{\D_0^2M_0}\D_0\D_1M_0\times\D_1M_0.}
\]
In order to see that this map is well-defined, we
call the first two factors in the source $P_{12}$ and the second two $P_{23}$,
and display the commuting diagrams
\[
\xymatrix{
P_{12}\ar[r]^-{p_1}
\ar[d]_-{p_2}
&\D_1\D_0M_1
\ar[r]^-{T_\D}
\ar[d]_-{\D_1\D_0S_\D}
&\D_0^2M_1
\ar[r]^-{\mu}
\ar[d]_-{\D_0^2S_\D}
&\D_0M_1
\ar[d]^-{\D_0S_\D}
\\\D_1\D_0\D_1M_0
\ar[r]^-{\D_1\D_0T_\D}
\ar[dr]_-{T_\D}
&\D_1\D_0^2M_0
\ar[r]^-{T_\D}
&\D_0^3M_0
\ar[r]^-{\mu}
&\D_0^2M_0
\\&\D_0^2\D_1M_0
\ar[ur]^-{\D_0^2T_\D}
\ar[r]_-{\mu}
&\D_0\D_1M_0
\ar[ur]_-{\D_0T_\D}}
\]
and
\[
\xymatrix{
P_{23}
\ar[r]^-{p_1}
\ar[dd]_-{p_2}
&\D_1\D_0\D_1M_0
\ar[r]^-{T_\D}
\ar[d]_-{\D_1\D_0S_\D}
&\D_0^2\D_1M_0
\ar[r]^-{\mu}
\ar[d]_-{\D_0^2S_\D}
&\D_0\D_1M_0
\ar[d]^-{\D_0S_\D}
\\&\D_1\D_0^2M_0
\ar[r]^-{T_\D}
\ar[d]_-{\D_1\mu}
&\D_0^3M_0
\ar[r]^-{\mu}
\ar[d]_-{\D_0\mu}
&\D_0^2M_0
\ar[d]^-{\mu}
\\\D_1^2M_0
\ar[r]^-{\D_1T_\D}
\ar[dr]_-{\mu}
&\D_1\D_0M_0
\ar[r]^-{T_\D}
&\D_0^2M_0
\ar[r]^-{\mu}
&\D_0M_0.
\\&\D_1M_0
\ar[urr]_-{T_\D}}
\]
Now the diagram we want with the left choice of vertical arrows
doesn't involve the terms $\xymatrix@1{\D_1^2M_0\ar[r]^-{\mu}&\D_1M_0}$,
but the rest of the diagram now reduces to the naturality diagram
\[
\xymatrix{
\D_1\D_0(M_1\times_{\D_0M_0}\D_1M_0)
\ar[r]^-{\mu\circ T_\D}
\ar[d]_-{\D_1\D_0\psi_1}
&\D_0(M_1\times_{\D_0M_0}\D_1M_0)
\ar[d]^-{\D_0\psi_1}
\\\D_1\D_0M_1
\ar[r]_-{\mu\circ T_\D}
&\D_0M_1.}
\]
The diagram we want with the right choice of vertical arrows doesn't involve the terms
$\xymatrix@1{\D_1\D_0M_1\ar[r]^-{\mu\circ T_\D}&\D_0M_1}$, and so
the diagram reduces to the following one, which is still to be verified:
\[
\xymatrix@C+40pt{
\D_1\D_0\D_1M_0\times_{\D_1\D_0M_0}\D_1^2M_0
\ar[r]^-{(\mu\circ T_\D)\times_{\mu\circ T_\D}\mu}
\ar[d]_-{\D_1\phi}
&\D_0\D_1M_0\times_{\D_0M_0}\D_1M_0
\ar[d]^-{\phi}
\\\D_1^2M_0
\ar[r]_-{\mu}&\D_1M_0.}
\]
However, this can be filled in as follows, expanding the definition
of $\phi=\gamma_\D((\mu\circ I_\D)\times1)$, and suppressing the
$M_0$, since it plays no role:
\[
\xymatrix@C+7pt{
\D_1\D_0\D_1\times_{\D_1\D_0}\D_1^2
\ar[r]^-{T_\D\times_{T_\D}1}
\ar[d]_-{\D_1I_\D\times1}
&\D_0^2\D_1\times_{\D_0^2}\D_1^2
\ar[r]^-{\mu\times_\mu\mu}
\ar[d]_-{I_\D^2\times1}
&\D_0\D_1\times_{\D_0}\D_1
\ar[d]^-{I_\D\times1}
\\\D_1^3\times_{\D_1\D_0}\D_1^2
\ar[r]^-{(I_\D\circ T_\D)\times_{T_\D}1}
\ar[d]_-{\D_1\mu\times1}
&\D_1^3\times_{\D_0^2}\D_1^2
\ar[r]^-{\mu\times_\mu\mu}
\ar[d]_-{\D_1\mu\times1}
&\D_1^2\times_{\D_0}\D_1
\ar[d]^-{\mu\times1}
\\\D_1^2\times_{\D_1\D_0}\D_1^2
\ar[r]^-{(I_\D\circ T_\D)\times_{T_\D}1}
\ar[d]_-{\D_1\gamma_\D}
&\D_1^2\times_{\D_0^2}\D_1^2
\ar[r]^-{\mu\times_\mu\mu}
\ar[dl]^-{\gamma_\D^2}
&\D_1\times_{\D_0}\D_1
\ar[d]^-{\gamma_\D}
\\\D_1^2
\ar[rr]_-{\mu}
&&\D_1.}
\]
It follows that the $\D$-algebra structure on $\Lhat M$ descends to one
on $LM$.

\section{The actual left adjoint: the counit}

We need to show that the counit $\e:\Lhat U\C\to\C$ factors through
$LU\C$.  Since the quotient map $\Lhat U\C\to LU\C$ is the identity
on objects, this amounts to checking that the map on morphisms factors.
Since the quotient map is a coequalizer on morphisms, this happens if and only if the
counit $\e:\Lhat U\C_1\to\C_1$ coequalizes the maps defining $LU\C$.
We begin by recalling that the action $\psi_1:U\C_1\times_{\D_0\C_0}\D_1\C_0$
can be expressed as the composite
\begin{gather*}
\xymatrix{
U\C_1\times_{\D_0\C_0}\D_1\C_0
\ar[r]^-{\kappa_1\times_{\xi_0}1}_-{\cong}
&\C_1\times_{\C_0}\D_1\C_0
\ar[r]^-{1\times\D_1I}
&\C_1\times_{\C_0}\D_1\C_1}
\\\xymatrix{
\relax
\ar[rr]^-{1\times(\xi_1,S_\D S)}
&&\C_1\times_{\C_0}\C_1\times_{C_0}\D_0\C_0
\ar[r]^-{\gamma_\C\times1}
&\C_1\times_{\C_0}\D_0\C_0\cong U\C_1.}
\end{gather*}
Now the counit $\e_1:\Lhat U\C_1\to\C_1$ expands as the composite
\begin{gather*}
\xymatrix@C+10pt{
\D_0U\C_1\times_{D_0\C_0}\D_1\C_0
\ar[r]^-{\D_0\kappa_1\times_{\xi_0}1}
&\D_0\C_1\times_{\C_0}\D_1\C_0}
\\\xymatrix{
\relax
\ar[r]^-{I_\D\times\D_1I}
&\D_1\C_1\times_{\C_0}\D_1\C_1
\ar[r]^-{\xi_1\times\xi_1}
&\C_1\times_{\C_0}\C_1
\ar[r]^-{\gamma_\C}
&\C_1.}
\end{gather*}
Since the first step of the counit is $\D_0\kappa_1$ on the factor of 
$\D_0U\C_1\cong\D_0\C_1\times_{\D_0\C_0}\D_0^2\C_0$,
and $\D_0\kappa_1$ just discards the $\D_0^2\C_0$, we can rewrite
the composite $\e_1\circ(\D_0\psi_1\times1)$ as follows, 
in which it should be noted carefully that the structure maps from
$\D_0\D_1\C_0$ to $\D_0\C_0$ are not the same: to the left it's 
$\D_0\xi_0(T_\D)$, and to the right it's $\mu\circ\D_0S_\D$:
\begin{gather*}
\xymatrix@C+20pt{
\D_0U\C_1\times_{\D_0^2\C_0}\D_0\D_1\C_0\times_{\D_0\C_0}\D_1\C_0
\ar[r]^-{\D_0\kappa_1\times_{\D_0\xi_0}1^2}
&\D_0\C_1\times_{\D_0\C_0}\D_0\D_1\C_0\times_{\D_0\C_0}\D_1\C_0}
\\\xymatrix@C+20pt{
\relax
\ar[r]^-{1\times\D_0\D_1I\times1}
&\D_0\C_1\times_{\D_0\C_0}\D_0\D_1\C_1\times_{\D_0\C_0}\D_1\C_0
\ar[r]^-{1\times\D_0\xi_1\times_{\xi_0}1}
&\D_0\C_1\times_{\D_0\C_0}\D_0\C_1\times_{\C_0}\D_1\C_0}
\\\xymatrix@C+10pt{
\relax
\ar[r]^-{\D_0\gamma_\C\times1}
&\D_0\C_1\times_{\C_0}\D_1\C_0
\ar[r]^-{I_\D\times\D_1I}
&\D_1\C_1\times_{\C_0}\D_1\C_1
\ar[r]^-{\xi_1\times\xi_1}
&\C_1\times_{\C_0}\C_1
\ar[r]^-{\gamma_\C}
&\C_1.}
\end{gather*}
We can now rewrite this using the following diagram, which starts
at the second term of the above composite:
\[
\xymatrix@C+35pt{
\D_0\C_1\times\D_0\D_1\C_0\times\D_1\C_0
\ar[r]^-{I_\D\times I_\D\D_1I\times\D_1I}
\ar[d]_-{1\times\D_0\D_1I\times1}
&\D_1\C_1\times\D_1^2\C_1\times\D_1\C_1
\ar[dd]^-{1\times\D_1\xi_1\times_{\xi_0}1}
\\\D_0\C_1\times\D_0\D_1\C_1\times\D_1\C_0
\ar[ur]_-{I_\D\times I_\D\times\D_1I}
\ar[d]_-{1\times\D_0\xi_1\times_{\xi_0}1}
\\\D_0\C_1\times\D_0\C_1\times_{\C_0}\D_1\C_0
\ar[r]^-{I_\D\times I_\D\times\D_1I}
\ar[d]_-{\D_0\gamma_\C\times1}
&\D_1\C_1\times\D_1\C_1\times_{\C_0}\D_1\C_1
\ar[d]^-{\gamma_{\D\C}\times1}
\\\D_0\C_1\times_{\C_0}\D_1\C_0
\ar[r]_-{I_\D\times\D_1I}
&\D_1\C_1.}
\]
The bottom rectangle simply records the fact that the
composition of identities is the identity in the category structure
on $\D$.  We can now express the composite $\e_1\circ(\D_0\psi_1\times1)$ 
as follows:
\begin{gather*}
\xymatrix@C+20pt{
\D_0U\C_1\times_{\D_0^2\C_0}\D_0\D_1\C_0\times_{\D_0\C_0}\D_1\C_0
\ar[r]^-{\D_0\kappa_1\times_{\D_0\xi_0}1^2}
&\D_0\C_1\times_{\D_0\C_0}\D_0\D_1\C_0\times_{\D_0\C_0}\D_1\C_0}
\\\xymatrix@C+30pt{
\relax
\ar[r]^-{I_\D\times I_\D\D_1I\times\D_1I}
&\D_1\C_1\times_{\D_0\C_0}\D_1^2\C_1\times_{\D_0\C_0}\D_1\C_1
\ar[r]^-{1\times\D_1\xi_1\times_{\xi_0}1}
&\D_1\C_1\times_{\D_0\C_0}\D_1\C_1\times_{\C_0}\D_1\C_1}
\\\xymatrix@C+10pt{
\relax
\ar[r]^-{\gamma_{\D\C}\times1}
&\D_1\C_1\times_{\C_0}\D_1\C_1
\ar[r]^-{\xi_1\times\xi_1}
&\C_1\times_{\C_0}\C_1
\ar[r]^-{\gamma_\C}
&\C_1.}
\end{gather*}
Next, we appeal to the diagram
\[
\xymatrix@C+7pt{
\D_1\C_1\times\D_1^2\C_1\times\D_1\C_1
\ar[r]^-{1\times_{\xi_0}\mu\times1}
\ar[d]_-{1\times\D_1\xi_1\times_{\xi_0}1}
&\D_1\C_1\times_{\C_0}\D_1\C_1\times\D_1\C_1
\ar[r]^-{1\times\gamma_{\D\C}}
\ar[d]^-{\xi_1\times\xi_1\times_{\xi_0}\xi_1}
&\D_1\C_1\times_{\C_0}\D_1\C_1
\ar[d]^-{\xi_1\times\xi_1}
\\\D_1\C_1\times\D_1\C_1\times_{\C_0}\D_1\C_1
\ar[r]^-{\xi_1\times_{\xi_0}\xi_1\times\xi_1}
\ar[d]_-{\gamma_{\D\C}\times1}
&\C_1\times_{\C_0}\C_1\times_{\C_0}\C_1
\ar[r]^-{1\times\gamma_\C}
\ar[d]^-{\gamma_\C\times1}
&\C_1\times_{\C_0}\C_1
\ar[d]^-{\gamma_\C}
\\\D_1\C_1\times_{\C_0}\D_1\C_1
\ar[r]_-{\xi_1\times\xi_1}
&\C_1\times_{\C_0}\C_1
\ar[r]_-{\gamma_\C}
&\C_1,}
\]
in which the upper left and lower right rectangles are associativity
diagrams, and the upper right and lower left follow from the requirement
that $\xi:\D\C\to\C$ be a functor; in particular, the diagram
\[
\xymatrix{
(\D\C)_2
\ar[r]^-{\xi_2}
\ar[d]_-{\gamma_{\D\C}}
&\C_2
\ar[d]^-{\gamma_\C}
\\(\D\C)_1
\ar[r]_-{\xi_1}
&\C_1}
\]
must commute.  Note that the action map $\xi_2$ can be rewritten
as $\xi_1\times_{\xi_0}\xi_1$; the rectangles in the previous diagram
now follow.  We can now rewrite our composite:  $\e_1\circ(\D_0\psi_1\times1)$
coincides with
\begin{gather*}
\xymatrix@C+20pt{
\D_0U\C_1\times_{\D_0^2\C_0}\D_0\D_1\C_0\times_{\D_0\C_0}\D_1\C_0
\ar[r]^-{\D_0\kappa_1\times_{\D_0\xi_0}1^2}
&\D_0\C_1\times_{\D_0\C_0}\D_0\D_1\C_0\times_{\D_0\C_0}\D_1\C_0}
\\\xymatrix@C+30pt{
\relax
\ar[r]^-{I_\D\times I_\D\D_1I\times\D_1I}
&\D_1\C_1\times_{\D_0\C_0}\D_1^2\C_1\times_{\D_0\C_0}\D_1\C_1
\ar[r]^-{1\times_{\xi_0}\mu\times1}
&\D_1\C_1\times_{\C_0}\D_1\C_1\times_{\D_0\C_0}\D_1\C_1}
\\\xymatrix@C+10pt{
\relax
\ar[r]^-{1\times\gamma_{\D\C}}
&\D_1\C_1\times_{\C_0}\D_1\C_1
\ar[r]^-{\xi_1\times\xi_1}
&\C_1\times_{\C_0}\C_1
\ar[r]^-{\gamma_\C}
&\C_1.}
\end{gather*}
We now appeal to the diagram
\[
\xymatrix@C+25pt{
\D_0\C_1\times\D_0\D_1\C_0\times\D_1\C_0
\ar[r]^-{I_\D\times I_\D\D_1I\times\D_1I}
\ar[d]_-{1\times I_\D\times1}
&\D_1\C_1\times\D_1^2\C_1\times\C_1\C_1
\ar[dd]^-{1\times_{\xi_0}\mu\times1}
\\\D_0\C_1\times\D_1^2\C_0\times\D_1\C_0
\ar[ur]_-{I_\D\times\D_1^2I\times\D_1I}
\ar[d]_-{1\times_{\xi_0}\mu\times1}
\\\D_0\C_1\times_{\C_0}\D_1\C_0\times\D_1\C_0
\ar[r]^-{I_\D\times\D_1I\times\D_1I}
\ar[d]_-{1\times\gamma_\D}
&\D_1\C_1\times_{\C_0}\D_1\C_1\times\D_1\C_1
\ar[d]^-{1\times\gamma_{\D\C}}
\\\D_0\C_1\times_{\C_0}\D_1\C_0
\ar[r]_-{I_\D\times\D_1I}
&\D_1\C_1\times_{\C_0}\D_1\C_1,}
\]
in which the lower rectangle commutes since composition
of identities in $\C$ give identities, and we can now rewrite
 $\e_1\circ(\D_0\psi_1\times1)$ as follows:
 \begin{gather*}
\xymatrix@C+20pt{
\D_0U\C_1\times_{\D_0^2\C_0}\D_0\D_1\C_0\times_{\D_0\C_0}\D_1\C_0
\ar[r]^-{\D_0\kappa_1\times_{\D_0\xi_0}1^2}
&\D_0\C_1\times_{\D_0\C_0}\D_0\D_1\C_0\times_{\D_0\C_0}\D_1\C_0}
\\\xymatrix@C+30pt{
\relax
\ar[r]^-{1\times I_\D\times1}
&\D_0\C_1\times_{\D_0\C_0}\D_1^2\C_0\times_{\D_0\C_0}\D_1\C_0
\ar[r]^-{1\times_{\xi_0}\mu\times1}
&\D_0\C_1\times_{\C_0}\D_1\C_0\times_{\D_0\C_0}\D_1\C_0}
\\\xymatrix@C+10pt{
\relax
\ar[r]^-{1\times\gamma_{\D}}
&\D_0\C_1\times_{\C_0}\D_1\C_0
\ar[r]^-{I_\D\times\D_1I}
&\D_1\C_1\times_{\C_0}\D_1\C_1
\ar[r]^-{\xi_1\times\xi_1}
&\C_1\times_{\C_0}\C_1
\ar[r]^-{\gamma_\C}
&\C_1.}
\end{gather*}
However, we can now appeal to the diagram
\[
\xymatrix@C+20pt{
\D_0U\C_1\times_{\D_0^2\C_0}\D_0\D_1\C_0\times\D_1\C_0
\ar[r]^-{\D_0\kappa_1\times_{\D_0\xi_0}1^2}
\ar[d]_-{1\times I_\D\times1}
&\D_0\C_1\times\D_0\D_1\C_0\times\D_1\C_0
\ar[d]^-{1\times I_\D\times1}
\\\D_0U\C_1\times_{\D_0^2\C_0}\D_1^2\C_0\times\D_1\C_0
\ar[r]^-{\D_0\kappa_1\times_{\D_0\xi_0}1^2}
\ar[d]_-{1\times_{\mu}\mu\times1}
&\D_0\C_1\times\D_1^2\C_0\times\D_1\C_0
\ar[d]^-{1\times_{\xi_0}\mu\times1}
\\\D_0U\C_1\times\D_1\C_0\times\D_1\C_0
\ar[r]^-{\D_0\kappa_1\times_{\xi_0}1^2}
\ar[d]_-{1\times\gamma_\D}
&\D_0\C_1\times_{\C_0}\D_1\C_0\times\D_1\C_0
\ar[d]^-{1\times\gamma_\D}
\\\D_0U\C_1\times\D_1\C_0
\ar[r]_-{\D_0\kappa_1\times_{\xi_0}1}
&\D_0\C_1\times_{\C_0}\D_1\C_0}
\]
to conclude that the composite can be rewritten as
 \begin{gather*}
\xymatrix@C+20pt{
\D_0U\C_1\times_{\D_0^2\C_0}\D_0\D_1\C_0\times_{\D_0\C_0}\D_1\C_0
\ar[r]^-{1\times I_\D\times1}
&\D_0U\C_1\times_{\D_0^2\C_0}\D_1^2\C_0\times_{\D_0\C_0}\D_1\C_0}
\\\xymatrix@C+30pt{
\relax
\ar[r]^-{1\times_\mu\mu\times1}
&\D_0U\C_1\times_{\D_0\C_0}\D_1C_0\times_{\D_0\C_0}\D_1\C_0
\ar[r]^-{1\times\gamma_\D}
&\D_0U\C_1\times_{\D_0\C_0}\D_1\C_0}
\\\xymatrix@C+10pt{
\relax
\ar[r]^-{\D_0\kappa_1\times_{\xi_0}1}
&\D_0\C_1\times_{\C_0}\D_1\C_0
\ar[r]^-{I_\D\times\D_1I}
&\D_1\C_1\times_{\C_0}\D_1\C_1
\ar[r]^-{\xi_1\times\xi_1}
&\C_1\times_{\C_0}\C_1
\ar[r]^-{\gamma_\C}
&\C_1,}
\end{gather*}
which coincides with $\e_1\circ(1\times\phi)$.  We may conclude that
the counit descends to $LU\C\to\C$.

\section{The actual left adjoint: the unit}

Our last piece of business is to verify that the unit map preserves
the presheaf structure on morphism sets; this was not done for the
unit to $\Lhat M$, since that is the one piece of the adjunction framework
that fails, and therefore $\Lhat$ is \emph{not} the actual left adjoint.
If we write $\pi:\Lhat M_1\to LM_1$ for the map on morphisms given
by the coequalizer construction, the unit map is given by the 
composite
\[
\xymatrix{
M_1\ar[r]^-{\eta_1}&\Lhat M_1\ar[r]^-{\pi}&LM_1.}
\]
Given a map $f:M\to N$ of $\D$-multicategories, the condition for
preservation of presheaf structure can be expressed by the requirement
that the following diagram commutes:
\[
\xymatrix@C+20pt{
M_1\times_{\D_0M_0}\D_1M_0
\ar[r]^-{f_1\times_{\D_0f_0}\D_1f_0}
\ar[d]_-{\psi_M}
&N_1\times_{\D_0N_0}\D_1N_0
\ar[d]^-{\psi_N}
\\M_1
\ar[r]_-{f_1}
&N_1.}
\]
In our case, what we want is for the perimeter of the diagram
\[
\xymatrix@C+20pt{
M_1\times_{\D_0M_0}\D_1M_0
\ar[r]^-{\eta_1\times_{\D_0\eta_0}\D_1\eta_0}
\ar[d]_-{\psi_1}
&U\Lhat M_1\times_{\D_0^2M_0}\D_1\D_0M_0
\ar[r]^-{U\pi\times1}\
\ar[d]_-{\psi_{U\Lhat M}}
&ULM_1\times_{\D_0^2M_0}\D_1\D_0M_0
\ar[d]^-{\psi_{ULM}}
\\M_1\ar[r]_-{\eta_1}
&U\Lhat M_1\ar[r]_-{U\pi}
&ULM_1}
\]
to commute.  However, although the right square does commute since
we've shown that the $\D$-algebra structure on $\Lhat M$ descends to one
on $LM$, the left square emphatically does \emph{not} commute.

Our strategy is first to express $U\pi$ as a coequalizer, and then show that
the two ways around the left square factor through the two maps that are
coequalized by $U\pi$.  We need the following elementary lemma.

\begin{lemma}
Suppose 
\[
\xymatrix{
A\ar@<.5ex>[r]\ar@<-.5ex>[r]&B\ar[r]&C}
\]
is a coequalizer diagram in $\Set/Z$ for some set $Z$, and let $X$ be a set 
over $Z$.  Then the induced diagram
\[
\xymatrix{
A\times_ZX\ar@<.5ex>[r]\ar@<-.5ex>[r]&B\times_ZX\ar[r]&C\times_ZX}
\]
is also a coequalizer diagram.
\end{lemma}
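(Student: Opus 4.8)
The plan is to exploit the fact that in $\Set$ the base-change functor $\blank\times_Z X$ preserves colimits. First I would recall that the forgetful functor $\Set/Z\to\Set$ creates colimits, so the hypothesis says precisely that $C$ is the coequalizer of the underlying sets $A\rightrightarrows B$ in $\Set$, with $C\to Z$ the induced map, and the conclusion is the analogous statement in $\Set$ for $A\times_Z X\rightrightarrows B\times_Z X$. Since $\Set$ is locally cartesian closed, the base-change functor $\Set/Z\to\Set/X$ sending $(Y\to Z)$ to $(Y\times_Z X\to X)$ is a left adjoint, its right adjoint being the dependent product along $X\to Z$, and therefore preserves coequalizers. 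Composing with the colimit-creating forgetful functor $\Set/X\to\Set$ then gives the result at once.

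If one prefers an elementary argument avoiding local cartesian closure, I would argue directly with equivalence relations. Write $C=B/\!\sim$, where $\sim$ is the equivalence relation on $B$ generated by $\beta(a)\sim\beta'(a)$ for $a\in A$, denoting by $\beta,\beta'\colon A\to B$ the two parallel maps. Elements of $B\times_Z X$ are pairs $(b,x)$ with $b$ and $x$ lying over a common point $z\in Z$, and the candidate map to $C\times_Z X$ sends $(b,x)$ to $([b],x)$. The key observation is that $\beta,\beta'$ lie over $Z$: given a generating relation $\beta(a)=b$, $\beta'(a)=b'$ and a pair $(b,x)\in B\times_Z X$, the element $a$ automatically lies over the same point $z$ as $x$, so $(a,x)\in A\times_Z X$ and the two induced maps carry it to $(b,x)$ and $(b',x)$. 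Hence $(b,x)\sim'(b',x)$, where $\sim'$ is the relation generated on $B\times_Z X$.

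Conversely, every generating relation for $\sim'$ identifies pairs with the same $X$-coordinate, so $\sim'$ never alters that coordinate; the equivalence classes of $\sim'$ are therefore exactly the fibers of $(b,x)\mapsto([b],x)$, yielding a bijection $(B\times_Z X)/\!\sim'\;\cong\;C\times_Z X$ compatible with the projections to $X$ and to $Z$. The only real subtlety, and the main obstacle, is precisely this interplay: one must check both that each step of a zig-zag realizing $b\sim b'$ can be lifted to $B\times_Z X$ with the $X$-coordinate held fixed (which is exactly where the hypothesis that $\beta,\beta'$ are morphisms over $Z$ is used) and that no new identifications among distinct $X$-coordinates are introduced. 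Once this is verified, the universal property of the coequalizer is immediate, and the lemma follows.
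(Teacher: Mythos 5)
Your proposal is correct, but both of your arguments take a different route from the paper's, whose entire proof is a one-line fiber decomposition: a coequalizer diagram in $\Set/Z$ splits as a coproduct, indexed by $z\in Z$, of coequalizer diagrams of fibers $A_z\rightrightarrows B_z\to C_z$; applying $\blank\times_Z X$ merely re-indexes these fiber diagrams by the elements of $X$ (the piece over $x\in X$ being a copy of the diagram over the image of $x$ in $Z$), and a coproduct of coequalizer diagrams is again a coequalizer diagram. Your first argument is more abstract and more general: base change $\Set/Z\to\Set/X$ is a left adjoint because $\Set$ is locally cartesian closed, and the forgetful functors from slice categories create colimits; this proves the statement in any locally cartesian closed category with coequalizers, at the price of invoking dependent products. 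Your second argument is the most hands-on: by lifting zig-zags with the $X$-coordinate held fixed, it re-proves exactly what the paper's coproduct decomposition makes automatic, namely that the equivalence relation generated on $B\times_Z X$ never mixes distinct fibers and restricts on each fiber to a copy of the relation defining $C$. All three are valid; the paper's decomposition buys brevity, your adjunction argument buys generality, and your zig-zag argument buys self-containedness, with the one genuinely delicate point being the one you correctly isolate: every element appearing in a zig-zag connecting $b$ to $b'$ lies over the same point of $Z$, precisely because the parallel maps and structure maps are morphisms over $Z$.
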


\begin{proof}
The coequalizer diagram can be decomposed as a coproduct of coequalizer
diagrams of
fibers over the elements of $Z$,
\[
\xymatrix{
\coprod_{z\in Z}A_z\ar@<.5ex>[r]\ar@<-.5ex>[r]
&\coprod_{z\in Z}B_z\ar[r]
&\coprod_{z\in Zz}C_z.}
\]
The conclusion now follows easily.
\end{proof}

In our application, we have $LM_1$ displayed in the coequalizer diagram
\[
\D_0M_1\times_{\D_0^2M_0}\D_0\D_1M_0\times_{\D_0M_0}\D_1M_0
\rightrightarrows\D_0M_1\times_{\D_0M_0}\D_1M_0\to LM_1,
\]
and using the structure maps of source type throughout, all maps are 
over $\D_0M_0$.  Since in general for a $\D$-algebra $\C$ we have
\[
U\C_1\cong\C_1\times_{\C_0}\D_0\C_0,
\]
and in our case $\C_0=LM_0=\D_0M_0$, we get
\[
ULM_1\cong LM_1\times_{\D_0M_0}\D_0^2M_0.
\]
Now the lemma tells us that $ULM_1$ arises as the coequalizer in
the following diagram, where in the interest of space we have adopted our usual convention
of suppressing $\D_0M_0$'s in the subscripts on pullbacks:
\[
\D_0M_1\times_{\D_0^2M_0}\D_0\D_1M_0\times\D_1M_0\times\D_0^2M_0
\rightrightarrows\D_0M_1\times\D_1M_0\times\D_0^2M_0\to ULM_1.
\]
Note that the middle term is $U\Lhat M_1$, and therefore the coequalizer
map is $U\pi$.  The two maps for which $U\pi$ is the coequalizer can be
written as $\D_0\psi_1\times1^2$ and $1\times\phi\times1$, since they also
are induced from the maps defining $LM_1$ as a coequalizer.  

We next introduce a map 
\[
\xymatrix{
M_1\times_{\D_0M_0}\D_1M_0\ar[r]^-{\etahat}
&\D_0M_1\times_{\D_0^2M_0}\D_0\D_1M_0\times_{\D_0M_0}\D_1M_0\times_{\D_0M_0}\D_0^2M_0}
\]
given as $\eta\times_\eta(\eta,I_\D\circ S_\D,\D_0\eta\circ S_\D)$.  We claim that
the diagrams
\[
\xymatrix@C+10pt{
M_1\times_{\D_0M_0}\D_1M_0
\ar[r]^-{\psi_1}
\ar[d]_-{\etahat}
&M_1
\ar[d]^-{\eta_1}
\\\D_0M_1\times_{\D_0^2M_0}\D_0\D_1M_0\times\D_1M_0\times\D_0^2M_0
\ar[r]_-{\D_0\psi_1\times1^2}
&\D_0M_1\times\D_1M_0\times\D_0^2M_0}
\]
and
\[
\xymatrix@C+10pt{
M_1\times_{\D_0M_0}\D_1M_0
\ar[r]^-{\eta\times_{\D_0\eta}\D_1\eta}
\ar[d]_-{\etahat}
&U\Lhat M_1\times_{\D_0^2M_0}\D_1\D_0M_0
\ar[d]^-{\psi_{U\Lhat M}}
\\\D_0M_1\times_{\D_0^2M_0}\D_0\D_1M_0\times\D_1M_0\times\D_0^2M_0
\ar[r]_-{1\times\phi\times1}
&\D_0M_1\times\D_1M_0\times\D_0^2M_0}
\]
both commute, that is, that both ways around the left square
above that does \emph{not} commute instead factor through
the two maps that are coequalized by $U\pi$, with the factorization
provided by the map $\etahat$.  This will establish that the perimeter
of the rectangle does commute, verifying that the unit is in fact a map
of $\D$-multicategories.  

We must still verify the two claimed diagrams.  

For the first diagram, we first expand the right vertical arrow
as $(\eta,I_\D\circ S,\D_0\eta\circ S)$, from the definition of
the unit map to $U\Lhat M_1$.  Now
projecting to the first factor $\D_0M_1$ of the target gives
us the diagram
\[
\xymatrix{
M_1\times_{\D_0M_0}\D_1M_0
\ar[r]^-{\psi_1}
\ar[d]_-{\eta\times_{\eta}\eta}
&M_1
\ar[d]^-{\eta}
\\\D_0M_1\times_{\D_0^2M_0}\D_0\D_1M_0
\ar[r]_-{\D_0\psi_1}
&\D_0M_1,}
\]
which is just a naturality diagram for $\eta:1\to\D_0$, since the lower
left corner is isomorphic to $\D_0(M_1\times_{\D_0M_0}\D_1M_0)$.  Projecting
to the second factor $\D_1M_0$ of the target gives us the diagram
\[
\xymatrix{
M_1\times_{\D_0M_0}\D_1M_0
\ar[r]^-{\psi_1}
\ar[d]_-{p_2}
&M_1
\ar[d]^-{S}
\\\D_1M_0
\ar[r]_-{S_\D}
&\D_0M_0
\ar[r]_-{I_\D}
&\D_1M_0,}
\]
which is a consequence of the requirement that the presheaf action $\psi_1$
preserve the structure maps of source type.  Projection to the third factor
is almost the same diagram: we just compose with $\D_0\eta$ instead of $I_\D$:
\[
\xymatrix{
M_1\times_{\D_0M_0}\D_1M_0
\ar[r]^-{\psi_1}
\ar[d]_-{p_2}
&M_1
\ar[d]^-{S}
\\\D_1M_0
\ar[r]_-{S_\D}
&\D_0M_0
\ar[r]_-{\D_0\eta}
&\D_0^2M_0.}
\]
This completes verification of the first claimed diagram.  

For the second one, we claim that both ways around the square coincide with
the map $\eta\times(1,\D_0\eta\circ S_\D)$.  For the lower left composite,
unpacking $1\times\phi\times1$ results in the claim that
\[
\xymatrix{
M_1\times_{\D_0M_0}\D_1M_0
\ar@/^3pc/[dddr]^-{\eta\times(1,\D_0\eta\circ S_\D)}
\ar[d]_-{\eta\times_\eta(\eta,I_\D\circ S_\D,\D_0\eta\circ S_\D)}
\\\D_0M_1\times_{\D_0^2M_0}\D_0\D_1M_0\times\D_1M_0\times\D_0^2M_0
\ar[d]_-{1\times I_\D\times1^2}
\\\D_0M_1\times_{\D_0^2M_0}\D_1^2M_0\times\D_1M_0\times\D_0^2M_0
\ar[d]_-{1\times_{\mu}\mu\times1^2}
\\\D_0M_1\times\D_1M_0\times\D_1M_0\times\D_0^2M_0
\ar[r]_-{1\times\gamma_\D\times1}
&\D_0M_1\times\D_1M_0\times\D_0^2M_0}
\]
commutes.  To check this, we first simplify the left column by checking 
the projections to each factor in the target.  The first $\D_0M_1$ clearly
is the result of just $\eta_0:M_1\to\D_0M_1$ from the first factor of the
source.  Everything else arises from the factor $\D_1M_0$ in the source.
The map to the second factor $\D_1M_0$ in the bottom left corner is the
identity, because of the diagram
\[
\xymatrix{
&\D_0\D_1M_0
\ar[d]^-{I_\D}
\\\D_1M_0
\ar[ur]^-{\eta_0}
\ar[r]^-{\eta_1}
\ar[dr]_-{=}
&\D_1^2M_0
\ar[d]^-{\mu}
\\&\D_1M_0.}
\]
The map to the third factor, also $\D_1M_0$, is simply $I_\D\circ S_\D$, 
and therefore the composite with the multiplication $\gamma_\D$ in
the bottom arrow gives us the identity because of the commuting triangle
\[
\xymatrix@C+10pt{
\D_1M_0
\ar[dr]_-{=}
\ar[r]^-{(1,I_\D\circ S_\D)}
&\D_1M_0\times_{\D_0M_0}\D_1M_0
\ar[d]^-{\gamma_\D}
\\&\D_1M_0.}
\]
We conclude that the left column simplifies as claimed.

What remains is to show that the upper right composite also simplifies 
to $\eta\times(1,\D_0\eta\circ S_\D)$.  Here we begin by simplifying the
top arrow as follows:
\[
\xymatrix@C+30pt{
M_1\times_{\D_0M_0}\D_1M_0
\ar[r]^-{\eta\times_{\D_0\eta}\D_1\eta}
\ar[dr]^-{\phantom{MMMMM}(\eta,I_\D\circ S,\D_0\eta\circ S)\times\D_1\eta}
\ar@/_1.5pc/[ddr]_-{(\eta,I_\D\circ S)\times\D_1\eta\phantom{MM}}
&U\Lhat M_1\times_{\D_0^2M_0}\D_1\D_0M_0
\ar[d]^-{=}
\\&\D_0M_1\times\D_1M_0\times\D_0^2M_0\times_{\D_0^2M_0}\D_1\D_0M_0
\ar[d]^-{\kappa_1\times_\mu1}_-{\cong}
\\&\D_0M_1\times\D_1M_0\times\D_1\D_0M_0
}
\]
Now we need to discuss the presheaf action map on $U\Lhat M_1$.  The presheaf
action on an underlying $\D$-multicategory $U\C$ is given by the composite
\begin{gather*}
\xymatrix{
U\C_1\times_{\D_0U\C_0}\D_1U\C_0=U\C_1\times_{\D_0\C_0}\D_1\C_0
\cong\C_1\times_{\C_0}\D_0\C_0\times_{\D_0\C_0}\D_1\C_0
\ar[r]^-{\kappa_1\times_{\xi_0}1}_-{\cong}
&\C_1\times_{\C_0}\D_1\C_0}
\\\xymatrix@C+10pt{
\relax
\ar[r]^-{1\times\D_1I}
&\C_1\times_{\C_0}\D_1\C_1
\ar[r]^-{1\times(\xi_1,S_\D S)}
&\C_1\times_{\C_0}\C_1\times_{\C_0}\D_0\C_0
\ar[r]^-{\gamma_\C\times1}
&\C_1\times_{\C_0}\D_0\C_0\cong U\C_1.}
\end{gather*}
Next, replacing $\C$ with $\Lhat M$, we can replace $\C_1$ with $\D_0M_1\times_{\D_0M_0}\D_1M_0$,
and $\C_0$ with $\D_0M_0$.  Furthermore, the identity arrow 
$I_{\Lhat M}:\D_0M_0\to\D_0M_1\times_{\D_0M_0}\D_1M_0$ is given
by the components $(\D_0I,I_\D)$.  Starting with the term
\[
\C_1\times_{\C_0}\D_1\C_0\cong\D_0M_1\times_{\D_0M_0}\D_1M_0\times_{\D_0M_0}\D_1\D_0M_0, 
\]
and suppressing
the subscript $\D_0M_0$'s
as usual, we can express the presheaf action on $U\Lhat M_1$ as the composite
\begin{gather*}
\xymatrix@C+40pt{
\D_0M_1\times\D_1M_0\times\D_1\D_0M_0
\ar[r]^-{1^2\times(\D_1\D_0I,\D_1I_\D)}
&\D_0M_1\times\D_1M_0\times\D_1\D_0M_1\times_{\D_1\D_0M_0}\D_1^2M_0}
\\\xymatrix@C+25pt{
\relax
\ar[rr]^-{1^2\times(\mu\circ T_\D)\times_{(\mu\circ T_\D)}(\mu, S_\D^2)}
&&\D_0M_1\times\D_1M_0\times\D_0M_1\times\D_1M_0\times\D_0^2M_0}
\\\xymatrix@C+30pt{
\relax
&\D_0M_1\times\D_1M_1\times\D_1M_0\times\D_0^2M_0
\ar[l]_-{1\times(\D_1T,S_\D)\times1^2}^-{\cong}
}
\\\xymatrix@C+30pt{
\relax
\ar[r]^-{1\times(T_\D,\theta)\times1^2}
&\D_0M_1\times\D_0M_1\times\D_1M_0\times\D_1M_0\times\D_0^2M_0
}
\\\xymatrix@C+15pt{
\relax
\ar[r]^-{\gamma_M\times\gamma_\D\times1}
&\D_0M_1\times\D_1M_0\times\D_0^2M_0.}
\end{gather*}
Precomposing this with
\[
\xymatrix@C+30pt{
M_1\times\D_1M_0
\ar[r]^-{(\eta,I_\D\circ S)\times\D_1\eta}
&\D_0M_1\times\D_1M_0\times\D_1\D_0M_0,}
\]
we wish to show that the entire composite ends up being $\eta\times(1,\D_0\eta\circ S_\D)$.
We proceed in steps, since the overall diagram is a bit large.
First, we have
\[
\xymatrix@C+30pt{
M_1\times\D_1M_0
\ar[r]^-{(\eta,I_\D\circ S)\times\D_1\eta}
\ar[dr]_-{(\eta,I_\D\circ S)\times(\D_1(\D_0I\circ\eta),\D_1\eta_1)\phantom{MMMMMM}}
&\D_0M_1\times\D_1M_0\times\D_1\D_0M_0
\ar[d]^-{1^2\times(\D_1\D_0I,\D_1I_\D)}
\\&\D_0M_1\times\D_1M_0\times\D_1\D_0M_1\times_{\D_1\D_0M_0}\D_1^2M_0.}
\]
This diagram commutes by projecting to each factor of the target,
and the only one that isn't immediate is the last one, which is a 
consequence of the category structure on $\D$, and in particular 
the diagram
\[
\xymatrix{
M_0\ar[r]^-{\eta_0}\ar[dr]_-{\eta_1}
&\D_0M_0\ar[d]^-{I_\D}
\\&\D_1M_0.}
\]
Next, we have the following diagram, which can be pasted to the previous step,
\[
\xymatrix{
&\D_0M_1\times\D_1M_0\times\D_1\D_0M_1\times_{\D_1\D_0M_0}\D_1^2M_0
\ar[dd]^-{1^2\times(\mu\circ T_\D)\times_{(\mu\circ T_\D)}(\mu,S_\D^2)}
\\M_1\times\D_1M_0
\ar[ur]^-{(\eta,I_\D\circ S)\times(\D_1(\D_0I\circ\eta),\D_1\eta_1)\phantom{MMMMMM}}
\ar[dr]_-{(\eta,I_\D\circ S)\times(\D_0I\circ T_\D,1,\D_0\eta\circ S_\D)\phantom{MMMMMM}}
\\&\D_0M_1\times\D_1M_0\times\D_0M_1\times\D_1M_0\times\D_0^2M_0.}
\]
Again, this commutes by projecting to each factor of the target, and the first two
are immediate.  The third one, to the second $\D_0M_1$, commutes because of
the following diagram:
\[
\xymatrix{
\D_1M_0
\ar[r]^-{\D_1\eta}
\ar[d]_-{T_\D}
&\D_1\D_0M_0
\ar[r]^-{\D_1\D_0I}
\ar[d]^-{T_\D}
&\D_1\D_0M_1
\ar[d]^-{T_\D}
\\\D_0M_0
\ar[r]^-{\D_0\eta}
\ar[dr]_-{=}
&\D_0^2M_0
\ar[r]^-{\D_0^2I}
\ar[d]^-{\mu}
&\D_0^2M_1
\ar[d]^-{\mu}
\\&\D_0M_0
\ar[r]_-{\D_0I}
&\D_0M_1.}
\]
Projection to the fourth factor, the second $\D_1M_0$, merely expresses
the monad identity
\[
\xymatrix{
\D_1M_0
\ar[r]^-{\D_1\eta}
\ar[dr]_-{=}
&\D_1^2M_0
\ar[d]^-{\mu}
\\&\D_1M_0.}
\]
And the last one, to $\D_0^2M_0$, is the elementary diagram
\[
\xymatrix{
\D_1M_0
\ar[r]^-{\D_1\eta_1}
\ar[d]_-{S_\D}
&\D_1^2M_0
\ar[dr]^-{S_\D^2}
\ar[d]_-{S_\D}
\\\D_0M_0
\ar[r]_-{\D_0\eta_1}
&\D_0\D_1M_0
\ar[r]_-{\D_0S}
&\D_0^2M_0.}
\]
Next, to be pasted to the previous step, is the diagram
\[
\xymatrix{
&\D_0M_1\times\D_1M_0\times\D_0M_1\times\D_1M_0\times\D_0^2M_0
\\M_1\times\D_1M_0
\ar[ur]^-{(\eta,I_\D\circ S)\times(\D_0I\circ T_\D,1,\D_0\eta\circ S_\D)\phantom{MMMMMM}}
\ar[dr]_-{(\eta,I_\D I\circ S)\times(1,\D_0\eta\circ S_\D)\phantom{MMMMMM}}
\\&\D_0M_1\times\D_1M_1\times\D_1M_0\times\D_0^2M_0.
\ar[uu]_-{1\times(\D_1T,S_\D)\times1^2}^-{\cong}}
\]
Projection to the first factor is clear, and to the second factor, $\D_1M_0$, 
is a consequence of 
\[
\xymatrix{
M_1\ar[r]^-{S}
&\D_0M_0
\ar[r]^-{I_\D}
\ar[dr]_-{I_\D I}
&\D_1M_0
\ar[dr]^-{=}
\ar[d]^-{\D_1I}
\\&&\D_1M_1
\ar[r]_-{\D_1T}
&\D_1M_0.}
\]
Projection to the third factor relies on the fact that the map factors through the
common projection to $\D_0M_0$ in the pullback $M_1\times_{\D_0M_0}\D_1M_0$
from which all these maps emanate.  The relevant diagram is as follows:
\[
\xymatrix{
M_1\times\D_1M_0\ar[r]^-{p_1}
\ar[d]_-{p_2}
&M_1
\ar[d]^-{S}
\\\D_1M_0
\ar[r]_-{T_\D}
&\D_0M_0
\ar[r]^-{\D_0I}
\ar[dr]_-{I_\D I}
&\D_0M_1
\ar[dr]^-{=}
\ar[d]^-{I_\D}
\\&&\D_1M_1
\ar[r]_-{S_\D}
&\D_0M_1.}
\]
The other two factors are clear.  We next paste on the diagram
\[
\xymatrix{
&\D_0M_1\times\D_1M_1\times\D_1M_0\times\D_0^2M_0
\ar[dd]^-{1\times(T_\D,\theta)\times1^2}
\\M_1\times\D_1M_0
\ar[ur]^-{(\eta,I_\D I\circ S)\times(1,\D_0\eta\circ S_\D)\phantom{MMMMMM}}
\ar[dr]_-{(\eta,\D_0I\circ S)\times(I_\D\circ T_\D,1,\D_0\eta\circ S_\D)\phantom{MMMMMM}}
\\&\D_0M_1\times\D_0M_1\times\D_1M_0\times\D_1M_0\times\D_0^2M_0.}
\]
Here projection to the first factor is clear, and to the second is a consequence
of 
\[
\xymatrix{
\D_0M_0
\ar[r]^-{\D_0I}
\ar[dr]_-{I_\D I}
&\D_0M_1
\ar[dr]^-{=}
\ar[d]^-{I_\D}
\\&\D_1M_1
\ar[r]_-{T_\D}
&\D_0M_1.}
\]
For the third projection, to the first factor of $\D_1M_0$, we appeal to the
fact that the map factors through $S:M_1\to\D_0M_0$, and so can just as well
be expressed factoring through $T_\D:\D_1M_0\to\D_0M_0$, since the source
is the pullback along these two maps.  The upper map can therefore be considered
$I_\D I\circ T_\D:\D_1M_0\to\D_1M_1$.  We then expand the definition of $\theta$
as the composite
\[
\xymatrix{
\D_1M_1
\ar[r]^-{\D_1S}
&\D_1\D_0M_0
\ar[r]^-{\D_1I_\D}
&\D_1^2M_0
\ar[r]^-{\mu}
&\D_1M_0,}
\]
and appeal to the following diagram:
\[
\xymatrix{
\D_1M_0
\ar[r]^-{T_\D}
&\D_0M_0
\ar[r]^-{I_\D}
\ar[d]_-{I_\D I}
&\D_1M_0
\ar[dl]_-{\D_0I}
\ar[d]_-{\D_1\eta_0}
\ar[dr]_-{\phantom{MMM}\D_1\eta_1}
\ar@/^.3pc/[drr]^-{=}
\\&\D_1M_1
\ar[r]_-{\D_1S}
&\D_1\D_0M_0
\ar[r]_-{\D_1I_\D}
&\D_1^2M_0
\ar[r]_-{\mu}
&\D_1M_0.}
\]
Projection to the last two factors is trivial.  Finally, we paste in the diagram
\[
\xymatrix{
&\D_0M_1\times\D_0M_1\times\D_1M_0\times\D_1M_0\times\D_0^2M_0
\ar[dd]^-{\D_0\gamma_M\times\gamma_\D\times1}
\\M_1\times\D_1M_0
\ar[ur]^-{(\eta,\D_0I\circ S)\times(I_\D\circ T_\D,1,\D_0\eta\circ S_\D)\phantom{MMMMMM}}
\ar[dr]_-{\eta\times(1,\D_0\eta\circ S_\D)\phantom{MMM}}
\\&\D_0M_1\times\D_1M_0\times\D_0^2M_0.
}
\]
Here projection to the first factor is a consequence of the right unital property
of $\gamma_M$, and projection to the second follows from the left unital
property for $\gamma_\D$.  This concludes the proof that the unit map
preserves the presheaf structure, and therefore that we have, in fact,
constructed a left adjoint to the forgetful functor.

\end{document}